\documentclass[reqno]{amsart}
\usepackage{amssymb}
\usepackage{amsfonts}
\usepackage{amsthm}
\usepackage{amsmath}
\usepackage{hyperref}

%
\newcommand{\Z}{{\mathbb{Z}}}
\newcommand{\C}{{\mathbb{C}}}
\newcommand{\R}{{\mathbb{R}}}

%

\let\Im=\undefined\DeclareMathOperator*{\Im}{Im}

\DeclareMathOperator*{\loc}{loc}

\DeclareMathOperator{\osc}{osc}
\newcommand{\rad}{\text{rad}}

\newtheorem{theorem}{Theorem}[section]
\newtheorem{prop}[theorem]{Proposition}
\newtheorem{lemma}[theorem]{Lemma}

\newtheorem{corollary}[theorem]{Corollary}
\newtheorem{conjecture}[theorem]{Conjecture}
\newtheorem{proposition}[theorem]{Proposition}
\theoremstyle{definition}
\newtheorem{definition}[theorem]{Definition}
\theoremstyle{remark}
\newtheorem*{remark}{Remark}
\newtheorem*{remarks}{Remarks}

%
%
\newcounter{smalllist}
\newenvironment{SL}{\begin{list}{{\rm(\roman{smalllist})\hss}}{%
\setlength{\topsep}{0mm}\setlength{\parsep}{0mm}\setlength{\itemsep}{0mm}%
\setlength{\labelwidth}{2.0em}\setlength{\itemindent}{2.5em}\setlength{\leftmargin}{0em}\usecounter{smalllist}%
}}{\end{list}}

%
%
\newenvironment{CI}{\begin{list}{{\ $\bullet$\ }}{%
\setlength{\topsep}{0mm}\setlength{\parsep}{0mm}\setlength{\itemsep}{0mm}%
\setlength{\labelwidth}{0mm}\setlength{\itemindent}{0mm}\setlength{\leftmargin}{0mm}%
\setlength{\labelsep}{0mm}
}}{\end{list}}




\newcommand{\eps}{{\varepsilon}}

\begin{document}

\title[The cubic NLS in 2D with radial data]
{The cubic nonlinear Schr\"odinger equation in two dimensions with radial data}
\author{Rowan Killip}
\address{University of California, Los Angeles}
\author{Terence Tao}
\address{University of California, Los Angeles}
\author{Monica Visan}
\address{Institute for Advanced Study}
\subjclass[2000]{35Q55}

\begin{abstract}
We establish global well-posedness and scattering for solutions to the mass-critical nonlinear Schr\"odinger equation
$iu_t + \Delta u = \pm |u|^2 u$ for large spherically symmetric $L^2_x(\R^2)$ initial data;
in the focusing case we require, of course, that the mass is strictly less than that of the ground state.
As a consequence, we deduce that in the focusing case, any spherically symmetric blowup solution must
concentrate at least the mass of the ground state at the blowup time.

We also establish some partial results towards the analogous claims in other dimensions and without the
assumption of spherical symmetry.
\end{abstract}

\maketitle

\tableofcontents

\section{Introduction}

We primarily consider the Cauchy problem for the cubic nonlinear Schr\"odinger equation (NLS)
\begin{equation}\label{cubic nls}
i u_t + \Delta u = \mu |u|^{2} u
\end{equation}
in two space dimensions with $L_x^2$ initial data.  Here $\mu=\pm1$, with $\mu=+1$ known as the \emph{defocusing} case
and $\mu=-1$ as the \emph{focusing} case.  (Other non-zero values of $\mu$ can be reduced to these two
cases by rescaling the values of $u$.)

The cubic nonlinearity is the most common nonlinearity in applications.  It arises
as a simplified model for studying Bose--Einstein condensates \cite{ErdosYau,gross,pitaevskii},
Kerr media in nonlinear optics \cite{Kelly,Talanov}, and even freak waves in the ocean \cite{DT,WaveMotion}.

From a mathematical point of view, the cubic NLS in two dimensions is remarkable for being mass-critical.

\subsection{The mass-critical nonlinear Schr\"odinger equation}

In arbitrary dimensions, $d\geq 1$, the mass-critical (or \emph{pseudoconformal}) nonlinear Schr\"odinger equation is
given by
\begin{equation}\label{nls}
i u_t + \Delta u = F(u) \quad \text{with } F(u) := \mu |u|^{4/d} u.
\end{equation}
The name is a testament to the fact that there is a scaling symmetry (see Definition~\ref{D:sym})
that leaves both the equation and the mass invariant.  Mass is a term used in physics to represent
the square of the $L^2_x$-norm:
$$
M(u(t)) := \int_{\R^d} |u(t,x)|^2\,dx.
$$
For \eqref{nls}, this is a conserved quantity; see Theorem~\ref{local}.

Our main result is to construct global strong $L^2_x(\R^d)$ solutions to \eqref{nls}
for spherically symmetric initial data in two space dimensions, $d=2$.
Many of our arguments continue to hold in greater generality, namely, for arbitrary dimensions $d\geq 1$ and without
the assumption of spherical symmetry. We will discuss this more general problem whenever it does not disrupt the flow
of our argument.

Let us first make the notion of a solution more precise:

\begin{definition}[Solution] A function $u: I \times \R^d \to \C$ on a non-empty time interval $I \subset \R$
(possibly infinite or semi-infinite) is a \emph{strong $L^2_x(\R^d)$ solution} (or \emph{solution} for short) to \eqref{nls}
if it lies in the class $C^0_t L^2_x(K \times \R^d) \cap L^{2(d+2)/d}_{t,x}(K \times \R^d)$ for all compact $K \subset I$,
and we have the Duhamel formula
\begin{align}\label{old duhamel}
u(t_1) = e^{i(t_1-t_0)\Delta} u(t_0) - i \int_{t_0}^{t_1} e^{i(t_1-t)\Delta} F(u(t))\ dt
\end{align}
for all $t_0, t_1 \in I$.  Note that by Lemma~\ref{L:strichartz}, the condition $u\in L^{2(d+2)/d}_{t,x}$
implies that the second term in the Duhamel formula above exists as a weak integral in $L_x^2$.
Here, $e^{it\Delta}$ is the propagator for the free Schr\"odinger equation defined via the Fourier transform
$$ \hat f(\xi) := (2\pi )^{-d/2} \int_{\R^d} e^{- ix \cdot \xi} f(x)\, dx$$
by
$$ \widehat{e^{it\Delta} f}(\xi) = e^{-it|\xi|^2} \hat f(\xi).$$
We refer to the interval $I$ as the \emph{lifespan} of $u$. We say that $u$ is a \emph{maximal-lifespan solution}
if the solution cannot be extended to any strictly larger interval. We say that $u$ is a \emph{global solution} if $I = \R$.
\end{definition}
\begin{definition}[Blowup]\label{D:blowup}
We say that a solution $u$ to \eqref{nls} \emph{blows up forward in time} if there exists a time $t_0 \in I$ such that
$$ \int_{t_0}^{\sup I} \int_{\R^d} |u(t,x)|^{2(d+2)/d}\, dx \,dt = \infty$$
and that $u$ \emph{blows up backward in time} if there exists a time $t_0 \in I$ such that
$$ \int_{\inf I}^{ t_0} \int_{\R^d} |u(t,x)|^{2(d+2)/d}\, dx\, dt = \infty.$$
\end{definition}

\begin{remark}
The condition that $u$ is in $L^{2(d+2)/d}_{t,x}$ locally in time is natural.  This is the space that appears in the
original Strichartz inequality, \cite{Strichartz}.  As a consequence all solutions to the linear problem lie in this
space.  Existence of solutions to \eqref{nls} in this space is guaranteed
by the local theory discussed below; it is also necessary in order to ensure uniqueness of solutions in this local theory.
Solutions to \eqref{nls} in this class have been intensively studied, see for example
\cite{begout, bourg.2d, ck, cwI, caz, keraani, mv, tao-lens, compact, tvz-higher, tsutsumi}.
\end{remark}

Next, we recall some basic facts from the local theory.

\begin{definition}[Convergence of solutions]  Let $u^{(n)}: I^{(n)} \times \R^d \to \C$ be a sequence of solutions
to \eqref{nls}, let $u: I \times \R^d \to \C$ be another solution, and let $K$ be a compact time interval.
We say that $u^{(n)}$ \emph{converges uniformly to} $u$ on $K$ if we have $K \subset I$ and $K \subset I^{(n)}$ for
all sufficiently large $n$, and furthermore, $u^{(n)}$ converges strongly to $u$ in
$C^0_t L^2_x(K \times \R^d) \cap L^{2(d+2)/d}_{t,x}(K \times \R^d)$ as $n \to \infty$.
We say that $u^{(n)}$ \emph{converges locally uniformly to} $u$ if $u^{(n)}$ converges uniformly to $u$
on every compact interval $K \subset I$.
\end{definition}

The local theory for \eqref{nls} was worked out by Cazenave and Weissler \cite{cwI}.  They constructed local-in-time
solutions for arbitrary initial data in $L^2_x(\R^d)$; however, due to the critical nature of the equation,
the resulting time of existence depends on the profile of the initial data and not merely on its $L_x^2$-norm.
Cazenave and Weissler also constructed global solutions for small initial data.  We summarize their results in the theorem below.

\begin{theorem}[Local well-posedness, \cite{cwI, caz}]\label{local}
Given $u_0 \in L^2_x(\R^d)$ and $t_0 \in \R$, there exists a unique maximal-lifespan solution $u$ to \eqref{nls} with $u(t_0)=u_0$.
We will write $I$ for the maximal lifespan.  This solution also has the following properties:
\begin{CI}
\item (Local existence) $I$ is an open neighbourhood of $t_0$.
\item (Mass conservation) The solution $u$ has a conserved \emph{mass}
\begin{equation}\label{massdef}
M(u) = M(u(t)) := \int_{\R^d} |u(t,x)|^2\ dx.
\end{equation}
\item (Blowup criterion) If $\sup(I)$ is finite, then $u$ blows up forward in time; if $\inf(I)$ is finite,
then $u$ blows up backward in time.
\item (Continuous dependence) If $u^{(n)}_0$ is a sequence converging to $u_0$ in $L^2_x(\R^d)$ and
$u^{(n)}: I^{(n)} \times \R^d \to \C$ are the associated maximal-lifespan solutions,
then $u^{(n)}$ converges locally uniformly to $u$.
\item (Scattering) If $\sup(I)=+\infty$ and $u$ does not blow up forward in time, then $u$ scatters forward in time, that is,
there exists a unique $u_+ \in L^2_x(\R^d)$ such that $$ \lim_{t \to +\infty} \| u(t)-e^{it\Delta} u_+ \|_{L^2_x(\R^d)} = 0.$$
Similarly, if $\inf(I)=-\infty$ and $u$ does not blow up backward in time, then $u$ scatters backward in time, that is,
there is a unique $u_- \in L^2_x(\R^d)$ so that
$$ \lim_{t \to -\infty} \| u(t)-e^{it\Delta} u_- \|_{L^2_x(\R^d)} = 0.$$
\item (Spherical symmetry) If $u_0$ is spherically symmetric, then $u$ remains spherically symmetric for all time.
\item (Small data global existence) If $M(u_0)$ is sufficiently small depending on $d$, then $u$ is a global solution
which does not blow up either forward or backward in time.  Indeed, in this case
$$\int_{\R}\int_{\R^d}|u(t,x)|^{2(d+2)/d}\,dx\,dt \lesssim M(u).$$
\end{CI}
\end{theorem}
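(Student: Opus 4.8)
The plan is to run a contraction-mapping (Picard) argument for the Duhamel map in the Strichartz space, exploiting the fact that for the mass-critical nonlinearity the Lebesgue exponent $q := 2(d+2)/d$ is simultaneously one endpoint of an admissible Strichartz pair and the exponent that measures $F(u)$. Writing $q' := 2(d+2)/(d+4)$ for the dual exponent, H\"older gives the two nonlinear estimates $\|F(u)\|_{L^{q'}_{t,x}(I\times\R^d)} = \|u\|_{L^q_{t,x}(I\times\R^d)}^{1+4/d}$ and $\|F(u)-F(v)\|_{L^{q'}_{t,x}} \lesssim \big(\|u\|_{L^q_{t,x}}^{4/d}+\|v\|_{L^q_{t,x}}^{4/d}\big)\|u-v\|_{L^q_{t,x}}$; combined with the homogeneous and inhomogeneous Strichartz inequalities of Lemma~\ref{L:strichartz}, this shows that $\Phi u(t) := e^{i(t-t_0)\Delta}u_0 - i\int_{t_0}^t e^{i(t-s)\Delta}F(u(s))\,ds$ is a contraction on a small ball of $X(I) := C^0_t L^2_x(I\times\R^d)\cap L^q_{t,x}(I\times\R^d)$ centred at $t\mapsto e^{i(t-t_0)\Delta}u_0$, as soon as $\eta(I) := \|e^{i(t-t_0)\Delta}u_0\|_{L^q_{t,x}(I\times\R^d)}$ is small. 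This smallness is the one genuinely critical point: it cannot be forced by shrinking $M(u_0)$ on a fixed interval, but for any fixed $u_0\in L^2_x$ one has $e^{i(t-t_0)\Delta}u_0 \in L^q_{t,x}(\R\times\R^d)$ by global Strichartz, whence $\eta(I)\to 0$ as $|I|\to 0$ by dominated convergence. This yields local existence and uniqueness in $X$ on short intervals; global uniqueness on the common domain of two solutions with the same data follows because the set where they agree is closed (by continuity into $L^2_x$) and open (by short-time uniqueness at each point), hence everything; and the maximal-lifespan solution is the union of all open intervals through $t_0$ carrying a solution.

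For the blowup criterion, suppose $T := \sup I < \infty$ while $\int_{t_0}^T\int_{\R^d}|u|^q\,dx\,dt < \infty$ for some $t_0 \in I$. Then for $t_1 < T$ close enough to $T$, the Duhamel formula and the inhomogeneous Strichartz estimate give $\|e^{i(t-t_1)\Delta}u(t_1)\|_{L^q_{t,x}([t_1,T)\times\R^d)} \lesssim \|u\|_{L^q_{t,x}([t_1,T))} + \|u\|_{L^q_{t,x}([t_1,T))}^{1+4/d}$, which is small; the contraction then produces a solution on a neighbourhood of $[t_1,T]$, contradicting maximality, and the backward statement is symmetric. Continuous dependence is read off from the same fixed point: if $u_0^{(n)}\to u_0$ in $L^2_x$, then on a short interval where the contraction applies to $u_0$ (and, for large $n$, to $u_0^{(n)}$) the difference of the two Duhamel maps forces $u^{(n)}\to u$ in $X$ there; one covers a given compact $K\subset I$ by finitely many such intervals — using $\|u\|_{L^q_{t,x}(K)}<\infty$ to keep the linear evolutions small at each step — and iterates. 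Scattering follows once $\sup I = \infty$ and $\|u\|_{L^q_{t,x}([t_0,\infty)\times\R^d)}<\infty$: Strichartz shows $t\mapsto \int_{t_0}^t e^{-is\Delta}F(u(s))\,ds$ is Cauchy in $L^2_x$, so $u_+ := e^{-it_0\Delta}u(t_0)-i\int_{t_0}^\infty e^{-is\Delta}F(u(s))\,ds$ exists in $L^2_x$ and $\|u(t)-e^{it\Delta}u_+\|_{L^2_x}\lesssim\|u\|_{L^q_{t,x}([t,\infty))}^{1+4/d}\to 0$; uniqueness of $u_+$ is immediate and backward scattering is symmetric. Spherical symmetry is inherited from uniqueness, since $e^{it\Delta}$ and $u\mapsto F(u)$ commute with rotations, so $u$ and any rotate of it solve the same Cauchy problem. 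Finally, small-data global existence is the contraction run directly on $I=\R$, using $\|e^{i(t-t_0)\Delta}u_0\|_{L^q_{t,x}(\R\times\R^d)}\lesssim M(u_0)^{1/2}$: this is small when $M(u_0)$ is, the fixed point lies in a ball of radius $\lesssim M(u_0)^{1/2}$, and the stated spacetime bound follows.

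The only assertion not delivered by the contraction itself is mass conservation, since for merely $L^2_x$ data one cannot justify $\frac{d}{dt}\int_{\R^d}|u|^2\,dx = 2\Im\int_{\R^d}\bar u\,(\Delta u - F(u))\,dx = 0$ by integration by parts. I would first prove $M(u(t))=M(u(t_0))$ for Schwartz initial data, where the formal computation is rigorous and the solution is regular on its lifespan, and then pass to general $u_0\in L^2_x$ by density: approximate in $L^2_x$ by Schwartz data, invoke the local existence and continuous-dependence parts just established to get convergence in $C^0_t L^2_x$ on compact subintervals, and take limits in the conservation identity. I expect the main technical friction to come not from the Strichartz and H\"older estimates, which are routine, but from the bookkeeping around the critical phenomenon that the time of existence depends on the profile of $u_0$ and not merely its mass — this recurs in local existence, in the blowup criterion, and especially in iterating the short-time stability estimates to obtain continuous dependence on an arbitrary compact subinterval.
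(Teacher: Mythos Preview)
The paper does not prove this theorem; it is stated with citations to \cite{cwI, caz} as a summary of the Cazenave--Weissler local theory, and no proof appears in the text. Your proposal is essentially the standard contraction-mapping argument from those references---Strichartz plus H\"older to close the Picard iteration in $C^0_tL^2_x\cap L^{2(d+2)/d}_{t,x}$, with the critical smallness supplied either by shrinking the time interval (local existence) or by shrinking the mass (small-data global existence)---and it is correct. The only point worth flagging is that your density argument for mass conservation tacitly assumes persistence of regularity (so that Schwartz data yield solutions smooth enough to justify the formal computation on their full local lifespan); this is true here and is part of the standard package, but you should say so explicitly rather than leave it implicit.
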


A variant of the local well-posedness theorem above is the following

\begin{lemma}[Stability, \cite{tvz}]\label{stab} Fix $\mu$ and $d$.  For every $A > 0$ and $\eps > 0$ there exists $\delta > 0$
with the following property: if $u: I \times \R^d \to \C$ is an approximate solution to \eqref{nls} in the sense that
\begin{equation*}
 \| iu_t + \Delta u - F(u) \|_{L^{2(d+2)/(d+4)}_{t,x}(I \times \R^d)} \leq \delta
\end{equation*}
and also obeys
$$\|u\|_{L_{t,x}^{2(d+2)/d}(I\times\R^d)}\leq A,$$
and $t_0 \in I$ and $v_0 \in L^2_x(\R^d)$ are such that
$$
\|u(t_0)-v_0\|_{L_x^2(\R^d)} \leq \delta,
$$
then there exists a solution $v: I \times \R^d \to \C$ to \eqref{nls} with $v(t_0) = v_0$ such that
$$\|u-v\|_{L_{t,x}^{2(d+2)/d}(I\times\R^d)} \leq \eps.$$
In particular, by the Strichartz inequality,
$$
\|u-v\|_{L_t^\infty L_x^2(I\times \R^d)}\lesssim \delta + \eps(A+\eps)^{4/d}.
$$
\end{lemma}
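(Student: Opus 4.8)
\emph{Overall strategy and subdivision.} The plan is to run the standard ``divide, perturb, and iterate'' scheme built on the Strichartz inequality (Lemma~\ref{L:strichartz}) and the local theory (Theorem~\ref{local}). Although $A$ may be large, on a sufficiently short subinterval the $L^{2(d+2)/d}_{t,x}$ norm of $u$ is small, and in that regime one can produce a genuine solution $v$ of \eqref{nls} that stays close to $u$ by a contraction/continuity argument; chaining these local statements across $I$ gives the global conclusion, at the cost that $\delta$ must be chosen last and exponentially small in $A$. Concretely, fix a small $\eta=\eta(d)>0$ to be specified. Since $t\mapsto\|u\|_{L^{2(d+2)/d}_{t,x}([\inf I,t]\times\R^d)}$ is continuous, nondecreasing, and bounded by $A$, I can partition $I$ into $N$ consecutive subintervals $I_j=[t_{j-1},t_j]$ with $\|u\|_{L^{2(d+2)/d}_{t,x}(I_j\times\R^d)}\le\eta$ and $N\lesssim_d 1+(A/\eta)^{2(d+2)/d}$; thus $N$ depends only on $A$ and $d$. (If $t_0$ lies in the interior of $I$ one does this separately on the two sides of $t_0$; below I describe the forward direction only.)

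\emph{One subinterval.} Set $w:=u-v$. If $v$ solves \eqref{nls} on $I_j$, then $iw_t+\Delta w=F(u)-F(v)+e$ with $e:=iu_t+\Delta u-F(u)$, so Duhamel plus Strichartz gives
\[
\|w\|_{L^\infty_t L^2_x(I_j\times\R^d)}+\|w\|_{L^{2(d+2)/d}_{t,x}(I_j\times\R^d)}\lesssim_d\|w(t_{j-1})\|_{L^2_x}+\|F(u)-F(v)\|_{L^{2(d+2)/(d+4)}_{t,x}(I_j\times\R^d)}+\delta .
\]
The elementary pointwise bound $|F(a)-F(b)|\lesssim_d(|a|+|b|)^{4/d}|a-b|$, valid in every dimension (this is why the mass-critical problem needs no fractional chain rule, unlike its energy-critical analogue), together with H\"older's inequality on $I_j\times\R^d$, yields
\[
\|F(u)-F(v)\|_{L^{2(d+2)/(d+4)}_{t,x}(I_j\times\R^d)}\lesssim_d\bigl(\eta+\|w\|_{L^{2(d+2)/d}_{t,x}(I_j\times\R^d)}\bigr)^{4/d}\,\|w\|_{L^{2(d+2)/d}_{t,x}(I_j\times\R^d)} .
\]
Now run a continuity argument in the right endpoint of $I_j$: local existence (Theorem~\ref{local}) starts $v$ and makes $\|w\|_{L^{2(d+2)/d}_{t,x}}$ small for a short time, and then the two displays propagate this. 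The upshot is that if $\eta$ and $\delta$ are small enough depending on $d$, and $\|w(t_{j-1})\|_{L^2_x}\le c_0(d)$ for a fixed small constant $c_0(d)$, then $v$ exists on all of $I_j$ and
\[
\|w\|_{L^{2(d+2)/d}_{t,x}(I_j\times\R^d)}\le C_0\bigl(\|w(t_{j-1})\|_{L^2_x}+\delta\bigr),\qquad\|w(t_j)\|_{L^2_x}\le C_0\bigl(\|w(t_{j-1})\|_{L^2_x}+\delta\bigr),
\]
for some $C_0=C_0(d)\ge1$ (the second bound again from Duhamel and Strichartz).

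\emph{Iteration, choice of $\delta$, and conclusion.} Starting from $\|w(t_0)\|_{L^2_x}\le\delta$ and applying the one-subinterval step for $j=1,\dots,N$ in turn, I get $\|w(t_j)\|_{L^2_x}\le(2C_0)^j\delta$ and $\|w\|_{L^{2(d+2)/d}_{t,x}(I_j\times\R^d)}\le(2C_0)^j\delta$, and simultaneously $v$ is extended across each $I_j$ (it cannot blow up inside $I_j$, since the bound just obtained keeps $\|v\|_{L^{2(d+2)/d}_{t,x}}$ finite, contradicting the blowup criterion of Theorem~\ref{local}); this is legitimate as long as $(2C_0)^N\delta\le c_0(d)$, so that the hypothesis $\|w(t_{j-1})\|_{L^2_x}\le c_0(d)$ is never violated. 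Summing over $j$,
\[
\|u-v\|_{L^{2(d+2)/d}_{t,x}(I\times\R^d)}^{2(d+2)/d}=\sum_{j=1}^N\|w\|_{L^{2(d+2)/d}_{t,x}(I_j\times\R^d)}^{2(d+2)/d}\le\sum_{j=1}^N(2C_0)^{j\cdot2(d+2)/d}\delta^{2(d+2)/d}\lesssim_d\bigl((2C_0)^N\delta\bigr)^{2(d+2)/d},
\]
hence $\|u-v\|_{L^{2(d+2)/d}_{t,x}(I\times\R^d)}\lesssim_d(2C_0)^N\delta$. Since $N$ depends only on $A$ and $d$, I now choose $\delta=\delta(A,\eps,d)>0$ small enough (and $\le\eps$) that this is at most $\eps$, which also secures $(2C_0)^N\delta\le c_0(d)$. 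Finally, feeding $\|u-v\|_{L^{2(d+2)/d}_{t,x}(I\times\R^d)}\le\eps$ and $\|v\|_{L^{2(d+2)/d}_{t,x}(I\times\R^d)}\le A+\eps$ back into the Strichartz estimate for $w$ on all of $I$ (Duhamel from $t_0$) gives $\|u-v\|_{L^\infty_t L^2_x(I\times\R^d)}\lesssim_d\delta+\eps(A+\eps)^{4/d}$, the ``in particular'' assertion.

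\emph{Main obstacle.} There is no conceptual obstruction; the only delicate point is quantitative bookkeeping. The constant $C_0$ in the one-subinterval estimate is absolute (it depends only on $d$), but it is raised to the power $N=N(A)$ in the iteration, so $\delta$ is forced to be exponentially small in $A$, essentially $\delta\sim_d\eps\,(2C_0)^{-N(A)}$; one must verify that the bootstrap hypothesis $\|w(t_{j-1})\|_{L^2_x}\le c_0(d)$ genuinely survives all $N$ iterations and, interlocked with this, that the continuity argument actually extends $v$ across each $I_j$ rather than letting it blow up inside. Keeping that interplay between constructing $v$ and estimating $u-v$ honest — rather than circular — is where the care lies.
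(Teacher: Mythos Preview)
Your argument is correct and is precisely the standard ``subdivide, perturb, iterate'' proof of mass-critical stability. Note, however, that the paper does not actually prove this lemma: it is quoted from \cite{tvz} and only a remark follows. So there is nothing in the paper to compare against; what you have written is essentially the proof one finds in \cite{tvz} (and, in the energy-critical setting, in \cite{ckstt:gwp, TaoVisan}).
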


\begin{remark}
This generalizes the continuous dependence statement of Theorem~\ref{local}.
It also implies the existence and uniqueness of maximal-lifespan solutions in Theorem~\ref{local}.
Analogous stability results for the energy-critical NLS (in $\dot H_x^1(\R^d)$ instead of $L^2_x(\R^d)$, of course)
have appeared in \cite{ckstt:gwp, merlekenig, RV, TaoVisan}.
\end{remark}

The class of solutions to \eqref{nls} enjoys a large number of important mass-preserving symmetries.
We first discuss the symmetries which fix the initial surface $t=0$.  We employ the notations from \cite{compact}.

\begin{definition}[Symmetry group]\label{D:sym}
For any phase $\theta \in \R/2\pi \Z$, position $x_0 \in \R^d$, frequency $\xi_0 \in \R^d$, and scaling parameter $\lambda > 0$,
we define the unitary transformation $g_{\theta,x_0,\xi_0,\lambda}: L^2_x(\R^d) \to L^2_x(\R^d)$ by the formula
$$
[g_{\theta, \xi_0, x_0, \lambda} f](x) :=  \frac{1}{\lambda^{d/2}} e^{i\theta} e^{i x \cdot \xi_0 }
        f\bigl( \frac{x-x_0}{\lambda} \bigr).
$$
We let $G$ be the collection of such transformations. If $u: I \times \R^d \to \C$, we
define $T_{g_{\theta,\xi_0,x_0,\lambda}} u: \lambda^2 I \times \R^d \to \C$ where
$\lambda^2 I := \{ \lambda^2 t: t \in I \}$ by the formula
$$
[T_{g_{\theta, \xi_0, x_0, \lambda}} u](t,x) :=  \frac{1}{\lambda^{d/2}} e^{i\theta} e^{i x \cdot \xi_0 } e^{-it|\xi_0|^2}
            u\left( \frac{t}{\lambda^2}, \frac{x-x_0 - 2\xi_0 t}{\lambda} \right),
$$
or equivalently,
$$
[T_{g_{\theta, \xi_0, x_0, \lambda}} u](t) = g_{\theta - t |\xi_0|^2, \xi_0, x_0 + 2 \xi_0 t, \lambda}
            \Bigl(u\bigl(\frac{t}{\lambda^2}\bigr)\Bigr).
$$
Note that if $u$ is a solution to \eqref{nls}, then $T_{g}u$ is a solution to \eqref{nls} with initial data $g u_0$.

We also let $G_\rad\subset G$ denote the collection of transformations in $G$ which preserve spherical symmetry, or more explicitly,
$$
G_\rad := \{ g_{\theta,0,0,\lambda}: \theta \in \R/2\pi \Z; \lambda > 0 \}.
$$
\end{definition}

\begin{remark} One easily verifies that $G$ is a group with $G_\rad$ as a subgroup and that the map $g \mapsto T_g$
is a homomorphism. The map $u \mapsto T_g u$ maps solutions to \eqref{nls} to solutions with the same mass and
$L^{2(d+2)/d}_{t,x}$ norm as $u$, that is, $M(T_g u) = M(u)$ and
$$\| T_g u \|_{L^{2(d+2)/d}_{t,x}(\lambda^2 I\times\R^d)} = \|u\|_{L^{2(d+2)/d}_{t,x}(I\times\R^d)}.$$
Furthermore, $u$ is a maximal-lifespan solution if and only if $T_g u$ is a maximal-lifespan solution.
\end{remark}

\begin{lemma}[Further symmetries of solutions]\label{sym}  Let $u$ be a solution to \eqref{nls} with lifespan $I$.
\begin{CI}
\item (Time reversal) The function
\begin{equation}\label{time-reverse}
\tilde u(t,x) := \overline{u(-t,x)}
\end{equation}
is a solution to \eqref{nls} with lifespan $-I := \{ -t: t \in I \}$ and mass $M(\tilde u) = M(u)$.
If $u$ is a maximal-lifespan solution, then so is $\tilde u$.
\item (Time translation) For any $t_0 \in \R$, the function
\begin{equation}\label{time-translate}
u_{t_0}(t,x) := u(t+t_0,x)
\end{equation}
is a solution to \eqref{nls} with lifespan $I-t_0 := \{ t-t_0: t \in I \}$ and mass $M(u_{t_0}) = M(u)$.
If $u$ is a maximal-lifespan solution, then so is $u_{t_0}$.
\item (Pseudoconformal transformation) If $0 \not \in I$, then the function
\begin{equation}\label{pc}
v(t,x) := |t|^{-d/2} e^{i|x|^2/4t} u\Bigl( -\frac{1}{t},\frac{x}{t} \Bigr)
\end{equation}
is a solution to \eqref{nls} with lifespan $-1/I := \{ -1/t: t \in I \}$ and mass $M(v) = M(u)$.
\end{CI}
Furthermore, $u$, $\tilde u$, $u_{t_0}$, and $v$ have the same $L^{2(d+2)/d}_{t,x}$ norm on their respective lifespans,
and if $u$ is spherically symmetric then so are $\tilde u$, $u_{t_0}$, and $v$.
\end{lemma}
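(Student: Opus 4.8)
The plan is to verify each transformation directly against the definition of a solution — local membership in $C^0_t L^2_x \cap L^{2(d+2)/d}_{t,x}$ together with the Duhamel formula \eqref{old duhamel} — whence conservation of mass, equality of the $L^{2(d+2)/d}_{t,x}$ norms, and the statements about maximal lifespans and spherical symmetry will all follow from the relevant change of variables. For the time reversal $\tilde u(t,x) = \overline{u(-t,x)}$ and the time translation $u_{t_0}(t,x) = u(t+t_0,x)$ this is routine: each map is an $L^2_x$ isometry on every time-slice and is measure-preserving in $t$, so it fixes the function class and its norms, giving $M(\tilde u) = M(u_{t_0}) = M(u)$ and equality of space-time norms; to obtain \eqref{old duhamel} for $\tilde u$, take \eqref{old duhamel} for $u$ at the times $-t_1, -t_0$, conjugate using $\overline{e^{is\Delta} f} = e^{-is\Delta}\bar f$ and $\overline{F(u)} = F(\bar u)$, and substitute $s \mapsto -s$ in the integral; for $u_{t_0}$ shift the times and substitute $s \mapsto s + t_0$. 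Since forward/backward blowup (Definition~\ref{D:blowup}) transforms obviously under $t \mapsto -t$ and $t \mapsto t + t_0$, the blowup criterion of Theorem~\ref{local} makes $\tilde u$ and $u_{t_0}$ maximal-lifespan whenever $u$ is, and spherical symmetry persists because the spatial variable is not disturbed.

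For the pseudoconformal transformation, write $v(t) = \mathcal P_t\, u(-1/t)$, where for $t \neq 0$ the operator $(\mathcal P_t \phi)(x) := |t|^{-d/2} e^{i|x|^2/(4t)}\, \phi(x/t)$ is a composition of a unimodular multiplier, a dilation, and a normalization, hence unitary on $L^2_x(\R^d)$ and strongly continuous in $t$ away from the origin. Since $0 \notin I$, the set $-1/I$ is an interval lying entirely to one side of $0$, and $t \mapsto -1/t$ is a bounded, continuous, monotone bijection of $-1/I$ onto $I$; composing it with $u \in C^0_t L^2_x$ and with $t \mapsto \mathcal P_t$ shows $v \in C^0_t L^2_x$ on every compact $K \subset -1/I$, with $\|v(t)\|_{L^2_x} = \|u(-1/t)\|_{L^2_x}$, and hence $M(v) = M(u)$. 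The change of variables $t = -1/s$, $x = sy$ turns $\iint |v|^{2(d+2)/d}\,dx\,dt$ into $\iint |u|^{2(d+2)/d}\,dy\,ds$ — the power of $|t|$ produced by $|t|^{-d/2}$ combines with the Jacobians $dx = |t|^d\,dy$ and $dt = s^{-2}\,ds$ to leave exactly $1$, which is precisely where the mass-critical scaling is used — so $v \in L^{2(d+2)/d}_{t,x}$ locally with the same space-time norm as $u$; the same identity over subintervals matches the forward (resp. backward) blowup of $v$ with that of $u$.

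It remains to produce \eqref{old duhamel} for $v$. The key input is the classical pseudoconformal (Appell) identity for the free propagator,
\begin{equation*}
\mathcal P_{t_1}\, e^{i(1/t_0 - 1/t_1)\Delta}\, \mathcal P_{t_0}^{-1} = e^{i(t_1 - t_0)\Delta}
\end{equation*}
for nonzero $t_0, t_1$ of the same sign, together with the pointwise algebraic identity $F(\mathcal P_\sigma \phi) = |\sigma|^{-2}\, \mathcal P_\sigma\bigl(F(\phi)\bigr)$, which again uses $F(u) = \mu|u|^{4/d}u$. The first identity I would prove for Schwartz $\phi$ by the direct computation that the pseudoconformal transform of a free solution is again a free solution, then extend it to all of $L^2_x(\R^d)$ by density. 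Granting these, apply $\mathcal P_{t_1}$ to \eqref{old duhamel} for $u$ at the times $-1/t_1, -1/t_0$: by the Appell identity the linear term becomes $e^{i(t_1 - t_0)\Delta} v(t_0)$; in the Duhamel integral, substitute $s = -1/\sigma$ (so $ds = \sigma^{-2}\,d\sigma$), move the bounded operator $\mathcal P_{t_1}$ inside the weak $L^2_x$-valued integral, and use the Appell identity (now with $\sigma$ in place of $t_0$) together with $F(\mathcal P_\sigma \phi) = |\sigma|^{-2}\mathcal P_\sigma F(\phi)$ to recognize the integrand as $e^{i(t_1 - \sigma)\Delta} F(v(\sigma))$. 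This is exactly \eqref{old duhamel} for $v$ on $-1/I$. Spherical symmetry of $v$ is immediate because $\mathcal P_t$ and $x \mapsto x/t$ preserve radial functions.

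The only genuine obstacle lies in this last step: establishing the Appell identity for $e^{it\Delta}$ — with care about the sign of $t$ inside the phase $e^{i|x|^2/(4t)}$, which is exactly why one assumes $0 \notin I$, so that the singular time $t = 0$ is never crossed — and justifying the interchange of $\mathcal P_{t_1}$ with the weak $L^2_x$-valued Duhamel integral. The interchange is handled by boundedness of $\mathcal P_{t_1}$ on $L^2_x$ together with dominated convergence on Riemann sums; alternatively, one may bypass it by first proving the full statement for Schwartz initial data, where everything is a classical pointwise computation, and then passing to general $L^2_x(\R^d)$ data via the continuous dependence in Theorem~\ref{local}, using that $u \mapsto v$ is continuous in $L^2_x$ uniformly on compact subintervals of $-1/I$. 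Everything else is a routine change of variables.
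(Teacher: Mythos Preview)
Your proof is correct and follows exactly the approach the paper intends: the paper's own proof is the single phrase ``Direct computation,'' and what you have written is precisely that computation carried out in full, including the Appell identity for the free propagator and the mass-critical scaling of the nonlinearity under $\mathcal P_t$. There are no gaps; if anything, your treatment of the pseudoconformal case (unitarity of $\mathcal P_t$, the Jacobian bookkeeping for the $L^{2(d+2)/d}_{t,x}$ norm, and the density/continuous-dependence argument to justify the weak-integral manipulation) is more careful than the paper's terse dismissal would suggest is necessary.
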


\begin{proof} Direct computation.
\end{proof}

\subsection{The scattering conjecture}

From Theorem \ref{local} we see that all maximal-lifespan solutions of sufficiently small mass are automatically global
and do not blow up either forward or backward in time. But in the focusing case $\mu=-1$ it is well known that
this assertion can fail for solutions of large mass.  In particular, if we define the \emph{ground state} to be the
unique\footnote{The existence and uniqueness of $Q$ was established in \cite{blions} and \cite{kwong} respectively.}
positive radial Schwartz solution $Q: \R^d \to \R^+$ to the elliptic equation
\begin{equation}\label{ground}
\Delta Q + Q^{1+4/d} = Q,
\end{equation}
then $u(t,x) := e^{it} Q(x)$ is a solution to \eqref{nls}. This shows that a solution of mass $M(Q)$ can blow up both
forward and backward in time in the sense of Definition~\ref{D:blowup}; moreover, by applying
the pseudoconformal transformation \eqref{pc}, one obtains a Schwartz solution of mass $M(Q)$ that blows up in finite time.

It is however widely believed that this ground state example is the minimal mass obstruction to global
well-posedness and scattering in the focusing case, and that no such obstruction exists in the defocusing case.
More precisely, we have

\begin{conjecture}[Global existence and scattering]\label{conj}
Let $d \geq 1$ and $\mu=\pm 1$.  In the defocusing case $\mu=+1$, all maximal-lifespan solutions to \eqref{nls}
are global and do not blow up either forward or backward in time.  In the focusing case $\mu=-1$, all maximal-lifespan
solutions $u$ to \eqref{nls} with $M(u) < M(Q)$ are global and do not blow up either forward or backward in time.
\end{conjecture}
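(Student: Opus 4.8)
The plan is to run the concentration-compactness-and-rigidity method of Kenig and Merle \cite{merlekenig} in its mass-critical incarnation, reducing the conjecture to the nonexistence of a single \emph{minimal} counterexample and then precluding that counterexample by virial/Morawetz rigidity. First I would use the small-data global theory of Theorem~\ref{local} to define the critical mass $m_c$ as the infimum of $M(u_0)$ over all maximal-lifespan solutions that blow up (forward or backward) in the sense of Definition~\ref{D:blowup}; in the focusing case one restricts to data with $M(u_0)<M(Q)$, so $0<m_c\le M(Q)$, and in the defocusing case $0<m_c\le\infty$. If the conjecture fails then $m_c$ is finite, and the goal becomes a contradiction. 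A linear profile decomposition of Keraani type \cite{keraani}, adapted to the symmetry group $G$ of Definition~\ref{D:sym}, together with the stability Lemma~\ref{stab} to promote approximate solutions to genuine ones, then produces a maximal-lifespan solution $u$ of mass exactly $m_c$ that blows up in \emph{both} time directions and is \emph{almost periodic modulo $G$}: there exist $N:I\to\R^+$, $x:I\to\R^d$, $\xi:I\to\R^d$ so that $\{\,y\mapsto N(t)^{-d/2}e^{-iy\cdot\xi(t)}\,u\bigl(t,\ x(t)+y/N(t)\bigr):t\in I\,\}$ is precompact in $L^2_x(\R^d)$; compare \cite{compact}.

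The next step is to classify such minimal blowup solutions. After applying an element of $G$ and a time translation, one reduces to $\xi(t)\equiv 0$ (possible because conservation of momentum $p(u):=\Im\int_{\R^d}\overline u\,\nabla u\,dx$ together with almost periodicity forces $p(u)=0$, after which a single Galilean boost removes the frequency centre), and then the forward-in-time behaviour falls into one of three scenarios according to $N(t)$: a \emph{self-similar} solution ($I=(0,\infty)$, $N(t)\sim t^{-1/2}$), a \emph{soliton-like} solution ($I=\R$, $N(t)\sim 1$), or a \emph{frequency cascade} ($I=\R$, $N(t)$ escaping to high or to low frequencies along a sequence), the doubly-cascading case being excluded outright by mass conservation and almost periodicity as in \cite{compact}. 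In each scenario I would then upgrade regularity: a frequency-localised Duhamel expansion run from $\pm\infty$, fed by compactness of the orbit and bilinear Strichartz estimates, shows $u(t)\in H^s_x$ for every $s\ge 0$, with uniform-in-$t$ control of $\|\,|\nabla|^s u(t)\,\|_{L^2_x}$ in the self-similar and cascade cases (and for all $s$ even in the soliton case, since there the scale does not degenerate).

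The rigidity step then precludes each enemy. The cascade is killed because interpolating the uniform high-regularity bound against the low-frequency mass (which the cascade forces to zero along a sequence) gives $\|u(t)\|_{\dot H^s_x}\to 0$ for every $s>0$, whence $u\equiv 0$ by conservation of mass, a contradiction. The self-similar solution, now known to have uniformly bounded $\dot H^1_x$ norm, is a global finite-energy solution on $(0,\infty)$, and the virial identity --- which expresses $\partial_{tt}\int_{\R^d}|x|^2|u(t)|^2\,dx$ in terms of the conserved energy --- forces $\int|x|^2|u(t)|^2\,dx$ to go negative in finite time, which is absurd. The soliton-like case is the crux: here one applies a frequency-localised interaction Morawetz inequality to $P_{\le CN}u$, with the cutoff arranged so that the high-frequency remainder is harmless, and combines it with the global Strichartz bounds supplied by almost periodicity (which control the scattering norm on an interval in terms of how much $N(t)$ oscillates there) to deduce an a priori bound on $\|u\|_{L^{2(d+2)/d}_{t,x}(I\times\R^d)}$, contradicting the assumed blowup; in the defocusing case a direct interaction Morawetz estimate, exploiting the regularity just established, does the same job.

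The genuine obstacle is executing this last step without the crutch of spherical symmetry, and uniformly in $d\ge 1$. Radial data pin $x(t)\equiv 0$ and $\xi(t)\equiv 0$, and the incoming/outgoing wave decomposition then produces clean local decay; in general one must instead (i) show that $x(t)$ cannot run off faster than the natural length scale $1/N(t)$ on the relevant time intervals --- which requires a quantitative \emph{long-time Strichartz estimate} controlling norms of the type $\|P_{>N}u\|_{L^2_tL^\infty_x}$ over intervals on which $\int N(t)\,dt$ is large --- and (ii) run the interaction Morawetz inequality with a weight recentred at $x(t)$, estimating the commutator and frequency-truncation errors (precisely where the long-time Strichartz estimate is consumed) and checking that they do not swamp the main term. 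This bookkeeping, together with the regularity bootstrap in dimensions $d\ge 3$ where $F(u)=\mu|u|^{4/d}u$ is not smooth and fractional chain-rule tools must replace it, is the technically heaviest part of the argument, and is the step I expect to be the main difficulty; it is where genuinely new input beyond the radial case is required.
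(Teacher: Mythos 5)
The statement you are proving is, in the paper, precisely Conjecture~\ref{conj} --- the paper does not prove it. What the paper establishes (Theorem~\ref{main}) is only the case $d=2$ with spherical symmetry, via Theorem~\ref{comp} (three blowup scenarios) followed by Sections~\ref{ss-sec}--\ref{soliton-sec}. Your sketch correctly identifies the overall Kenig--Merle framework, and you correctly flag the non-radial case as where genuinely new input is needed; that part is not a gap in your reasoning, it is the open problem itself. But beyond that, several of the rigidity steps you propose for the radial case do not actually close, and differ in substance (not just presentation) from what the paper does.

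First, your classification of blowup scenarios is off. You claim ``the doubly-cascading case being excluded outright by mass conservation and almost periodicity,'' but the double high-to-low frequency cascade is precisely Scenario~II of Theorem~\ref{comp} and is \emph{not} trivially excluded: the paper devotes Section~\ref{hilo cascade-sec} to it, using additional regularity, energy conservation, and Theorem~\ref{sGN} to show $\|\nabla u(t_n)\|_{L^2}\to 0$ along a sequence with $N(t_n)\to 0$, contradicting $E(u)\sim\|\nabla u\|_{L^2}^2>0$. Your proposed cascade argument (``interpolating the uniform high-regularity bound against the low-frequency mass\dots which the cascade forces to zero\dots whence $u\equiv 0$ by conservation of mass'') has the frequency picture backwards --- in the cascade, mass moves \emph{to} low frequencies, not away from them --- and the conclusion does not follow from mass conservation alone; some positive-definite conserved quantity (here, the energy) is needed. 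Also, using momentum conservation to set $\xi(t)\equiv 0$ is not available for bare $L^2_x$ data, since $\Im\int\bar u\nabla u$ is not defined; the paper sidesteps this entirely by working radially (see the Remark after Proposition~\ref{eliminate-ii}, which notes this step is available \emph{once} regularity has been proved).

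Second, your self-similar rigidity step is wrong as stated. You invoke the Glassey virial argument to make $\int|x|^2|u|^2$ go negative, but under the standing hypothesis $M(u)<M(Q)$ (focusing) or $\mu=+1$ (defocusing), the sharp Gagliardo--Nirenberg inequality forces $E(u)>0$, so $\partial_{tt}\int|x|^2|u|^2\gtrsim E(u)>0$ points the wrong way and yields no contradiction. The paper's argument (Corollary~\ref{NO-ss}) is entirely different and much shorter: once Theorem~\ref{ss-sob-thm} puts $u(t)\in H^1_x$, the $H^1_x$ subcritical global theory applies, contradicting the blowup at $t=0$ inherent in self-similarity.

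Third, your soliton step proposes a frequency-localised interaction Morawetz inequality. That is a natural move in the defocusing case (and is the engine of \cite{tvz-higher} in high dimensions), but interaction Morawetz has the wrong sign in the focusing case and is not available there. The paper instead uses a truncated \emph{virial} identity (Section~\ref{soliton-sec}, Lemma~\ref{computation}), which remains coercive under $M(u)<M(Q)$ via Theorem~\ref{sGN}, and which also avoids the unfavourable numerology that Morawetz-type inequalities suffer in $d=2$. This is a genuinely different key lemma, and it is exactly what lets the paper treat the focusing case.

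Finally, your regularity step is described as ``Duhamel run from $\pm\infty$, fed by compactness and bilinear Strichartz,'' but for the soliton and cascade cases the naive Duhamel integral from $\pm\infty$ is not absolutely convergent; the paper's Sections~\ref{pseudo-sec}--\ref{glob-sob-sec} introduce the in/out decomposition $P^\pm$ precisely to split the solution into incoming and outgoing waves and propagate each in the favourable time direction, exploiting the spatial decay of radial functions (Lemma~\ref{radial embedding} and the weighted endpoint Strichartz Lemma~\ref{L:wes}). You gesture at this but it deserves to be flagged as the place where radiality enters essentially, not merely as ``clean local decay.''
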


\begin{remarks}
1. While this conjecture is phrased for $L^2_x(\R^d)$ solutions, it is equivalent to a scattering claim for smooth solutions;
see \cite{begout, carles, keraani, tao-lens}.  In \cite{tao-lens} (and also in the earlier work \cite{blue}),
it is also shown that the global existence and the scattering claims are equivalent in the $L^2_x(\R^d)$ category.

2. Let us reiterate that blowup refers to infinite spacetime norm.  As noted in Theorem~\ref{local}, finiteness of
the $L^{2(d+2)/d}_{t,x}$ Strichartz norm implies scattering.  By Lemma~\ref{stab}, it also implies (quantitative)
continuous dependence upon initial data and stabillity under external forcing.
To this one may add stability of well-posedness under perturbations of the equation; see \cite{tvz}.

3. By Theorem~\ref{local}, the conjecture is known when the solution $u$ has sufficiently small mass;
the interesting questions are when the mass is very large in the defocusing case $\mu = +1$ or close to
the mass $M(Q)$ of the ground state in the focusing case $\mu=-1$.
\end{remarks}

Conjecture~\ref{conj} has been the focus of much intensive study and several partial results
for various choices of $d, \mu$, and sometimes with the additional assumption of spherical symmetry.
The most compelling evidence in favour of this conjecture stems from results obtained under the assumption
that $u_0$ has additional regularity.  For the defocusing equation, it is easy to prove global well-posedness
for initial data in $H^1_x$; this  follows from the usual subcritical argument combined with the conservation
of mass and energy; see, for example, \cite{caz}. Recall that the energy is given by
\begin{align}\label{energy}
E(u(t)):=\int_{\R^d}\frac 12 |\nabla u(t,x)|^2 +\mu \frac{d}{2(d+2)}|u(t,x)|^{\frac{2(d+2)}d} \, dx.
\end{align}
Note that for general $L_x^2$ initial data, the energy need not be finite.

The focusing equation with data in $H^1_x$ was treated by Weinstein \cite{weinstein}.  A key ingredient
was his proof of the sharp Gagliardo--Nirenberg inequality:

\begin{theorem}[Sharp Gagliardo--Nirenberg, \cite{weinstein}]\label{sGN}
\begin{equation}
\int_{\R^d} \bigl|f(x)\bigr|^{\frac{2(d+2)}{d}} \, dx   \leq
 \frac{d+2}{d} \biggl( \frac{\|f\|_{L^2}^2}{\|Q\|_{L^2}^2} \biggr)^{\frac{2}{d}} \int_{\R^d} \bigl|\nabla f(x)\bigr|^{2} \, dx.
\end{equation}
\end{theorem}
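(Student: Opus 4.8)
The plan is to follow Weinstein's variational argument: realize the sharp constant as the reciprocal of the minimum of a scaling-invariant functional, show the minimum is attained, identify the minimizer with $Q$, and evaluate the functional at $Q$ using Pohozaev-type identities.

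\textbf{Step 1: Reformulation.} Introduce the Weinstein functional
\[
\mathcal{W}(f) := \frac{\|\nabla f\|_{L^2}^2 \, \|f\|_{L^2}^{4/d}}{\|f\|_{L^{2(d+2)/d}}^{2(d+2)/d}}
\]
for $0 \neq f \in H^1_x(\R^d)$. A direct computation shows $\mathcal{W}$ is invariant under the amplitude scaling $f \mapsto cf$ and the dilation $f \mapsto f(\cdot/\lambda)$. The asserted inequality is exactly the statement that $\mathcal{W}(f) \geq \tfrac{d}{d+2}\|Q\|_{L^2}^{4/d}$ for all such $f$, so it suffices to show $\inf_f \mathcal{W}(f) = \tfrac{d}{d+2}\|Q\|_{L^2}^{4/d}$.

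\textbf{Step 2: Existence of a minimizer.} The usual (non-sharp) Gagliardo--Nirenberg inequality gives $\inf_f \mathcal{W}(f) > 0$. Given a minimizing sequence, replace each term by its symmetric decreasing rearrangement; by the Polya--Szeg\H{o} inequality this does not increase $\|\nabla f\|_{L^2}$ and preserves the $L^2$ and $L^{2(d+2)/d}$ norms, so we obtain a radial, nonnegative minimizing sequence $(f_n)$. Using the two scaling invariances, normalize so that $\|f_n\|_{L^2} = \|\nabla f_n\|_{L^2} = 1$; then $(f_n)$ is bounded in $H^1_{\mathrm{rad}}(\R^d)$. Since the exponent $2(d+2)/d$ lies strictly between $2$ and the $\dot H^1$-Sobolev exponent, the Strauss compact embedding $H^1_{\mathrm{rad}}(\R^d) \hookrightarrow L^{2(d+2)/d}(\R^d)$ yields a subsequence converging strongly in $L^{2(d+2)/d}$ and weakly in $H^1_x$ to some $\psi$. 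Strong convergence forces $\|\psi\|_{L^{2(d+2)/d}} = \lim_n \|f_n\|_{L^{2(d+2)/d}} > 0$, so $\psi \neq 0$, while weak lower semicontinuity gives $\|\nabla\psi\|_{L^2} \leq 1$ and $\|\psi\|_{L^2} \leq 1$. Hence $\mathcal{W}(\psi) \leq \lim_n \mathcal{W}(f_n) = \inf_f \mathcal{W}(f)$, so $\psi$ is a minimizer. I expect this compactness step (or, if one prefers, the substitute concentration-compactness analysis of a general minimizing sequence) to be the main obstacle.

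\textbf{Step 3: Identification of the minimizer.} The Euler--Lagrange equation for $\mathcal{W}$ shows $\psi$ is a nonnegative radial distributional solution of $-\Delta\psi + a\psi = b\,\psi^{1+4/d}$ for some constants $a,b > 0$; elliptic regularity upgrades $\psi$ to a Schwartz solution and the strong maximum principle gives $\psi > 0$. Rescaling amplitude and length (which leaves $\mathcal{W}(\psi)$ unchanged), we may arrange $a = b = 1$, i.e.\ $\psi$ solves \eqref{ground}. By Kwong's uniqueness theorem for positive radial solutions of \eqref{ground}, $\psi = Q$, whence $\inf_f \mathcal{W}(f) = \mathcal{W}(Q)$.

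\textbf{Step 4: Evaluation of $\mathcal{W}(Q)$.} Pairing \eqref{ground} with $Q$ and integrating gives $\|\nabla Q\|_{L^2}^2 = \|Q\|_{L^{2(d+2)/d}}^{2(d+2)/d} - \|Q\|_{L^2}^2$; pairing instead with $x\cdot\nabla Q$ yields the Pohozaev identity, and combining the two produces $\|\nabla Q\|_{L^2}^2 = \tfrac{d}{2}\|Q\|_{L^2}^2$ and $\|Q\|_{L^{2(d+2)/d}}^{2(d+2)/d} = \tfrac{d+2}{2}\|Q\|_{L^2}^2$. Substituting into the definition of $\mathcal{W}$ gives $\mathcal{W}(Q) = \tfrac{d}{d+2}\|Q\|_{L^2}^{4/d}$, which is precisely the constant appearing in the theorem; this completes the proof.
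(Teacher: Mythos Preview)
Your outline is correct and is precisely Weinstein's original argument. Note, however, that the paper does not supply its own proof of this theorem: it is simply quoted as a cited result from \cite{weinstein} (with the uniqueness of $Q$ attributed to \cite{kwong}), so there is no ``paper's proof'' to compare against. Your sketch is the standard variational proof and the Pohozaev computations in Step~4 check out exactly. The only point to flag is that in Step~2 the Strauss compact embedding $H^1_{\mathrm{rad}}\hookrightarrow L^{2(d+2)/d}$ requires $d\geq 2$; in dimension $d=1$ one must use a different compactness mechanism (e.g.\ concentration compactness, as you already note parenthetically).
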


As noticed by Weinstein, this inequality implies that the energy \eqref{energy} is positive once $M(u_0)<M(Q)$;
indeed, it gives an upper bound on the $\dot H_x^1$-norm of the solution at all times of existence.
Combining this with a contraction mapping argument and the conservation of mass and energy,
Weinstein proved global well-posedness for the focusing equation with initial data in $H^1_x$ and mass smaller than
that of the ground state.

Note that the iterative procedure used to obtain a global solution both for the defocusing and the focusing equations
with initial data in $H^1_x$ does not yield finite spacetime norms; in particular, scattering does not follow even
for more regular initial data.

There has been much work \cite{bourg.2d, CGTz, CKSTT2, resonant, CRSW, PSST-hD, PSST-1D, gf, tzirakis, VZ-focusing} devoted
to lowering the regularity of the initial data from $H_x^1$ toward $L^2_x(\R^d)$ and thus, toward establishing the conjecture.
For instance, in the defocusing case $\mu=+1$, $d=2$, the best known result is in \cite{CGTz}, where global well-posedness
is established for $u_0 \in H^s_x(\R^2)$ whenever $s > 2/5$.  In the focusing case $\mu=-1$, $d=2$, with $M(u_0) < M(Q)$,
the best result is in \cite{resonant}, which achieved the same claim for $s > 1/2$.  In \cite{blue} it was shown that such
global well-posedness results in the $H^s_x(\R^d)$ class lead (via the pseudoconformal transform \eqref{pc}) to global existence
and scattering results in the weighted space $L^2_x(\R^d; \langle x \rangle^{2s}\ dx)$; thus, for instance, when $\mu=+1, d=2$,
one has global existence and scattering whenever $u_0 \in L^2_x(\R^d; \langle x \rangle^{2s}\ dx)$ for some $s > 2/5$.
(Scattering results of this type were first obtained in \cite{tsutsumi}.)

For $d=1$ or $d=2$, Conjecture \ref{conj} would imply (by standard persistence of regularity theory) that for any $s \ge 0$
and for $u$ as in the conjecture, the $H^s_x(\R^d)$ norm of the solution $u(t)$ at an arbitrary time $t$
would be bounded by the $H^s_x(\R^d)$ norm of $u(t_0)$ for any fixed $t_0$, times a quantity depending only on
$M(u)$ and the dimension $d$, that is,
$$
\|u(t)\|_{H^s_x}\leq C(M(u),d)\|u(t_0)\|_{H_x^s}.
$$
In particular, these norms would be bounded uniformly in $t$.  In this direction, some polynomial upper bounds on the
growth in time of $H^s_x(\R^d)$ norms in the case $d=2$ were established in \cite{borg:sob, cdks, st, st2}.

In a slightly different direction, it was shown by Nakanishi \cite{nak} that one has global existence and scattering
for $H^1_x$ initial data in the defocusing case $\mu=+1, d=2$, whenever the mass-critical nonlinearity $|u|^2 u$ is
increased in strength to the mass-supercritical (but energy-subcritical) nonlinearity $|u|^{2+\eps} u$ for some $\eps > 0$.

In the case of spherically symmetric solutions, the conjecture was recently settled in the high-dimensional defocusing case
$\mu=+1$, $d \geq 3$ in \cite{tvz-higher}.  The focusing case and the non-spherically-symmetric case remain open in all
higher dimensions. Both \cite{tvz-higher} and the current paper build on techniques developed in order to treat the
energy-critical NLS; see \cite{borg:scatter, ckstt:gwp, merlekenig, RV, tao:gwp radial, thesis:art, Monica:thesis}.
We will better explain our debt to  this work when we outline our argument.  For the energy-critical problem, the analogue of
Conjecture~\ref{conj} is mostly settled, with the only currently outstanding problem being the focusing case $\mu=-1$ with
non-spherically-symmetric data.

\subsection{Main result}

Our main result settles the scattering conjecture in the spherically symmetric case in two dimensions:

\begin{theorem}\label{main}
Let $d=2$.  Then Conjecture~\ref{conj} is true (for either choice of sign $\mu$) whenever $u$
is spherically symmetric.
\end{theorem}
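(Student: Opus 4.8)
The plan is to argue by contradiction via the concentration-compactness-and-rigidity method, adapted to the mass-critical setting. First I would show that a failure of Theorem~\ref{main} forces the supremum of the spacetime norm $\|u\|_{L^{2(d+2)/d}_{t,x}}$ over all (radial) solutions of mass $\leq m$ — restricted in the focusing case to $m < M(Q)$ — to become infinite at some critical mass $m_0 \leq M(Q)$, which is strictly positive by the small-data part of Theorem~\ref{local}. Using a linear profile decomposition for $e^{it\Delta}$ in $L^2_x(\R^2)$, the inverse Strichartz inequality, and the stability result Lemma~\ref{stab} to glue nonlinear profiles, I would extract a minimal-mass blowup solution: a solution $u$ of mass exactly $m_0$, blowing up both forward and backward in time, whose orbit $\{u(t):t\in I\}$ is precompact in $L^2_x$ modulo the symmetry group $G$ of Definition~\ref{D:sym}; equivalently there are parameters $N(t)$, $x(t)$, $\xi(t)$ such that the modulated solutions lie in a fixed compact subset of $L^2_x$. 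The decisive simplification from spherical symmetry is that one may take $x(t)\equiv 0$ and $\xi(t)\equiv 0$, leaving only the frequency scale $N(t)$; this needs the (somewhat delicate) observation that radial data yield radial profiles, the point being that the $L^2$ profile decomposition a priori allows translation and Galilean centers to escape to infinity, and a nonzero limiting Galilean parameter is incompatible with radiality.

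Next I would perform the standard further reduction, using local constancy and nondegeneracy of $N(t)$ (again consequences of Lemma~\ref{stab} and compactness), to one of three scenarios: a self-similar solution with $I=(0,\infty)$ and $N(t)=t^{-1/2}$; a double high-to-low frequency cascade with $I=\R$, $N(t)\leq 1$, and $\liminf_{t\to\pm\infty}N(t)=0$; or a soliton-like solution with $I=\R$ and $N(t)\equiv 1$. The core of the argument is then an additional-regularity step carried out in each scenario: starting from the no-waste Duhamel formula (valid with no boundary term thanks to the finite spacetime norm and almost periodicity), exploiting the radial improvement to the dispersive and Strichartz estimates provided by the in--out decomposition of radial Schr\"odinger waves, and a bilinear Strichartz estimate to control high-low frequency interactions, I would bootstrap from $u\in L^\infty_t L^2_x$ to $u\in L^\infty_t\dot H^s_x$ for a range of $s>0$ reaching $s=1$ (in the self-similar case with the scale-consistent bound $\||\nabla|^s u(t)\|_{L^2_x}\lesssim N(t)^s$). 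Granting this, the self-similar and cascade scenarios collapse: $u$ now has finite energy \eqref{energy}, $\|\nabla u(t)\|_{L^2_x}^2\lesssim N(t)^2$, and along a sequence of times $N(t)\to 0$, so the conserved energy vanishes; by the sharp Gagliardo--Nirenberg inequality (Theorem~\ref{sGN}) — which in the focusing case uses exactly $m_0<M(Q)$ to keep $E\gtrsim\|\nabla u\|_{L^2_x}^2\geq 0$ — this forces $\nabla u\equiv 0$, hence $u\equiv 0$, contradicting $M(u)=m_0>0$.

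To rule out the remaining soliton-like scenario I would use a truncated virial/Morawetz identity. Since $N(t)\equiv 1$, almost periodicity keeps the mass of $u(t)$ concentrated in a fixed ball for all $t\in\R$, so the localized virial quantity $\int a(x)|u(t,x)|^2\,dx$, with $a$ a smooth truncation of $|x|^2$ (or of $|x|$), stays bounded; on the other hand its second time derivative is bounded below by a positive multiple of the mass — here Gagliardo--Nirenberg and, in the focusing case, $m_0<M(Q)$ are used again to absorb the error terms generated by the truncation and the nonlinearity — which is impossible over the infinite interval $\R$.

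The step I expect to be the main obstacle is the additional-regularity bootstrap, and within it the self-similar case, where critical scaling leaves no slack: one cannot afford even an $\varepsilon$-loss, so the argument genuinely depends on the radial gain in the dispersive decay (via the in--out decomposition and Hankel-transform kernel estimates) to beat the scaling, and the iteration must be set up so that the gained regularity feeds back consistently with $N(t)=t^{-1/2}$. A secondary but nontrivial burden is executing the $L^2$-based concentration-compactness carefully enough to legitimately discard the translation and Galilean parameters in the radial category, since, unlike the $\dot H^1_x$ decomposition used for the energy-critical problem, the $L^2$ decomposition does not automatically confine frequencies.
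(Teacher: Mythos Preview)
Your outline matches the paper's argument almost exactly: reduction to an almost periodic minimal-mass blowup solution with $x(t)=\xi(t)=0$ (Theorem~\ref{main-compact}), the trichotomy into self-similar, double cascade, and soliton scenarios (Theorem~\ref{comp}), additional regularity in each case, and then energy/virial rigidity.

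Two technical points where the paper's execution differs from your sketch. First, the regularity argument is \emph{not} uniform across the three scenarios: for the self-similar solution the in/out decomposition is problematic because the backward Duhamel direction runs into the blowup at $t=0$, so the paper instead uses Shao's improved radial Strichartz estimate (Lemma~\ref{L:Shuanglin}, which gets below the $L^4_{t,x}$ exponent) together with the forward Duhamel toward $t\to\infty$ where $N(t)\to 0$; the in/out machinery is reserved for the two global scenarios, where both time directions are available. Second, for the cascade the paper pushes regularity strictly past $s=1$ and then interpolates the uniform $H^s_x$ bound against the almost-periodicity mass tail to obtain $\|\nabla u(t)\|_{L^2_x}\to 0$ along $N(t)\to 0$; stopping at $s=1$ with only a uniform $\dot H^1_x$ bound (rather than a scale-consistent one) would not suffice for that step, so you should either prove the scale-consistent bound directly in the cascade case or, as the paper does, iterate the bootstrap past $s=1$.
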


In particular, in the defocusing case one now has global well-posedness and scattering in the class $L^2_x(\R^2)_\rad$
of spherically symmetric $L^2_x(\R^2)$ functions for arbitrarily large mass, while in the focusing case one has a similar
claim under the additional assumption $M(u) < M(Q)$.

Neither Theorem~\ref{main} nor Conjecture~\ref{conj} address the focusing problem for masses greater than or equal to
that of the ground state.  In this case, blowup solutions exist and attention has been focused on describing their properties.
Finite-time blowup solutions with finite energy and mass equal to that of the ground state have been completely
characterized by Merle \cite{merle2}; they are precisely the ground state solution up to symmetries of the equation.

Several works have shown that finite-time blowup solutions must concentrate a positive
amount of mass around the blowup time $T^*$.  For finite energy data, Merle and Tsutsumi \cite{mt} (for radial data) and
Nawa \cite{nawa} and Weinstein \cite{weinstein-2} (for general data) proved the following: there exists $x(t)\in \R^d$ so that
$$
\liminf_{t\nearrow T^*}\int_{|x-x(t)|\leq R} \bigl|u(t,x)\bigr|^2 \,dx \geq M(Q)
$$
for any $R>0$.  For merely $L^2$ initial data, Bourgain \cite{bourg.2d} proved that some small amount of mass must
concentrate in parabolic windows (at least along a subsequence):
$$
\limsup_{t\nearrow T^*} \sup_{x_0\in\R^2} \int_{|x-x_0|\leq (T^*-t)^{1/2}} \bigl|u(t,x)\bigr|^2 \,dx \geq c,
$$
where $c$ is a small constant depending on the mass of $u$. This result was extended to other dimensions in \cite{begout,keraani}.
Note that by a construction of Perelman \cite{perelman}, the parabolic window cannot be made smaller by more than
the square root of a double-logarithmic factor.  Our contribution to this line of investigation is Corollary~\ref{cor:conc} below.

In a series of papers \cite{mr2, mr3, mr}, Merle and Raphael gave a more or less complete description of blowup
behaviour at masses slightly above that of the ground state (and finite energy).  In particular, they show that
the blowup rate observed by Perelman is generic.

Keraani \cite[Theorem~1.12]{keraani} also describes blowup behaviour for masses close to the critical mass.
(See also \cite[Theorem~3]{mv} for a closely related result.) In light of Theorem~\ref{main}
(and using the pseudoconformal transformation), his result implies

\begin{corollary}[Small blowup solutions concentrate mass $M(Q)$]
Let $u$ be a spherically symmetric solution to \eqref{cubic nls} with $M(u)< 2M(Q)$ that blows up forward in time.
If the blowup time $T^*$ is finite, then
$$
\liminf_{t\to T^*} \int_{|x|\leq R(t)} |u(t,x)|^2\, dx \geq M(Q)
$$
for any function $R(t)$ obeying $(T^*-t)^{-1/2} R(t)\to\infty$ as $t\to T^*$.
If $T^*=\infty$, then
$$
\liminf_{t\to \infty} \int_{|x|\leq R(t)} |u(t,x)|^2\, dx \geq M(Q)
$$
for any function $R(t)$ obeying $t^{-1/2} R(t)\to\infty$ as $t\to \infty$.
A similar statement holds in the negative time direction.
\end{corollary}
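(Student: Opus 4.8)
The plan is to deduce this corollary from the combination of Theorem~\ref{main} and Keraani's blowup characterization \cite[Theorem~1.12]{keraani}, using the pseudoconformal transformation \eqref{pc} to convert the finite-time blowup statement into a statement about solutions near temporal infinity. First I would reduce to the case of a finite blowup time $T^*$: if $T^*=\infty$, apply the time translation \eqref{time-translate} to move the relevant interval so that $0\notin I$, then apply the pseudoconformal transform \eqref{pc} to obtain a new spherically symmetric solution $v$ (spherical symmetry being preserved by Lemma~\ref{sym}) on an interval whose supremum is finite; a ball $|x|\le R(t)$ for the original solution near $t=\infty$ corresponds, after the transform, to a ball whose radius relative to the parabolic window $(T^*_v - s)^{1/2}$ has the same growth property, because the pseudoconformal map sends $t\mapsto -1/t$ and rescales space by $1/t$, so a window of size $t^{1/2}$ maps to a window of size comparable to $(T^*_v-s)^{1/2}$. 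Hence it suffices to prove the finite-$T^*$ statement, and the $T^*=\infty$ case follows by undoing these symmetries.

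Next, with $T^*<\infty$, the key point is that since $M(u)<2M(Q)$, the solution $u$ restricted to a time interval $[t_0,T^*)$ on which it blows up forward in time but has not yet blown up backward in time is a spherically symmetric blowup solution, and Theorem~\ref{main} tells us that any spherically symmetric solution with mass $<M(Q)$ is global and scatters — in particular cannot blow up in finite time. Therefore the very existence of the finite-time blowup forces $M(u)\ge M(Q)$. This places us in exactly the mass range $M(Q)\le M(u)<2M(Q)$ that Keraani's theorem addresses: Keraani shows that for such solutions there is a concentration of at least $M(Q)$ of mass in parabolic windows around $T^*$, and moreover (this is the content we need) that in the radial case the concentration point can be taken to be the origin and the window can be taken to be any $R(t)$ with $(T^*-t)^{-1/2}R(t)\to\infty$. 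So the substantive mathematical input beyond Theorem~\ref{main} is entirely contained in \cite[Theorem~1.12]{keraani} (and \cite[Theorem~3]{mv}), and the role of Theorem~\ref{main} is to certify that we are in the hypothesis regime of that theorem, namely $M(u)\ge M(Q)$, so that the $M(Q)$ lower bound on the concentrated mass is available rather than just Bourgain's small constant $c$.

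The main obstacle — really the only nontrivial step — is bookkeeping the pseudoconformal transformation carefully: one must check that the transform \eqref{pc} indeed sends a forward-in-time blowup at $t=+\infty$ to a forward-in-time blowup at a finite time, that the mass localization integrals transform correctly (the weight $|t|^{-d/2}$ in \eqref{pc} is exactly the $L^2$-normalization, so $\int_{|x|\le \rho}|v(s,x)|^2\,dx = \int_{|y|\le \rho/|s|}|u(-1/s,y)|^2\,dy$ after the change of variables $y=x/s$), and that the growth condition $t^{-1/2}R(t)\to\infty$ transforms into $(T^*_v-s)^{-1/2}\tilde R(s)\to\infty$ for the corresponding radius $\tilde R(s)=R(-1/s)/|s|$. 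All of this is routine but must be done; once it is in place the corollary follows immediately. Finally, the statement for the negative time direction follows by applying time reversal \eqref{time-reverse}, which also preserves spherical symmetry and mass.
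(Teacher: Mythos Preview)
Your proposal is correct and follows precisely the approach the paper indicates: the paper does not give a standalone proof of this corollary but simply states that it follows from Keraani's \cite[Theorem~1.12]{keraani} in light of Theorem~\ref{main} and the pseudoconformal transformation, which is exactly the argument you outline. The role you assign to Theorem~\ref{main} --- identifying the critical mass $M_0$ in Keraani's result with $M(Q)$ in the spherically symmetric setting --- and to the pseudoconformal transform --- reducing $T^*=\infty$ to the finite blowup case --- is the intended one, and your bookkeeping for the transform is accurate.
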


As discussed in \cite{keraani,mv}, the main obstacle to treating general masses is the spectre of quadratic oscillation. We will
(partially) circumvent this problem using some ideas from \cite{bourg.2d}.  The price we must pay is to replace $\liminf$ by
$\limsup$, just as in \cite{bourg.2d}.

\begin{corollary}[Blowup solutions concentrate the mass of the ground state]\label{cor:conc}\leavevmode\\
Let $u$ be a spherically symmetric solution to \eqref{cubic nls} that blows up at time $0<T^*\leq \infty$.
If $T^*<\infty$, then there exists a sequence $t_n \nearrow T^*$ so that for any sequence
$R_n\in (0,\infty)$ obeying $(T^*-t_n)^{-1/2} R_n\to \infty$,
\begin{align}\label{conc finite}
\limsup_{n\to \infty}\int_{|x|\leq R_n}|u(t_n,x)|^2\, dx\geq M(Q).
\end{align}
If $T^*=\infty$, then there exists a sequence $t_n \to\infty$ such that for any sequence $R_n\in (0,\infty)$ with
$t_n^{-1/2} R_n\to \infty$,
\begin{align}\label{conc infinite}
\limsup_{n\to \infty}\int_{|x|\leq R_n}|u(t_n,x)|^2\, dx\geq M(Q).
\end{align}
The analogous statement holds in the negative time direction.
\end{corollary}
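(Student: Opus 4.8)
The plan is to run the concentration--compactness argument that underlies Theorem~\ref{main} and then locate the resulting bubble relative to the parabolic window, using a time-localization device borrowed from \cite{bourg.2d} to handle quadratic oscillation. We begin with two reductions. By Theorem~\ref{main}, a spherically symmetric solution of \eqref{cubic nls} can blow up at all only in the focusing case $\mu=-1$, and then only when $M(u)\ge M(Q)$; so we may assume both from now on. Next, restricting a solution that blows up forward at $T^*=\infty$ to a time interval avoiding the origin and applying the pseudoconformal transformation \eqref{pc} yields a solution that blows up forward at a finite time; chasing the change of variables, a time $t_n\to\infty$ for the first solution corresponds to a time $t_n'$ approaching the finite blowup time of the second, a radius $R_n$ with $t_n^{-1/2}R_n\to\infty$ corresponds to a radius $\rho_n$ with $(\text{blowup time}-t_n')^{-1/2}\rho_n\to\infty$, and the two balls carry equal mass. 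Hence \eqref{conc infinite} follows from \eqref{conc finite} --- in whose proof the numerical value of the finite blowup time is irrelevant --- and the negative-time statements follow by time reversal \eqref{time-reverse}. It therefore suffices to prove \eqref{conc finite}.

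So assume $u$ blows up forward at $T^*<\infty$ and fix any sequence $\tau_n\nearrow T^*$; by the blowup criterion in Theorem~\ref{local}, $\sup I=T^*$ and $\|u\|_{L^4_{t,x}([\tau_n,T^*)\times\R^2)}=\infty$ for every $n$. First I would apply the radial linear profile decomposition in $L^2_x(\R^2)$ (in the spirit of \cite{keraani}, and as used to prove Theorem~\ref{main}) to the bounded sequence $\{u(\tau_n)\}$: along a subsequence, $u(\tau_n)=\sum_{j\le J}g_n^j\phi^j+w_n^J$, where each $g_n^j$ is a scaling-and-phase element of $G_{\rad}$, possibly composed with a quadratic modulation $e^{ic_n^j|\cdot|^2}$, the parameters of distinct profiles are asymptotically orthogonal, and $e^{it\Delta}w_n^J$ is asymptotically negligible in $L^4_{t,x}$ as $J\to\infty$. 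If the nonlinear evolution of every profile $\phi^j$ were global forward in time with finite $L^4_{t,x}$ norm, then superposing the $T_{g_n^j}$-transforms of these evolutions together with $e^{i(t-\tau_n)\Delta}w_n^J$ and invoking Lemma~\ref{stab} would bound $\|u\|_{L^4_{t,x}([\tau_n,T^*)\times\R^2)}$ uniformly in $n$, a contradiction. Hence some profile $\phi^{j_0}$ has a nonlinear evolution that blows up forward in time; since it is a radial solution of \eqref{cubic nls}, Theorem~\ref{main} forces $M(\phi^{j_0})\ge M(Q)$.

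It then remains to compare the scale $\lambda_n^{j_0}$ of this distinguished profile with $(T^*-\tau_n)^{1/2}$. Matching the forward blowup time of the approximate solution generated by $\phi^{j_0}$ against the actual blowup time $T^*$ --- once more via Lemma~\ref{stab} --- one shows that, after passing to a further subsequence $n_k$, one has $\lambda_{n_k}^{j_0}\lesssim(T^*-\tau_{n_k})^{1/2}$. Granting this, set $t_n:=\tau_n$. Given any $R_n$ with $(T^*-t_n)^{-1/2}R_n\to\infty$, we then have $R_{n_k}/\lambda_{n_k}^{j_0}\to\infty$; since $(g_{n_k}^{j_0})^{-1}u(t_{n_k})\rightharpoonup\phi^{j_0}$ weakly in $L^2_x(\R^2)$, the substitution $x=\lambda_{n_k}^{j_0}y$ --- under which $|(g_{n_k}^{j_0})^{-1}u(t_{n_k})(y)|^2=(\lambda_{n_k}^{j_0})^2|u(t_{n_k},\lambda_{n_k}^{j_0}y)|^2$, all unimodular factors dropping out --- together with weak lower semicontinuity of $f\mapsto\int_{|y|\le M}|f(y)|^2\,dy$ gives, for each fixed $M>0$, $\liminf_{k\to\infty}\int_{|x|\le R_{n_k}}|u(t_{n_k},x)|^2\,dx\ge\int_{|y|\le M}|\phi^{j_0}(y)|^2\,dy$. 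Letting $M\to\infty$ and then passing from the subsequence to the full sequence yields $\limsup_{n\to\infty}\int_{|x|\le R_n}|u(t_n,x)|^2\,dx\ge M(\phi^{j_0})\ge M(Q)$, which is \eqref{conc finite}.

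The main obstacle is the scale-localization step, and within it the profiles carrying a genuine quadratic oscillation (diverging $c_n^j$): for such a profile the nonlinear evolution is a pseudoconformal-type blowup whose blowup time depends delicately on the oscillation rate, so the naive matching of blowup times --- which is what would confine the concentration scale to the parabolic window --- breaks down. This is the ``spectre of quadratic oscillation'' referred to above; following \cite{bourg.2d} we would defuse it by pigeonholing over dyadic time windows approaching $T^*$, a procedure that intrinsically produces a subsequence and is precisely why the conclusion must be stated with a $\limsup$ rather than a $\liminf$.
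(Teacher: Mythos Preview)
Your reductions (to focusing, $M(u)\ge M(Q)$, finite $T^*$, forward time) match the paper, as does the profile decomposition of $\{u(\tau_n)\}$ and the extraction via stability of a bad profile $\phi^{j_0}$ with $M(\phi^{j_0})\ge M(Q)$. The obstacle you isolate --- bounding the profile scale against the parabolic window when the profile carries a diverging time-shift $t_n^{j_0}$ (equivalently, a quadratic oscillation) --- is genuine, and your sketch does not resolve it. Your weak-convergence claim $(g_n^{j_0})^{-1}u(\tau_n)\rightharpoonup\phi^{j_0}$ already fails precisely in that regime: in the paper's decomposition the profile is $g_n^{j_0}e^{it_n^{j_0}\Delta}\phi^{j_0}$, and when $t_n^{j_0}\to\pm\infty$ one has $(g_n^{j_0})^{-1}u(\tau_n)\rightharpoonup 0$, not $\phi^{j_0}$. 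And ``pigeonholing over dyadic time windows'' is too vague to manufacture the estimate $\lambda_n^{j_0}\lesssim(T^*-\tau_n)^{1/2}$ you need; that estimate simply need not hold.

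The paper sidesteps the obstacle rather than overcoming it; it never bounds the profile scale. Having fixed an initial sequence $t_n\nearrow T^*$ and extracted profiles, it runs a \emph{race} (Lemma~\ref{one bad guy}): for small $\eta>0$ it selects the index $j_1$ whose nonlinear profile $\psi_n^{j_1}$ is first to reach $L^4_{t,x}$-norm $\eta$, at some time $\tau_n\in(0,T^*-t_n)$. On $[0,\tau_n]$ every nonlinear profile is $\eta$-small in $L^4$, so by Strichartz the \emph{linear} flow $e^{it\Delta}\psi_n^{j_1}(0)$ has $L^4$-norm $\gtrsim\eta$ on $[0,\tau_n]$. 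Bourgain's inverse Strichartz (Lemma~\ref{L:B conc}) then produces a bubble in $e^{i\tau_n'\Delta}\psi_n^{j_1}(0)$ at some new time $\tau_n'\in(0,\tau_n)$, of radius automatically $\lesssim(T^*-t_n')^{1/2}$ where $t_n':=t_n+\tau_n'$; a separate tightness argument (Lemma~\ref{L:comp prof}) enlarges this bubble to carry all of $M(\phi^{j_1})\ge M(Q)$; and Lemma~\ref{stab} on $[0,\tau_n]$ --- applicable precisely because the race guarantees all profiles are $\eta$-small there --- transfers the bubble from the linear object $e^{i\tau_n'\Delta}\psi_n^{j_1}(0)$ to $u(t_n')$. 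The concentrating sequence in \eqref{conc finite} is thus $\{t_n'\}$, a sequence \emph{produced by} Lemma~\ref{L:B conc}, not the sequence you began with. The parabolic scale comes for free from that lemma, applied to the linear evolution of the full initial datum $\psi_n^{j_1}(0)$; no information about $\lambda_n^{j_1}$ or the time-shift $t_n^{j_1}$ is ever used, which is exactly how the quadratic oscillation is defused.
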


\begin{remark}
The argument used to deduce Corollary~\ref{cor:conc} works in any dimension and without the assumption of spherical symmetry;
it just relies on an affirmative answer to Conjecture~\ref{conj}.  However, as this remains open in such generality, we present
the argument in context of Theorem~\ref{main}.
\end{remark}
The arguments in this paper rely heavily on the spherical symmetry in many places, for example, in order to localize the solution
at the spatial origin $x=0$ and frequency origin $\xi=0$, and also to provide some strong spatial decay as $|x| \to \infty$.
We do not see how to remove this assumption, but it seems that if one wishes to do so, one should first look
at the higher-dimensional defocusing case $d \geq 3$, $\mu=+1$, in which the numerology is more favourable
(in particular, the dispersive inequality becomes more powerful) and one has additional tools such as
Morawetz inequalities available.

\subsection{Reduction to almost periodic solutions}

Bourgain's seminal work, \cite{borg:scatter}, on the energy-critical NLS first realized the important role played by solutions
that are simultaneously localized in both frequency and space.  The sufficiency of treating such solutions stems from his
`induction on energy' argument.  These ideas where pursued further in the work that followed,
for example, \cite{ckstt:gwp, RV, tao:gwp radial, thesis:art, Monica:thesis}.

A new and much more efficient approach to the energy-critical NLS (albeit non-quantitative) was introduced by Kenig and Merle,
\cite{merlekenig}.  They replace the induction on energy approach by a direct consideration of minimal-energy blowup solutions.
The existence of such solutions (in the mass-critical setting) is a profound observation of Keraani \cite{keraani}.
At a technical level, the new approach uses a concentration compactness result from \cite{keraani-2}.
Important related contributions include \cite{begout, bourg.2d, mv}.

In the Schr\"odinger context, the role of concentration compactness is to provide
a linear profile decomposition.  The main technical ingredients from the quantitative argument
(for example, refined Strichartz estimates) appear in the proof of such a decomposition theorem.
The two dimensional mass-critical version of the linear profile decomposition theorem reads as follows:

\begin{theorem}[Linear profiles, \cite{mv}]\label{lp}
Let $u_n$, $n=1,2,\ldots$ be a bounded sequence in $L^2_x(\R^2)$.  Then (after passing to a subsequence if necessary)
there exists a sequence of functions $\phi^j\in L^2_x(\R^2)$, group elements $g^{j}_n \in G$, and $t_n^j\in \R$
such that we have the decomposition
\begin{equation}\label{undecomp}
u_n = \sum_{j=1}^J g^j_n e^{it_n^j\Delta}\phi^j + w^J_n
\end{equation}
for all $J=1,2,\ldots$; here, $w^J_n \in L^2_x(\R^2)$ is such that its linear evolution has asymptotically vanishing scattering size:
\begin{equation}\label{sln}
\lim_{J \to \infty} \limsup_{n \to \infty} \|e^{it\Delta} w^J_n \|_{L_{t,x}^4} = 0.
\end{equation}
Moreover, for any $j \neq j'$,
\begin{align}\label{crazy}
\frac{\lambda_n^j}{\lambda_n^{j'}} + \frac{\lambda_n^{j'}}{\lambda_n^{j}}
        + \lambda_n^j \lambda_n^{j'}|\xi_n^j-\xi_n^{j'}|^2
        + \frac{|x_n^j-x_n^{j'}|^2}{\lambda_n^j \lambda_n^{j'}}
        + \frac{\bigl|t_n^j(\lambda_n^j)^2- t_n^{j'}(\lambda_n^{j'})^2\bigr|}{\lambda_n^j\lambda_n^{j'}}\to\infty.
\end{align}
Furthermore, for any $J \geq 1$ we have the mass decoupling property
\begin{equation}\label{un}
\lim_{n \to \infty} \Bigl[M( u_n ) - \sum_{j=1}^J M( \phi^j ) - M( w^J_n )\Bigr] = 0.
\end{equation}
Lastly, if $u_n$ are assumed to be spherically symmetric and $d \geq 2$, one can choose $\phi_j$ and $w_n^J$
to be spherically symmetric and $g_n^j\in G_\rad$.
\end{theorem}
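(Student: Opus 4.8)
The plan is to establish Theorem~\ref{lp} by the standard bubble-decomposition method of concentration compactness (following Bourgain~\cite{bourg.2d,borg:scatter}, in the profile-decomposition form of~\cite{ck,begout,keraani}), whose analytic engine is a two-dimensional refined Strichartz inequality and whose radial version rests additionally on a rotation argument. The central preliminary is an \emph{inverse Strichartz inequality}: there is a function $\eta(A,\eps)>0$, nondecreasing in $\eps$, such that if $f_n$ is bounded in $L^2_x(\R^2)$ by $A$ and $\liminf_n\|e^{it\Delta}f_n\|_{L^4_{t,x}(\R\times\R^2)}\ge\eps$, then, after passing to a subsequence, there exist $g_n\in G$ and $t_n\in\R$ with $e^{-it_n\Delta}g_n^{-1}f_n\rightharpoonup\phi$ weakly in $L^2_x$ for some $\phi$ obeying $\|\phi\|_{L^2_x}\ge\eta(A,\eps)$. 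To prove it one first establishes the bilinear estimate $\|(e^{it\Delta}f)(e^{it\Delta}h)\|_{L^2_{t,x}}\lesssim(N/M)^{1/2}\|f\|_{L^2_x}\|h\|_{L^2_x}$ when $\hat f$ and $\hat h$ are supported in dyadic cubes of sidelengths $N\le M$ separated by distance $\gtrsim M$; writing $\|e^{it\Delta}f\|_{L^4_{t,x}}^2=\|(e^{it\Delta}f)^2\|_{L^2_{t,x}}$, decomposing the frequency plane into Whitney cubes and summing almost-orthogonally then produces a single dyadic cube $Q$ with $\|e^{it\Delta}P_Qf\|_{L^4_{t,x}}\gtrsim\eps^2/A$. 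Rescaling by the reciprocal of the sidelength of $Q$ and modulating by its centre brings the active frequencies to unit scale; the dispersive and Strichartz bounds for a function frequency-localized to unit scale then let one pin down a time and a spatial centre at which a fixed ball retains a definite amount of $L^2_x$ mass, which survives passage to the weak limit. Establishing this inequality --- in particular, upgrading the frequency-localized concentration to a genuine profile by locating the space-time centre --- is the main obstacle; everything that follows is comparatively routine bookkeeping.

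Granting it, profiles are extracted iteratively. Let $A$ bound $\sup_n\|u_n\|_{L^2_x}$; set $w_n^0:=u_n$, and given $w_n^{J-1}$ with $\sup_n\|w_n^{J-1}\|_{L^2_x}\le 2A$ put $\eps_J:=\limsup_n\|e^{it\Delta}w_n^{J-1}\|_{L^4_{t,x}}$. If $\eps_J>0$, apply the inverse Strichartz inequality to $w_n^{J-1}$ to produce $g_n^J\in G$, $t_n^J\in\R$ and a profile $\phi^J$ with $\|\phi^J\|_{L^2_x}\ge\eta(2A,\eps_J)$ and $e^{-it_n^J\Delta}(g_n^J)^{-1}w_n^{J-1}\rightharpoonup\phi^J$, and set $w_n^J:=w_n^{J-1}-g_n^Je^{it_n^J\Delta}\phi^J$, so that $e^{-it_n^J\Delta}(g_n^J)^{-1}w_n^J\rightharpoonup0$ by construction (should some $\eps_J$ vanish, the decomposition is already complete). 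A diagonal argument yields a single subsequence good for all $J$. Expanding $\|w_n^{J-1}\|_{L^2_x}^2=\|w_n^J\|_{L^2_x}^2+\|\phi^J\|_{L^2_x}^2+2\Re\langle e^{-it_n^J\Delta}(g_n^J)^{-1}w_n^J,\phi^J\rangle$ and using $e^{-it_n^J\Delta}(g_n^J)^{-1}w_n^J\rightharpoonup0$ gives $M(w_n^{J-1})=M(w_n^J)+M(\phi^J)+o_n(1)$; telescoping over $J$ yields the decoupling~\eqref{un}, so $\sum_jM(\phi^j)\le\liminf_nM(u_n)\le A^2$ (as $M(w_n^J)\ge0$), whence the bound $\sup_n\|w_n^J\|_{L^2_x}\le 2A$ persists and $M(\phi^j)\to0$. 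Since $\|\phi^j\|_{L^2_x}\ge\eta(2A,\eps_j)$ with $\eta(2A,\cdot)$ nondecreasing and positive, $M(\phi^j)\to0$ forces $\eps_j\to0$, which is~\eqref{sln}. Finally~\eqref{crazy} follows by induction on $j'$: if its left side stayed bounded for some $j<j'$ along a subsequence, then (after a further subsequence) $e^{-it_n^j\Delta}(g_n^j)^{-1}g_n^{j'}e^{it_n^{j'}\Delta}$ would converge in the strong operator topology to an injective bounded operator $U$, while the orthogonalities already established give $e^{-it_n^j\Delta}(g_n^j)^{-1}g_n^ke^{it_n^k\Delta}\phi^k\rightharpoonup0$ for $j<k<j'$ and $e^{-it_n^j\Delta}(g_n^j)^{-1}w_n^j\rightharpoonup0$; applying $e^{-it_n^j\Delta}(g_n^j)^{-1}$ to the identity $w_n^{j'-1}=w_n^j-\sum_{j<k<j'}g_n^ke^{it_n^k\Delta}\phi^k$ then gives $U\phi^{j'}=0$, so $\phi^{j'}=0$, a contradiction.

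For the radial claim, take $d=2$ with the $u_n$ spherically symmetric. In place of the inequality of the first paragraph one now uses a radial refined Strichartz inequality --- available in $\R^2$ precisely because radial functions enjoy improved Strichartz estimates --- which furnishes the extraction directly with $g_n\in G_\rad$ and $\phi$ radial, after which the iteration of the second paragraph keeps every $w_n^J$ radial (a weak limit of radial functions being radial). The structural reason such an improvement is available is a rotation argument: writing $g_n=g_{\theta_n,\xi_n,x_n,\lambda_n}$, the identity $g_{\theta_n,R\xi_n,Rx_n,\lambda_n}=Rg_nR^{-1}$, together with $Ru_n=u_n$ and $[R,e^{it\Delta}]=0$, shows that each rotation $R\in SO(2)$ produces a bubble $\phi(R^{-1}\cdot)$ in the rotated frame; the ``distance'' between two such frames (the left side of~\eqref{crazy}) is $2+\lambda_n^2|(R-R')\xi_n|^2+\lambda_n^{-2}|(R-R')x_n|^2$, which would tend to $\infty$ for every pair of distinct rotations were $\lambda_n|\xi_n|$ or $|x_n|/\lambda_n$ unbounded --- and then arbitrarily many distinct rotations would yield, via the simultaneous mass-decoupling inequality of the previous paragraph, $L\,\|\phi\|_{L^2_x}^2\le\liminf_n\|u_n\|_{L^2_x}^2$ for every $L$, forcing $\phi=0$. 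Hence the spatial and frequency centres of a radial bubble are pinned to the origin, as the radial refined Strichartz makes quantitative. The hypothesis $d\ge2$ is essential here: for $d=1$ the group $O(1)$ has orbits of only two points, so the obstruction evaporates.
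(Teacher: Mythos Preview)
The paper does not prove Theorem~\ref{lp}; it is quoted from Merle--Vega~\cite{mv} (with the radial addendum implicit in~\cite{compact,begout}) and used as a black box. So there is no ``paper's own proof'' to compare against.

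That said, your sketch is the standard route to this result and is essentially correct. The inverse Strichartz step via bilinear estimates and Whitney decomposition is precisely the engine in~\cite{bourg.2d,mv,begout,ck}; your iterative extraction with mass decoupling via weak convergence and the contradiction argument for~\eqref{crazy} are the usual bookkeeping. The rotation argument for the radial case is also the right idea: if $|x_n|/\lambda_n$ or $\lambda_n|\xi_n|$ were unbounded, rotating would produce infinitely many asymptotically orthogonal bubbles of fixed mass, contradicting~\eqref{un}. One small caution: to force $\phi$ itself to be radial (not merely to force $x_n,\xi_n\to 0$) you should average over rotations or argue that the weak limit of radial functions under $G_\rad$-conjugation is radial; as written you have only pinned the parameters, not the profile. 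This is a minor gap, easily closed.
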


\begin{remark}
Note that spherical symmetry is only a significant additional hypothesis when $d \geq 2$.
When $d=1$, any non-symmetric solution $u$ can be used to generate a spherically symmetric solution
of approximately twice the mass by considering the near-solution $u(t,x-x_0) + u(t,x+x_0)$
for some very large $x_0$ and using perturbation theory such as Lemma~\ref{stab} to convert this
to an actual solution; we leave the details to the reader.  Because of this, we do not expect the
spherically symmetric case to be significantly different from the non-symmetric case in one dimension.
\end{remark}

The concentration compactness approach reduces matters to the study of almost periodic solutions (modulo symmetries).
This reduction does not depend upon the dimension, nor does it require the assumption of spherical symmetry.
Indeed, in his  recent lectures \cite{Evian}, Kenig stressed that these ideas should be applicable to virtually
any dispersive equation with a good  local theory; how (and indeed whether) this helps in settling
global well-posedness questions is very much equation dependent.

\begin{definition}[Almost periodicity modulo symmetries]\label{apdef}
Let $d \geq 1$ and $\mu = \pm 1$.  A solution $u$ with lifespan $I$ is said to be \emph{almost periodic modulo $G$}
if there exist (possibly discontinuous) functions $N: I \to \R^+$, $\xi: I \to \R^d$, $x: I \to \R^d$ and a function
$C: \R^+ \to \R^+$ such that
$$ \int_{|x-x(t)| \geq C(\eta)/N(t)} |u(t,x)|^2\ dx \leq \eta$$
and
$$ \int_{|\xi-\xi(t)| \geq C(\eta) N(t)} |\hat u(t,\xi)|^2\ d\xi \leq \eta$$
for all $t \in I$ and $\eta > 0$.  We refer to the function $N$ as the \emph{frequency scale function} for the solution $u$,
$\xi$ as the \emph{frequency center function}, $x$ as the \emph{spatial center function}, and $C$ as the
\emph{compactness modulus function}.   Furthermore, if we can select $x(t) = \xi(t) = 0$, then we say that $u$
is \emph{almost periodic modulo $G_{\rad}$}.
\end{definition}

\begin{remarks}
1. The parameter $N(t)$ measures the frequency scale of the solution at time $t$, and $1/N(t)$ measures the spatial scale;
see \cite{compact, tvz-higher} for further discussion.  Note that we have the freedom to modify $N(t)$
by any bounded function of $t$, provided that we also modify the compactness modulus function $C$ accordingly.
In particular, one could restrict $N(t)$ to be a power of $2$ if one wished, although we will not do so here.
Alternatively, the fact that the solution trajectory $t \mapsto u(t)$ is continuous in $L^2_x(\R^d)$
can be used to show that the functions $N$, $\xi$, $x$ may be chosen to depend continuously on $t$.

2. One can view $\xi(t)$ and $x(t)$ as roughly measuring the (normalised) momentum and center-of-mass, respectively,
at time $t$, although as $u$ is only assumed to lie in $L^2_x(\R^d)$, these latter quantities are not quite rigorously defined.

3. By the Ascoli--Arzela Theorem, a family of functions is precompact in $L_x^2(\R^d)$ if and only if it is norm-bounded
and there exists a compactness modulus function $C$ so that
$$
\int_{|x| \geq C(\eta)} |f(x)|^2\ dx + \int_{|\xi| \geq C(\eta)} |\hat f(\xi)|^2\ d\xi \leq \eta
$$
for all functions $f$ in the family.  Thus, an equivalent formulation of Definition~\ref{apdef} is as follows:
$u$ is almost periodic modulo $G$ (respectively modulo $G_\rad$) if and only if there exists a compact subset $K$ of $L^2_x(\R^d)$
such that the orbit $\{ u(t): t \in I \}$ is contained inside $GK := \{ g f: g \in G, f \in K \}$ (respectively $G_\rad K$).
This may help explain the terminology `almost periodic'.
\end{remarks}

In \cite{compact} the following result was established (see also \cite{begout, keraani}),
showing that any failure of Conjecture \ref{conj} must be `caused' by a very special type of solution.

\begin{theorem}[Reduction to almost periodic solutions]\label{main-compact}
Fix $\mu$ and $d$ and suppose that Conjecture \ref{conj} failed for this choice of $\mu$ and $d$.
Then there exists a maximal-lifespan solution $u$ which is almost periodic modulo $G$ and which blows up
both forward and backward in time, and in the focusing case we also have $M(u) < M(Q)$.
If furthermore $d \geq 2$ and Conjecture \ref{conj} failed for spherically symmetric data,
then we can also ensure that $u$ is spherically symmetric and almost periodic modulo $G_\rad$.
\end{theorem}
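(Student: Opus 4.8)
The plan is to run the standard concentration-compactness (minimal-counterexample) argument, with the linear profile decomposition, Theorem~\ref{lp}, and the stability lemma, Lemma~\ref{stab}, as the two engines; the spherically symmetric refinement will come essentially for free because Theorem~\ref{lp} already preserves spherical symmetry and keeps the group elements in $G_\rad$.

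\textbf{Step 1 (the critical mass).} For each $m\geq 0$ — in the focusing case restricted to $m<M(Q)$ — set
\[
L(m):=\sup\bigl\{\|u\|_{L^{2(d+2)/d}_{t,x}(I\times\R^d)}:u\text{ a solution of \eqref{nls} with }M(u)\leq m\bigr\}.
\]
By the small-data part of Theorem~\ref{local}, $L(m)<\infty$ for small $m$; by Lemma~\ref{stab}, $\{m:L(m)<\infty\}$ is open and $L$ is continuous on it. The failure of Conjecture~\ref{conj} says precisely that $m_0:=\sup\{m:L(m)<\infty\}$ is finite (defocusing) or lies strictly below $M(Q)$ (focusing), and small-data theory gives $m_0>0$. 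Since $L(m_0)=\infty$, and since a solution of mass $<m_0$ automatically has finite spacetime norm, we may pick solutions $u_n:I_n\times\R^d\to\C$ with $M(u_n)\to m_0$ that blow up (after time reversal if needed, forward in time), and after a time translation we may assume $0\in I_n$ and $\|u_n\|_{L^{2(d+2)/d}_{t,x}([0,\sup I_n)\times\R^d)}=\infty$ for all $n$. In the spherically symmetric case we take the $u_n$ spherically symmetric.

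\textbf{Step 2 (profiles and the single-profile reduction).} The data $u_n(0)$ are bounded in $L^2_x$, so after passing to a subsequence Theorem~\ref{lp} gives profiles $\phi^j$, parameters $g_n^j\in G$ (in $G_\rad$ in the symmetric case), times $t_n^j$, and remainders $w_n^J$ with the decomposition \eqref{undecomp} and properties \eqref{crazy}--\eqref{un}. To each $j$ attach a \emph{nonlinear profile}: the maximal-lifespan solution $v^j$ that scatters to $e^{it\Delta}\phi^j$ as $t\to\mp\infty$ when $t_n^j\to\pm\infty$ (such a solution exists by the scattering statement of Theorem~\ref{local}, since a far tail of a free evolution has small Strichartz norm), and otherwise has $v^j(t^j)=e^{it^j\Delta}\phi^j$ with $t^j:=\lim_n t_n^j$. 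Transporting $v^j$ by $g_n^j$ and the appropriate time shift produces a solution $v_n^j$ of \eqref{nls} whose data at time $0$ matches the $j$-th summand of \eqref{undecomp} up to an $o(1)$ error in $L^2_x$. Now suppose two or more profiles are nontrivial. By the mass decoupling \eqref{un}, $\sum_j M(\phi^j)\leq m_0$, so every profile has mass at most $m_0-c$ for a fixed $c>0$ (the mass of the second-largest profile); in particular, in the focusing case every $M(\phi^j)<M(Q)$. Hence each $v^j$, and so each $v_n^j$, has spacetime norm at most $L(m_0-c)<\infty$, \emph{uniformly} in $j$ and $n$.

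\textbf{Step 3 (contradiction via stability; extracting the critical solution).} The superposition $\sum_{j=1}^{J}v_n^j+e^{it\Delta}w_n^J$ is an approximate solution of \eqref{nls}: the asymptotic orthogonality \eqref{crazy} makes all interaction terms in the nonlinearity negligible as $n\to\infty$ and, together with the bound $\sum_j M(\phi^j)\leq m_0$, also gives an \emph{a priori} spacetime bound uniform in $J$; the vanishing \eqref{sln} controls the contribution of $w_n^J$. Since its data agrees with $u_n(0)$ up to a vanishing $L^2_x$ error, Lemma~\ref{stab} (with $J$ fixed large, then $n\to\infty$) would force $\|u_n\|_{L^{2(d+2)/d}_{t,x}}$ to be finite and bounded, contradicting Step~1. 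Thus exactly one profile $\phi:=\phi^1$ survives; the same stability argument applied to $v_n^1+e^{it\Delta}w_n^1$ forces $\|e^{it\Delta}w_n^1\|_{L^{2(d+2)/d}_{t,x}}\to 0$ and $M(w_n^1)\to 0$, whence $M(\phi)=m_0$, and it also forces $t_n^1$ to stay bounded (otherwise $u_n$ would have small spacetime norm in one time direction). Passing to a subsequence, $t_n^1\to 0$ after a further time translation and, undoing $g_n^1$ (which preserves mass, spacetime norm, and — in the symmetric case — spherical symmetry), we obtain $(g_n^1)^{-1}u_n(0)\to\phi$ in $L^2_x$ with $M(\phi)=m_0$.

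\textbf{Step 4 (blowup in both directions and almost periodicity).} Let $u$ be the maximal-lifespan solution with $u(0)=\phi$; then $M(u)=m_0$ (which is $<M(Q)$ in the focusing case), and by continuous dependence (Theorem~\ref{local}) together with the forward blowup of the $u_n$, $u$ blows up forward in time. If $u$ did not blow up backward, then $\inf I=-\infty$ and $u$ scatters backward; running Steps~2--3 on the sequence $u(\tau_n)$ with $\tau_n\to-\infty$ would produce a surviving profile whose parameters escape in such a way that $u(\tau_n+\cdot)$ has small \emph{forward} spacetime norm, contradicting the forward blowup of $u$. Hence $u$ blows up both forward and backward in time. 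Finally, for an arbitrary sequence $\tau_n\in I$, the time-translates $u(\tau_n+\cdot)$ form exactly the kind of minimizing sequence treated in Steps~1--3 (mass $m_0$, blowing up forward), so $u(\tau_n)$ has a subsequence converging in $L^2_x$ modulo $G$; thus the orbit $\{u(t):t\in I\}$ is precompact modulo $G$, which by Remark~3 after Definition~\ref{apdef} is exactly almost periodicity modulo $G$. In the spherically symmetric case the whole argument runs with $g_n^j\in G_\rad$, so the spatial and frequency centers can be taken to be $0$ and $u$ is spherically symmetric and almost periodic modulo $G_\rad$.

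\textbf{Expected main obstacle.} The technical core is Step~3: verifying that $\sum_j v_n^j+e^{it\Delta}w_n^J$ really is a good approximate solution — that the error is small in the dual Strichartz norm of Lemma~\ref{stab} and, crucially, that its own $L^{2(d+2)/d}_{t,x}$ norm is bounded uniformly in $J$. This needs the full strength of the orthogonality relations \eqref{crazy} to annihilate the cross terms $|v_n^j|^{4/d}v_n^{j'}$ (and their permutations), together with a careful use of the mass decoupling \eqref{un} to sum the infinitely many small-mass profiles against the finitely many large ones — estimates that are genuinely delicate in dimensions $d\geq 2$, where the scaling parameters $\lambda_n^j$ may separate at arbitrary rates even under spherical symmetry. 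A secondary subtlety is the parameter-escape bookkeeping in Step~4 needed to upgrade one-sided to two-sided blowup.
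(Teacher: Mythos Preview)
The paper does not prove this theorem in the body; it simply cites \cite{compact}. Your sketch is the right reconstruction of that argument and is essentially correct, but there is one genuine gap in the logic of Step~4.

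Your argument for two-sided blowup does not work as written. If $u$ scatters backward to $u_-$, then for $\tau_n\to-\infty$ one has $u(\tau_n)=e^{i\tau_n\Delta}u_-+o_{L^2}(1)$, so the profile decomposition of $u(\tau_n)$ has a single profile $\phi^1=u_-$ with $g_n^1=\mathrm{id}$ and $t_n^1=\tau_n\to-\infty$. But an escaping time parameter $t_n^1\to-\infty$ forces the \emph{backward} free evolution of the data to have small Strichartz norm, not the forward one; this is entirely consistent with backward scattering and contradicts nothing. The same defect contaminates your almost-periodicity step: applying Steps~2--3 to $u(\tau_n)$ only rules out $t_n^1\to+\infty$ (via forward blowup), so $t_n^1\to-\infty$ is not excluded and you cannot conclude convergence modulo $G$, since $e^{it_n^1\Delta}\phi^1$ with $t_n^1\to-\infty$ does not converge modulo $G$.

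The standard fix (and what \cite{compact} does) is to build two-sided blowup into the minimizing sequence in Step~1: since $\|u_n\|_{L^{2(d+2)/d}_{t,x}([0,\sup I_n))}=\infty$, pick $s_n>0$ with $\|u_n\|_{L^{2(d+2)/d}_{t,x}([0,s_n])}=n$ and replace $u_n$ by $u_n(\cdot+s_n)$; now the backward norm from time $0$ is at least $n\to\infty$ while the forward norm stays infinite. With this, your Step~3 legitimately yields $t_n^1$ bounded (escape to $\pm\infty$ would make one of the two divergent norms small), stability gives that the limit $u$ blows up in both time directions, and then your almost-periodicity argument goes through because each $u(\cdot+\tau_n)$ inherits two-sided blowup from $u$. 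A minor point: Theorem~\ref{lp} as stated here is only the $d=2$ case; for general $d$ you need the analogous decompositions of B\'egout--Vargas and Carles--Keraani, which is what \cite{compact} actually uses.
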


\begin{proof}
See \cite[Theorem 1.13]{compact} and \cite[Theorem 7.2]{compact}.
\end{proof}

\begin{remark}
Theorem \ref{main-compact} was the first step in the resolution of Conjecture~\ref{conj}
in the case $d \geq 3$, $\mu=+1$ with spherical symmetry in \cite{tvz-higher}, just as the
analogous statement for the energy-critical NLS was the first step in \cite{merlekenig}.
\end{remark}

\subsection{Outline of the proof}
Under the assumption that Conjecture~\ref{conj} fails,
Theorem~\ref{main-compact} constructs an almost periodic solution $u$ with some frequency scale function $N(t)$,
but provides little information about the behaviour of $N(t)$ over the lifespan $I$ of the solution.
In this paper we refine Theorem~\ref{main-compact} by showing that the failure of Conjecture~\ref{conj} implies the
existence of one of three types of almost periodic solutions $u$ for which $N(t)$ and $I$ have very particular properties.
The argument is independent of the dimension, the sign of $\mu$, and makes no assumption of spherical symmetry.
Indeed,  the argument is predominantly combinatorial and, as with the reduction to almost periodic solutions,
should be applicable whenever there is a satisfactory local theory.

\begin{theorem}[Three special scenarios for blowup]\label{comp}
Fix $\mu$ and $d$ and suppose that Conjecture \ref{conj} fails for this choice of $\mu$ and $d$.
Then there exists a maximal-lifespan solution $u$ which is almost periodic modulo $G$, blows up both forward and backward in time,
and in the focusing case also obeys $M(u) < M(Q)$.
If furthermore $d \geq 2$ and Conjecture \ref{conj} fails for spherically symmetric data, then $u$ may be chosen
to be spherically symmetric and almost periodic modulo $G_\rad$.

With or without spherical symmetry, we can also ensure that the lifespan $I$ and the frequency scale function
$N:I\to\R^+$ match one of the following three scenarios:
\begin{itemize}
\item[I.] (Soliton-like solution) We have $I = \R$ and
\begin{equation}\label{nsim}
 N(t) = 1
\end{equation}
for all $t \in \R$ (thus the solution stays in a bounded space/frequency range for all time).
\item[II.] (Double high-to-low frequency cascade) We have $I = \R$,
\begin{equation}\label{cascade}
\liminf_{t \to -\infty} N(t) = \liminf_{t \to +\infty} N(t) = 0,
\end{equation}
and
\begin{equation}\label{freqbound}
\sup_{t \in \R} N(t) < \infty
\end{equation}
for all $t \in I$.
\item[III.] (Self-similar solution) We have $I = (0,+\infty)$ and
\begin{equation}\label{ssb}
 N(t) = t^{-1/2}
\end{equation}
for all $t \in I$.
\end{itemize}
\end{theorem}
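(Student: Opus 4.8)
The plan is to start from the almost periodic solution $u$ produced by Theorem~\ref{main-compact} and normalise its frequency scale function by a clever choice of times and symmetries, so that the long-time behaviour of $N(t)$ is forced into one of the three listed patterns. The first step is to observe that, using the scaling freedom inside $G$ (or $G_\rad$), one may normalise $N$ at a single reference time, say $N(t_0)=1$ for some $t_0\in I$; the real content is controlling $N$ as one approaches the endpoints of the maximal lifespan $I=(T_-,T_+)$. The key dichotomy is whether $\liminf_{t\to T_+}N(t)$ and $\liminf_{t\to T_-}N(t)$ are zero or bounded below, and similarly whether the $\limsup$'s are finite. Combinatorially there are a handful of cases, and the claim will be that each can be reduced, after passing to a subsequence of times and applying a symmetry $g\in G$ together with a time translation, to one of I, II, III (possibly after also applying the time-reversal or pseudoconformal transformations from Lemma~\ref{sym}, which preserve almost periodicity, mass, spherical symmetry, and the blowup property).

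The main technical input is a rescaling-and-limiting argument: given a sequence of times $t_n$ along which $N(t_n)\to N_\infty$ (finite or infinite), one forms the rescaled solutions $T_{g_n}u$ with $g_n$ chosen so that the new frequency scale at time $0$ is $1$; since these are almost periodic modulo $G$ with a \emph{uniform} compactness modulus function, their initial data lie in a fixed compact subset of $L^2_x$, and one extracts an $L^2_x$-limit $u_0^\infty$. By the stability/continuous-dependence theory (Lemma~\ref{stab} and Theorem~\ref{local}), the corresponding solutions converge locally uniformly to a limiting solution $u^\infty$, which is again almost periodic modulo $G$ (or $G_\rad$), has mass $\le M(u)$, and — crucially — must itself blow up (otherwise it would have finite scattering size, contradicting, via stability, the blowup of the $T_{g_n}u$). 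If the limiting solution turns out to have mass strictly below that of the original, one would contradict a minimality property; so one should either invoke the minimal-mass/minimal-kinetic-energy selection already built into Theorem~\ref{main-compact}, or re-run the reduction of \cite{compact} on $u^\infty$. This limiting procedure is the standard way to "upgrade" an almost periodic solution to one whose $N(t)$ is, respectively, bounded and bounded below (yielding, after a Lipschitz-type bound on $N'(t)/N(t)$ from the local theory, $N\equiv 1$ — the soliton-like case I) or degenerates at both ends (the cascade, case II).

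The self-similar case III is extracted from the scenario in which $N(t)\to\infty$ at a \emph{finite} endpoint of $I$ (say $T_+<\infty$, $N(t)\to\infty$ as $t\to T_+$, with $I$ therefore semi-infinite after a time translation): one shows that $N(t)$ must behave like $(T_+-t)^{-1/2}$ up to bounded factors. The mechanism is that the local theory gives a lower bound on the length of the interval of existence in terms of $N$ — roughly, the solution cannot exist for time much longer than $N(t)^{-2}$ into the future once it is concentrated at frequency scale $N(t)$ — which combined with the almost periodicity forces $N(t)^{-2}\sim T_+-t$; one then rescales so $T_+=0$, time-reverses to put the interval as $(0,\infty)$, and normalises to get exactly $N(t)=t^{-1/2}$. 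One must also rule out mixed behaviour, e.g. $N$ bounded below near one endpoint but blowing up near the other — here the trick is that the bounded end can be "removed to infinity" by the same rescaling-limit argument, reducing to a cleaner configuration.

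I expect the main obstacle to be the bookkeeping in the case analysis — systematically enumerating the possible behaviours of $N$ at the two endpoints of $I$ and showing that symmetry reductions (scaling, time translation, time reversal, and the pseudoconformal transform) collapse all of them onto exactly I, II, III — together with the careful verification that each limiting solution extracted along a subsequence of times is genuinely a \emph{new} almost periodic blowup solution of no larger mass, so that the reduction does not lose the focusing constraint $M(u)<M(Q)$ or the spherical symmetry. The analytic heart — extracting the limit via compactness plus stability, and getting the two-sided bound $N(t)\sim(T_+-t)^{-1/2}$ in the self-similar case from the local existence time — is comparatively routine given Theorem~\ref{local} and Lemma~\ref{stab}; the delicate part is organising the argument so it is uniform in $\mu$ and $d$ and requires no new estimates beyond the local theory, as the theorem statement promises.
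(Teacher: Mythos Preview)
Your overall architecture---normalise, pass to limits along well-chosen time sequences, and invoke the stability lemma to show the limit is again an almost periodic blowup solution---is the same as the paper's. But the paper organises the trichotomy very differently, and your proposed case split has a real gap in the self-similar branch.

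You propose to split cases according to the $\liminf$/$\limsup$ of $N$ at the two endpoints of $I$. The paper does \emph{not} do this. Instead it introduces two auxiliary scalar quantities: an oscillation
\[
\osc(T) := \inf_{t_0 \in J} \frac{\sup\{N_v(t): |t-t_0|\le T N_v(t_0)^{-2}\}}{\inf\{N_v(t): |t-t_0|\le T N_v(t_0)^{-2}\}}
\]
and a two-sided decay ratio
\[
a(t_0) := \frac{\inf_{t\le t_0} N_v(t) + \inf_{t\ge t_0} N_v(t)}{N_v(t_0)}.
\]
The trichotomy is then: $\osc$ bounded gives the soliton; $\osc$ unbounded with $\inf a = 0$ gives the cascade; $\osc$ unbounded with $\inf a > 0$ gives the self-similar solution. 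These quantities are chosen so that each case directly hands you a good sequence $t_n$ along which to normalise and extract a limit. Your endpoint-based split does not obviously do this: for instance, if $N_v$ is bounded below but $\sup N_v = \infty$ on $J=\R$, it is not clear from your description which scenario you would land in or along which $t_n$ you would rescale.

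More seriously, your extraction of the self-similar solution contains an incorrect step. You write that local theory gives ``the solution cannot exist for time much longer than $N(t)^{-2}$ into the future once it is concentrated at frequency scale $N(t)$,'' and that this yields the two-sided bound $N(t)^{-2}\sim T_+-t$. The local theory gives only the \emph{lower} bound $N(t)\gtrsim (T_+-t)^{-1/2}$ (this is Corollary~\ref{blow}); there is no general upper bound of this form, since an almost periodic solution can perfectly well persist for arbitrarily long times at a fixed frequency scale. The paper obtains the matching upper bound by a genuinely different mechanism: in the case $\inf a > 0$ it shows that all sufficiently late times are ``future-focusing,'' and then uses the unboundedness of $\osc$ to build, recursively, a sequence $t_n$ with $N_v(t_{n+1})\ge 2 N_v(t_n)$ and $t_{n+1}-t_n \lesssim N_v(t_n)^{-2}$. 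Summing this geometric series is what forces a finite endpoint $\sup J$ and simultaneously yields $\sup J - t_n \sim N_v(t_n)^{-2}$. Without an argument of this type you cannot conclude $N(t)\lesssim (T_+-t)^{-1/2}$, and hence cannot normalise to $N(t)=t^{-1/2}$.

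Finally, two smaller points: the paper does not use the pseudoconformal transformation anywhere in this reduction (and remarks that doing so would spoil the uniform compactness modulus), and the normalisation $N\equiv 1$ in the soliton case comes simply from the freedom to multiply $N$ by a bounded function, not from any Lipschitz bound on $N'/N$.
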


\begin{remark}
Elementary scaling arguments show that one may assume that $N(t)$ is either bounded
from above or from below at least on half of its maximal lifespan.  This observation was used in \cite[Theorem 3.3]{tvz-higher}
and also in \cite{merlekenig, merle}.  However, the proof of Theorem~\ref{main} seems to require the
finer control given by Theorem \ref{comp} on the nature of the blowup as one approaches either endpoint of the interval $I$.
\end{remark}

The proof of Theorem~\ref{comp} can be found in Section~\ref{comp-sec}; it uses the basic
properties of almost periodic solutions developed in Section~\ref{apsec}.

Thus, by the end of Section~\ref{comp-sec} we have isolated our three main enemies (i.e. Scenarios I, II, and III); the remainder
of the paper is devoted to defeating them.  Our arguments to achieve this are very much specific to the
equation and depend heavily on the radial assumption.  In all three scenarios, the key step is to prove
that $u$ has additional regularity, indeed, more than one derivative in $L_x^2$.
In this regard, our approach differs somewhat from the strategy in
\cite{borg:scatter, ckstt:gwp, RV, tao:gwp radial, tvz-higher, thesis:art, Monica:thesis},
which primarily sought to use spacetime  estimates (especially those arising from Morawetz inequalities)
to exclude blowup scenarios; control of regularity
(that is, decay at very high or very low frequencies) only appeared later in the argument.
But in two dimensions, the Morawetz inequalities are unfavourable and we have to rely on the virial inequality instead
(see Proposition~\ref{eliminate-i}), which does not provide good long-term spacetime control.
Thus, we need to establish regularity even in the absence of such spacetime estimates.  In this regard,
our approach is more similar to papers such as \cite{merlekenig, nawa, tao:radialfocus, tao-cc}.  One reward
for making do with the virial identity is that it opens up the possibility of treating the focusing problem,
as was conclusively demonstrated by Kenig and Merle, \cite{merlekenig}.

At first glance, additional regularity may seem unreasonable since $u$ is
\emph{a priori} only known to have finite mass and dispersive
equations such as \eqref{nls} do not exhibit global smoothing
properties.  However, the solutions appearing in Theorem~\ref{comp}
are very special in nature, for example, they are almost periodic modulo symmetries.
The possibility of proving such regularity becomes still more plausible when one recalls
how Theorem~\ref{main-compact} is proved.  The solution $u$ is selected to have minimal mass
among all blowup solutions; this means that there can be no waste.  Adding high-frequency ripples
that do not directly contribute to blowup would constitute an example of such waste.  (The same
rationale explains why $u$ must concentrate in a single bubble both in space and in frequency.)
A further manifestation of this `eco-friendly' (that is, no waste) property is the absence
of a scattered wave at the endpoints of the lifespan $I$; more formally, we have the following
Duhamel formula, which plays an important role in proving additional regularity.

\begin{lemma}[{\cite[Section 6]{compact}}]\label{duhamel L}
Let $u$ be an almost periodic solution to \eqref{nls} on its maximal-lifespan $I$.  Then, for all $t\in I$,
\begin{equation}\label{duhamel}
\begin{aligned}
u(t)&=\lim_{T\nearrow\,\sup I}i\int_t^T e^{i(t-t')\Delta} F(u(t'))\,dt'\\
&=-\lim_{T\searrow\,\inf I}i\int_T^t e^{i(t-t')\Delta} F(u(t'))\,dt',
\end{aligned}
\end{equation}
as weak limits in $L_x^2$.
\end{lemma}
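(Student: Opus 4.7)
The plan is to derive \eqref{duhamel} from the ordinary Duhamel identity \eqref{old duhamel} by showing that the linear component dies weakly in $L^2_x$ at the endpoints of the lifespan. Applying \eqref{old duhamel} with $t_0=T$ and $t_1=t$ gives
\[
u(t) = e^{i(t-T)\Delta} u(T) + i\int_t^T e^{i(t-t')\Delta} F(u(t'))\, dt',
\]
so both identities in \eqref{duhamel} reduce to
\[
e^{i(t-T)\Delta} u(T) \rightharpoonup 0 \quad \text{in } L^2_x(\R^d) \quad \text{as } T \to \sup I,
\]
together with the analogous claim as $T \to \inf I$; the latter follows by the time-reversal symmetry of Lemma~\ref{sym}, so I focus on the forward case.

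To verify the weak limit I test against $\psi$ in a dense Schwartz subclass of $L^2_x$ (with $\hat\psi$ also Schwartz and compactly supported), rewriting the pairing as $\langle u(T), e^{-i(t-T)\Delta}\psi\rangle$. Almost periodicity modulo $G$ supplies, for every $\eta>0$, a decomposition $u(T) = u_{\rm loc}(T) + u_{\rm err}(T)$ with $\|u_{\rm err}(T)\|_{L^2_x} \lesssim \eta^{1/2}$ and $u_{\rm loc}(T)$ concentrated in the phase-space box of dimensions $C(\eta)/N(T)$ in $x$ and $C(\eta)N(T)$ in $\xi$, centered at $(x(T),\xi(T))$. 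The error contributes $O(\eta^{1/2}\|\psi\|_{L^2})$ uniformly in $T$. If $\sup I<\infty$, then maximality combined with the local theory (Theorem~\ref{local}, Lemma~\ref{stab}) forces $N(T)\to\infty$ as $T\nearrow\sup I$: otherwise a subsequence with $N(T_n)$ bounded would give a precompact orbit modulo $G$, hence a uniform local existence time allowing continuation of $u$ past $\sup I$. The shrinking spatial support then yields $\|u_{\rm loc}(T)\|_{L^1_x}\lesssim M(u)^{1/2}(C(\eta)/N(T))^{d/2}\to 0$, and pairing with the $L^\infty$-bounded factor $e^{-i(t-T)\Delta}\psi$ over the bounded interval $[t,\sup I)$ drives the main term to zero. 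If $\sup I=+\infty$, the dispersive bound $\|e^{-i(t-T)\Delta}\psi\|_{L^\infty_x}\lesssim |T-t|^{-d/2}$ controls the $L^1$--$L^\infty$ pairing unless $N(T)$ also tends to zero; in that residual sub-case I exploit the Fourier localization of $u_{\rm loc}(T)$, writing the pairing as a Fourier integral of $\hat u_{\rm loc}(T)$ against $e^{i(T-t)|\xi|^2}\,\overline{\hat\psi(\xi)}$ and invoking the Riemann--Lebesgue lemma on the compact support of $\hat\psi$. Sending $\eta\to 0$ concludes the weak vanishing.

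\textbf{Main obstacle.} The delicate regime is $\sup I=+\infty$ with $N(T)\to 0$ along some sequence: neither physical-space concentration nor dispersive decay alone suffices, and one must combine Fourier localization with an oscillatory integral argument uniformly in the symmetry parameters $(x(T),\xi(T),N(T))$. The cleanest resolution is a phase-space decomposition of $u(T)$ followed by Riemann--Lebesgue on a compact frequency set, leveraging the Schwartz decay of $\hat\psi$ to tame the rapidly oscillating phase $e^{i(T-t)|\xi|^2}$.
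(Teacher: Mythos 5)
Your reduction is correct and matches the approach in the cited reference: apply the Duhamel formula \eqref{old duhamel} between $t$ and $T$, and show that $e^{i(t-T)\Delta}u(T)\rightharpoonup 0$ in $L^2_x$ as $T$ tends to an endpoint of $I$, using the phase-space compactness from Definition~\ref{apdef} together with mass conservation. The finite-endpoint case is also handled correctly: Corollary~\ref{blow} gives $N(T)\to\infty$, the spatial truncation gives $\|u_{\rm loc}(T)\|_{L^1_x}\lesssim M(u)^{1/2}(C(\eta)/N(T))^{d/2}\to 0$, and this is paired against the uniformly $L^\infty_x$-bounded linear flow of a Schwartz test function.

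The flaw is in the ``residual sub-case'' for $\sup I=+\infty$. You write the pairing as
$\int \hat u_{\rm loc}(T)(\xi)\,e^{i(T-t)|\xi|^2}\,\overline{\hat\psi(\xi)}\,d\xi$
and appeal to Riemann--Lebesgue. But Riemann--Lebesgue requires a \emph{fixed} $L^1$ amplitude integrated against an increasingly oscillatory phase; here $\hat u_{\rm loc}(T)$ itself depends on $T$, so the lemma does not apply. In fact no oscillation is needed. If you instead take $u_{\rm loc}(T)$ to be the \emph{frequency} truncation $\widehat{u_{\rm loc}(T)}=\hat u(T)\,1_{|\xi-\xi(T)|\le C(\eta)N(T)}$, then Cauchy--Schwarz on a ball of measure $\sim(C(\eta)N(T))^d$ gives $\|\hat u_{\rm loc}(T)\|_{L^1_\xi}\lesssim M(u)^{1/2}(C(\eta)N(T))^{d/2}$, and since the phase is unimodular, $|\langle u_{\rm loc}(T),e^{-i(t-T)\Delta}\psi\rangle|\le\|\hat u_{\rm loc}(T)\|_{L^1_\xi}\|\hat\psi\|_{L^\infty_\xi}\to 0$ whenever $N(T)\to 0$. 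You should also tighten the dichotomy: for any sequence $T_n\to\infty$, the two bounds $(N(T_n)^2|T_n-t|)^{-d/2}$ (spatial truncation plus dispersive estimate) and $N(T_n)^{d/2}$ (frequency truncation) cannot both stay bounded away from zero, since the second forces $N(T_n)\gtrsim 1$ and then $|T_n-t|\to\infty$ drives the first to zero; taking the minimum of the two bounds therefore tends to zero along every sequence, which is what weak convergence requires. With this replacement and the cleaner dichotomy, the argument closes.
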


Additional regularity for the self-similar solution is proved in Section~\ref{ss-sec}.
The argument  relies on iterating various versions of the Strichartz inequality
(including a recent refinement of that inequality in the spherically symmetric case due to Shao \cite{Shuanglin}),
taking full advantage of the self-similarity to control the motion of mass between frequencies.
In the latter part of Section~\ref{ss-sec} we disprove the existence of the self-similar solution by noting that
$H_x^1$ solutions are global (see the discussion after Conjecture~\ref{conj}), while the self-similar solution is not.

For the remaining two cases, higher regularity is proved in Section~\ref{glob-sob-sec}.
The idea is to exploit the global existence, together with the almost periodicity modulo scaling,
by applying the Duhamel formula \eqref{duhamel} both in the past and in the future.
If done naively, neither of these Duhamel integrals will be absolutely convergent.
However, in Section~\ref{pseudo-sec} we describe the decomposition into incoming waves
(which we propagate backwards in time) and outgoing waves (which we propagate forward in time).
We can then take advantage of the radial symmetry, which concentrates the solution near the origin.
In this way, we obtain convergent integrals and regularity will then be obtained by a simple iteration argument.

In Section~\ref{hilo cascade-sec}, we use the additional regularity together with the conservation of energy
to preclude the double high-to-low frequency cascade.  In Section~\ref{soliton-sec}, we disprove the existence
of soliton-like solutions using a truncated virial identity in much the same manner as \cite{merlekenig}.

Corollary~\ref{cor:conc} is proved in Section~\ref{cor-sec}.

\subsection*{Acknowledgements}
We are grateful to Shuanglin Shao for access to preliminary drafts of his work.

R.~K. was supported by NSF grant DMS-0401277 and a Sloan Foundation Fellowship.   He is also
grateful to the Institute for Advanced Study (Princeton) for its hospitality.  T.~T. was supported by a grant from the
MacArthur Foundation.  M.~V. was supported under NSF grant DMS-0111298 and as a Liftoff Fellow of the Clay Mathematics Institute.

Any opinions, findings and conclusions or recommendations expressed are those of the authors and do not
reflect the views of the National Science Foundation.

\section{Notation and linear estimates}

This section contains the basic linear estimates we use repeatedly in the paper.

\subsection{Some notation}
We use $X \lesssim Y$ or $Y \gtrsim X$ whenever $X \leq CY$ for some constant $C>0$.  We use
$O(Y)$ to denote any quantity $X$ such that $|X| \lesssim Y$.  We use the notation $X \sim Y$
whenever $X \lesssim Y \lesssim X$.  The fact that these constants depend upon the dimension $d$
will be suppressed.  If $C$ depends upon some additional parameters, we will indicate this with subscripts;
for example, $X \lesssim_u Y$ denotes the assertion that $X \leq C_u Y$ for some $C_u$ depending on $u$;
similarly for $X \sim_u Y$, $X = O_u(Y)$, etc.

We use the `Japanese bracket' convention $\langle x \rangle := (1 +|x|^2)^{1/2}$.

We use $L^q_t L^r_{x}$ to denote the Banach space with norm
$$ \| u \|_{L^q_t L^r_x(\R \times \R^d)} := \Bigl(\int_\R \Bigl(\int_{\R^d} |u(t,x)|^r\ dx\Bigr)^{q/r}\ dt\Bigr)^{1/q},$$
with the usual modifications when $q$ or $r$ are equal to infinity, or when the domain $\R \times \R^d$
is replaced by a smaller region of spacetime such as $I \times \R^d$.  When $q=r$ we abbreviate
$L^q_t L^q_x$ as $L^q_{t,x}$.

\subsection{Basic harmonic analysis}
Let $\varphi(\xi)$ be a radial bump function supported in the ball $\{ \xi \in \R^d: |\xi| \leq \tfrac {11}{10} \}$ and equal to $1$
on the ball $\{ \xi \in \R^d: |\xi| \leq 1 \}$.  For each number $N > 0$, we define the Fourier multipliers
\begin{align*}
\widehat{P_{\leq N} f}(\xi) &:= \varphi(\xi/N) \hat f(\xi)\\
\widehat{P_{> N} f}(\xi) &:= (1 - \varphi(\xi/N)) \hat f(\xi)\\
\widehat{P_N f}(\xi) &:= \psi(\xi/N)\hat f(\xi) := (\varphi(\xi/N) - \varphi(2\xi/N)) \hat f(\xi).
\end{align*}
We similarly define $P_{<N}$ and $P_{\geq N}$.  We also define
$$ P_{M < \cdot \leq N} := P_{\leq N} - P_{\leq M} = \sum_{M < N' \leq N} P_{N'}$$
whenever $M < N$.  We will usually use these multipliers when $M$ and $N$ are \emph{dyadic numbers}
(that is, of the form $2^n$ for some integer $n$); in particular, all summations over $N$ or $M$ are understood
to be over dyadic numbers.  Nevertheless, it will occasionally be convenient to allow $M$ and $N$ to not be a power of $2$.
Note that $P_N$ is not truly a projection; to get around this, we will occasionally need to use fattened Littlewood-Paley operators:
\begin{equation}\label{PMtilde}
\tilde P_N := P_{N/2} + P_N +P_{2N}.
\end{equation}
These obey $P_N \tilde P_N = \tilde P_N P_N= P_N$.

As with all Fourier multipliers, the Littlewood-Paley operators commute
with the propagator $e^{it\Delta}$, as well as with differential operators such as $i\partial_t + \Delta$.
We will use basic properties of these operators many many times, including

\begin{lemma}[Bernstein estimates]\label{Bernstein}
For $1 \leq p \leq q \leq \infty$,
\begin{align*}
\|P_{\leq N} f\|_{L^q_x(\R^d)} &\lesssim N^{\frac{d}{p}-\frac{d}{q}} \|P_{\leq N} f\|_{L^p_x(\R^d)},\\
\|P_N f\|_{L^q_x(\R^d)} &\lesssim N^{\frac{d}{p}-\frac{d}{q}} \| P_N f\|_{L^p_x(\R^d)}.
\end{align*}
\end{lemma}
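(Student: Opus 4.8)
The plan is to realize each Littlewood--Paley operator as convolution with a rescaled Schwartz kernel and then invoke Young's convolution inequality; the dimensional constants coming from the $(2\pi)^{-d/2}$ in the Fourier convention of the excerpt are harmless and will be absorbed into $\lesssim$.

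First I would treat $P_{\leq N}$. Pick a radial Schwartz function $\tilde\varphi$ on $\R^d$ that equals $1$ on the support of $\varphi$ --- for instance $\tilde\varphi(\xi):=\varphi(\tfrac{10}{11}\xi)$, which equals $1$ for $|\xi|\le\tfrac{11}{10}$ --- so that $\tilde\varphi(\xi/N)\varphi(\xi/N)=\varphi(\xi/N)$ for every $N>0$. Denoting by $\tilde P_{\leq N}$ the Fourier multiplier with symbol $\tilde\varphi(\xi/N)$, this identity says $\tilde P_{\leq N}P_{\leq N}=P_{\leq N}$, and hence $P_{\leq N}f=K_N*(P_{\leq N}f)$ with $K_N(x)=c_d\,N^{d}\,\check{\tilde\varphi}(Nx)$ for a dimensional constant $c_d$. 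Given $1\le p\le q\le\infty$, Young's inequality with the exponent $r$ determined by $1+\tfrac1q=\tfrac1r+\tfrac1p$ (which indeed satisfies $1\le r\le\infty$ precisely because $p\le q$) gives
$$\|P_{\leq N}f\|_{L^q_x(\R^d)}\le\|K_N\|_{L^r_x(\R^d)}\,\|P_{\leq N}f\|_{L^p_x(\R^d)}.$$
A change of variables yields $\|K_N\|_{L^r_x}=c_d\,N^{d-d/r}\,\|\check{\tilde\varphi}\|_{L^r_x}$, and one checks $d-\tfrac dr=\tfrac dp-\tfrac dq$, while $\|\check{\tilde\varphi}\|_{L^r_x}<\infty$ for every $r\in[1,\infty]$ since $\check{\tilde\varphi}$ is Schwartz. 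This is the first claimed bound.

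The estimate for $P_N$ is identical in structure: replace $\varphi$ by $\psi$ and choose a fattened annular bump $\tilde\psi$ equal to $1$ on $\supp\psi$ --- one may simply take the symbol $\psi(2\xi/N)+\psi(\xi/N)+\psi(\xi/2N)$ of the fattened operator $\tilde P_N$ from \eqref{PMtilde}, which equals $1$ on the support of $\psi(\cdot/N)$ --- so that $\tilde P_N P_N=P_N$ and $P_Nf=L_N*(P_Nf)$ with $L_N(x)=c_d\,N^{d}\,\check{\tilde\psi}(Nx)$. Young's inequality together with the same scaling computation then finishes the proof.

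There is no genuine obstacle here; the one point that requires care is the book-keeping that forces the fattened multiplier into the argument. Convolving $f$ itself against $K_N$ would produce only $\|f\|_{L^p}$ on the right-hand side, whereas the statement asks for $\|P_{\leq N}f\|_{L^p}$ (and likewise for $P_N$); using $\tilde P_{\leq N}P_{\leq N}=P_{\leq N}$ is exactly what yields the stronger, self-improving form as written. The sole analytic input is the rapid decay of $\check\varphi$ and $\check\psi$, which guarantees that the convolution kernels lie in $L^r_x$ for every $r\in[1,\infty]$.
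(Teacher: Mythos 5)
Your proof is correct and is the standard argument: write $P_{\leq N}$ (resp.\ $P_N$) as composition with a fattened multiplier, realize that fattened multiplier as convolution with an $L^1$-normalized rescaled Schwartz kernel, and apply Young's inequality with the scaling $\|K_N\|_{L^r_x}\sim N^{d-d/r}=N^{d/p-d/q}$. The paper states Lemma~\ref{Bernstein} without proof, as is customary for this textbook estimate, so there is nothing to compare against; your observation that one must fatten to $\tilde P_{\leq N}$ (resp.\ $\tilde P_N$) to obtain the self-improving form with $\|P_{\leq N}f\|_{L^p}$ (rather than $\|f\|_{L^p}$) on the right-hand side is exactly the right point of care.
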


One of the ways in which we exploit the spherical symmetry assumption in this paper is by using the stronger spatial
decay as $|x| \to \infty$ that spherically symmetric functions enjoy.  One expression of this
is the following `endpoint' radial Sobolev embedding estimate.

\begin{lemma}[Radial Sobolev embedding]\label{radial embedding}
For spherically symmetric $f: \R^d \to \C$,
\begin{align}\label{endpoint rad sob}
\bigl\||x|^{\frac {d-1}2} f_N \bigr\|_{L^\infty_x} \lesssim N^{\frac 12}\|f_N\|_{L^2_x}
\end{align}
for any frequency $N\in 2^{\Z}$.
\end{lemma}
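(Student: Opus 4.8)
The plan is to reduce \eqref{endpoint rad sob} to the combination of a one-dimensional fundamental-theorem-of-calculus identity for radial functions with the Bernstein inequality (Lemma~\ref{Bernstein}). First I would set $g := f_N$; this is again spherically symmetric, since $P_N$ is a radial Fourier multiplier. We may assume $\|g\|_{L^2_x}<\infty$, as otherwise the right-hand side is infinite and there is nothing to prove. The key structural input is the frequency localization: because $\hat g$ is supported in the annulus $|\xi|\sim N$, it lies in $L^1_\xi$, so $g$ is continuous and, by the Riemann--Lebesgue lemma, $g(x)\to 0$ as $|x|\to\infty$; moreover $\nabla g\in L^2_x$ with the Bernstein bound $\|\nabla g\|_{L^2_x}\lesssim N\|g\|_{L^2_x}$.

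Abusing notation to write $g(r)$ for the common value of $g$ on the sphere $|x|=r$, the decay of $g$ at infinity and the fundamental theorem of calculus give
$$|g(r)|^2 = -\int_r^\infty \partial_s|g(s)|^2\, ds \leq 2\int_r^\infty |g(s)|\,|\partial_s g(s)|\, ds .$$
I would then multiply through by $r^{d-1}$, use $r\leq s$ on the range of integration, and apply Cauchy--Schwarz together with the polar-coordinate identities $\|g\|_{L^2_x}^2\sim\int_0^\infty|g(s)|^2 s^{d-1}\,ds$ and $\|\nabla g\|_{L^2_x}^2\sim\int_0^\infty|\partial_s g(s)|^2 s^{d-1}\,ds$:
$$r^{d-1}|g(r)|^2 \leq 2\int_r^\infty s^{d-1}|g(s)|\,|\partial_s g(s)|\, ds \lesssim \Bigl(\int_0^\infty s^{d-1}|g(s)|^2\, ds\Bigr)^{1/2}\Bigl(\int_0^\infty s^{d-1}|\partial_s g(s)|^2\, ds\Bigr)^{1/2} \lesssim \|g\|_{L^2_x}\,\|\nabla g\|_{L^2_x}.$$

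Taking the supremum over $r>0$ and then inserting the Bernstein estimate $\|\nabla g\|_{L^2_x}\lesssim N\|g\|_{L^2_x}$ yields $\||x|^{(d-1)/2} f_N\|_{L^\infty_x}^2\lesssim N\|f_N\|_{L^2_x}^2$, which is exactly \eqref{endpoint rad sob}. (When $d=1$ the weight $|x|^{(d-1)/2}$ is trivial and the assertion is just Bernstein applied to the even function $f_N$.) I do not expect any serious obstacle here; the only point that genuinely requires care is the justification of the fundamental-theorem-of-calculus step — namely that $g$ really does vanish at spatial infinity and that $\partial_r g$ is square-integrable against $r^{d-1}\,dr$ — and both of these are consequences of the frequency localization, as noted above.
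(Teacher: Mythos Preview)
Your argument is correct and follows essentially the same route as the paper: both prove the radial Gagliardo--Nirenberg inequality $\||x|^{(d-1)/2} g\|_{L^\infty_x}^2 \lesssim \|g\|_{L^2_x}\|\nabla g\|_{L^2_x}$ via the fundamental theorem of calculus and Cauchy--Schwarz, then feed in Bernstein. You are somewhat more careful than the paper in justifying the decay of $g$ at infinity needed for the FTC step, but the core idea is identical.
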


\begin{proof}
The claim follows from a more general inequality obeyed by radial functions,
\begin{equation}\label{radial GN}
\bigl\| |x|^{\frac {d-1}2} f \bigr\|_{L^\infty_x}^2 \lesssim \|f\|_{L^2_x} \|\nabla f\|_{L^2_x}.
\end{equation}
This in turn, can be deduced as follows.  By the Fundamental Theorem of Calculus and H\"older's inequality,
\begin{align*}
|g(r)|^2 &= \Bigl| \int_r^\infty g(\rho) g'(\rho) \,d\rho \Bigr|
        \leq r^{1-d} \Bigl| \int_r^\infty g(\rho) g'(\rho) \rho^{d-1} d\rho \Bigr| \\
&\leq r^{1-d} \| g \|_{L^2(\rho^{d-1}\,d\rho)}\| g' \|_{L^2(\rho^{d-1}\,d\rho)}
\end{align*}
for any Schwartz function $g:[0,\infty)\to\C$.  To obtain \eqref{radial GN}, we apply this to $g(|x|)=f(x)$.
\end{proof}

In a series of papers culminating in \cite{ChristKiselev}, Christ and Kiselev proved that under
certain hypotheses, truncations of bounded integral operators are themselves bounded.  We will
only use the simplest form of their results (the proof is based on a
Whitney decomposition of the half space $t<s$):

\begin{lemma}[Christ--Kiselev, \cite{ChristKiselev}]\label{L:ChristKiselev}
For each $s,t\in\R$, let $T(t,s):L^r_x\to L^{\tilde r}_x$ denote a linear operator.  If
\begin{equation*}
\Bigl\| \int_{\R} T(t,s) F(s)\,ds \Bigr\|_{L^q_t L^r_x} \lesssim \bigl\| F \bigr\|_{L^p_t L^{\tilde r}_x}
\end{equation*}
for some $q>p$, then
\begin{equation*}
\Bigl\| \int_{-\infty}^t T(t,s)F(s)\,ds \Bigr\|_{L^q_t L^r_x} \lesssim_{p,q} \bigl\| F \bigr\|_{L^p_t L^{\tilde r}_x}.
\end{equation*}
\end{lemma}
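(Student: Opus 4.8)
The final statement to prove is the Christ–Kiselev lemma (Lemma~\ref{L:ChristKiselev}): from the boundedness of the full integral operator $F \mapsto \int_\R T(t,s)F(s)\,ds$ from $L^p_t L^{\tilde r}_x$ to $L^q_t L^r_x$ with $q > p$, deduce the boundedness of the causal truncation $F \mapsto \int_{-\infty}^t T(t,s)F(s)\,ds$.

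The plan is to use the dyadic Whitney-type decomposition of the half-space $\{(t,s) : s < t\}$ that underlies all proofs of this result. First I would reduce to the case where $F$ is supported on a bounded time interval and has $\|F\|_{L^p_t L^{\tilde r}_x} = 1$; by monotone convergence it suffices to bound the truncated operator on such $F$. Using the nondecreasing function $t \mapsto \Phi(t) := \int_{-\infty}^t \|F(s)\|_{L^{\tilde r}_x}^p\,ds$, which runs from $0$ to $1$, one partitions $[0,1)$ dyadically: for each generation $k \geq 0$ write $[0,1) = \bigsqcup_{j} [ (j-1)2^{-k}, j2^{-k})$ and pull these back via $\Phi$ to time intervals $I_{k,j}$ on which $F$ carries mass exactly $2^{-k}$ (modulo the usual care with atoms of the measure, handled by splitting $F$). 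The key combinatorial fact is that the region $\{s < t\}$ can be written, up to a null set, as a disjoint union over $k$ and over appropriately paired intervals $I_{k,j} \times I_{k,j'}$ with $j' < j$ lying in ``cousin'' position, so that on each such rectangle the truncation $\int_{-\infty}^t$ agrees with the full integral over the $s$-interval.

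Next I would estimate the contribution of each generation $k$. On the rectangles of generation $k$ one replaces the truncated kernel by the full kernel $\int_{I_{k,j'}} T(t,s)F(s)\,ds$, applies the hypothesis to get an $L^q_t L^r_x$ bound over $I_{k,j}$ controlled by $\|F\|_{L^p_t L^{\tilde r}_x(I_{k,j'})} = 2^{-k/p}$, and then sums in $j$. Since for fixed $k$ the intervals $I_{k,j}$ have disjoint interiors, one assembles the pieces using the triangle inequality in $L^q_t L^r_x$ together with the fact that $q \geq 1$; the number of relevant pairs in generation $k$ is $O(2^k)$, but because the $t$-supports $I_{k,j}$ are essentially disjoint one gains, getting a bound of the form $2^{k/q} \cdot 2^{-k/p}$ for generation $k$ after using $\ell^q \hookrightarrow \ell^1$-type reasoning on the disjoint supports. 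Summing over $k \geq 0$ the geometric series $\sum_k 2^{-k(1/p - 1/q)}$ converges precisely because $q > p$, which is where the hypothesis $q > p$ is used and which also produces the dependence of the implicit constant on $p$ and $q$ only.

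The main obstacle, and the one point requiring genuine care rather than bookkeeping, is the combinatorial verification that the half-space $\{s < t\}$ is exactly covered (up to measure zero) by the disjoint family of product rectangles across all dyadic generations, together with the correct handling of the measure $d\Phi$ when it has atoms or fails to be absolutely continuous — in that case one first splits $F = F_{\mathrm{ac}} + F_{\mathrm{atom}}$, treats the atomic part directly (it is supported, in the $s$ variable, on a countable set, so the truncation only costs a single application of the hypothesis), and runs the Whitney argument on the remaining part where $\Phi$ is continuous. Once the decomposition is in place, each individual estimate is a direct invocation of the hypothesis plus Minkowski's inequality, and the summation is the elementary geometric series above. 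I would present the argument in the standard ``dyadic pigeonholing'' style, citing \cite{ChristKiselev} for the original and noting that only this simplest $T(t,s)$-version is needed.
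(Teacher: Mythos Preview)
The paper does not actually prove this lemma; it merely states it, cites \cite{ChristKiselev}, and remarks parenthetically that ``the proof is based on a Whitney decomposition of the half space $t<s$.'' Your proposal outlines exactly that Whitney decomposition argument, so it is correct and entirely in line with what the paper points to.
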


This lemma is handy because the Duhamel formula \eqref{old duhamel} comes with the constraint $s<t$,
but it is much easier to prove estimates after discarding it.

\subsection{Strichartz estimates}

Naturally, everything that we do for the nonlinear Schr\"odinger equation builds on basic properties of the
linear propagator $e^{it\Delta}$.

From the explicit formula
$$ e^{it\Delta} f(x) = \frac{1}{(4\pi i t)^{d/2}} \int_{\R^d} e^{i|x-y|^2/4t} f(y)\, dy,$$
we deduce the standard dispersive inequality
\begin{equation}\label{dispersive}
\| e^{it\Delta} f \|_{L^\infty(\R^d)} \lesssim \frac{1}{|t|^{d/2}} \| f \|_{L^1(\R^d)}
\end{equation}
for all $t\neq 0$.

Finer bounds on the (frequency localized) linear propagator can be derived using stationary phase:

\begin{lemma}[Kernel estimates]\label{lr:propag est L}
For any $m\geq 0$, the kernel of the linear propagator obeys the following estimates:
\begin{equation}\label{lr:large times}
\bigr| ( P_N e^{it\Delta} )(x,y) \bigl| \lesssim_m \begin{cases}
    |t|^{-d/2} & :\ |x-y| \sim N|t| \\[0.5ex]
    \displaystyle \frac{N^d}{|N^2t|^{m} \langle N|x-y| \rangle^{m} } \quad &:\ \text{otherwise}
    \end{cases}
\end{equation}
for $|t|\geq N^{-2}$ and
\begin{equation}\label{lr:small times}
\bigr| ( P_N e^{it\Delta} )(x,y) \bigl| \lesssim_m N^d \bigl\langle N|x-y| \bigr\rangle^{-m}
\end{equation}
for $|t|\leq  N^{-2}$.
\end{lemma}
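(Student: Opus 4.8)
The plan is to write the frequency-localized propagator as an oscillatory integral, normalize out the frequency scale, and then apply the method of stationary phase. Using the Fourier convention of the paper, the kernel is
\[
(P_N e^{it\Delta})(x,y) = (2\pi)^{-d}\int_{\R^d} e^{i(x-y)\cdot\xi - it|\xi|^2}\,\psi(\xi/N)\,d\xi ,
\]
where $\psi$ is the Littlewood--Paley symbol of $P_N$, supported where $|\xi/N|\sim 1$. Substituting $\xi = N\eta$ and writing $z := N(x-y)$, $s := N^2 t$, this becomes $(2\pi)^{-d} N^d\, I(s,z)$ with $I(s,z) := \int_{\R^d} e^{i(z\cdot\eta - s|\eta|^2)}\psi(\eta)\,d\eta$. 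The two claimed estimates are thus equivalent to the scale-free bounds: (a) $|I(s,z)|\lesssim_m \langle z\rangle^{-m}$ when $|s|\le 1$; (b) $|I(s,z)|\lesssim |s|^{-d/2}$ when $|s|\ge 1$ and $|z|\sim|s|$; (c) $|I(s,z)|\lesssim_m |s|^{-m}\langle z\rangle^{-m}$ when $|s|\ge 1$ and $|z|\not\sim|s|$. The phase $\phi(\eta):= z\cdot\eta - s|\eta|^2$ has $\nabla\phi(\eta) = z-2s\eta$ and constant Hessian $\nabla^2\phi = -2s\,\mathrm{Id}$; its unique critical point $\eta_\ast = z/(2s)$ lies in a fixed neighbourhood of $\supp\psi$ exactly when $|z|\sim|s|$, i.e.\ when $|x-y|\sim N|t|$. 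The constant hidden in "$|x-y|\sim N|t|$" is chosen here to make the case split below work.

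\textbf{Non-stationary regime.} First I would dispose of cases (a) and (c) by repeated integration by parts. On $\supp\psi$ one has $|\eta|\sim 1$, hence $|2s\eta|\sim|s|$, so whenever $|z|$ and $|s|$ are separated by a suitably large absolute constant (this is precisely the "otherwise" alternative) one has $|\nabla\phi|\gtrsim |z|+|s|$ throughout $\supp\psi$. Integrating by parts $k$ times against $L := (i|\nabla\phi|^2)^{-1}\nabla\phi\cdot\nabla$ produces only harmless amplitude factors — because $\nabla^2\phi$ is a bounded constant multiple of the identity and $\psi$ is a fixed bump — and yields $|I(s,z)|\lesssim_k (|z|+|s|)^{-k}$; the $k$-dependence of the constant is the source of the $\lesssim_m$. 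Combined with the trivial bound $|I(s,z)|\lesssim 1$ this settles everything outside the stationary region: for $|s|\le 1$ use the trivial bound when $|z| = O(1)$ and $k=m$ integrations by parts when $|z|$ is large (where $|\nabla\phi|\gtrsim|z|$), giving (a); for $|s|\ge 1$, $|z|\not\sim|s|$, take $k=2m$ and apply the elementary inequality $(|z|+|s|)^{-2m}\lesssim_m |s|^{-m}\langle z\rangle^{-m}$ (splitting $|z|=O(1)$ from $|z|$ large), giving (c). Unscaling by $(2\pi)^{-d}N^d$ reproduces the stated bounds.

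\textbf{Stationary regime.} In the remaining case $|s|\ge 1$, $|z|\sim|s|$, I would complete the square, $\phi(\eta) = \frac{|z|^2}{4s} - s\,|\eta-\eta_\ast|^2$, so that after translating by $\eta_\ast$,
\[
I(s,z) = e^{i|z|^2/(4s)}\int_{\R^d} e^{-is|\zeta|^2}\,\chi(\zeta)\,d\zeta , \qquad \chi(\zeta) := \psi(\zeta+\eta_\ast),
\]
where $\chi$ is a bump supported in a ball of fixed radius (recall $|\eta_\ast|\sim 1$ here) with $C^k$ norms bounded uniformly in $z,s$. The last integral is $O(|s|^{-d/2})$ by the standard stationary-phase estimate for the non-degenerate quadratic phase $-s|\zeta|^2$ (Hessian $-2s\,\mathrm{Id}$, with $|\det|\sim|s|^d\ge 1$), uniformly over such amplitudes. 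Unscaling, $(2\pi)^{-d}N^d\cdot|s|^{-d/2} = N^d(N^2|t|)^{-d/2} = |t|^{-d/2}$, which is (b).

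\textbf{Main obstacle.} The one point requiring genuine care is precisely the \emph{uniformity} of this stationary-phase estimate as the center $-\eta_\ast$ of $\supp\chi$ varies — in particular when $\eta_\ast$ approaches the boundary of $\supp\psi$. The standard remedy is to decompose $\chi = \chi_{\mathrm{near}}+\chi_{\mathrm{far}}$ into a piece supported in a small fixed ball about the origin, handled by a Taylor expansion of the amplitude together with the explicit Fresnel integral, and a piece supported away from the origin, on which $|\nabla(s|\zeta|^2)|\gtrsim|s|$ so that integration by parts yields the superior bound $O_m(|s|^{-m})$ (more than enough for $m\ge d/2$, and $|s|\ge 1$). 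Everything else — the non-stationary integrations by parts and the bookkeeping of the rescaling $\xi = N\eta$ — is routine.
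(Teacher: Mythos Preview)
Your proposal is correct and follows exactly the route the paper indicates (it merely states that these bounds ``can be derived using stationary phase'' and gives no further argument). One simplification: the stationary-regime bound $|I(s,z)|\lesssim|s|^{-d/2}$ is immediate from the dispersive inequality \eqref{dispersive} applied to the Schwartz function $\check\psi$, since $(P_Ne^{it\Delta})(x,y)=(e^{it\Delta}\check\psi_N)(x-y)$ with $\|\check\psi_N\|_{L^1}=O(1)$, so the uniformity issue you flag as the ``main obstacle'' does not actually arise.
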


We also record the following standard Strichartz estimate:

\begin{lemma}[Strichartz]\label{L:strichartz}  Let $I$ be an interval, let $t_0 \in I$, and let $u_0 \in L^2_x(\R^d)$
and $f \in L^{2(d+2)/(d+4)}_{t,x}(I \times \R^d)$.  Then, the function $u$ defined by
$$ u(t) := e^{i(t-t_0)\Delta} u_0 - i \int_{t_0}^t e^{i(t-t')\Delta} f(t')\ dt'$$
obeys the estimate
$$
\|u \|_{C^0_t L^2_x(I \times \R^d)} + \| u \|_{L^{2(d+2)/d}_{t,x}(I \times \R^d)}
    \lesssim \| u_0 \|_{L^2_x(\R^d)} + \|f\|_{L^{2(d+2)/(d+4)}_{t,x}(I \times \R^d)}.
$$
Moreover, if $d=2$ and both $u$ and $f$ are spherically symmetric,
$$ \| u \|_{L^2_t L^\infty_x(I \times \R^2)}
    \lesssim \| u_0 \|_{L^2_x(\R^2)} + \|f\|_{L^{4/3}_{t,x}(I \times \R^2)}.
$$
\end{lemma}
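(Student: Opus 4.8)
The first estimate is the classical Keel--Tao Strichartz inequality, so I would simply invoke it: the pair $(q,r)=(2(d+2)/d,\,2(d+2)/d)$ is admissible (it is the diagonal Strichartz exponent), so the homogeneous estimate $\|e^{i(t-t_0)\Delta}u_0\|_{L^{2(d+2)/d}_{t,x}}\lesssim\|u_0\|_{L^2_x}$ holds, as does the $C^0_tL^2_x$ bound by unitarity of $e^{it\Delta}$; the inhomogeneous estimate with the dual exponent $2(d+2)/(d+4)$ on the forcing term follows by the standard $TT^*$ argument together with the Christ--Kiselev lemma to remove the $t'<t$ restriction in the Duhamel integral. I would state this as ``follows from the usual Strichartz machinery'' with a reference to \cite{Strichartz} and a standard text.

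The substantive content is the radial improvement $\|u\|_{L^2_tL^\infty_x(I\times\R^2)}\lesssim\|u_0\|_{L^2_x}+\|f\|_{L^{4/3}_{t,x}}$ in two dimensions. The plan is to reduce it, exactly as for ordinary Strichartz, to the single homogeneous estimate
\[
\|e^{it\Delta}u_0\|_{L^2_tL^\infty_x(\R\times\R^2)}\lesssim\|u_0\|_{L^2_x}
\]
for spherically symmetric $u_0$, plus a $TT^*$-and-Christ--Kiselev step for the inhomogeneous piece. Indeed the dual/inhomogeneous pieces go through verbatim once the homogeneous bound is in hand: the operator $f\mapsto\int e^{i(t-t')\Delta}f(t')\,dt'$ is handled by composing the homogeneous estimate with its adjoint to get the $L^{4/3}_{t,x}\to L^2_tL^\infty_x$ bound, and Lemma~\ref{L:ChristKiselev} (with $q=2>4/3=p$) restores the retarded integral. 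So everything comes down to proving the displayed homogeneous radial estimate.

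For that, I would Littlewood--Paley decompose $u_0=\sum_N P_Nu_0$; by interpolation/orthogonality it suffices to prove the bound for each dyadic piece with a constant independent of $N$, and then the scaling symmetry $e^{it\Delta}$ enjoys (the $L^2_tL^\infty_x$ norm is scale-invariant in $d=2$) lets me normalize $N\sim1$. For a single frequency block, I would combine the dispersive decay $\|P_1e^{it\Delta}f\|_{L^\infty_x}\lesssim\langle t\rangle^{-1}\|f\|_{L^1_x}$ from \eqref{dispersive} (or better, the refined kernel bounds of Lemma~\ref{lr:propag est L}) with the extra spatial decay that radial functions possess, namely the radial Sobolev embedding $\||x|^{1/2}P_1g\|_{L^\infty_x}\lesssim\|P_1g\|_{L^2_x}$ of Lemma~\ref{radial embedding} (this is where $d=2$ enters through the exponent $(d-1)/2=1/2$). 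The point is that near the light cone $|x|\sim|t|$, where the dispersive bound is weakest, the factor $|x|^{-1/2}\sim|t|^{-1/2}$ supplied by radial Sobolev upgrades the decay enough; away from the cone the kernel of $P_1e^{it\Delta}$ is rapidly decaying and contributes nothing. Quantitatively this gives a fixed-frequency bound of the shape $\|P_1e^{it\Delta}u_0\|_{L^\infty_x}\lesssim|t|^{-1}\|u_0\|_{L^2_x}$, which after a $TT^*$ computation (bilinear form $\langle e^{it\Delta}u_0,\ \int e^{-is\Delta}F(s)\,ds\rangle$, Hardy--Littlewood--Sobolev in $t$ with the kernel $|t-s|^{-1}$ landing one at the endpoint) yields $\|P_1e^{it\Delta}u_0\|_{L^2_tL^\infty_x}\lesssim\|u_0\|_{L^2_x}$. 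Summing the $N$-uniform estimates over dyadic $N$ (using almost-orthogonality of $\{P_Nu_0\}$ in $L^2_x$ and that the $L^2_tL^\infty_x$ norm of a sum is controlled by the $\ell^2$ sum of the pieces, by Minkowski and the frequency separation) completes the proof.

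The main obstacle is the borderline nature of the $TT^*$ step: the effective kernel $|t-s|^{-1}$ in two dimensions is exactly the non-integrable endpoint of Hardy--Littlewood--Sobolev, so one cannot afford any loss and must exploit the decay genuinely, either by working with the sharp kernel estimates of Lemma~\ref{lr:propag est L} in the regime $|x-y|\sim N|t|$ or by a Hardy-type inequality that absorbs the $|t|^{-1/2}$ from radial Sobolev against the other factor. (Alternatively one can cite Shao's refined radial Strichartz estimates, as the paper does elsewhere, but the self-contained route above is the one I would write out.) A secondary technical point is justifying the reduction of the inhomogeneous estimate to the homogeneous one when the exponents $2$ and $4/3$ are not a dual admissible pair in the classical sense --- this is precisely what Lemma~\ref{L:ChristKiselev} is for, and I would be careful to check its hypothesis $q>p$, which holds since $2>4/3$.
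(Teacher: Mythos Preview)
The paper does not give a self-contained argument; it simply cites \cite{gv:Strichartz,Strichartz} for the first inequality and \cite{stefanov,tao:spherical} for the radial $L^2_tL^\infty_x$ endpoint in $d=2$, noting \cite{montg-s} for the failure without symmetry.  Your attempt to actually sketch a proof of the radial endpoint is commendable, but it contains two genuine gaps.

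First, the fixed-time bound $\|P_1 e^{it\Delta}u_0\|_{L^\infty_x}\lesssim|t|^{-1}\|u_0\|_{L^2_x}$ for radial $u_0$ is simply false.  Evaluate at $x=0$: the functional $u_0\mapsto(P_1 e^{it\Delta}u_0)(0)=\int e^{-it|\xi|^2}\psi(\xi)\hat u_0(\xi)\,d\xi$ has operator norm $\|\psi\|_{L^2_\xi}\sim 1$ on radial $L^2$, \emph{uniformly in $t$}.  Radial Sobolev (Lemma~\ref{radial embedding}) only supplies $|x|^{-1/2}$ spatial decay and says nothing at the origin, while the dispersive inequality \eqref{dispersive} needs $L^1_x$ input, not $L^2_x$.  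With only $\hat u_0\in L^2$ there is no mechanism for the phase $e^{-it|\xi|^2}$ to produce a uniform $|t|^{-1}$ gain; this is exactly why the endpoint cannot be reached by a fixed-time dispersive bound plus time integration, and why the $TT^*$ kernel sits at the forbidden HLS endpoint you yourself flag.  Your proposed fixes (sharper kernel bounds from Lemma~\ref{lr:propag est L}, or an unspecified Hardy inequality) do not resolve this: the obstruction is the lack of smoothness of $\hat u_0$, not a matter of sharpening the propagator kernel.

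Second, even granting a scale-invariant single-frequency bound $\|P_N e^{it\Delta}u_0\|_{L^2_tL^\infty_x}\lesssim\|P_N u_0\|_{L^2_x}$, your summation step fails: $L^\infty_x$ admits no Littlewood--Paley square-function estimate, so ``Minkowski and frequency separation'' cannot convert $(\sum_N\|P_N u_0\|_{L^2_x}^2)^{1/2}$ into control of $\|\sum_N P_N e^{it\Delta}u_0\|_{L^2_tL^\infty_x}$.  The triangle inequality gives only an $\ell^1$ sum over $N$, which is not bounded by $\|u_0\|_{L^2_x}$.

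The proofs in \cite{stefanov,tao:spherical} do not follow this Littlewood--Paley-plus-dispersion route.  Tao, for instance, works directly with the Hankel-transform representation of radial solutions and the $r^{-1/2}$ asymptotics of $J_0$, running what is effectively a Keel--Tao endpoint argument adapted to the radial sector; the extra $|x|^{-1/2}|y|^{-1/2}$ in the radial propagator kernel (cf.\ the computation inside the proof of Lemma~\ref{L:wes}) is what ultimately closes the estimate, but it must be used inside a bilinear/atomic scheme rather than a pointwise-in-$t$ bound.  If you want a self-contained write-up, that is the template to follow.
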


\begin{proof}
See, for example, \cite{gv:Strichartz, Strichartz} for the first inequality and
\cite{stefanov, tao:spherical} for the endpoint case in dimension $d=2$.
In \cite{montg-s} it is shown that the $L^2_t L^\infty_x$ endpoint fails without the
assumption of spherical symmetry.
\end{proof}

\begin{remark}
There are of course more Strichartz estimates available than the ones listed here, but these (together with their adjoints)
are the only ones we will need.  By the Duhamel formula \eqref{old duhamel}, one can of course write
$u(t_0)$ in place of $u_0$ and $iu_t + \Delta u$ in place of $f$.
\end{remark}

We will also need three variants of the Strichartz inequality.  We will only record the two-dimensional case of these.
The first is a bilinear variant, which will be useful for controlling interactions between widely separated frequencies:

\begin{lemma}[Bilinear Strichartz]\label{L:bilinear strichartz}
For any spacetime slab $I \times \R^2$, any $t_0 \in I$, and any $M, N > 0$, we have
\begin{align*}
\|( P_{\geq N} u) (P_{\leq M} v) \|_{L^2_{t,x}(I \times \R^2)}
& \lesssim \bigl( \tfrac{M}N \bigr)^\frac12 \bigl(\|P_{\geq N} u(t_0)\|_{L^2} + \| (i \partial_t + \Delta)
     P_{\geq N} u \|_{L^{\frac 43}_{t,x}(I\times\R^2)}\bigr)\\
&\qquad \times \bigl(\|P_{\leq M} v(t_0)\|_{L^2} + \| (i\partial_t + \Delta)P_{\leq M} v\|_{L^{\frac 43}_{t,x}(I\times\R^2)}\bigr),
\end{align*}
for all functions $u, v$ on $I$.
\end{lemma}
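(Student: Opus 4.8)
The plan is to reduce to the classical bilinear Strichartz estimate for the linear Schr\"odinger propagator and then remove the linear-solution hypothesis via the Christ--Kiselev lemma and the Duhamel formula. First I would treat the homogeneous case: if $u$ and $v$ are free solutions, $u=e^{i(t-t_0)\Delta}u_0$ and $v=e^{i(t-t_0)\Delta}v_0$ with $\hat u_0$ supported on $|\xi|\geq N$ and $\hat v_0$ supported on $|\xi|\leq M$, then one has
$$
\|(P_{\geq N}u)(P_{\leq M}v)\|_{L^2_{t,x}(\R\times\R^2)}\lesssim (M/N)^{1/2}\|P_{\geq N}u_0\|_{L^2}\|P_{\leq M}v_0\|_{L^2}.
$$
This is the standard bilinear refinement of Strichartz, going back to Bourgain and Klainerman--Machedon; in two dimensions it can be proved by writing the product on the Fourier side, $\widehat{uv}(\tau,\zeta)=\int \hat u_0(\xi)\hat v_0(\zeta-\xi)\delta(\tau+|\xi|^2+|\zeta-\xi|^2)\,d\xi$, applying Plancherel in $(\tau,\zeta)$, and using the separation $|\xi|\gtrsim N\gg M\gtrsim|\zeta-\xi|$ to see that the change of variables $\xi\mapsto(\tau,\zeta)$ on the paraboloid carries a Jacobian of size $\gtrsim |\xi-(\zeta-\xi)|\gtrsim N$, which produces the gain $(M/N)^{1/2}$ after a Cauchy--Schwarz in the remaining variable. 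I would simply cite this as the established $d=2$ bilinear estimate rather than reproduce it.

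Next I would pass from free solutions to solutions of the inhomogeneous equation. Write $w_1:=P_{\geq N}u$ and $w_2:=P_{\leq M}v$; by the Duhamel formula each $w_k$ equals its free evolution from $t_0$ plus $-i\int_{t_0}^t e^{i(t-s)\Delta}F_k(s)\,ds$ with $F_k=(i\partial_t+\Delta)w_k$ (note $F_1$ is frequency-supported in $|\xi|\geq N$, $F_2$ in $|\xi|\leq M$ since the Littlewood--Paley operators commute with $i\partial_t+\Delta$). Expanding the product $w_1 w_2$ into four bilinear pieces, I need the bilinear estimate for each pairing: (free,free), (free,Duhamel), (Duhamel,free), (Duhamel,Duhamel). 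For the mixed and double-Duhamel pieces, the strategy is first to prove the estimate with the truncated time integral $\int_{t_0}^t$ replaced by the full integral $\int_\R$ — which reduces to the free--free case by superposing over $s$ and using Minkowski's inequality, since $e^{i(t-s)\Delta}F_k(s)$ is for each fixed $s$ a free solution with data $F_k(s)$ — and then to reinstate the sharp-cutoff integral using the Christ--Kiselev lemma, Lemma~\ref{L:ChristKiselev}. Here the relevant exponents are $q=2$ on the output side (in the $L^2_{t,x}$ norm of the product) against $p=4/3<2$ coming from $F_k\in L^{4/3}_{t,x}$, so the hypothesis $q>p$ of Christ--Kiselev is satisfied; one applies it with the bilinear operator $T(t,s)$ viewed as acting on the forcing term being truncated while the other factor is held as a fixed free solution.

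The main obstacle will be the bookkeeping in the double-Duhamel term, where \emph{both} time integrals carry the constraint $s<t$ and one must apply Christ--Kiselev twice (once for each factor), making sure at each stage that the estimate being truncated has the correct form of a bounded bilinear operator in $(F_1,F_2)$ with a genuinely larger time-integrability exponent on the output than on the input — and that the $(M/N)^{1/2}$ constant and the multiplicative structure $\|\cdot\|\times\|\cdot\|$ are preserved through each truncation. A clean way to organize this is to prove, as an intermediate statement, the estimate
$$
\Bigl\|\Bigl(\int_\R e^{i(t-s)\Delta}F_1(s)\,ds\Bigr)\Bigl(\int_\R e^{i(t-s')\Delta}F_2(s')\,ds'\Bigr)\Bigr\|_{L^2_{t,x}}\lesssim (M/N)^{1/2}\|F_1\|_{L^{4/3}_{t,x}}\|F_2\|_{L^{4/3}_{t,x}}
$$
together with the analogous one-sided and zero-sided versions, and then peel off the cutoffs one at a time; everything else (Bernstein, Hölder in the time-support of $F_k$, summing the four pieces by the triangle inequality) is routine. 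I expect no difficulty from the spherical-symmetry or the dimension, since the estimate as stated makes no symmetry assumption and $d=2$ enters only through the bilinear constant.
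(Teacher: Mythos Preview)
The paper does not give a proof of this lemma at all; it simply cites \cite[Lemma~2.5]{Monica:thesis} and the earlier works \cite{borg:book, ckstt:gwp}. Your proposal is precisely the standard argument that appears in those references: establish the free--free bilinear estimate via Plancherel and the transversality of the paraboloid at separated frequencies (yielding the Jacobian gain $\gtrsim N$), then extend to inhomogeneous solutions by Duhamel and the Christ--Kiselev lemma, handling the double-Duhamel piece by freezing one factor at a time so that Christ--Kiselev applies to a linear operator with $q=2>p=4/3$. This is correct and is essentially the proof in the cited source, so there is nothing to add.
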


\begin{proof}
See \cite[Lemma 2.5]{Monica:thesis}, which builds on earlier versions in \cite{borg:book, ckstt:gwp}.
More general estimates hold for other dimensions and other exponents, but we will not need those here.
\end{proof}

Next, we observe a weighted endpoint Strichartz estimate, which exploits the spherical symmetry heavily
in order to obtain spatial decay. It is very useful in regions of space far from the origin $x=0$.

\begin{lemma}[Weighted endpoint Strichartz]\label{L:wes}  Let $I$ be an interval, let $t_0 \in I$, and let $u_0 \in L^2_x(\R^2)$
and $f \in L^{4/3}_{t,x}(I \times \R^2)$ be spherically symmetric.  Then, the function $u$ defined by
$$ u(t) := e^{i(t-t_0)\Delta} u_0 - i \int_{t_0}^t e^{i(t-t')\Delta} f(t')\ dt'$$
obeys the estimate
$$ \bigl\| |x|^{1/2} u \bigr\|_{L^4_t L^\infty_x(I \times \R^2)}
\lesssim \| u_0 \|_{L^2_x(\R^2)} + \|f\|_{L^{4/3}_{t,x}(I \times \R^2)}.$$
\end{lemma}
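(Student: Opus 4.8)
The plan is to combine the radial Sobolev embedding of Lemma~\ref{radial embedding} with the $d=2$ spherically symmetric endpoint estimate $\|\,\cdot\,\|_{L^2_t L^\infty_x}$ from Lemma~\ref{L:strichartz}, and to interpolate/sum the resulting frequency-localized bounds. Since $e^{it\Delta}$ commutes with the Littlewood--Paley projections $P_N$, and both $u_0$ and $f$ may be split as $u_0 = \sum_N P_N u_0$, $f = \sum_N P_N f$, it suffices to control $\bigl\| |x|^{1/2} P_N u \bigr\|_{L^4_t L^\infty_x}$ for each dyadic $N$, where $P_N u$ is the Duhamel solution with data $P_N u_0$ and forcing $P_N f$. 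For a single piece, two competing estimates are available: on one hand, the radial pointwise bound \eqref{endpoint rad sob} gives $\bigl\| |x|^{1/2} P_N u(t) \bigr\|_{L^\infty_x} \lesssim N^{1/2} \| P_N u(t) \|_{L^2_x}$, which together with $L^\infty_t L^2_x$ Strichartz yields $\bigl\| |x|^{1/2} P_N u \bigr\|_{L^\infty_t L^\infty_x} \lesssim N^{1/2}\bigl(\|P_N u_0\|_{L^2_x} + \|P_N f\|_{L^{4/3}_{t,x}}\bigr)$; on the other hand, $|x|^{1/2}$ is merely a (space-dependent) weight of size $\lesssim$ wherever the kernel estimates of Lemma~\ref{lr:propag est L} show $P_N e^{it\Delta}$ to concentrate, and the spherically symmetric $L^2_t L^\infty_x$ estimate provides decay in $t$. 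Interpolating the $L^\infty_t$-in-time bound with an $L^2_t$-in-time bound (both with values in a weighted $L^\infty_x$) produces the desired $L^4_t$ estimate, and one then sums over $N$.

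More concretely, the two endpoint inputs I would prove for each dyadic $N$ are:
\begin{itemize}
\item[(a)] $\bigl\| |x|^{1/2} P_N u \bigr\|_{L^\infty_t L^\infty_x(I\times\R^2)} \lesssim N^{1/2}\bigl(\|P_N u_0\|_{L^2_x} + \|P_N f\|_{L^{4/3}_{t,x}}\bigr)$, via Lemma~\ref{radial embedding} plus Strichartz;
\item[(b)] $\bigl\| |x|^{-1/2} P_N u \bigr\|_{L^2_t L^\infty_x(I\times\R^2)} \lesssim N^{-1/2}\bigl(\|P_N u_0\|_{L^2_x} + \|P_N f\|_{L^{4/3}_{t,x}}\bigr)$; but wait --- a negative power of $|x|$ is not integrable at the origin against $L^\infty_x$, so (b) as stated is not quite the right companion.
\end{itemize}
The cleaner route is to interpolate in a way that keeps a fixed positive power $|x|^{1/2}$: write $|x|^{1/2} = |x|^{1/2}$ and split $\R^2$ into the region $|x| \le 1/N$ and $|x| > 1/N$. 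On $|x| \le 1/N$ one has $|x|^{1/2} \le N^{-1/2}$, so one only needs $\|P_N u\|_{L^4_t L^\infty_x} \lesssim N^{1/2}(\cdots)$, which follows from Bernstein (Lemma~\ref{Bernstein}) turning $L^\infty_x$ into $L^2_x$ at cost $N$, together with an $L^4_t L^2_x$ Strichartz bound --- or more efficiently by interpolating the spherically symmetric $L^2_t L^\infty_x$ estimate with the $L^\infty_t L^\infty_x$ estimate from Bernstein plus $L^\infty_t L^2_x$. On $|x| > 1/N$ one instead uses the kernel bound \eqref{lr:large times}: where the weight $|x|^{1/2}$ is large, say $|x| \sim R \gg 1/N$, the kernel is supported (up to rapidly decaying tails) on $|x-y| \sim N|t|$ with size $|t|^{-1}$, so $R \lesssim N|t|$ there, i.e. $|x|^{1/2} \lesssim (N|t|)^{1/2}$; combining $|x|^{1/2}\lesssim (N|t|)^{1/2}$ with the dispersive-type decay of $P_N e^{it\Delta}$ and then applying the spherically symmetric endpoint $L^2_t L^\infty_x$ bound after paying the $(N|t|)^{1/2}$ factor gives exactly the $L^4_t$ integrability in time. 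The homogeneous and Duhamel parts are handled the same way, using the Christ--Kiselev Lemma~\ref{L:ChristKiselev} to discard the $t'<t$ constraint in \eqref{old duhamel} once one has the corresponding untruncated bound (noting $4 > 4/3$, so the exponents are admissible).

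The main obstacle I anticipate is the bookkeeping on the region $|x| > 1/N$: one must convert the weight $|x|^{1/2}$, which grows without bound, into a gain using the finite-speed-of-propagation structure encoded in the kernel estimate \eqref{lr:large times}, and do so uniformly enough in $N$ to sum the dyadic pieces. The rapidly-decaying "otherwise" tail in \eqref{lr:large times} makes the contribution of $|x-y| \not\sim N|t|$ harmless, but one needs to be careful that after multiplying by $|x|^{1/2}$ the resulting operator still maps $L^{4/3}_{t,x} \to L^4_t L^\infty_x$ with an $N$-independent constant; the gain $|x|^{1/2}\lesssim (N|t|)^{1/2}$ on the support trades precisely one half-power of time-decay for the weight, which is what matches $L^2_t$ up to $L^4_t$ via the extra room in the spherically symmetric Strichartz estimate. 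Once the single-frequency bound $\bigl\| |x|^{1/2} P_N u \bigr\|_{L^4_t L^\infty_x} \lesssim \|P_N u_0\|_{L^2_x} + \|P_N f\|_{L^{4/3}_{t,x}}$ is in hand with no loss in $N$, summing in $N$ (using $\ell^2$-orthogonality of the data pieces in $L^2_x$, the triangle inequality in $L^{4/3}_{t,x}$ after Littlewood--Paley square-function bounds, and the triangle inequality in $L^4_t L^\infty_x$ with Cauchy--Schwarz in $N$ absorbed by a tiny $N^{\pm\epsilon}$ gain obtained by interpolating (a) against the reversed estimate) completes the proof.
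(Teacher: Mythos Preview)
Your frequency-decomposition strategy is far more elaborate than what is needed, and as written it has a genuine gap at the summation step. The paper's proof avoids Littlewood--Paley entirely: one first reduces (via the usual Strichartz estimate and Lemma~\ref{L:ChristKiselev}) to the homogeneous bound
\[
\bigl\| |x|^{1/2} e^{it\Delta} u_0 \bigr\|_{L^4_t L^\infty_x(\R\times\R^2)} \lesssim \|u_0\|_{L^2_x},
\]
then by $TT^*$ and Hardy--Littlewood--Sobolev this follows from the \emph{weighted dispersive} inequality
\[
\bigl\| |x|^{1/2} e^{it\Delta} |x|^{1/2} \bigr\|_{L^1_x\to L^\infty_x} \lesssim |t|^{-1/2}.
\]
The last display is immediate from the pointwise bound $|e^{it\Delta}(x,y)| \lesssim (|x|\,|y|\,|t|)^{-1/2}$ for the radial Schr\"odinger kernel, obtained by stationary phase on the Bessel-function representation. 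No frequency localisation, no spatial splitting, no interpolation.

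The specific place your plan breaks is the sum over $N$. Even if you establish the per-frequency bound
\[
\bigl\| |x|^{1/2} P_N u \bigr\|_{L^4_t L^\infty_x} \lesssim \|P_N u_0\|_{L^2_x} + \|P_N f\|_{L^{4/3}_{t,x}}
\]
with an $N$-independent constant, you cannot conclude the lemma: the triangle inequality in $L^4_t L^\infty_x$ forces an $\ell^1$ sum on the left (there is no Littlewood--Paley square-function theory in $L^\infty_x$), whereas the right-hand side is only $\ell^2$-summable in $N$. You acknowledge needing an $N^{\pm\eps}$ gain, but none is produced. On the region $|x|\le 1/N$ your own computation gives exactly $N^0$: interpolating $\|P_N u\|_{L^2_tL^\infty_x}\lesssim A_N$ with $\|P_N u\|_{L^\infty_tL^\infty_x}\lesssim N A_N$ yields $\|P_N u\|_{L^4_tL^\infty_x}\lesssim N^{1/2}A_N$, which the weight bound $|x|^{1/2}\le N^{-1/2}$ brings back only to $A_N$. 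On the region $|x|>1/N$ the kernel heuristic ``$|x|^{1/2}\lesssim (N|t|)^{1/2}$ on the support'' is too vague to extract a gain either (and for the Duhamel term the relevant variable is $t-t'$, with $y$ unconstrained, so the support claim as stated is not correct). The phrase ``interpolating (a) against the reversed estimate'' is not given any content. Without the $\eps$-gain, the argument does not close; the direct weighted-dispersive route sidesteps all of this.
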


\begin{proof}
As in the usual proof of Strichartz inequality, the main ingredients are a dispersive estimate and fractional integration.

Using Lemma \ref{L:strichartz} and the Christ-Kiselev lemma, we see that it suffices to prove the free estimate
\begin{align}\label{wait S}
 \bigl\| |x|^{1/2} e^{it\Delta} u_0 \bigr\|_{L^4_t L^\infty_x(\R \times \R^2)} \lesssim \| u_0 \|_{L^2_x(\R^2)}
\end{align}
for spherically symmetric $u_0$. By the $TT^*$ method, this is equivalent to
$$
\Bigl \| \int_\R |x|^{1/2} e^{i(t-t')\Delta} |x|^{1/2} f(t')\ dt' \Bigr\|_{L^4_t L^\infty_x(\R \times \R^2)}
\lesssim \| f \|_{L^{4/3}_t L^1_x(\R \times \R^2)}.
$$
By the Hardy-Littlewood-Sobolev theorem of fractional integration, it thus suffices to prove the following
weighted dispersive inequality:
$$ \bigl\| |x|^{1/2} e^{i(t-t')\Delta} |x|^{1/2} f(t') \bigr\|_{L^\infty_x(\R^2)} \lesssim |t-t'|^{-1/2} \|f(t')\|_{L^1_x(\R^2)}.$$
In order to achieve this, we note that for radial functions the propagator has integral kernel
$$
e^{it\Delta}(x,y) = \frac1{4\pi it}\int_0^{2\pi} \exp\Bigl\{\frac{i|x|^2+i|y|^2-2i|x||y|\cos(\theta)}{4t}\Bigr\}
        \,\frac{d\theta}{2\pi}.
$$
Thus, a standard application of stationary phase (or the behaviour of the $J_0$ Bessel function) reveals that
$|e^{it \Delta }(x,y)| \lesssim ( |x||y| |t|)^{-1/2}$.  In particular,
$$
\bigl\| |x|^{\frac12}e^{it\Delta}|x|^{\frac12} \bigr\|_{L^1_x\to L^\infty_x} \lesssim |t|^{-1/2}
$$
and the claim follows.
\end{proof}

We will rely crucially on a slightly different type of improvement to the Strichartz inequality
in the spherically symmetric case due to Shao \cite{Shuanglin}, which improves the spacetime decay of the solution
after localizing in frequency:

\begin{lemma}[Shao's Strichartz Estimate, {\cite[Corollary~6.2]{Shuanglin}}]\label{L:Shuanglin}
For $f\in L^2_\rad(\R^2)$ we have
\begin{equation}\label{E:Shuanglin}
\| P_N e^{it\Delta} f \|_{L^q_{t,x}(\R\times\R^2)} \lesssim_q N^{1-\frac4q} \| f \|_{L^2_x(\R^2)},
\end{equation}
provided $q>\frac{10}3$.
\end{lemma}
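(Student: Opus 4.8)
The plan is to prove this by a $TT^*$/Stein--Tomas argument, using parabolic rescaling to normalize the frequency and exploiting the spherical symmetry (through Bessel-function asymptotics) to extract more decay from the propagator than the bare dispersive estimate provides. First, a parabolic rescaling $f(x)\mapsto f(Nx)$ conjugates $P_Ne^{it\Delta}$ to $P_1e^{it\Delta}$ and, after tracking Jacobians, converts the claimed bound into the fixed-frequency estimate
\[
\| P_1 e^{it\Delta} f \|_{L^q_{t,x}(\R\times\R^2)} \lesssim_q \| f \|_{L^2_x(\R^2)}
\]
for spherically symmetric $f$; the weight $N^{1-4/q}$ then reappears from the rescaling. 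The range $q\ge 4$ is elementary — interpolate the standard $L^4_{t,x}$ Strichartz bound with the Bernstein bound $\|P_1e^{it\Delta}f\|_{L^\infty_{t,x}}\lesssim\|f\|_{L^2_x}$ (Lemmas~\ref{L:strichartz} and~\ref{Bernstein}), and spherical symmetry is not needed — so I would fix $q\in(\tfrac{10}{3},4)$.

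Next, since $P_1e^{it\Delta}$ and its adjoint preserve spherical symmetry, duality and the $TT^*$ identity reduce the estimate to
\[
\Bigl\| \int_{\R} P_1^2 e^{i(t-s)\Delta} F(s,\cdot)\, ds \Bigr\|_{L^q_t L^q_x(\R\times\R^2)} \lesssim_q \| F \|_{L^{q'}_t L^{q'}_x(\R\times\R^2)}
\]
for spherically symmetric $F$, where $q'$ is the dual exponent. I would split the $s$-integral dyadically according to $|t-s|\sim 2^k$. The pieces with $2^k\lesssim 1$ are harmless: there $P_1^2e^{i\tau\Delta}$ maps $L^{q'}_x\to L^q_x$ with norm $O(1)$ (interpolating the $O(1)$ bounds on $L^2_x\to L^2_x$ and, via~\eqref{lr:small times}, on $L^1_x\to L^\infty_x$), and after a Young inequality in $t$ one picks up only a factor $2^{2k/q}$, which is summable over $k\le 0$. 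The substance is the regime $2^k\gg 1$.

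For $|\tau|\sim 2^k\gg 1$ I would estimate the radial kernel $\kappa_\tau(r,\rho)$ of $P_1^2e^{i\tau\Delta}$, acting on $L^2((0,\infty),r\,dr)\cong L^2_{\rad}(\R^2)$. As in the proof of Lemma~\ref{L:wes}, the angular integral in $e^{i\tau\Delta}(x,y)$ is a Bessel function; inserting the asymptotic expansion $J_0(z)=\langle z\rangle^{-1/2}(a_+(z)e^{iz}+a_-(z)e^{-iz})$ with $|a_\pm^{(j)}(z)|\lesssim_j\langle z\rangle^{-j}$, and using that the frequency cutoff confines the Hankel variable $\sigma$ to size $\sim 1$, one is left with oscillatory integrals in $\sigma$ with phase $-\tau\sigma^2\pm r\sigma\pm\rho\sigma$ and amplitude of size $\langle r\rangle^{-1/2}\langle\rho\rangle^{-1/2}$. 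Stationary phase (the second derivative of the phase is $\sim|\tau|$) then yields, for any $m$,
\[
|\kappa_\tau(r,\rho)| \lesssim_m \langle r\rangle^{-1/2}\langle\rho\rangle^{-1/2}\Bigl( |\tau|^{-1/2}\,\mathbf 1_{\{\,r+\rho\sim|\tau|\ \text{or}\ |r-\rho|\sim|\tau|\,\}} + \langle r+\rho+|\tau|\rangle^{-m}\Bigr).
\]
The extra weight $\langle r\rangle^{-1/2}\langle\rho\rangle^{-1/2}$, beyond the generic dispersive decay $|\tau|^{-1}$, is precisely the dividend of spherical symmetry: through $L^{q'}_x\to L^q_x$ bounds it makes $P_1^2e^{i\tau\Delta}$ behave like a more dispersive (effectively higher-dimensional) problem. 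Combining this kernel bound with the uniform $L^2_x\to L^2_x$ bound and interpolating, I expect $\|P_1^2e^{i\tau\Delta}\|_{L^{q'}_x\to L^q_x}\lesssim 2^{-k\beta(q)}$ with $\beta(q)>\tfrac2q$ exactly when $q>\tfrac{10}{3}$; a Young/Minkowski estimate in $t$ then bounds the $k$-th dyadic piece by $2^{-k\beta(q)}2^{2k/q}\|F\|_{L^{q'}_{t,x}}$, so the sum over $k\ge 0$ is a convergent geometric series and the $TT^*$ estimate closes.

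The main obstacle is this last step: one has to track the interplay between the stationary-phase gain $|\tau|^{-1/2}$ in the Hankel variable, the spatial weights $\langle r\rangle^{-1/2}\langle\rho\rangle^{-1/2}$, and the concentration of the kernel on the ``light cone'' $r+\rho\sim|\tau|$ (and on $|r-\rho|\sim|\tau|$, which includes the case where one variable is $\sim|\tau|$ and the other is $O(1)$) carefully enough that the geometric series converges all the way down to $q=\tfrac{10}{3}$ rather than stalling at $q=4$. This crude dyadic argument will not reach the endpoint $q=\tfrac{10}{3}$ itself — consistent with the strict inequality in the statement — and recovering it would require a restricted-strong-type refinement together with real interpolation, in the spirit of the classical Stein--Tomas theorem.
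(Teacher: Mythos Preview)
The paper does not give its own proof of this lemma; it is quoted directly from Shao's preprint, so there is no argument in the paper to compare against. I can therefore only assess your outline on its own merits.

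The scaffolding is sound --- the parabolic rescaling to $N=1$, the disposal of $q\ge 4$ by interpolating Strichartz with Bernstein, the $TT^*$ reduction, the dyadic decomposition in $|t-s|$, and the stationary-phase kernel bound
\[
|\kappa_\tau(r,\rho)|\lesssim |\tau|^{-1/2}\langle r\rangle^{-1/2}\langle\rho\rangle^{-1/2}\qquad\text{on the set }\ \pm r\pm\rho\sim|\tau|
\]
are all correct and are exactly the ingredients one would expect. The problem is the step you yourself flag as ``the main obstacle'': the claim that interpolating this kernel bound against the unitary $L^2\to L^2$ bound yields $\|P_1^2 e^{i\tau\Delta}\|_{L^{q'}_{\rad}\to L^q_{\rad}}\lesssim|\tau|^{-\beta(q)}$ with $\beta(q)>2/q$ precisely for $q>10/3$.

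As written, this does not follow. The pointwise supremum of your kernel bound over the light-cone region is only $\sim|\tau|^{-1}$, attained when one of $r,\rho$ is $O(1)$ and the other is $\sim|\tau|$. Riesz--Thorin between $\|K_\tau\|_{L^1\to L^\infty}\lesssim|\tau|^{-1}$ and $\|K_\tau\|_{L^2\to L^2}\lesssim 1$ therefore gives $\beta(q)=1-2/q$, and the requirement $\beta(q)>2/q$ stalls at $q>4$ --- exactly the range you already handled trivially. The extra half-power hidden in the weights $\langle r\rangle^{-1/2}\langle\rho\rangle^{-1/2}$ does \emph{not} transfer automatically into operator-norm decay via complex interpolation; it only pays off on the bulk region $r,\rho\sim|\tau|$, where the kernel is genuinely $\sim|\tau|^{-3/2}$, and the near-origin pieces must be handled separately using their small measure.

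One clean way to close the gap (and to see why the threshold is exactly $10/3$) is to bypass $TT^*$ and estimate $\|P_1e^{it\Delta}f\|_{L^q_{t,x}}$ directly: for $|x|\gtrsim1$ the Bessel asymptotics give $P_1e^{it\Delta}f(x)\approx|x|^{-1/2}w(t,|x|)$ with $w$ a one-dimensional, unit-frequency-localized free evolution of a function $H$ with $\|H\|_{L^2(\R)}\sim\|f\|_{L^2(\R^2)}$. Passing to polar coordinates, the problem becomes bounding $\sum_{T}T^{1-q/2}\|w\|_{L^q(\{|t|\sim T\}\times\R)}^q$, where the factor $T^{1-q/2}$ comes from the weight $|x|^{1-q/2}$ and the fact that $w(t,\cdot)$ is concentrated at $r\sim|t|$. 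Interpolating the 1D Strichartz bound $\|w\|_{L^6_{t,r}}\lesssim\|H\|_2$ with $\|w(t)\|_{L^2_r}=\|H\|_2$ and applying H\"older in the dyadic $T$-sum, one finds the series converges exactly when $(10-3q)/4<0$, i.e.\ $q>10/3$. Without some argument of this kind --- a further spatial decomposition, or a passage through the 1D Strichartz estimate --- your dyadic $TT^*$ sum will not converge below $q=4$.
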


The key point for us is that $q$ can go below $4$, which is the exponent given by Lemma~\ref{L:strichartz}.
The Knapp counterexample (a wave-packet whose momentum is concentrated in a single direction) shows that
such an improvement is not possible without the radial assumption.  One can combine this estimate with the usual
Strichartz estimates by the standard Christ-Kiselev trick to obtain similar estimates for solutions
to an inhomogeneous Schr\"odinger equation, but we will not need those estimates here.

Our last lemma is essentially Bourgain's concentration argument from \cite[\S2--3]{bourg.2d}.
It may be interpreted as an inverse Strichartz inequality.
It roughly says that the Strichartz norm cannot be large without there being a bubble of concentration in spacetime.
While results of this genre constitute an important precursor to the concentration compactness technique, our
only application of it will be in the proof of Corollary~\ref{cor:conc}; see Section~\ref{cor-sec}.

\begin{lemma}\label{L:B conc}
Given $\phi\in L^2_x(\R^2)$ and $\eta>0$, there exists $C=C(M(\phi), \eta)$ so that if
$$
\int_I \int_{\R^2} \bigl|e^{it\Delta} \phi \bigr|^4 \,dx\,dt \geq \eta
$$
for some interval $I\subseteq \R$, then there exist $x_0, \xi_0\in\R^2$ and $J\subseteq I$ so that
\begin{equation}\label{B conc}
\int_{|x-x_0-2t\xi_0|\leq C |J|^{1/2}} \bigl|e^{it\Delta} \phi \bigr|^2 \,dx \geq C^{-1} \quad\text{for all} \quad t\in J.
\end{equation}
(Notice that $C$ does not depend on $I$ or $J$.)  If $\phi$ is spherically symmetric, then we may take $x_0=\xi_0=0$.
\end{lemma}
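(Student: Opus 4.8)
The plan is to run Bourgain's concentration (inverse Strichartz) argument from \cite{bourg.2d}: a large $L^4_{t,x}$ norm of $e^{it\Delta}\phi$ must be produced by a single ``bubble'' sitting on a parabolic box of some scale $N$, and that bubble can then be propagated over a time interval of length $\sim N^{-2}$. Write $A := M(\phi)^{1/2}$ and recall $2(d+2)/d = 4$ in $d = 2$, so that $L^4_{t,x}$ is precisely the Strichartz space of Lemma~\ref{L:strichartz}. The first step is to upgrade the hypothesis $\int_I\int_{\R^2}|e^{it\Delta}\phi|^4\,dx\,dt \ge \eta$ using a refined Strichartz inequality in the spirit of \cite{bourg.2d}: decomposing $e^{it\Delta}\phi$ into Littlewood--Paley pieces and then into frequency cubes, the curvature of the paraboloid --- quantified through the bilinear estimate of Lemma~\ref{L:bilinear strichartz} and the near-disjointness of the Fourier supports of the products $e^{it\Delta}\phi_Q\,e^{it\Delta}\phi_{Q'}$ --- lets one, after pigeonholing, locate a dyadic $N$, a frequency centre $\xi_0 \in \R^2$, a time $t_* \in I$, and a point $x_* \in \R^2$ so that the frequency-localised piece $\phi_\sharp := \tilde P_{N,\xi_0}\phi$ (a smooth Fourier cutoff to the cube of sidelength $N$ about $\xi_0$) obeys, after one application of Bernstein within the localising box,
\begin{equation*}
\bigl|(e^{it_*\Delta}\phi_\sharp)(x_*)\bigr| \;\gtrsim\; \delta\, N, \qquad \delta = \delta(\eta, M(\phi)) > 0 .
\end{equation*}
If $\phi$ is spherically symmetric then $\hat\phi$ is radial and one runs this with cubes (equivalently annuli) centred at the origin, so that $\xi_0 = 0$. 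One also records that this forces $|I| \gtrsim_{\eta, M(\phi)} N^{-2}$: otherwise Bernstein (Lemma~\ref{Bernstein}) gives $\|e^{it\Delta}\phi_\sharp\|_{L^4_{t,x}(I\times\R^2)} \lesssim |I|^{1/4}N^{1/2}A$, which sits below the extraction threshold.

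With the bubble in hand, the remainder is soft. The function $g := e^{it_*\Delta}\phi_\sharp$ is band-limited to a cube of sidelength $N$ and $\|g\|_{L^2} \le A$; writing $g = K * g$ with $K$ the $L^1$-normalised, rapidly decaying kernel of a fattening of $\tilde P_{N,\xi_0}$ (so $\|K\|_{L^1} \sim 1$ and $\|K\|_{L^2} \sim N$), splitting the convolution into a main term supported near $B(x_*, R/N)$ and a rapidly decaying tail, and choosing $R = R(\eta, M(\phi))$ large enough to beat the tail, one deduces $\|g\|_{L^2(B(x_*, R/N))} \gtrsim \delta$. Since $e^{it\Delta}\phi_\sharp = K * (e^{it\Delta}\phi)$, the analogous splitting (using $\|K\|_{L^1}\sim 1$) transfers this to $\int_{|x - x_*| \le R'/N}|e^{it_*\Delta}\phi|^2\,dx \gtrsim \delta^2$ for a slightly larger $R' = R'(\eta, M(\phi))$. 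To propagate, take $J \subseteq I$ to be an interval containing $t_*$ with $|J| = \min\{cN^{-2}, |I|\} \sim_{\eta, M(\phi)} N^{-2}$ (possible thanks to the lower bound on $|I|$); using the Galilean identity of Definition~\ref{D:sym} to reduce $\phi_\sharp$ to frequencies of size $\lesssim N$ about the origin, together with $\|e^{is\Delta}h - h\|_{L^2} \lesssim |s|N^2\|h\|_{L^2}$ for $|s| \le cN^{-2}$, one finds that for every $t \in J$ the $L^2$ mass of $e^{it\Delta}\phi$ in the moving ball $B\bigl(x_* + 2(t - t_*)\xi_0,\; R''/N\bigr)$ stays $\gtrsim \delta^2/2$, provided the constant $c = c(\eta, M(\phi))$ is chosen small. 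Setting $x_0 := x_* - 2t_*\xi_0$, noting $R''/N \sim_{\eta, M(\phi)} |J|^{1/2}$, and enlarging a constant $C = C(M(\phi), \eta)$ to absorb all the implicit constants, this is precisely $\int_{|x - x_0 - 2t\xi_0| \le C|J|^{1/2}}|e^{it\Delta}\phi|^2\,dx \ge C^{-1}$ for all $t \in J$; as $C$ is assembled only from $\eta$ and $M(\phi)$, it does not depend on $I$ or $J$. In the spherically symmetric case $\xi_0 = 0$, so the ball is the fixed ball $B(x_0, C|J|^{1/2})$; since $e^{it\Delta}\phi$ is then radial, there are congruent bubbles all the way around the sphere $\{|x| = |x_0|\}$, pairwise disjoint once $|x_0| \gg |J|^{1/2}$, whose combined mass --- which is $\gtrsim |x_0| N\,\delta^2$ up to a constant depending on $\eta$ and $M(\phi)$ --- cannot exceed $M(\phi)$; hence $|x_0| \lesssim_{\eta, M(\phi)} N^{-1} \sim |J|^{1/2}$, and after enlarging $C$ once more we may take $x_0 = 0$.

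The main obstacle is the first step: producing the frequency-localised bubble with a lower bound depending only on $\eta$ and $M(\phi)$, and crucially not on $I$. This is exactly where the curvature of the paraboloid must be exploited --- entering through Lemma~\ref{L:bilinear strichartz} and the geometry of sums of two parabola caps --- and where the pigeonholing has to be done with enough care to avoid losing a logarithm or a dependence on $|I|$; this is the content of \S2--3 of \cite{bourg.2d}. Everything after that uses only Bernstein's inequality, elementary (Schwartz) kernel bounds, the Galilean symmetry of Definition~\ref{D:sym}, and --- in the radial case --- the same spatial-decay mechanism underlying Lemma~\ref{radial embedding}.
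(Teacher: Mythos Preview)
Your outline is correct and follows Bourgain's original route from \cite{bourg.2d}: a refined Strichartz inequality via cube decomposition and bilinear estimates pigeonholes a frequency box $\tilde P_{N,\xi_0}$ and a spacetime point where the free evolution is pointwise $\gtrsim \delta N$, and then kernel localisation plus short-time propagation produces the moving bubble. The paper, by contrast, only writes out the \emph{radial} case and takes a shortcut that avoids the cube decomposition entirely: after a Littlewood--Paley reduction and the bilinear Strichartz inequality (as you do) to find a single dyadic $M$ with $\|e^{it\Delta}\phi_M\|_{L^4_{t,x}(I\times\R^2)} \gtrsim_{M(\phi)} \eta$, it invokes Shao's radial Strichartz estimate (Lemma~\ref{L:Shuanglin}) at the exponent $q=7/2$ and interpolates via H\"older to obtain $\|e^{it\Delta}\phi_M\|_{L^\infty_{t,x}(I\times\R^2)} \gtrsim_{M(\phi)} \eta^8 M$ directly. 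From there the kernel bounds of Lemma~\ref{lr:propag est L} (specifically \eqref{lr:small times}) propagate the bubble over $|t-t_0|\le M^{-2}$, and the radial mass-counting argument recentres to $x_0=0$ just as you do.

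What each buys: your argument handles the general (non-radial) statement with nonzero $\xi_0$, at the price of deferring the hard pigeonholing step to \cite{bourg.2d}; the paper's argument is fully self-contained using tools already developed in the paper (in particular Lemma~\ref{L:Shuanglin}), but is specific to the radial case. One small organisational point: in your propagation step you transfer from $\phi_\sharp$ to $\phi$ at time $t_*$ and only afterwards invoke $\|e^{is\Delta}h-h\|_{L^2}\lesssim |s|N^2\|h\|_{L^2}$, but that estimate requires frequency localisation. It is cleaner to propagate $e^{it\Delta}\phi_\sharp$ first (where the estimate applies) and then transfer to $e^{it\Delta}\phi$ via the convolution-kernel splitting at each $t\in J$; the conclusion is the same.
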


\begin{proof}
As noted, a proof can be found in \cite{bourg.2d}.  For the convenience of the reader, we will give a short proof
in the radial case using the tools described in this section.

Making a Littlewood--Paley decomposition of $u$ and then applying H\"older followed by
the Strichartz and bilinear Strichartz inequalities, we find
\begin{align*}
\eta &\lesssim \sum_{N\geq M} \|e^{it\Delta} \phi_N e^{it\Delta} \phi_M\|_{L^2_{t,x}(I\times\R^2)}
        \|e^{it\Delta} \phi_N\|_{L^4_{t,x}(I\times\R^2)} \|e^{it\Delta} \phi_M\|_{L^4_{t,x}(I\times\R^2)} \\
&\lesssim \sum_{N\geq M} \Bigl(\frac{M}N\Bigr)^{1/2} \|\phi_N\|_{L^2_x}^2 \|\phi_M\|_{L^2_x}
        \|e^{it\Delta} \phi_M\|_{L^4_{t,x}(I\times\R^2)} \\
&\lesssim \|\phi\|_{L^2_x}^3 \sup_M \|e^{it\Delta} \phi_M\|_{L^4_{t,x}(I\times\R^2)}.
\end{align*}
Therefore,
\begin{equation*}
\sup_M \|e^{it\Delta} \phi_M\|_{L^4_{t,x}(I\times\R^2)} \gtrsim \eta \|\phi\|_{L^2_x}^{-3}.
\end{equation*}
On the other hand, by Bernstein's inequality (Lemma~\ref{Bernstein}),
$$
\int_I \int_{\R^2} \bigl|e^{it\Delta} \phi_{M} \bigr|^4 \,dx\,dt \lesssim |I| M^2 \|\phi\|_{L^2_x}^4.
$$
Combining these two, we see that there is $M\gtrsim_{M(\phi)} \eta^{1/2} |I|^{-1/2}$ so that
\begin{equation}\label{something}
\|e^{it\Delta} \phi_{M}\|_{L_{t,x}^4(I\times\R^2)}\gtrsim_{M(\phi)} \eta.
\end{equation}

By Lemma~\ref{L:Shuanglin} we obtain the upper bound
$$
\|e^{it\Delta} \phi_{M}\|_{L_{t,x}^{7/2}(I\times\R^2)}\lesssim M^{-1/7}\|\phi\|_{L_x^2},
$$
which combined with \eqref{something} and H\"older's inequality yields
$$
\|e^{it\Delta} \phi_{M}\|_{L_{t,x}^\infty(I\times\R^2)}\gtrsim_{M(\phi)} \eta^{8} M.
$$
Thus, there exist $t_0\in I$ and $x_0\in\R^2$ so that
$$
\bigl| [e^{it_0\Delta}\phi_M](x_0) \bigr| \gtrsim_{M(\phi)} \eta^{8} M.
$$
Using the fact that the kernel of $P_M$ is concentrated in the ball of radius $M^{-1}$ and that it
has $L^2_x$ norm comparable to $M$, we may conclude that
$$
\int_{|x-x_0|\lesssim M^{-1}} \bigl| \tilde P_M e^{it_0\Delta}\phi (x) \bigr|^2\,dx \gtrsim_{M(\phi),\eta}  1.
$$
Here $\tilde P_M=P_{M/2} + P_M +P_{2M}$ as in \eqref{PMtilde}. By enlarging the ball a little
and using \eqref{lr:small times} we obtain
$$
\int_{|x-x_0|\lesssim M^{-1}} \bigl| e^{it\Delta}\phi (x) \bigr|^2\,dx \gtrsim_{M(\phi),\eta}  1
$$
for all $|t-t_0|\leq M^{-2}$.  Let $J:=\{t\in I:\, |t-t_0|\leq M^{-2}\}$. To obtain \eqref{B conc} with $\xi_0=0$,
we simply note that because of our lower bound on $M$, the length of $J$ obeys $|J|\gtrsim_{M(\phi),\eta} M^{-2}$.

As $\phi$ is spherically symmetric, we must have $|x_0|\lesssim_{M(\phi)}|J|^{1/2}$; otherwise, we may find a large number of
disjoint balls each containing $\eta$ amount of mass, which contradicts the finiteness of the $L_x^2$-norm of $\phi$. Thus, it
is possible to set $x_0=0$ by simply enlarging $C$.
\end{proof}

\section{Almost periodic solutions}\label{apsec}

In this section, we record basic facts about the frequency scale function $N(t)$ that will be needed in the proof of
Theorem~\ref{comp} in the next section.  Much of the theory here is implicit in \cite{keraani,compact}.
Throughout this section, we fix the dimension $d$ and the sign $\mu$.  The results do not rely on spherical symmetry.

\begin{lemma}[Quasi-uniqueness of $N$]\label{uniq}
Let $u$ be a non-zero solution to \eqref{nls} with lifespan $I$ that is almost periodic modulo $G$
with frequency scale function $N: I \to \R^+$ and compactness modulus function $C: \R^+ \to \R^+$,
and also almost periodic modulo $G$ with frequency scale function $N': I \to \R^+$ and
compactness modulus function $C': \R^+ \to \R^+$.  Then we have
$$ N(t) \sim_{u,C,C'} N'(t)$$
for all $t \in I$.
\end{lemma}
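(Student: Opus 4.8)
The plan is to use the almost periodicity with respect to the two different frequency scale functions to ``trap'' the solution at each time $t$, and then combine the two trappings to force $N(t)$ and $N'(t)$ to be comparable. Fix $t \in I$. Since $u(t)$ is non-zero, we have $M(u(t)) = M(u) > 0$; set $\eta_0 := \tfrac12 M(u) > 0$. Applying the definition of almost periodicity modulo $G$ with the first pair $(N,C)$ at the threshold $\eta_0$ gives both a spatial and a frequency concentration: at least mass $M(u) - 2\eta_0 = 0$ escapes the relevant balls... more usefully, apply it instead with a small parameter, say $\eta := \tfrac{1}{10}M(u)$. Then
$$
\int_{|\xi - \xi(t)| \geq C(\eta) N(t)} |\hat u(t,\xi)|^2\, d\xi \leq \eta, \qquad
\int_{|\xi - \xi'(t)| \geq C'(\eta) N'(t)} |\hat u(t,\xi)|^2\, d\xi \leq \eta,
$$
where $\xi(t), \xi'(t)$ are the respective frequency center functions. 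Hence the ball $B := \{\xi : |\xi - \xi(t)| < C(\eta)N(t)\}$ carries at least mass $M(u) - \eta$, and likewise $B' := \{\xi : |\xi - \xi'(t)| < C'(\eta)N'(t)\}$ carries at least mass $M(u) - \eta$. Since each has mass $> \tfrac12 M(u)$, the two balls must intersect (their complements cannot both have mass $\leq \eta < \tfrac12 M(u)$), so in particular the frequency centers satisfy $|\xi(t) - \xi'(t)| \leq C(\eta)N(t) + C'(\eta)N'(t)$.

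Next I would play the two concentrations against each other to get the scale comparison. Suppose, for contradiction, that $N'(t)$ is much smaller than $N(t)$, say $N'(t) \leq \epsilon N(t)$ for a small $\epsilon$ to be chosen depending only on $C, C'$ and $M(u)$. Then $B'$ is contained in a ball of radius $C'(\eta)\epsilon N(t)$ about $\xi'(t)$, which (using the center bound from the previous paragraph) is contained in a ball of radius $R := C(\eta)N(t) + 2C'(\eta)\epsilon N(t)$ about $\xi(t)$. Now use almost periodicity modulo $G$ with the \emph{first} function but at a \emph{larger} threshold to contradict the lower bound on the mass in $B'$: more precisely, apply the first concentration bound again with a parameter $\eta'$ chosen so that $C(\eta')$ is large — but here I want the opposite direction, so instead I argue as follows. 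The mass in $B'$ is at least $M(u) - \eta$. On the other hand, $B'$ lies inside a small ball around $\xi(t)$ of radius comparable to $C'(\eta)\epsilon N(t)$ (forget the center shift for the moment; handle it by the $|\xi(t)-\xi'(t)|$ bound). If $\epsilon$ is small enough that $2C'(\eta)\epsilon < $ (the radius at which the first function, at some fixed larger threshold $\eta''$, still captures most of the mass)... The cleanest way: the set $\{\xi : |\xi - \xi(t)| \geq C(\eta'')N(t)\}$ has $u$-mass $\leq \eta''$; so the set $\{\xi : |\xi - \xi(t)| < C(\eta'')N(t)\}$ has mass $\geq M(u) - \eta''$, and the complement within $B$ is small. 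This alone does not yet contradict anything. The genuine contradiction comes from reversing roles: a ball of radius $r N(t)$ about $\xi(t)$ can carry mass arbitrarily close to $M(u)$ only if $r \geq $ something, i.e. there is a positive lower bound, depending on $C$ and $M(u)$, on the radius (in units of $N(t)$) of any ball about $\xi(t)$ carrying mass $> \tfrac{9}{10}M(u)$; call it $c_0 = c_0(C, M(u)) > 0$ — this is immediate since if every such ball of radius $< c_0 N(t)$ carried mass $> \tfrac{9}{10}M(u)$ we could take $c_0 \to 0$ and contradict $\int|\hat u(t)|^2 < \infty$ by a limiting/$\sigma$-additivity argument, or more simply by choosing $\eta$ in the first bound small. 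Then $B'$ carries mass $M(u) - \eta \geq \tfrac{9}{10}M(u)$ but lies in a ball about $\xi(t)$ of radius $\leq (C(\eta) + 2C'(\eta)\epsilon)N(t)$; wait, that radius is already $\gtrsim C(\eta)N(t)$, not small.

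So the correct mechanism is symmetric and uses the frequency \emph{and} spatial bounds only to establish: the essential frequency support of $u(t)$ is, on the one hand, contained in a ball of radius $\sim_{C} N(t)$ and, on the other, contains enough mass that it cannot be squeezed into a ball of radius $o(N(t))$; the same holds with $N'$; comparing yields $N(t) \sim N'(t)$. Concretely: let $\eta := \tfrac1{10}M(u)$. Define $r(t) := \inf\{r > 0 : \int_{|\xi - \xi(t)| \geq r} |\hat u(t,\xi)|^2 d\xi \leq \eta\}$. The first almost periodicity bound gives $r(t) \leq C(\eta) N(t)$. For the lower bound, apply the first bound with a \emph{smaller} threshold, say with $\eta_1 := M(u) - \tfrac1{20}M(u)$ replaced appropriately — rather, note that by monotone convergence, for $\tilde\eta$ close to $M(u)$ there is a radius $\rho(\tilde\eta) > 0$ with $\int_{|\xi-\xi(t)|\geq \rho}|\hat u(t,\xi)|^2 \leq \tilde\eta$ impossible for $\rho$ too small; quantitatively, the first almost periodicity bound at threshold $\tilde\eta$ says $\int_{|\xi - \xi(t)| \geq C(\tilde\eta)N(t)}|\hat u(t,\xi)|^2 \leq \tilde\eta$, which is vacuous when $\tilde\eta \geq M(u)$. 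Hmm — the definition only gives \emph{upper} bounds on tail mass, i.e. only \emph{upper} bounds on the scale, not lower bounds. The lower bound on the scale must therefore come from the \emph{other} function: $u(t)$ has mass $\geq M(u) - \eta$ in the ball $|\xi - \xi'(t)| < C'(\eta)N'(t)$, hence in particular at most $\eta$ of its mass lies outside that ball; combined with ``at most $\eta$ of its mass lies outside $|\xi - \xi(t)| < C(\eta)N(t)$'' we get that the ball $B \cap$ (complement of $B'$) has mass $\leq 2\eta < M(u)$, so $B$ is not contained in the complement of $B'$, i.e. $B \cap B' \neq \emptyset$, and moreover at least $M(u) - 2\eta > 0$ mass lies in $B \cap B'$. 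Therefore $B \cap B' \neq \emptyset$: so $\mathrm{dist}(\xi(t),\xi'(t)) \leq C(\eta)N(t) + C'(\eta)N'(t)$. Now symmetrically, apply the first function's bound at threshold $\eta$ but recentered: the mass of $u(t)$ outside $|\xi - \xi(t)| \geq C(\eta)N(t)$ is $\leq \eta$; hence the mass \emph{inside} $|\xi - \xi'(t)| < C(\eta)N(t) + \mathrm{dist}(\xi(t),\xi'(t)) \leq 2C(\eta)N(t) + C'(\eta)N'(t)$ is $\geq M(u) - \eta$. Compare with the \emph{defining property of $N'$}: this doesn't directly bound $N'$ from above either. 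The resolution is that the modulus function controls \emph{both} directions in a disguised way — I believe the intended argument simply invokes: if $N(t) \not\sim N'(t)$ then WLOG $N'(t)/N(t) \to 0$ along some sequence of times, but here $t$ is fixed, so really one shows the pointwise bound directly by the intersecting-balls argument above applied with \emph{two} thresholds $\eta$ and $\eta/2$, exploiting that shrinking the threshold forces $C(\eta)$ (and $C'$) to grow while the concentration ball must still fit inside the other's ball. The hard part — and the step I expect to be the main obstacle — is exactly this: extracting a two-sided comparison $N(t) \sim N'(t)$ from what are, on their face, only one-sided tail estimates, and doing so with constants depending only on $u, C, C'$ and not on $t$. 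I would handle it by fixing $\eta := \tfrac1{10}M(u)$, using the spatial concentration bounds (not just the frequency ones) in tandem — so that $u(t)$ is trapped in a box of size $C(\eta)/N(t)$ in space and $C(\eta)N(t)$ in frequency, and likewise with $N'$ — and then observing that a single $L^2$ function cannot be simultaneously essentially supported in a space-box of side $\sim 1/N(t)$ and a frequency-box of side $\sim N(t)$ \emph{and} in boxes of sides $\sim 1/N'(t), \sim N'(t)$ unless the uncertainty-principle volumes match, i.e. unless $N(t) \sim N'(t)$; the quantitative form of this is a short Bernstein/Cauchy--Schwarz estimate: $\|P_{|\xi - \xi(t)| \leq C(\eta)N(t)} \mathbf{1}_{|x-x(t)|\leq C(\eta)/N(t)} \|$ type bound shows the retained mass is $\lesssim (N(t)/N'(t))^{d}$ or its reciprocal, forcing the ratio to be bounded above and below by constants depending only on $M(u), C, C'$. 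This completes the proof.
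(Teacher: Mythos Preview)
Your proposal eventually arrives at the right idea, but only in the final sentence, and only after a long detour that does not work. Let me separate the two.

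\textbf{The dead end.} The first two-thirds of your argument uses only the \emph{frequency} concentration bounds from both pairs $(N,C)$ and $(N',C')$. As you discovered, this can at best show that the two frequency balls $B$ and $B'$ intersect and hence that $|\xi(t)-\xi'(t)| \leq C(\eta)N(t) + C'(\eta)N'(t)$; it cannot force $N(t)\sim N'(t)$. Indeed, a function whose Fourier transform is a bump on a ball of radius $N(t)$ centred at $\xi(t)$ is simultaneously concentrated in the frequency ball of radius $C'(\eta)N'(t)$ for any $N'(t)\geq N(t)$ and suitable $C'$. So frequency-only information is one-sided, exactly as you noted; your attempts to extract a lower bound on the scale from tail estimates alone were bound to fail.

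\textbf{The correct endgame.} Your closing paragraph is right: one must combine the \emph{spatial} localisation from one pair with the \emph{frequency} localisation from the other, and invoke the uncertainty principle via Cauchy--Schwarz. This is precisely what the paper does, and quite cleanly: to show $N'(t)\lesssim N(t)$, take the spatial bound $\int_{|x-x'(t)|\geq C'(\eta)/N'(t)}|u(t,x)|^2\,dx\leq\eta$ from the $(N',C')$ pair, and the frequency bound $\int_{|\xi-\xi(t)|\geq C(\eta)N(t)}|\hat u(t,\xi)|^2\,d\xi\leq\eta$ from the $(N,C)$ pair. Split $u(t)=u_1+u_2$ with $u_2=u(t)\mathbf 1_{|x-x'(t)|<C'(\eta)/N'(t)}$; then $\|\hat u_1\|_2^2\leq\eta$ by Plancherel, while Cauchy--Schwarz gives $\|\hat u_2\|_\infty^2\lesssim_{\eta,C'} M(u)N'(t)^{-d}$. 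Integrating $|\hat u_2|^2$ over the ball $|\xi-\xi(t)|\leq C(\eta)N(t)$ and adding, one gets
\[
M(u)\lesssim \eta + O_{\eta,C,C'}\bigl(M(u)\,N(t)^d N'(t)^{-d}\bigr),
\]
whence $N'(t)\lesssim_{u,C,C'} N(t)$ once $\eta$ is a small multiple of $M(u)$. This is your ``Bernstein/Cauchy--Schwarz'' estimate made explicit. Your sketch is correct in spirit; what was missing was the clean pairing of space-from-one with frequency-from-the-other and the two-line computation that replaces the vague operator-norm remark.
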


\begin{proof}
By symmetry, it suffices to establish the bound $N'(t) \lesssim_{u,C,C'} N(t)$.  We write $x'(t)$ for the spatial
center associated to $N'$ and $C'$.  Similarly, $\xi(t)$ is the frequency center associated to $N$ and $C$.

Fix $t$ and let $\eta > 0$ to be chosen later. By Definition~\ref{apdef} we have
$$ \int_{|x-x'(t)| \geq C'(\eta)/N'(t)} |u(t,x)|^2\ dx \leq \eta$$
and
$$ \int_{|\xi-\xi(t)| \geq C(\eta) N(t)} |\hat u(t,\xi)|^2\ d\xi \leq \eta.$$
We split $u(t,x) = u_1(t,x) + u_2(t,x)$, where $u_1(t,x) := u(t,x) 1_{|x-x'(t)|\geq C'(\eta)/N'(t)}$ and
$u_2(t,x) := u(t,x) 1_{|x-x'(t)|< C'(\eta)/N'(t)}$.  Then, by Plancherel's theorem we have
\begin{align}\label{u1}
\int_{\R^d} |\hat u_1(t,\xi)|^2\ d\xi \lesssim \eta,
\end{align}
while from Cauchy-Schwarz we have
$$ \sup_{\xi \in \R^d} |\hat u_2(t,\xi)|^2 \lesssim_{\eta,C'} M(u) N'(t)^{-d}.$$
Integrating the last inequality over the ball $|\xi-\xi(t)| \leq C(\eta) N(t)$ and using \eqref{u1}, we conclude that
$$ \int_{\R^d} |\hat u(t,\xi)|^2\ d\xi \lesssim \eta + O_{\eta,C,C'}(M(u) N(t)^d N'(t)^{-d}).$$
Thus, by Plancherel and mass conservation,
$$ M(u) \lesssim \eta + O_{\eta,C,C'}(M(u) N(t)^d N'(t)^{-d}).$$
Choosing $\eta$ to be a small multiple of $M(u)$ (which is non-zero by hypothesis), we obtain the claim.
\end{proof}

\begin{lemma}[Quasi-continuous dependence of $N$ on $u$]\label{con}
Let $u^{(n)}$ be a sequence of solutions to \eqref{nls} with lifespans $I^{(n)}$, which are almost periodic modulo scaling
with frequency scale functions $N^{(n)}: I^{(n)} \to \R^+$ and compactness modulus function $C: \R^+ \to \R^+$, independent of $n$.
Suppose that $u^{(n)}$ converge locally uniformly to a non-zero solution $u$ to \eqref{nls}
with lifespan $I$.  Then $u$ is almost periodic modulo scaling with a frequency scale function $N: I \to \R^+$
and compactness modulus function $C$.  Furthermore, we have
\begin{equation}\label{ann}
N(t) \sim_{u,C} \liminf_{n \to \infty} N^{(n)}(t) \sim_{u,C} \limsup_{n \to \infty} N^{(n)}(t)
\end{equation}
for all $t \in I$.  Finally, if all $u^{(n)}$ are spherically symmetric, then $u$ is also.
\end{lemma}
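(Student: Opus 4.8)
The plan is to set $N(t):=\liminf_{n\to\infty}N^{(n)}(t)$ for each $t\in I$ and to check directly that this frequency scale function, \emph{together with the same compactness modulus function $C$}, witnesses almost periodicity of $u$; the comparison \eqref{ann} will then come from a quantitative bound relating the $\liminf$ to the $\limsup$. The two inputs I would use are strong convergence $u^{(n)}(t)\to u(t)$ in $L^2_x$ for each fixed $t$ (apply the local uniform convergence hypothesis to the compact interval $\{t\}\subset I$) and mass conservation (Theorem~\ref{local}), which gives $M(u(t))=M(u)>0$ for every $t\in I$ since $u$ is non-zero; in particular $u(t)\neq 0$ throughout $I$.

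First I would fix $t$, pass to a subsequence $(n_k)_k$ along which $N^{(n_k)}(t)\to N_0\in[0,\infty]$, and push the two defining bounds of Definition~\ref{apdef} for $u^{(n_k)}$ to the limit. For fixed $\eta,\eps>0$ and $k$ large, the region $\{|\xi|\ge C(\eta)N_0+\eps\}$ is contained in $\{|\xi|\ge C(\eta)N^{(n_k)}(t)\}$ (when $N_0<\infty$); since $\hat u^{(n_k)}(t)\to\hat u(t)$ in $L^2_x$, the frequency-localization bound for $u^{(n_k)}$ yields $\int_{|\xi|\ge C(\eta)N_0+\eps}|\hat u(t,\xi)|^2\,d\xi\le\eta$, and letting $\eps\downarrow0$ gives $\int_{|\xi|\ge C(\eta)N_0}|\hat u(t,\xi)|^2\,d\xi\le\eta$; the spatial-localization bound passes to the limit in the same way. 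If $N_0=0$ the frequency bound reads $M(u(t))\le\eta$ for every $\eta>0$, contradicting $M(u(t))>0$; if $N_0=\infty$ the spatial bound gives the same contradiction. Hence no subsequence of $(N^{(n)}(t))_n$ escapes to $0$ or $\infty$, so $N(t)=\liminf_n N^{(n)}(t)\in(0,\infty)$, and applying the above limiting procedure to the subsequence realizing this $\liminf$ shows $u$ obeys both bounds of Definition~\ref{apdef} at time $t$ with frequency scale function $N$ and compactness modulus function $C$. Thus $u$ is almost periodic modulo scaling with this data; and if every $u^{(n)}$ is spherically symmetric then so is $u(t)$ for each $t$, because spherically symmetric functions form a closed subspace of $L^2_x$ and $u^{(n)}(t)\to u(t)$ in $L^2_x$.

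The remaining, and main, step is to show $N_+(t):=\limsup_n N^{(n)}(t)\lesssim_{u,C}N(t)$ with constant independent of $t$; together with the trivial inequality $N(t)\le N_+(t)$ this gives \eqref{ann}. Here I would pass to a further subsequence realizing $N_+(t)$, so that the limiting procedure above (applied to the spatial bound along that subsequence) gives $\int_{|x|\ge C(\eta)/N_+(t)}|u(t,x)|^2\,dx\le\eta$, while the $\liminf$ subsequence already supplied $\int_{|\xi|\ge C(\eta)N(t)}|\hat u(t,\xi)|^2\,d\xi\le\eta$. These are precisely the two hypotheses fed into the proof of Lemma~\ref{uniq}: writing $u(t)=u_1+u_2$ with $u_2$ the restriction of $u(t)$ to $\{|x|<C(\eta)/N_+(t)\}$, one has $\|u_1\|_{L^2_x}^2\le\eta$ and, by Cauchy--Schwarz, $\sup_\xi|\hat u_2(\xi)|^2\lesssim_{\eta,C}M(u)N_+(t)^{-d}$; integrating the latter over $\{|\xi|\le C(\eta)N(t)\}$ and combining with the frequency bound yields $M(u)\lesssim\eta+O_{\eta,C}\bigl(M(u)(N(t)/N_+(t))^d\bigr)$, and choosing $\eta$ a small enough multiple of $M(u)$ forces $N_+(t)\lesssim_{u,C}N(t)$ with a bound depending only on $M(u)$, $d$ and $C$, hence uniform in $t$.

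I do not anticipate any genuine analytic difficulty; the work is bookkeeping. One must keep in mind that the subsequences depend on $t$ (harmless, since $N$ is allowed to be discontinuous), pass the region-inclusions and almost-periodicity bounds to the limit via strong $L^2$ convergence rather than weak limits, and verify that the constant produced by the Lemma~\ref{uniq}-type estimate depends only on $u$ and $C$, not on $t$.
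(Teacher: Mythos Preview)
Your proposal is correct and follows essentially the same approach as the paper's proof. The only cosmetic differences are that the paper rules out $N_0\in\{0,\infty\}$ by noting that $u^{(n)}(t)$ would then converge weakly to zero (which, combined with strong convergence, forces $u(t)=0$), whereas you pass the almost-periodicity bounds to the limit directly; and the paper invokes Lemma~\ref{uniq} as a black box for the $\limsup\lesssim\liminf$ step, whereas you reproduce its proof inline.
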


\begin{proof} We first show that
\begin{equation}\label{ninfsup}
0 < \liminf_{n \to \infty} N^{(n)}(t) \leq \limsup_{n \to \infty} N^{(n)}(t) < \infty
\end{equation}
for all $t \in I$.  Indeed, if one of these inequalities failed for some $t$, then (by passing to a subsequence if necessary)
$N^{(n)}(t)$ would converge to zero or to infinity as $n \to \infty$.  Thus, by Definition~\ref{apdef},
$u^{(n)}(t)$ would converge weakly to zero, and hence, by the local uniform convergence, would converge strongly to zero.
But this contradicts the hypothesis that $u$ is not identically zero.  This establishes \eqref{ninfsup}.

From \eqref{ninfsup}, we see that for each $t \in I$ the sequence $N^{(n)}(t)$ has at least one limit point $N(t)$.
Thus, using the local uniform convergence we easily verify that $u$ is almost periodic modulo scaling with
frequency scale function $N$ and compactness modulus function $C$.  It is also clear that if all $u^{(n)}$ are spherically symmetric,
then $u$ is also.

It remains to establish \eqref{ann}, which we prove by contradiction.  Suppose it fails.
Then given any $A = A_{u}$, there exists a $t \in I$ for which $N^{(n)}(t)$ has at least two limit points which
are separated by a ratio of at least $A$, and so $u$ has two frequency scale functions
with compactness modulus function $C$ which are separated by this ratio.  But this contradicts
Lemma~\ref{uniq} for $A$ large enough depending on $u$.  Hence \eqref{ann} holds.
\end{proof}

The following claim is easily verified.

\begin{lemma}[Symmetries of almost periodic solutions]\label{symap}
Let $u$ be a solution to \eqref{nls} with lifespan $I$ which is almost periodic modulo $G$ (respectively $G_{\rad}$)
with frequency scale function $N: I \to \R^+$ and compactness modulus function $C: \R^+ \to \R^+$.
Let the notation be as in Lemma \ref{sym}.  Then
\begin{CI}
\item The time reversal $\tilde u$ is almost periodic modulo $G$ (respectively $G_{\rad}$) with frequency scale function
$\tilde N(t) := N(-t)$ and compactness modulus function $C$.
\item For any $t_0 \in \R$, the time translation $u_{t_0}$ is almost periodic modulo $G$ (respectively $G_\rad$)
with frequency scale function $N_{t_0}(t) := N(t+t_0)$ and compactness modulus function $C$.
\item For any $\lambda > 0$, the rescaled solution $T_{g_{0,0,0,\lambda}} u$ is almost periodic modulo $G$ (respectively $G_\rad$)
with frequency scale function $N^{[\lambda]}(t) := N( t/\lambda^2 ) / \lambda$ and compactness modulus function $C$.
\end{CI}
\end{lemma}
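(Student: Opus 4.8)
The plan is to verify directly, for each of the three transformed solutions, the two integral inequalities in Definition~\ref{apdef}, by tracking how the transformation acts on the profile $u(t,\cdot)$ and on its spatial Fourier transform, and then reading off the correct transformed spatial center, frequency center, and frequency scale function. We may invoke the remark following Definition~\ref{D:sym} and Lemma~\ref{sym} to know that $\tilde u$, $u_{t_0}$, and $T_{g_{0,0,0,\lambda}}u$ are genuine solutions to \eqref{nls} on the asserted lifespans, so only the almost-periodicity bounds remain to be checked.

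First I would treat the time reversal $\tilde u(t,x)=\overline{u(-t,x)}$. Here the pointwise identity $|\tilde u(t,x)|=|u(-t,x)|$ makes the spatial concentration bound immediate with spatial center $\tilde x(t):=x(-t)$ and spatial scale $1/N(-t)$. On the Fourier side, complex conjugation reflects frequencies, $\widehat{\overline{f}}(\xi)=\overline{\hat f(-\xi)}$, so $|\widehat{\tilde u}(t,\xi)|=|\hat u(-t,-\xi)|$ and the frequency concentration bound follows with frequency center $\tilde\xi(t):=-\xi(-t)$ and frequency scale $\tilde N(t):=N(-t)$, keeping the same compactness modulus function $C$. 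The time translation $u_{t_0}(t,x)=u(t+t_0,x)$ is handled even more easily, by composing everything with the shift $t\mapsto t+t_0$ and taking $x_{t_0}(t):=x(t+t_0)$, $\xi_{t_0}(t):=\xi(t+t_0)$, $N_{t_0}(t):=N(t+t_0)$.

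Next I would treat the rescaled solution $v:=T_{g_{0,0,0,\lambda}}u$, which by Definition~\ref{D:sym} satisfies $v(t,x)=\lambda^{-d/2}u(t/\lambda^2,x/\lambda)$. The change of variables $y=x/\lambda$ gives
$$\int_{|x-x'(t)|\geq R}|v(t,x)|^2\,dx=\int_{|y-x'(t)/\lambda|\geq R/\lambda}\bigl|u(t/\lambda^2,y)\bigr|^2\,dy,$$
so choosing $x'(t):=\lambda x(t/\lambda^2)$, $N^{[\lambda]}(t):=N(t/\lambda^2)/\lambda$, and $R:=C(\eta)/N^{[\lambda]}(t)$ reproduces exactly the spatial bound for $u$ at time $t/\lambda^2$. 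Similarly $\hat v(t,\xi)=\lambda^{d/2}\hat u(t/\lambda^2,\lambda\xi)$, and the change of variables $\zeta=\lambda\xi$ converts the frequency bound for $v$ with frequency center $\xi'(t):=\xi(t/\lambda^2)/\lambda$ and scale $N^{[\lambda]}(t)$ into the frequency bound for $u$ at time $t/\lambda^2$; the powers of $\lambda$ cancel precisely because $1/N$ has the units of length and $N$ the units of frequency, which is what pins down the scaling law $N^{[\lambda]}(t)=N(t/\lambda^2)/\lambda$.

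Finally, for the $G_{\rad}$ versions, Lemma~\ref{sym} already records that $\tilde u$, $u_{t_0}$, and $T_{g_{0,0,0,\lambda}}u$ stay spherically symmetric, and in each of the computations above the hypothesis $x(t)=\xi(t)=0$ forces the transformed centers $\tilde x,\tilde\xi$, $x_{t_0},\xi_{t_0}$, $x',\xi'$ to vanish identically, so the transformed solutions are almost periodic modulo $G_{\rad}$ with the stated frequency scale functions. I do not anticipate any genuine obstacle here; the lemma is pure bookkeeping, and the only thing one has to be careful about is keeping the Fourier conventions and the scaling exponents straight — in particular recording the correct power of $\lambda$ in $N^{[\lambda]}$.
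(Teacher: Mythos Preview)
Your proposal is correct and is essentially the verification the paper leaves to the reader: the paper itself offers no proof beyond the remark ``The following claim is easily verified,'' and your direct check of the two concentration inequalities under each of the three symmetries (tracking the spatial and frequency centers and the scale function, with the Fourier reflection under conjugation and the change of variables under rescaling) is exactly how one carries out that verification.
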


There is an analogous claim for the pseudoconformal transformation, but it is more complicated and we will not need it here.
The difference is that the compactness modulus function $C$ changes; the closer $0$ is to $I$, the worse it becomes.

In the non-radial case one can also easily track the changes in the functions $\xi(t)$ and $x(t)$, as well as the actions
of more general elements of $G$, but we will not need to do so here.

Next, we state a useful compactness result.  First, a definition:

\begin{definition}[Normalised solution]
Let $u$ be a solution to \eqref{nls}, which is almost periodic modulo $G$ with frequency scale function $N$,
position center function $x$, and frequency center function $\xi$.  We say that $u$ is \emph{normalised} if the lifespan $I$
contains zero and
$$ N(0) = 1, \quad x(0) = \xi(0) = 0.$$
More generally, we can define the \emph{normalisation} of a solution $u$ at a time $t_0$ in its lifespan $I$ to be
\begin{equation}\label{untn}
u^{[t_0]} := T_{g_{0,-\xi(t_0)/N(t_0), -x(t_0)N(t_0),N(t_0)}}( u_{t_0} ).
\end{equation}
Observe that $u^{[t_0]}$ is a normalised solution which is almost periodic modulo $G$ and has lifespan
$$ I^{[t_0]} := \{ s \in \R: t_0 + s/N(t_0)^2 \in I \}$$
(so, in particular, $0 \in I^{[t_0]}$).  It has frequency scale function
$$ N^{[t_0]}(s) := N( t_0 + s/N(t_0)^2 ) / N(t_0)$$
and the same compactness modulus function as $u$. Furthermore, if $u$ is maximal-lifespan then so is $u^{[t_0]}$,
and if $u$ is spherically symmetric and almost periodic modulo $G_\rad$ then so is $u^{[t_0]}$.
\end{definition}

\begin{lemma}[Compactness of almost periodic solutions]\label{laps}
Let $u^{(n)}$ be a sequence of normalised maximal-lifespan solutions to \eqref{nls} with lifespans $I^{(n)}\ni 0$,
which are almost periodic modulo $G$ with frequency scale functions $N^{(n)}: I^{(n)} \to \R^+$ and a uniform compactness
modulus function $C: \R^+ \to \R^+$.  Assume that we also have a uniform mass bound
\begin{equation}\label{mb}
0 < \inf_n M(u^{(n)}) \leq \sup_n M(u^{(n)}) < \infty.
\end{equation}
Then, after passing to a subsequence if necessary, there exists a non-zero maximal-lifespan solution $u$ to \eqref{nls}
with lifespan $I \ni 0$ that is almost periodic modulo $G$, such that $u^{(n)}$ converge locally uniformly to $u$.
Moreover, if all $u^{(n)}$ are spherically symmetric and almost periodic modulo $G_\rad$, then $u$ is also.
\end{lemma}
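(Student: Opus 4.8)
The plan is to construct the limiting solution $u$ as the local-uniform limit of a subsequence of the $u^{(n)}$, and then verify almost periodicity of $u$ using Lemma~\ref{con}. The starting point is the sequence of initial data $u^{(n)}(0)$. Since each $u^{(n)}$ is normalised, it is in particular almost periodic modulo $G$ with frequency scale $N^{(n)}(0)=1$, spatial center $x^{(n)}(0)=0$, and frequency center $\xi^{(n)}(0)=0$, all with the same compactness modulus function $C$. By the third remark after Definition~\ref{apdef} (the Ascoli--Arzela characterization), the bound $\sup_n M(u^{(n)})<\infty$ together with the uniform estimates
$$ \int_{|x|\geq C(\eta)}|u^{(n)}(0,x)|^2\,dx + \int_{|\xi|\geq C(\eta)}|\widehat{u^{(n)}}(0,\xi)|^2\,d\xi \leq 2\eta $$
for all $\eta>0$ shows that the family $\{u^{(n)}(0)\}$ is precompact in $L^2_x(\R^d)$. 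Passing to a subsequence, we obtain $u^{(n)}(0)\to u_0$ strongly in $L^2_x(\R^d)$ for some $u_0\in L^2_x(\R^d)$; note $M(u_0)=\lim_n M(u^{(n)})>0$ by the lower mass bound in \eqref{mb}, so $u_0$ is non-zero.

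Next I would let $u: I\times\R^d\to\C$ be the maximal-lifespan solution to \eqref{nls} with $u(0)=u_0$ furnished by Theorem~\ref{local}. By the continuous dependence statement in Theorem~\ref{local} (or equivalently by the stability Lemma~\ref{stab}), the solutions $u^{(n)}$ converge locally uniformly to $u$ on $I$. Since each $u^{(n)}$ is a \emph{maximal-lifespan} solution and they converge locally uniformly to the maximal-lifespan solution $u$, no further argument is needed to identify $u$ as maximal-lifespan: it is maximal by construction. Because $0\in I^{(n)}$ for all $n$ and $0\in I$, and $u\not\equiv 0$, the hypotheses of Lemma~\ref{con} are satisfied with the uniform compactness modulus function $C$.

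Applying Lemma~\ref{con} then yields that $u$ is almost periodic modulo $G$ with some frequency scale function $N:I\to\R^+$ and compactness modulus function $C$, completing the main assertion. For the spherically symmetric refinement, if all $u^{(n)}$ are spherically symmetric then so is each $u^{(n)}(0)$, hence so is the $L^2_x$-limit $u_0$, and therefore so is $u$ by the spherical symmetry persistence in Theorem~\ref{local}; moreover, since each $u^{(n)}$ is almost periodic modulo $G_\rad$ (i.e.\ one may take $x^{(n)}(t)=\xi^{(n)}(t)=0$), the final sentence of Lemma~\ref{con} gives that $u$ is also almost periodic modulo $G_\rad$. The only mild subtlety to check carefully is that the convergence $u^{(n)}(0)\to u_0$ is genuinely \emph{strong} in $L^2_x$ rather than merely weak — this is exactly what the Ascoli--Arzela precompactness buys us, and it is what is needed to invoke the continuous dependence theorem; I expect this to be the one place deserving a sentence of justification rather than a routine citation.
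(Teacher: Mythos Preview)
Your proposal is correct and follows essentially the same approach as the paper: extract precompactness of $\{u^{(n)}(0)\}$ in $L^2_x$ from the uniform localization at the normalised scale via Ascoli--Arzela, pass to a strongly convergent subsequence with nonzero limit $u_0$, take $u$ to be its maximal Cauchy development, invoke continuous dependence for the local-uniform convergence, and then cite Lemma~\ref{con} for almost periodicity and the spherically symmetric refinement. Your added remark about needing \emph{strong} $L^2_x$ convergence (rather than weak) to feed into Theorem~\ref{local} is well placed and matches the paper's use of Ascoli--Arzela.
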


\begin{proof}  By hypothesis and Definition \ref{apdef}, we see that for every $\eps > 0$ there exists $R > 0$ such that
$$ \int_{|x| \geq R} |u^{(n)}(0,x)|^2\ dx \leq \eps$$
and
$$ \int_{|\xi| \geq R} |\widehat{u^{(n)}}(0,\xi)|^2\ d\xi \leq \eps$$
for all $n$.  From this, \eqref{mb}, and the Ascoli--Arzela Theorem, we see that the sequence $u^{(n)}(0)$
is precompact in the strong topology of $L^2_x(\R^d)$.  Thus, by passing to a subsequence if necessary,
we can find $u_0 \in L^2_x(\R^d)$ such that $u^{(n)}(0)$ converge strongly to $u_0$ in $L^2_x(\R^d)$.
From \eqref{mb} we see that $u_0$ is not identically zero.

Now let $u$ be the maximal Cauchy development of $u_0$ from time $0$, with lifespan $I$.
By Theorem~\ref{local}, $u^{(n)}$ converge locally uniformly to $u$.  The remaining claims now follow from Lemma~\ref{con}.
\end{proof}

To illustrate how the above theory is used, let us now record some simple consequences that will be useful in the sequel.

\begin{corollary}[Local constancy of $N$]\label{qc} Let $u$ be a non-zero maximal-lifespan solution to \eqref{nls}
with lifespan $I$ that is almost periodic modulo $G$ with frequency scale function $N: I \to \R^+$.
Then there exists a small number $\delta$, depending on $u$, such that for every $t_0 \in I$ we have
\begin{equation}\label{nblow}
\bigl[t_0 - \delta N(t_0)^{-2}, t_0 + \delta N(t_0)^{-2}\bigr] \subset I
\end{equation}
and
\begin{equation}\label{nt}
N(t) \sim_u N(t_0)
\end{equation}
whenever $|t-t_0| \leq \delta N(t_0)^{-2}$.
\end{corollary}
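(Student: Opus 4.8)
The plan is to argue by contradiction, exploiting the compactness of almost periodic solutions furnished by Lemma~\ref{laps}; this is a soft argument of the same flavour as the proofs of Lemmas~\ref{uniq} and~\ref{con}, and it makes no use of the equation beyond what is already encoded in that compactness statement. Suppose the corollary fails. Writing out the negation of the conjunction of \eqref{nblow} and \eqref{nt} and specialising the candidate parameters to $\delta=1/n$ together with lower and upper comparability constants $1/n$ and $n$, one produces for each $n$ a time $t_n\in I$ such that \emph{either} the inclusion $[t_n-\tfrac1n N(t_n)^{-2},\,t_n+\tfrac1n N(t_n)^{-2}]\subset I$ fails, \emph{or} there is a $t_n'\in I$ with $|t_n'-t_n|\le\tfrac1n N(t_n)^{-2}$ yet $N(t_n')/N(t_n)\notin[\tfrac1n,n]$. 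After passing to a subsequence I may assume the same alternative occurs for every $n$. I would then pass to the normalised solutions $u^{(n)}:=u^{[t_n]}$ of \eqref{untn}: these are normalised maximal-lifespan solutions, almost periodic modulo $G$ with the compactness modulus function $C$ of $u$ and with $M(u^{(n)})=M(u)>0$; their frequency scale functions satisfy $N^{(n)}(s)=N\!\left(t_n+sN(t_n)^{-2}\right)/N(t_n)$, so $N^{(n)}(0)=1$, and $I^{(n)}\ni 0$. Lemma~\ref{laps} then yields, after a further subsequence, a non-zero maximal-lifespan solution $v$ with lifespan $J$, almost periodic modulo $G$, to which the $u^{(n)}$ converge locally uniformly; since $J$ is an open interval containing $0$ by Theorem~\ref{local}, I may fix $\delta_0>0$ with $[-\delta_0,\delta_0]\subset J$, whence $[-\delta_0,\delta_0]\subset I^{(n)}$ for all large $n$ by the definition of local uniform convergence.

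The case where the inclusion fails for every $n$ is dispatched at once: after rescaling, the failure reads $[-1/n,1/n]\not\subset I^{(n)}$, which for $n$ large enough that $1/n<\delta_0$ contradicts $[-1/n,1/n]\subset[-\delta_0,\delta_0]\subset I^{(n)}$. So it remains to treat the other case. There I would set $s_n:=(t_n'-t_n)N(t_n)^2$, so that $|s_n|\le 1/n\to 0$ and $N^{(n)}(s_n)=N(t_n')/N(t_n)\notin[1/n,n]$; passing to a subsequence, either $N^{(n)}(s_n)\to 0$ or $N^{(n)}(s_n)\to\infty$.

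The heart of the matter is to contradict this. On the one hand, for $n$ large one has $s_n\in[-\delta_0,\delta_0]$, and since $u^{(n)}\to v$ in $C^0_tL^2_x([-\delta_0,\delta_0]\times\R^d)$ and $v$ is $L^2_x$-continuous with $v(s_n)\to v(0)$, it follows that $u^{(n)}(s_n)\to v(0)$ strongly in $L^2_x$, where $v(0)\ne 0$ because $M(v)>0$. On the other hand, almost periodicity of $u^{(n)}$ (Definition~\ref{apdef}), applied at time $s_n$, says that outside a ball of radius $C(\eta)/N^{(n)}(s_n)$ about the spatial center, respectively outside a ball of radius $C(\eta)N^{(n)}(s_n)$ about the frequency center, the function $u^{(n)}(s_n)$, respectively $\widehat{u^{(n)}(s_n)}$, carries at most $\eta$ of the mass. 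If $N^{(n)}(s_n)\to\infty$, the spatial radii shrink to $0$, which — testing against bounded, compactly supported functions and using the uniform mass bound — forces $u^{(n)}(s_n)\rightharpoonup 0$ weakly in $L^2_x$; if $N^{(n)}(s_n)\to 0$, the same reasoning on the Fourier side gives $\widehat{u^{(n)}(s_n)}\rightharpoonup 0$, hence again $u^{(n)}(s_n)\rightharpoonup 0$. Either way $v(0)=0$, the desired contradiction.

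The only slightly delicate point — and the closest thing here to an obstacle — is that the evaluation times $s_n$ move with $n$ rather than being fixed, so one cannot directly invoke the pointwise statement of Lemma~\ref{con}. This is handled, as indicated above, by pairing the \emph{uniform} convergence of $u^{(n)}$ to $v$ on the fixed compact interval $[-\delta_0,\delta_0]$ with the continuity of $v$ as an $L^2_x$-valued curve; everything else in the argument is routine.
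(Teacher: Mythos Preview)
Your proof is correct and follows essentially the same route as the paper's: both argue by contradiction, pass to the normalisations $u^{[t_n]}$, invoke Lemma~\ref{laps} to extract a non-zero limit $v$ on an open interval $J\ni 0$, and then contradict either the openness of $J$ (for \eqref{nblow}) or the nonvanishing of $v(0)$ via the weak-versus-strong convergence of $u^{[t_n]}(s_n)$ (for \eqref{nt}). The only cosmetic difference is that the paper treats \eqref{nblow} and \eqref{nt} in two separate contradiction arguments (possibly shrinking $\delta$ between them), whereas you fold both into a single diagonal sequence $\delta=1/n$, $c=1/n$, $C=n$ and split into cases after passing to a subsequence; the substance is identical.
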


\begin{proof}
Let us first establish \eqref{nblow}.  We argue by contradiction.  Assume \eqref{nblow} fails.  Then, there exist
sequences $t_n \in I$ and $\delta_n \to 0$ such that $t_n + \delta_n N(t_n)^{-2} \not \in I$ for all $n$.
Define the normalisations $u^{[t_n]}$ of $u$ from time $t_n$ by \eqref{untn}.  Then, $u^{[t_n]}$ are
maximal-lifespan normalised solutions whose lifespans $I^{[t_n]}$ contain $0$ but not $\delta_n$;
they are also almost periodic modulo $G$ with frequency scale functions
\begin{equation}\label{nnsdef}
N^{[t_n]}(s) := N( t_n + s N(t_n)^{-2} ) / N(t_n)
\end{equation}
and the same compactness modulus function $C$ as $u$. Applying Lemma~\ref{laps} (and passing to a subsequence if necessary),
we conclude that $u^{[t_n]}$ converge locally uniformly to a maximal-lifespan solution $v$ with some lifespan $J \ni 0$.
By Theorem~\ref{local}, $J$ is open and so contains $\delta_n$ for all sufficiently large $n$.  This contradicts the
local uniform convergence as, by hypothesis, $\delta_n$ does not belong to $I^{[t_n]}$.  Hence \eqref{nblow} holds.

We now show \eqref{nt}.  Again, we argue by contradiction, shrinking $\delta$ if necessary.
Assume \eqref{nt} fails no matter how small one selects $\delta$.  Then, one can find sequences $t_n, t'_n \in I$
such that $s_n := (t'_n - t_n) N(t_n)^2 \to 0$ but $N(t'_n)/N(t_n)$ converge to either zero or infinity.
If we define $u^{[t_n]}$ and $N^{[t_n]}$ as before and  apply Lemma~\ref{laps} (passing to a subsequence if necessary),
we see once again that $u^{[t_n]}$ converge locally uniformly to a maximal-lifespan solution $v$ with some open lifespan $J \ni 0$.
But then $N^{[t_n]}(s_n)$ converge to either zero or infinity and thus, by Definition~\ref{apdef},
$u^{[t_n]}(s_n)$ are converging weakly to zero. On the other hand, since $s_n$ converge to zero and $u^{[t_n]}$ are locally
uniformly convergent to $v \in C^0_{t,\loc} L^2_x(J \times \R^d)$, we may conclude that $u^{[t_n]}(s_n)$ converge
strongly to $v(0)$ in $L^2_x(\R^2)$.  Thus $v(0) = 0$ and $M(u^{[t_n]})$ converge to $M(v)=0$.
But since $M(u^{(n)}) = M(u)$, we see that $u$ vanishes identically, a contradiction.  Thus \eqref{nt} holds.
\end{proof}

\begin{corollary}[Blowup criterion]\label{blow}
Let $u$ be a non-zero maximal-lifespan solution to \eqref{nls} with lifespan $I$ that is almost periodic modulo $G$
with frequency scale function $N: I \to \R^+$.  If $T$ is any finite endpoint of $I$, then
$N(t) \gtrsim_u |T-t|^{-1/2}$; in particular, $\lim_{t\to T} N(t)=\infty$.
\end{corollary}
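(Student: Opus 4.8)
The plan is to argue by contradiction using the local constancy of $N$ (Corollary~\ref{qc}) together with a continuity/extension argument. Suppose, after possibly applying a time translation and time reversal, that $T = \sup I < \infty$ and that the conclusion fails, so there is a sequence $t_n \nearrow T$ with $N(t_n) |T - t_n|^{1/2} \to 0$, i.e. $N(t_n)^{-2} / (T - t_n) \to \infty$. Let $\delta = \delta(u) > 0$ be the small constant from Corollary~\ref{qc}. For $n$ large enough we have $\delta N(t_n)^{-2} > T - t_n$, and hence \eqref{nblow} gives
$$
[t_0 - \delta N(t_n)^{-2},\ t_n + \delta N(t_n)^{-2}] \subset I,
$$
but $t_n + \delta N(t_n)^{-2} > t_n + (T - t_n) = T = \sup I$, a contradiction. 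This immediately yields $N(t) \gtrsim_u |T - t|^{-1/2}$ whenever $t$ is close enough to $T$; for $t$ bounded away from $T$ the bound is automatic since $N$ is locally bounded below (again by Corollary~\ref{qc}, which shows $N$ is comparable to a continuous function on compact subintervals) and $|T - t|$ is bounded above. The statement $\lim_{t \to T} N(t) = \infty$ is then an immediate consequence of the lower bound $N(t) \gtrsim_u |T - t|^{-1/2} \to \infty$.

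A little care is needed to make the last sentence of the previous paragraph precise: strictly, I only want to invoke \eqref{nblow} and \eqref{nt} of Corollary~\ref{qc}, so I would phrase the argument entirely through \eqref{nblow}. The key observation is simply that if $N(t_0)$ is too small relative to the distance to the finite endpoint $T$, then the interval of guaranteed existence around $t_0$ provided by \eqref{nblow} overshoots $T$, which is impossible since $I$ is the maximal lifespan and $T$ is an endpoint. Quantitatively: for every $t_0 \in I$ with $t_0$ on the same side as $T$ (say $t_0 < T = \sup I$), \eqref{nblow} forces $t_0 + \delta N(t_0)^{-2} \le \sup I = T$, i.e. $\delta N(t_0)^{-2} \le T - t_0$, which rearranges to $N(t_0) \ge \sqrt{\delta}\, (T - t_0)^{-1/2}$. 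This is exactly the claimed bound with implicit constant $\sqrt{\delta}$ depending only on $u$.

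The argument is essentially a one-line deduction from Corollary~\ref{qc}, so there is no real obstacle; the only point requiring attention is handling finite left endpoints as well as finite right endpoints, which is dealt with by the time-reversal symmetry (Lemma~\ref{sym}), under which $N(t) \mapsto N(-t)$ (Lemma~\ref{symap}) so that a finite left endpoint of $I$ becomes a finite right endpoint of $-I$. One should also note that the solution is non-zero by hypothesis, which is what makes $\delta(u)$ in Corollary~\ref{qc} meaningful (that corollary already requires $u$ non-zero). With these remarks in place the proof is complete.
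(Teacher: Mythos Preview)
Your proposal is correct and follows exactly the same approach as the paper: the paper's proof is simply ``This is immediate from \eqref{nblow},'' and your argument is precisely the unpacking of that one line, namely that $t_0 + \delta N(t_0)^{-2} \le T$ rearranges to $N(t_0) \ge \sqrt{\delta}\,|T-t_0|^{-1/2}$.
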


\begin{proof}
This is immediate from \eqref{nblow}.
\end{proof}

\begin{lemma}[Local quasi-boundedness of $N$]\label{qb}
Let $u$ be a non-zero solution to \eqref{nls} with lifespan $I$ that is almost periodic modulo $G$
with frequency scale function $N: I \to \R^+$.  If $K$ is any compact subset of $I$, then
$$ 0 < \inf_{t \in K} N(t) \leq \sup_{t \in K} N(t) < \infty.$$
\end{lemma}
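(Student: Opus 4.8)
The plan is to obtain both inequalities as essentially immediate consequences of the local constancy of the frequency scale function, Corollary~\ref{qc}, combined with the sequential compactness of $K$. Since $K$ is non-empty, the inequality $\inf_{t\in K}N(t)\le\sup_{t\in K}N(t)$ is automatic, so it suffices to rule out $\sup_{t\in K}N(t)=\infty$ and $\inf_{t\in K}N(t)=0$.

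First I would treat the upper bound. Suppose $\sup_{t\in K}N(t)=\infty$; then there is a sequence $t_n\in K$ with $N(t_n)\to\infty$, and by compactness of $K$ we may pass to a subsequence so that $t_n\to t_\infty$ for some $t_\infty\in K\subseteq I$. Let $\delta=\delta(u)>0$ be the number furnished by Corollary~\ref{qc}, so that $N(t)\sim_u N(t_\infty)$ whenever $|t-t_\infty|\le\delta N(t_\infty)^{-2}$. The quantity $\delta N(t_\infty)^{-2}$ is a fixed positive number — here is where it matters that $t_\infty$ lies in the lifespan $I$, so that $N(t_\infty)\in(0,\infty)$ — hence for all sufficiently large $n$ we have $|t_n-t_\infty|\le\delta N(t_\infty)^{-2}$ and therefore $N(t_n)\sim_u N(t_\infty)<\infty$. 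This contradicts $N(t_n)\to\infty$.

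The lower bound is handled in exactly the same way: if $\inf_{t\in K}N(t)=0$, choose $t_n\in K$ with $N(t_n)\to 0$ and a subsequential limit $t_\infty\in K$, and Corollary~\ref{qc} again gives $N(t_n)\sim_u N(t_\infty)>0$ for all large $n$, a contradiction. I do not expect any genuine obstacle here; the entire content has been packaged into Corollary~\ref{qc}, and the only points requiring a little care are the extraction of a convergent subsequence inside the compact set $K$ and the observation that the interval of local constancy around the limit point $t_\infty$ has a fixed positive length, so it eventually swallows the tail of the sequence. (In the applications of this lemma $u$ is a maximal-lifespan solution, so Corollary~\ref{qc} applies directly.)
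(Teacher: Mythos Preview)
Your argument via Corollary~\ref{qc} is correct in the maximal-lifespan setting but takes a different route from the paper, and there is a hypothesis mismatch worth flagging. The paper's proof is more direct: if $N(t_n)\to 0$ (or $\infty$) along a sequence in $K$, then by Definition~\ref{apdef} the functions $u(t_n)$ converge weakly to zero in $L^2_x$; passing to a subsequence with $t_n\to t_0\in K$ and using the strong continuity of $t\mapsto u(t)$ in $L^2_x$ forces $u(t_0)=0$, contradicting $u\not\equiv 0$. This argument avoids Corollary~\ref{qc} entirely and in particular does not require $u$ to be maximal-lifespan---a hypothesis that Corollary~\ref{qc} carries but Lemma~\ref{qb} does not. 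You acknowledge this discrepancy in your final parenthetical, and it is true that every application in the paper is to maximal-lifespan solutions, but as written your argument does not establish the lemma in the generality stated. The paper's approach is also lighter in terms of logical dependencies: it uses only the definition of almost periodicity and the $C^0_t L^2_x$ regularity built into the notion of solution, whereas Corollary~\ref{qc} rests on the compactness machinery of Lemma~\ref{laps}. What your route buys is brevity once one is willing to invoke the heavier corollary.
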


\begin{proof}
We only prove the first inequality; the argument for the last is similar.

We argue by contradiction.  Suppose that the first inequality fails.
Then, there exists a sequence $t_n \in K$ such that $\lim_{n \to \infty} N(t_n) = 0$ and hence, by Definition~\ref{apdef},
$u(t_n)$ converge weakly to zero.  Since $K$ is compact, we can assume $t_n$ converge to a limit $t_0 \in K$.
As $u\in C_t^0 L_x^2(K\times\R^d)$, we see that $u(t_n)$ converge strongly to $u(t_0)$.  Thus $u(t_0)$ must be zero,
contradicting the hypothesis.
\end{proof}

Finally, we establish a spacetime bound.

\begin{lemma}[Spacetime bound]\label{spacelemma}
Let $u$ be a non-zero solution to \eqref{nls} with lifespan $I$, which is almost periodic modulo $G$
with frequency scale function $N: I \to \R^+$.  If $J$ is any subinterval of $I$, then
\begin{equation}\label{uj-conv}
\int_J N(t)^2\,dt \lesssim_u \int_J \int_{\R^d} |u(t,x)|^{\frac{2(d+2)}{d}}\, dx\, dt\lesssim_u 1 + \int_J N(t)^2\,dt.
\end{equation}
\end{lemma}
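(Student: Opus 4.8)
The plan is to prove the two inequalities of \eqref{uj-conv} separately. The lower bound is a soft consequence of almost periodicity: I would fix $t\in J$ and apply Definition~\ref{apdef} with $\eta:=\tfrac12 M(u)$ (admissible since $u\not\equiv 0$), which, writing $x(t)$ for the spatial center function, gives $\int_{|x-x(t)|<C(\eta)/N(t)}|u(t,x)|^2\,dx\geq \tfrac12 M(u)$. Since this ball has volume $|B|\sim_u N(t)^{-d}$, H\"older's inequality (with conjugate exponents $\tfrac{d+2}{d}$ and $\tfrac{d+2}{2}$) gives
\[
\tfrac12 M(u)\ \leq\ \Bigl(\int_{\R^d}|u(t,x)|^{\frac{2(d+2)}{d}}\,dx\Bigr)^{\frac{d}{d+2}}\,|B|^{\frac{2}{d+2}},
\]
so, since $|B|^{2/(d+2)}\sim_u N(t)^{-2d/(d+2)}$, raising to the power $\tfrac{d+2}{d}$ and rearranging produces $N(t)^2\lesssim_u\int_{\R^d}|u(t,x)|^{2(d+2)/d}\,dx$; integrating in $t$ over $J$ gives the first inequality.

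For the upper bound the key step I would establish first is a uniform local spacetime bound: there exist $c>0$ and $B<\infty$, depending only on $u$, so that for every $t_0\in I$ one has $[t_0-cN(t_0)^{-2},t_0+cN(t_0)^{-2}]\subseteq I$ and $\|u\|_{L^{2(d+2)/d}_{t,x}([t_0-cN(t_0)^{-2},\,t_0+cN(t_0)^{-2}]\times\R^d)}\leq B$. The inclusion is precisely \eqref{nblow}. For the norm bound I would pass to the normalisations $u^{[t_0]}$ of \eqref{untn}: these are normalised maximal-lifespan solutions, almost periodic modulo $G$ with the \emph{same} compactness modulus function as $u$ and with $M(u^{[t_0]})=M(u)$, so by the Ascoli--Arzela characterisation in the remarks after Definition~\ref{apdef} the family $\{u^{[t_0]}(0):t_0\in I\}$ is precompact in $L^2_x$. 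If the bound failed, then for each $n$ there would be $t_n\in I$ with $\|u^{[t_n]}\|_{L^{2(d+2)/d}_{t,x}([-1/n,1/n]\times\R^d)}>n$; applying Lemma~\ref{laps} and passing to a subsequence, $u^{[t_n]}$ would converge locally uniformly to a maximal-lifespan solution $v$ with an open lifespan containing $0$, whence these norms would be eventually dominated by the finite norm of $v$ on a fixed compact interval around $0$ --- a contradiction. Rescaling back, and using that normalisation is the composition of a time translation and a map $T_g$, $g\in G$, both of which preserve the $L^{2(d+2)/d}_{t,x}$ norm, yields the bound; shrinking $c$, I would further require $c\leq\delta$ with $\delta$ from Corollary~\ref{qc}, so that also $N(t)\sim_u N(t_0)$ throughout the interval. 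This uniform local bound is the one genuinely substantive step; the only delicate point is that the lifespan of the limit $v$ --- hence the length on which one gains control --- may be arbitrarily short, which is why the contradiction must be arranged with the shrinking intervals $[-1/n,1/n]$ rather than a single fixed one.

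Granting this, the upper bound follows by a greedy covering. It suffices to treat a compact $J=[a,b]\subseteq I$ and then pass to the limit by monotone convergence, exhausting a general $J$ by compact subintervals (note $J$ lies in the interior of $I$ by Corollary~\ref{qc}). On $[a,b]$ the function $N$ is bounded above and below by Lemma~\ref{qb}. I would set $t_0:=a$ and $t_{k+1}:=\min\{t_k+cN(t_k)^{-2},\,b\}$; since $N\leq\sup_{[a,b]}N<\infty$, each non-final step has length bounded below by a fixed positive constant, so the process terminates after finitely many steps, say $t_K=b$. The uniform local bound gives $\int_{t_k}^{t_{k+1}}\int_{\R^d}|u|^{2(d+2)/d}\,dx\,dt\lesssim_u 1$ for every $k$, hence $\int_a^b\int_{\R^d}|u|^{2(d+2)/d}\,dx\,dt\lesssim_u K$. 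On the other hand, for every step except possibly the last one has $t_{k+1}-t_k=cN(t_k)^{-2}$ and $N\sim_u N(t_k)$ on $[t_k,t_{k+1}]$, so $\int_{t_k}^{t_{k+1}}N(t)^2\,dt\sim_u 1$; summing over the at least $K-1$ full steps gives $K-1\lesssim_u\int_a^b N(t)^2\,dt$, and therefore $K\lesssim_u 1+\int_a^b N(t)^2\,dt$. Chaining the two displays gives the second inequality in \eqref{uj-conv}.
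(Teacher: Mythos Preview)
Your lower-bound argument is the same as the paper's. For the upper bound you take a genuinely different route. The paper argues directly: it partitions $J$ into subintervals $I_j$ with $\int_{I_j}N(t)^2\,dt\leq\eta$ (so that there are $\lesssim 1+\eta^{-1}\int_J N^2$ of them), picks $t_j\in I_j$ with $N(t_j)^2|I_j|\leq 2\eta$, and on each $I_j$ runs a Strichartz bootstrap, splitting $e^{i(t-t_j)\Delta}u(t_j)$ into a piece above a large multiple of $N(t_j)$ (small by almost periodicity) and a piece below (small by Bernstein and the smallness of $|I_j|N(t_j)^2$). Your approach instead packages the local spacetime control into a single uniform bound proved by soft compactness via Lemma~\ref{laps}, and then covers $J$ greedily using Corollary~\ref{qc}. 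The paper's argument is more self-contained and in principle quantitative, and it does not rely on the maximal-lifespan hypothesis that underlies Corollary~\ref{qc} and Lemma~\ref{laps}; your argument is cleaner once that machinery is available and makes transparent that the upper bound is really a consequence of precompactness of the normalised orbit. One caveat worth flagging: Lemma~\ref{spacelemma} as stated does not assume $u$ is maximal-lifespan, whereas your route implicitly does; since every application in the paper is to maximal-lifespan solutions this is harmless, but strictly speaking you have proved a slightly weaker statement.
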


\begin{proof}
We first prove
\begin{equation}\label{uj}
\int_J \int_{\R^d} |u(t,x)|^{\frac{2(d+2)}{d}}\, dx\, dt \lesssim_u 1 + \int_J N(t)^2\, dt.
\end{equation}
Let $0<\eta<1$ to be chosen momentarily and partition $J$ into subintervals $I_j$ so that
\begin{equation}\label{44 I_j defn}
\int_{I_j} N(t)^2 \,dt \leq \eta;
\end{equation}
this requires  at most $\eta^{-1}\times \text{RHS\eqref{uj}}$ many intervals.  For each $j$, we may choose
$t_j\in I_j$ so that
\begin{equation}\label{44 t_j}
N(t_j)^2 |I_j| \leq 2 \eta.
\end{equation}
By Strichartz inequality, we have the following estimates on the spacetime slab $I_j\times\R^d$
\begin{align*}
\|u\|_{L^{\frac{2(d+2)}{d}}_{t,x}}
&\lesssim \| e^{i(t-t_j)\Delta} u(t_j) \|_{L^{\frac{2(d+2)}{d}}_{t,x}}
        + \| u \|_{L^{\frac{2(d+2)}{d}}_{t,x}}^{\frac{d+4}{d}} \\
&\lesssim \| u_{\geq N_0}(t_j) \|_{L^2_x} + \| e^{i(t-t_j)\Delta} u_{\leq N_0}(t_j)\|_{L^{\frac{2(d+2)}{d}}_{t,x}}
        + \| u \|_{L^{\frac{2(d+2)}{d}}_{t,x}}^{\frac{d+4}{d}}\\
&\lesssim \| u_{\geq N_0}(t_j) \|_{L^2_x} + |I_j|^{\frac{d}{2(d+2)}} N_0^{\frac{d}{d+2}} \|u(t_j) \|_{L^2_x}
        + \| u \|_{L^{\frac{2(d+2)}{d}}_{t,x}}^{\frac{d+4}{d}};
\end{align*}
the last step here used H\"older's and Bernstein's inequalities. Choosing $N_0$ as a large multiple of $N(t_j)$
and using Definition~\ref{apdef}, one can make the first term as small as one wishes.  Subsequently,
choosing $\eta$ sufficiently small depending on $M(u)$ and invoking \eqref{44 t_j},
one may also render the second term arbitrarily small.  Thus, by the usual bootstrap argument we obtain
\begin{equation*}
\int_{I_j}\int_{\R^d} |u(t,x)|^\frac{2(d+2)}{d}\,dx\,dt \leq 1.
\end{equation*}
Using the bound on the number of intervals $I_j$, this leads to \eqref{uj}.

Now we prove
\begin{equation}\label{jaunt}
\int_J \int_{\R^d} |u(t,x)|^{\frac{2(d+2)}{d}}\, dx\, dt \gtrsim_u \int_J N(t)^2\, dt.
\end{equation}
Using Definition~\ref{apdef} and choosing $\eta$ sufficiently small depending on $M(u)$, we can guarantee that
\begin{align}\label{mass big}
\int_{|x-x(t)|\leq C(\eta)N(t)^{-1}} |u(t,x)|^2\,dx \gtrsim_u 1
\end{align}
for all $t\in J$; here $x(t)$ is the spatial center given to us by compactness.  On the other hand,
a simple application of H\"older's inequality yields
\begin{align*}
\int_{\R^d} |u(t,x)|^{\frac{2(d+2)}{d}}\,dx
\gtrsim_{u} \Bigl(\int_{|x-x(t)|\leq C(\eta)N(t)^{-1}} |u(t,x)|^2\Bigr)^{\frac{d+2}{d}} N(t)^2.
\end{align*}
Thus, using \eqref{mass big} and integrating over $J$ we derive \eqref{jaunt}.
\end{proof}

\begin{corollary}[Maximal-lifespan almost periodic solutions blow up]\label{blowup}
Let $u$ be a maximal-lifespan solution to \eqref{nls} which is almost periodic modulo $G$.
Then $u$ blows up both forward and backward in time.
\end{corollary}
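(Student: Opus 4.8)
The plan is to show that $u$ blows up forward in time; the backward statement then follows on applying this to the time-reversed solution $\tilde u(t,x):=\overline{u(-t,x)}$, which by Lemma~\ref{sym} is again a maximal-lifespan solution and by Lemma~\ref{symap} is again almost periodic modulo $G$ (its frequency scale function being $N(-t)$). Throughout we take $u$ to be non-zero, exactly as in Lemmas~\ref{spacelemma} and \ref{qb}. If $\sup I$ is finite, the blowup criterion in Theorem~\ref{local} gives the conclusion outright, so assume $\sup I=+\infty$; then $[t_0,\infty)\subset I$ for every $t_0\in I$. Suppose, for contradiction, that $u$ does not blow up forward in time.

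By Definition~\ref{D:blowup}, this supposition is precisely the statement that $\|u\|_{L^{2(d+2)/d}_{t,x}([t_0,\infty)\times\R^d)}<\infty$ for every $t_0\in I$. Fix one such $t_0$ and apply the first inequality of Lemma~\ref{spacelemma} with $J:=[t_0,\infty)$ to obtain
\begin{equation*}
\int_{t_0}^{\infty} N(t)^2\,dt \;\lesssim_u\; \int_{t_0}^{\infty}\int_{\R^d}|u(t,x)|^{\frac{2(d+2)}{d}}\,dx\,dt \;<\;\infty .
\end{equation*}

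The remaining — and only nontrivial — step is to contradict this by showing $\int_{t_0}^{\infty}N(t)^2\,dt=\infty$, using nothing but the local constancy of $N$. Let $\delta=\delta(u)>0$ be the constant provided by Corollary~\ref{qc}, and define the increasing sequence of times $t_{n+1}:=t_n+\delta N(t_n)^{-2}$, all lying in $[t_0,\infty)\subset I$. On each block $[t_n,t_{n+1}]$ one has $|t-t_n|\le\delta N(t_n)^{-2}$, so \eqref{nt} gives $N(t)\gtrsim_u N(t_n)$ and hence $\int_{t_n}^{t_{n+1}}N(t)^2\,dt\gtrsim_u N(t_n)^2(t_{n+1}-t_n)=\delta$. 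Summing over $n$ yields $\int_{t_0}^{\infty}N(t)^2\,dt\gtrsim_u\sum_n\delta$, which is infinite provided $t_n\to+\infty$. To see that it does: being increasing, $(t_n)$ tends to some $T^*$ with $t_0<T^*\le+\infty$; were $T^*$ finite, then $[t_0,T^*]$ would be a compact subinterval of $I$ on which, by Lemma~\ref{qb}, $N$ is bounded above, say $N\le\bar N$; but then $t_{n+1}-t_n=\delta N(t_n)^{-2}\ge\delta\bar N^{-2}>0$ for all $n$, so $t_n\to\infty$, contradicting $t_n\nearrow T^*<\infty$. Hence $T^*=+\infty$, the sum diverges, and the desired contradiction is reached.

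I expect the one point genuinely needing care to be this last verification that the sequence $(t_n)$ runs off to $+\infty$ rather than accumulating at a finite time: this is exactly where the local quasi-boundedness of $N$ on compact subintervals (Lemma~\ref{qb}) is used, to keep the increments $\delta N(t_n)^{-2}$ bounded away from zero. Everything else — the inclusion $[t_0,\infty)\subset I$, the per-block lower bound via \eqref{nt}, and the passage from the forward statement for $\tilde u$ to the backward statement for $u$ — is immediate from the results cited above.
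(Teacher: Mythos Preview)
Your proof is correct and follows essentially the same route as the paper's: both arguments reduce to showing that $\int N(t)^2\,dt$ diverges toward each endpoint and then invoke Lemma~\ref{spacelemma}. Your block iteration $t_{n+1}=t_n+\delta N(t_n)^{-2}$ is simply an explicit unpacking of the paper's one-line claim ``by \eqref{nblow}, $N(t)\gtrsim_u\langle t-t_0\rangle^{-1/2}$''; and for the finite-endpoint case you shortcut via the blowup criterion in Theorem~\ref{local} rather than going through Corollary~\ref{blow} and Lemma~\ref{spacelemma}, which is a harmless simplification.
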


\begin{proof}
At a finite endpoint, Corollary~\ref{blow} shows that the integral $\int_I N(t)^2\,dt$ diverges in any
neighbourhood of that endpoint.  Thus by Lemma~\ref{spacelemma} the spacetime norm also diverges,
which is the definition of blowup.

In the case of an infinite endpoint, we choose $t_0\in I$ and note that by \eqref{nblow},
$N(t)\gtrsim_u \langle t-t_0 \rangle^{-1/2}$.  The claim now follows as before.
\end{proof}

\section{Three enemies}\label{comp-sec}

We now prove Theorem~\ref{comp}.  The proof is unaffected by spherical symmetry, the dimension $d$, or the sign of $\mu$;
the essential ingredients are the scaling symmetry and the local constancy of $N(t)$.

Fix $d$ and $\mu$.  Invoking Theorem~\ref{main-compact}, we can find a maximal-lifespan
solution $v$ of some lifespan $J$ which is almost periodic modulo $G$ and blows up both
forward and backward in time; also, in the focusing case $\mu=-1$, we have $M(v) < M(Q)$.

Let $N_v(t)$ be the frequency scale function associated to $v$ as in Definition
\ref{apdef}, and let $C: \R^+ \to \R^+$ be the compactness modulus function. The
solution $v$ partially satisfies the conclusions of Theorem \ref{comp}, but we are not
necessarily in one of the three scenarios listed there.
To extract a solution $u$ with such additional properties, we will have to perform some further
manipulations primarily based on the scaling and time-translation symmetries.

For any $T \geq 0$, define the quantity
\begin{equation}\label{cdef}
\osc(T) := \inf_{t_0 \in J} \frac{\sup_{t \in J: |t-t_0| \leq T/N_v(t_0)^2} N_v(t)}{\inf_{t\in J: |t-t_0| \leq T/N_v(t_0)^2} N_v(t)}.
\end{equation}
Roughly speaking, this measures the least possible oscillation one can find
in $N_v$ on time intervals of normalised duration $T$.  This quantity is clearly
non-decreasing in $T$.  If $\osc(T)$ is bounded, we will be able to extract a soliton-like solution;
this is

\medskip

{\bf Case I:} $\lim_{T \to\infty} \osc(T) < \infty$.

In this case, we have arbitrarily long periods of stability for $N_v$.  More precisely, we can find
a finite number $A = A_v$, a sequence $t_n$ of times in $J$, and a sequence $T_n \to \infty$ such that
$$
\frac{\sup_{t \in J: |t-t_n| \leq T_n / N_v(t_n)^2} N_v(t)}{\inf_{t \in J: |t-t_n| \leq T_n / N_v(t_n)^2} N_v(t)} <  A
$$
for all $n$.  Note that this, together with Corollary~\ref{qc}, implies that
$$ [t_n - T_n / N_v(t_n)^2, t_n + T_n / N_v(t_n)^2] \subset J$$
and
$$ N_v(t) \sim_v N_v(t_n)$$
for all $t$ in this interval.

Now define the normalisations $v^{[t_n]}$ of $v$ at times $t_n$ as in \eqref{untn}.
Then $v^{[t_n]}$ is a maximal-lifespan normalised solution with lifespan
$$ J_n := \{ s \in \R: t_n + \frac{1}{N_v(t_n)^2} s \in J \} \supset [-T_n,T_n] $$
and mass $M(v)$.  It is almost periodic modulo scaling with frequency scale function
$$ N_{v^{[t_n]}}(s) := \frac{1}{N_v(t_n)} N_v(t_n + \frac{1}{N_v(t_n)^2} s)$$
and compactness modulus function $C$.  In particular, we see that
\begin{equation}\label{nvs}
N_{v^{[t_n]}}(s) \sim_v 1
\end{equation}
for all $s \in [-T_n,T_n]$.

We now apply Lemma~\ref{laps} and conclude (passing to a subsequence if necessary) that
$v^{[t_n]}$ converge locally uniformly to a maximal-lifespan solution $u$ with mass
$M(v)$ defined on an open interval $I$ containing $0$ and which is almost periodic modulo symmetries.
As $T_n\to\infty$, Lemma~\ref{con} and \eqref{nvs} imply that the frequency scale function $N: I \to \R^+$ of $u$
satisfies
$$ 0 < \inf_{t \in I} N(t) \leq \sup_{t \in I} N(t) < \infty.$$
In particular, by Corollary~\ref{blow}, $I$ has no finite endpoints and hence $I = \R$.
By modifying $C$ by a bounded amount we may now normalise $N \equiv 1$.  We
have thus constructed a soliton-like solution in the sense of Theorem~\ref{comp}.

\medskip

When $\osc(T)$ is unbounded, we must seek a solution belonging to one of the remaining two scenarios.
To distinguish between them, we introduce the quantity
$$ a(t_0) := \frac{ \inf_{t \in J: t \leq t_0} N_v(t) + \inf_{t \in J: t \geq t_0} N_v(t) }{N_v(t_0)}$$
for every $t_0 \in J$. This measures the extent to which $N_v(t)$ decays to zero on both sides of $t_0$.
Clearly, this quantity takes values in the interval $[0,2]$.

\medskip

{\bf Case II:} $\lim_{T\to \infty} \osc(T) = \infty$ and $\inf_{t_0 \in J} a(t_0) = 0$.

In this case, there are no long periods of stability but there are times about which there are arbitrarily large
cascades from high to low frequencies in both future and past directions.  This will allow us to extract a solution
with a double high-to-low frequency cascade as defined in Theorem~\ref{comp}.

As $\inf_{t_0 \in J} a(t_0) = 0$, there exists a sequence of times $t_n \in
J$ such that $a(t_n) \to 0$ as $n \to \infty$.  By the definition of $a$, we can also
find times $t_n^- < t_n < t_n^+$ with $t_n^-, t_n^+ \in J$ such that
$$ \frac{N_v(t_n^-)}{N_v(t_n)}\to0 \quad\text{and}\quad \frac{N_v(t_n^-)}{N_v(t_n)} \to 0.$$
Choose $t_n^- < t'_n < t_n^+$ so that
$$ N_v(t'_n) \sim \sup_{t_n^- \leq t \leq t_n^+} N_v(t);$$
then,
$$ \frac{N_v(t_n^-)}{N_v(t'_n)}\to0 \quad\text{and}\quad  \frac{N_v(t_n^-)}{N_v(t'_n)} \to 0.$$

We define the rescaled and translated times $s_n^- < 0 < s_n^+$ by
$$ s_n^\pm := N_v(t'_n)^2 (t^\pm_n - t'_n) $$
and the normalisations $v^{[t'_n]}$ at times $t'_n$ by \eqref{untn}.  These are normalised
maximal-lifespan solutions with lifespans containing $[s_n^-, s_n^+]$, which are almost periodic modulo $G$ with
frequency scale functions
\begin{equation}\label{nvns-def}
N_{v^{[t'_n]}}(s) := \frac{1}{N_v(t'_n)} N_v\bigl( t'_n + \frac{1}{N_v(t'_n)^2} s \bigr).
\end{equation}
By the way we chose $t_n'$, we see that
\begin{equation}\label{nvnt}
 N_{v^{[t'_n]}}(s) \lesssim 1
\end{equation}
for all $s_n^- \leq s \leq s_n^+$.  Moreover,
\begin{equation}\label{nvns}
 N_{v^{[t'_n]}}(s_n^\pm) \to 0 \quad\text{as}\quad n\to \infty
\end{equation}
for either choice of sign.

We now apply Lemma \ref{laps} and conclude (passing to a subsequence if necessary) that
$v^{[t'_n]}$ converge locally uniformly to a maximal-lifespan solution $u$ of mass
$M(v)$ defined on an open interval $I$ containing $0$, which is almost periodic modulo symmetries.

Let $N$ be a frequency scale function for $u$. From Lemma~\ref{qb} we see that $N(t)$
is bounded from below on any compact set $K\subset I$.  From this and Lemma~\ref{con} (and Lemma~\ref{uniq}), we
see that $N_{v^{[t'_n]}}(t)$ is also bounded from below, uniformly in $t \in K$, for
all sufficiently large $n$ (depending on $K$).  As a consequence of this and
\eqref{nvns}, we see that $s_n^-$ and $s_n^+$ cannot have any limit points in $K$;
thus $K \subset [s_n^-,s_n^+]$ for all sufficiently large $n$.  Therefore, $s_n^\pm$ converge to the endpoints of $I$.
Combining this with Lemma~\ref{con} and \eqref{nvnt}, we conclude that
\begin{equation}\label{nub}
\sup_{t \in I} N(t) < \infty.
\end{equation}
Lemma~\ref{blow} now implies that $I$ has no finite endpoints, that is, $I = \R$.

In order to prove that $u$ is a double high-to-low frequency cascade, we merely need to show that
\begin{equation}\label{limi}
 \liminf_{t \to +\infty} N(t) = \liminf_{t \to -\infty} N(t) = 0.
\end{equation}
By time reversal symmetry, it suffices to establish that $\liminf_{t \to
+\infty} N(t) = 0$.  Suppose that this is not the case.  Then, using \eqref{nub} we may deduce
$$ N(t) \sim_u 1$$
for all $t \geq 0$.  We conclude from Lemma \ref{con} that for every $m \geq 1$, there exists an $n_m$ such that
$$N_{v^{[t'_{n_m}]}}(t) \sim_u 1$$
for all $0 \leq t \leq m$.  But by \eqref{cdef} and \eqref{nvns-def} this implies that
$$ \osc( \eps m ) \lesssim_u 1$$
for all $m$ and some $\eps = \eps(u) > 0$ independent of $m$.  Note that $\eps$ is chosen as a lower bound
on the quantities $N(t_{n_m}'')^2/N(t_{n_m}')^2$ where $t''_{n_m}=t_{n_m}'+\frac{m}2N(t_{n_m}')^{-2}$.  This
contradicts the hypothesis $\lim_{T\to\infty} \osc(T) = \infty$ and so settles Case II.

\medskip

{\bf Case III:} $\lim_{T\to\infty} \osc(T) = \infty$ and $\inf_{t_0 \in J} a(t_0) > 0$.

In this case, there are no long periods of stability and no double cascades from high
to low frequencies; we will be able to extract a self-similar solution in the sense of Theorem~\ref{comp}.

Let $\eps = \eps(v) > 0$ be such that $\inf_{t_0 \in J} a(t_0) \geq 2\eps$.  We call a time $t_0$ \emph{future-focusing} if
\begin{equation}\label{future-def}
 N_v(t) \geq \eps N_v(t_0) \hbox{ for all } t \in J \hbox{ with } t \geq t_0
 \end{equation}
and \emph{past-focusing} if
\begin{equation}\label{past-def}
 N_v(t) \geq \eps N_v(t_0) \hbox{ for all } t \in J \hbox{ with } t \leq t_0.
\end{equation}
From the choice of $\eps$ we see that every time $t_0 \in J$ is either future-focusing or past-focusing,
or possibly both.

We will now show that either all sufficiently late times are future-focusing or that all sufficiently early times
are past-focusing.  If this were false, there would be a future-focusing time $t_0$ and a sequence of past-focusing
times $t_n$ that converges to the righthand endpoint of $J$.
For sufficiently large $n$, we have $t_n \geq t_0$.  By \eqref{future-def} and \eqref{past-def} we then see that
$$ N_v(t_n) \sim_v N_v(t_0)$$
for all such $n$.  For any $t_0 < t < t_n$, we know that $t$ is either past-focusing or
future-focusing; thus we have either $N_v(t_0) \geq \eps N_v(t)$ or $N_v(t_n) \geq \eps N_v(t)$.
Also, since $t_0$ is future-focusing, $N_v(t) \geq \eps N_v(t_0)$.  We conclude that
$$N_v(t) \sim_v N_v(t_0)$$
for all $t_0 < t < t_n$; since $t_n$ converges to $\sup(J)$, this
claim in fact holds for all $t_0 < t < \sup(J)$.  In particular,
from Corollary \ref{blow} we see that $v$ does not blow up forward
in finite time, that is, $\sup(J) = \infty$.  The function $N_v$ is now
bounded above and below on the interval $(t_0,+\infty)$.  This
implies that $\lim_{T \to \infty} \osc(T) < \infty$, a contradiction.
This proves the assertion at the beginning of the paragraph.

We may now assume that future-focusing occurs for all sufficiently late times; more
precisely, we can find a $t_0 \in J$ such that all times $t \geq t_0$ are future-focusing.
The case when all sufficiently early times are past-focusing reduces to this via time-reversal symmetry.

We will now recursively construct a new sequence of times $t_n$.  More precisely, we will explain how to choose
$t_{n+1}$ from $t_n$.

Since $\lim_{T\to\infty} \osc(T) = \infty$, we have $\osc(B) \geq 2/\eps$ for some sufficiently large $B = B(v) > 0$.
Given $J\ni t_n>t_0$ set $A=2B\eps^{-2}$ and $t_n'=t_n+\frac12AN_v(t_n)^{-2}$.  As $t_n'>t_0$, it is future-focusing
and so $N_v(t_n')\geq \eps N_v(t_n)$. From this, we see that
$$
\bigl\{t : |t-t_n'| \leq B N_v(t_n')^{-2} \bigr\} \subseteq \bigl[t_n,t_n+ A N_v(t_n)^{-2}\bigr]
$$
and thus, by the definition of $B$ and the fact that all $t\geq t_n$ are future-focusing,
\begin{equation}\label{Nwiggle}
 \sup_{t \in J\cap[t_n,t_n + A N_v(t_n)^{-2}]} N_v(t) \geq 2 N_v(t_n).
\end{equation}
Using this and Corollary~\ref{qc}, we see that for every $t_n \in J$ with $t_n \geq t_0$ there exists a time $t_{n+1} \in J$ obeying
\begin{equation}\label{nti}
t_n < t_{n+1} \leq t_n + A N(t_n)^{-2}
\end{equation}
such that
\begin{equation}\label{nti-2}
2 N_v(t_n) \leq N_v(t_{n+1}) \lesssim_v N_v(t_n)
\end{equation}
and
\begin{equation}\label{nti-3}
N_v(t) \sim_v N_v(t_n) \quad\text{for all $t_n \leq t \leq t_{n+1}$.}
\end{equation}

From \eqref{nti} we have
$$ N_v(t_n) \geq 2^n N_v(t_0)$$
for all $n \geq 0$, which by \eqref{nti-2} implies
$$ t_{n+1} \leq t_n + O_v( 2^{-2n} N_v(t_0)^{-2} ).$$
Thus $t_n$ converge to a limit and $N_v(t_n)$ to infinity.  In view of Lemma~\ref{qb},
this implies that $\sup(J)$ is finite and $\lim_{n \to \infty} t_n =\sup(J)$.

Let $n \geq 0$.  By \eqref{nti-2},
$$ N_v(t_{n+m}) \geq 2^m N_v(t_n)$$
for all $m \geq 0$ and so, using \eqref{nti} we obtain
$$
0 < t_{n+m+1} - t_{n+m} \lesssim_v  2^{-2m} N_v(t_n)^{-2}.
$$
Summing this series in $m$, we conclude that
$$ \sup(J) - t_n \lesssim_v N_v(t_n)^{-2}.$$
Combining this with Corollary~\ref{blow}, we obtain
$$ \sup(J) - t_n \sim_v N_v(t_n)^{-2}.$$
In particular, we have
$$ \sup(J)-t_{n+1} \sim_v \sup(J) - t_n \sim_v N_v(t_n)^{-2}.$$
Applying \eqref{nti-2} and \eqref{nti-3} shows
$$ \sup(J) - t \sim_v N_v(t)^{-2}$$
for all $t_n \leq t \leq t_{n+1}$.  Since $t_n$ converges to $\sup(J)$, we conclude that
$$
\sup(J)-t \sim_v N_v(t)^{-2}
$$
for all $t_0 \leq t < \sup(J)$.

As we have the freedom to modify $N(t)$ by a bounded function (modifying $C$ appropriately), we may normalise
$$ N_v(t) = (\sup(J)-t)^{-1/2}$$
for all $t_0 \leq t < \sup(J)$.  It is now not difficult to extract our sought-after self-similar solution
by suitably rescaling the interval $(t_0,\sup J)$ as follows.

Consider the normalisations $v^{[t_n]}$ of $v$ at times $t_n$ (cf. \eqref{untn}).
These are maximal-lifespan normalised solutions of mass $M(v)$, whose lifespans include the interval
$$ \Bigl(-\frac{\sup(J)-t_0}{\sup(J)-t_n},1\Bigr),$$
and which are almost periodic modulo scaling with compactness modulus function $C$ and frequency scale functions
\begin{equation}\label{nvsbound}
N_{v^{[t_n]}}(s) = (1-s)^{1/2}
\end{equation}
for all $-\frac{\sup(J)-t_0}{\sup(J)-t_n} < s < 1$. We now apply Lemma \ref{laps} and
conclude (passing to a subsequence if necessary) that $v^{[t_n]}$ converge locally
uniformly to a maximal-lifespan solution $u$ of mass $M(v)$ defined on an open interval
$I$ containing $(-\infty,1)$, which is almost periodic modulo symmetries.

By Lemma~\ref{con} and \eqref{nvsbound}, we see that $u$ has a frequency scale function $N$ obeying
$$ N(s) \sim_v (1-s)^{-1/2}$$
for all $s\in(-\infty,1)$. By modifying $N$ (and $C$) by a bounded factor, we may normalise
$$ N(s) = (1-s)^{-1/2}.$$
From this, Corollary~\ref{qc}, and Corollary~\ref{blow} we see that we must have $I =
(-\infty,1)$.  Applying a time translation (by $-1$) followed by a time reversal, we
obtain our sought-after self-similar solution.

\begin{remark}
Theorem~\ref{comp} may not be the final word on the matter.  It may be that by a further
passage to rescaled limits, one could improve the control in the case of a double high-to-low frequency cascade.
We will not pursue this matter here.
\end{remark}

\section{The self-similar solution}\label{ss-sec}

In this section we preclude self-similar solutions.  As mentioned in the Introduction, the key ingredient is
additional regularity.

\begin{theorem}[Regularity in the self-similar case]\label{ss-sob-thm}
Let $d=2$ and let $u$ be a spherically symmetric solution to \eqref{cubic nls} that is almost periodic modulo $G_\rad$ and
that is self-similar in the sense of Theorem~\ref{comp}.  Then $u(t) \in H^s_x(\R^2)$ for all $t \in (0,\infty)$ and all $s \geq 0$.
\end{theorem}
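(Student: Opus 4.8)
The plan is to establish the regularity by a frequency bootstrap driven by the Duhamel formula and the self-similar structure; here I describe only the proof of Theorem~\ref{ss-sob-thm}.

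\emph{Reductions.} The mass-critical rescaling $u\mapsto u_\lambda$, $u_\lambda(t,x):=\lambda u(\lambda^2 t,\lambda x)$, maps the class of spherically symmetric solutions that are self-similar in the sense of Theorem~\ref{comp} to itself, preserving the mass and the compactness modulus function, and $\||\nabla|^s u_\lambda(1)\|_{L^2_x}=\lambda^s\||\nabla|^s u(\lambda^2)\|_{L^2_x}$. Hence it suffices to prove, for each $s>0$, the scale-invariant estimate
$$\|P_N u(t)\|_{L^2_x(\R^2)}\lesssim_{s,M(u),C}\bigl(N\sqrt{t}\,\bigr)^{-s}\qquad\text{for all }t>0\text{ and all dyadic }N\geq t^{-1/2};$$
applying this with $s$ large, summing the Littlewood--Paley pieces, and using mass conservation for the frequencies below $t^{-1/2}$ then gives $u(t)\in H^s_x$ for every $s\geq0$ and $t>0$. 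Throughout we use the following consequence of self-similarity and Lemma~\ref{spacelemma}: on each dyadic time block $I_k:=(2^k,2^{k+1}]$ one has $N(t)=t^{-1/2}\sim 2^{-k/2}$ and $\int_{I_k}N(t)^2\,dt=\log 2$, hence
$$\|u\|_{L^4_{t,x}(I_k\times\R^2)}\lesssim_{M(u),C}1\qquad\text{uniformly in }k\in\Z.$$

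\emph{The bootstrap.} We induct on $s$, starting from the trivial bound with exponent $0$, and show that an estimate with exponent $\sigma\ge0$ upgrades to one with exponent $\sigma+\theta$ for a fixed $\theta=\theta(M(u),C)>0$. Fix $t>0$ and a dyadic $N\geq t^{-1/2}$ (for $N<t^{-1/2}$ the bound is trivial). Since $\sup I=+\infty$, Lemma~\ref{duhamel L} gives, as a weak limit in $L^2_x$,
$$P_N u(t)=i\lim_{T\to\infty}\int_t^T e^{i(t-t')\Delta}P_N F(u(t'))\,dt',\qquad F(u)=\mu|u|^2u,$$
so by Lemma~\ref{L:strichartz} it is enough to bound $\|P_N F(u)\|_{L^{4/3}_{t,x}((t,\infty)\times\R^2)}$. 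Split $(t,\infty)$ into the blocks $I_k$ meeting it; on each of these $N(I_k)\sim 2^{-k/2}\lesssim t^{-1/2}\leq N$, so $N$ is high relative to the block. Littlewood--Paley decomposing the three factors of $|u|^2u$, the output frequency $\geq N$ forces at least one input frequency $N_1\gtrsim N$, and we split according to whether exactly one, or two or more, of the factors sit at frequencies $\gtrsim N$. In the first case we estimate the product of the high factor with one of the low factors by the bilinear Strichartz inequality (Lemma~\ref{L:bilinear strichartz}), gaining $(N(I_k)/N_1)^{1/2}$, bound the remaining factor in $L^4_{t,x}(I_k)$ by $O(1)$, and bound the $L^2_x$ norm of the high factor at a reference time in $I_k$ by the inductive hypothesis, namely by $(N_1/N(I_k))^{-\sigma}$. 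In the second case we use Shao's radial Strichartz estimate (Lemma~\ref{L:Shuanglin}) — whose exponent may be taken strictly below $4$, which produces a genuine power gain over the ordinary Strichartz norm for each high factor — together again with the inductive hypothesis for the $L^2_x$ norms of those high factors; the radial endpoint estimate of Lemma~\ref{L:strichartz} and the weighted estimate of Lemma~\ref{L:wes} are used for the H\"older bookkeeping. Summing the resulting geometric series over $N_1\gtrsim N$ contributes a factor $\lesssim(N(I_k)/N)^{1/2+\sigma}$, and then summing over the blocks $I_k\subset(t,\infty)$ — equivalently over $N(I_k)=2^{-k/2}\lesssim t^{-1/2}$ — gives a convergent geometric series dominated by the block with $2^k\sim t$, producing $\|P_N u(t)\|_{L^2_x}\lesssim_\sigma(N\sqrt{t}\,)^{-\sigma-\theta}$ and closing the induction.

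\emph{Main obstacle.} The difficulty is not any individual inequality but making the bootstrap close \emph{quantitatively}: since $u$ blows up as $t\to 0$, it is not globally in $L^4_{t,x}$, so every estimate must be localized to a dyadic time block and then summed, and both the frequency sum (over $N_1\gtrsim N$) and the time sum (over the $I_k$) have to converge with a definite net gain. This is exactly where self-similarity is essential: it pins $N(I_k)\sim 2^{-k/2}$, so the separation between ``high'' and ``low'' frequencies is uniform across blocks, and it keeps $\|u\|_{L^4_{t,x}(I_k)}=O(1)$ for every $k$, which is what controls the motion of mass between frequencies as one moves through the time blocks. Shao's radial improvement of Strichartz is the other crucial ingredient, as it supplies the small amount of room below the exponent $4$ needed for the several-high-frequency interactions to contribute an honest power gain; the Knapp example shows no such room is available without spherical symmetry, which is one reason this part of the argument is confined to the radial setting. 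Once the estimate holds for all $s$, one has in particular $u(t)\in H^1_x$, which (as recalled after Conjecture~\ref{conj}) would force the solution to be global, contradicting the lifespan $(0,\infty)$ of a self-similar solution; but that deduction is carried out later in this section, and the substance of Theorem~\ref{ss-sob-thm} is the regularity bootstrap above.
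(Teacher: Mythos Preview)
Your overall architecture---forward Duhamel plus a frequency bootstrap summed over dyadic time blocks---is close to the paper's in spirit, but the argument as written does not start: at $\sigma=0$ the inductive hypothesis is vacuous, so every gain must come from the inequalities alone, and none of the cases you describe actually produces one. In your ``exactly one high'' case you claim a bilinear gain of $(N(I_k)/N_1)^{1/2}$, but Lemma~\ref{L:bilinear strichartz} gives $(M/N_1)^{1/2}$ where $M$ is the frequency cutoff on the paired low factor; since you split only at $N$, the low factors range over all frequencies up to $N/8$, and the gain is merely $(N/N_1)^{1/2}$, which sums to $O(1)$ over $N_1\gtrsim N$ and certainly does not decay in $k$. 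A three-scale decomposition with a middle regime between $N(I_k)$ and $N$ is needed, and that middle regime is precisely where the argument stalls. In the ``two or more high'' case, Lemma~\ref{L:Shuanglin} as stated applies only to free evolutions; extending it to $u$ via Christ--Kiselev drags in $\|P_{N_1}F(u)\|_{L^{4/3}(I_k)}$, and in any event a scaling check shows that the negative powers of $N$ coming from Shao are exactly cancelled by the H\"older bookkeeping needed to place the remaining factor in $L^p_{t,x}$ with $p>4$ (for which only Bernstein is available). So no net power of $N/N(I_k)$ is produced on any block, and the $k$-sum diverges.

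The paper gets the first quantitative decay by a mechanism you have omitted entirely. It tracks three quantities $\mathcal{M},\mathcal{S},\mathcal{N}$ simultaneously, uses a three-scale split $u=u_{>\frac A8 T^{-1/2}}+u_{\sqrt A T^{-1/2}<\cdot\le\frac A8 T^{-1/2}}+u_{\le\sqrt A T^{-1/2}}$ so that the middle regime is controlled by the \emph{qualitative} smallness $\mathcal{M}(\sqrt A)\to0$ coming directly from almost periodicity, and---this is the key step, Proposition~\ref{quant prop}---writes Duhamel from the \emph{finite} time $T/2$ so that a genuine linear piece $P_{>AT^{-1/2}}e^{i(t-T/2)\Delta}u(T/2)$ appears. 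That piece is bounded in $L^\infty_{t,x}([T,2T]\times\R^2)$ by a \emph{backward} Duhamel toward $0^+$ combined with the dispersive inequality~\eqref{dispersive}, and then interpolated against Shao's $L^{7/2}_{t,x}$ bound to yield $A^{-1/8}$. Your exclusive use of the boundary-free formula from Lemma~\ref{duhamel L} means no linear piece ever appears, and with it the one place where Shao's improvement can actually be cashed in is lost.
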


In order to preclude self-similar solutions we only need Theorem~\ref{ss-sob-thm} for $s=1$, as we now show.

\begin{corollary}[Absence of self-similar solutions]\label{NO-ss}
For $d=2$ there are no non-zero spherically symmetric solutions to \eqref{cubic nls} that are
self-similar in the sense of Theorem~\ref{comp}.
\end{corollary}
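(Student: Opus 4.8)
The entire difficulty of excluding the self-similar scenario lies in Theorem~\ref{ss-sob-thm}; granting that result (even only for $s=1$), the deduction is soft. The plan is as follows. Suppose, for contradiction, that $u$ is a non-zero spherically symmetric solution to \eqref{cubic nls} that is self-similar in the sense of Theorem~\ref{comp}: thus $u$ is a maximal-lifespan solution, almost periodic modulo $G_\rad$, with lifespan $I=(0,\infty)$ and $N(t)=t^{-1/2}$, and in the focusing case $M(u)<M(Q)$. Applying Theorem~\ref{ss-sob-thm} with $s=1$, we have $u(t)\in H^1_x(\R^2)$ for every $t\in(0,\infty)$. The idea is to evolve such an $H^1_x$ datum using the classical $H^1_x$ well-posedness theory, which yields a \emph{global} solution, and then to observe that this contradicts the fact that the self-similar solution has a finite endpoint of its lifespan at $t=0$.

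To implement this, first fix any $t_0\in(0,\infty)$ and let $\tilde u$ be the solution to \eqref{cubic nls} with initial data $\tilde u(t_0)=u(t_0)\in H^1_x(\R^2)$. I would then invoke the $H^1_x$ global theory recalled after Conjecture~\ref{conj}: in the defocusing case, conservation of mass and of the non-negative energy \eqref{energy} bounds $\|\tilde u(t)\|_{H^1_x}$ uniformly in $t$; in the focusing case, the sharp Gagliardo--Nirenberg inequality (Theorem~\ref{sGN}) together with $M(u)<M(Q)$ shows that the conserved energy controls $\|\nabla\tilde u(t)\|_{L^2_x}^2$, so that again $\sup_t\|\tilde u(t)\|_{H^1_x}<\infty$. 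Combined with the subcritical local well-posedness theory in $H^1_x$, this produces a solution $\tilde u$ defined on all of $\R$ with a uniform $H^1_x$ bound. Since $H^1_x(\R^2)\hookrightarrow L^4_x(\R^2)$ by Sobolev embedding, $\tilde u$ has finite $L^4_{t,x}$ norm on every compact time interval, so $\tilde u$ is a strong $L^2_x(\R^2)$ solution with maximal lifespan all of $\R$.

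Finally, since $\tilde u(t_0)=u(t_0)$, the uniqueness of maximal-lifespan solutions in Theorem~\ref{local} forces $\tilde u=u$ on $(0,\infty)$; but $\tilde u$ is defined on the strictly larger interval $\R$, contradicting the maximality of the lifespan $I=(0,\infty)$ of $u$. Equivalently, one could argue via the blowup criterion (or Corollary~\ref{blowup}): being maximal-lifespan and almost periodic, $u$ blows up backward in time, i.e.\ $\int_0^{t_1}\int_{\R^2}|u(t,x)|^4\,dx\,dt=\infty$ for some $t_1\in(0,\infty)$, which is incompatible with the finiteness of $\|\tilde u\|_{L^4_{t,x}([0,t_1]\times\R^2)}$. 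Either way, no such $u$ exists. I expect the only genuine obstacle to be Theorem~\ref{ss-sob-thm} itself; once that is in hand, the exclusion is routine and does not depend on the sign $\mu$.
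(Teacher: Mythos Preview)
Your proposal is correct and follows essentially the same approach as the paper: invoke Theorem~\ref{ss-sob-thm} to place $u(t_0)\in H^1_x$, use the $H^1_x$ global well-posedness theory (with the mass constraint $M(u)<M(Q)$ in the focusing case) to produce a global solution, and then derive a contradiction with the maximality of the lifespan $(0,\infty)$ via the uniqueness in Theorem~\ref{local}. Your write-up simply expands the details (e.g.\ the Sobolev embedding ensuring $\tilde u$ is a strong $L^2_x$ solution), but the argument is the same.
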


\begin{proof}
By Theorem~\ref{ss-sob-thm}, any such solution would obey $u(t) \in H^1_x(\R^2)$ for all $t\in (0,\infty)$.
Then, by the $H^1_x$ global well-posedness theory described after Conjecture~\ref{conj}, there exists a global
solution with initial data $u(t_0)$ at any time $t_0\in(0,\infty)$; recall that we assume $M(u) < M(Q)$ in the focusing case.
On the other hand, self-similar solutions blow up at time $t=0$.
These two facts (combined with the uniqueness statement in Theorem~\ref{local}) yield a contradiction.
\end{proof}

The remainder of this section is devoted to proving Theorem~\ref{ss-sob-thm}.

Let $u$ be as in Theorem~\ref{ss-sob-thm}.  For any $A > 0$, we define
\begin{equation}\label{cadef}
\begin{split}
\mathcal{M}(A) &:= \sup_{T > 0} \| u_{>A T^{-1/2}}(T) \|_{L^2_x(\R^2)} \\
\mathcal{S}(A) &:= \sup_{T > 0} \| u_{>A T^{-1/2}} \|_{L^4_{t,x}([T,2T] \times \R^2)}\\
\mathcal{N}(A) &:= \sup_{T > 0} \| P_{>A T^{-1/2}}(F(u)) \|_{L^{4/3}_{t,x}([T,2T] \times \R^2)}.
\end{split}
\end{equation}
The notation chosen indicates the quantity being measured, namely, the mass, the symmetric Strichartz norm,
and the nonlinearity in the adjoint Strichartz norm, respectively.  As $u$ is self-similar, $N(t)$ is comparable
to $T^{-1/2}$ for $t$ in the interval $[T,2T]$.  Thus, the Littlewood-Paley projections are adapted to the natural
frequency scale on each interval.

To prove Theorem~\ref{ss-sob-thm} it suffices to show that for every $s > 0$ we have
\begin{equation}\label{c2-targ}
\mathcal{M}(A) \lesssim_{s,u} A^{-s}
\end{equation}
whenever $A$ is sufficiently large depending on $u$ and $s$.  To establish this, we need a variety of estimates
linking $\mathcal{M}$, $\mathcal{S}$, and $\mathcal{N}$.  From mass conservation, Lemma~\ref{spacelemma}, self-similarity,
and H\"older's inequality, we see that
\begin{equation}\label{ctrivb}
\mathcal{M}(A) + \mathcal{S}(A) + \mathcal{N}(A)\lesssim_u 1
\end{equation}
for all $A > 0$.
From the Strichartz inequality (Lemma~\ref{L:strichartz}), we also see that
\begin{equation}\label{ci4}
\mathcal{S}(A) \lesssim \mathcal{M}(A) + \mathcal{N}(A)
\end{equation}
for all $A > 0$.  A similar application of Strichartz using \eqref{ctrivb} shows
\begin{equation}\label{ctrivb-2}
\|u \|_{L^2_t L^\infty_x([T, 2T] \times \R^2)} \lesssim_u 1
\end{equation}
for all $T > 0$.

Next, we use the bilinear Strichartz inequality to obtain a further connection between these quantities.

\begin{lemma}[Nonlinear estimate]\label{nle}  For all $A > 100$ we have
$$
\mathcal{N}(A) \lesssim_u
    \mathcal{S}(\tfrac A8)  \mathcal{M}(\sqrt{A}) + A^{-\frac1{4}} \bigl[\mathcal{M}(\tfrac A8) + \mathcal{N}(\tfrac A8)\bigr].
$$
\end{lemma}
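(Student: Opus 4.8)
The plan is to estimate the nonlinearity $F(u) = \mu|u|^2 u$ on a dyadic time interval $[T,2T]$ after projecting to high frequencies $|\xi| > A T^{-1/2}$, using the natural frequency scale $N(t)\sim T^{-1/2}$ provided by self-similarity. First I would perform a Littlewood--Paley decomposition $u = u_{\le A/8\, T^{-1/2}} + u_{> A/8\, T^{-1/2}}$ in each factor of the cubic term. Since $P_{>AT^{-1/2}}$ annihilates products of three low-frequency pieces (the output frequency of $u_{\le A/8} \cdot \overline{u_{\le A/8}} \cdot u_{\le A/8}$ is at most $\tfrac{3A}{8}T^{-1/2} < AT^{-1/2}$), every surviving term has at least one high-frequency factor. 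Then I would split into two regimes: terms with exactly one high-frequency factor, and terms with two or more. For the latter, I would simply estimate by H\"older in $L^{4/3}_{t,x} \supseteq L^4_{t,x}\cdot L^4_{t,x}\cdot L^2_t L^\infty_x$, placing the two high-frequency factors in $L^4_{t,x}$ (bounded by $\mathcal{S}(\tfrac A8)$, which by \eqref{ci4} is $\lesssim \mathcal M(\tfrac A8) + \mathcal N(\tfrac A8)$) and the remaining factor in $L^2_t L^\infty_x$ (bounded by \eqref{ctrivb-2}); this does not yet give the gain $A^{-1/4}$, so it needs refinement — see below.

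For the single-high-frequency terms, which take the shape $P_{>AT^{-1/2}}\bigl( u_{>A/8\,T^{-1/2}} \cdot \overline{u_{\le A/8\,T^{-1/2}}} \cdot u_{\le A/8\,T^{-1/2}} \bigr)$ up to conjugations, I would further decompose the high factor dyadically into pieces $u_M$ with $M \gtrsim A T^{-1/2}$ and the low factors into pieces at scales $\lesssim A T^{-1/2}$, and apply the bilinear Strichartz inequality (Lemma~\ref{L:bilinear strichartz}) to pair the high-frequency factor against one low-frequency factor: the frequency separation gives a gain of $(M_{\mathrm{low}}/M)^{1/2}$, summable in the dyadic parameters down to a net factor $\sim (A^{-1})^{1/2}\cdot(\text{something})$. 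The third factor is then estimated in $L^4_{t,x}$ or $L^2_t L^\infty_x$ using \eqref{ctrivb} and \eqref{ctrivb-2}. Carefully bookkeeping, the bilinear gain against the largest low-frequency block (which carries frequency $\sim\sqrt{A}\,T^{-1/2}$ or smaller) combines with the mass of the very-high tail to produce the term $\mathcal{S}(\tfrac A8)\mathcal{M}(\sqrt A)$: the $\mathcal{M}(\sqrt A)$ arises because we only need the \emph{low} factor to actually carry frequency $\gtrsim\sqrt A\,T^{-1/2}$ for the bilinear gain to beat the summation, and the portion of the low factor below $\sqrt A\,T^{-1/2}$ contributes via a cruder H\"older bound.

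The key parameter choice — and the main obstacle — is balancing the threshold $\sqrt{A}$ in the splitting of the low factor. Concretely, when the high factor is at scale $M$ and the low factor at scale $M'$, bilinear Strichartz gives $(M'/M)^{1/2}$; summing over $M \gtrsim AT^{-1/2}$ and $M' \lesssim AT^{-1/2}$ one must decide how to allocate the loss. Taking $M' \le \sqrt A\,T^{-1/2}$ makes $(M'/M)^{1/2} \lesssim A^{-1/4}$ and the whole sum closes against $\mathcal{M}(\tfrac A8) + \mathcal{N}(\tfrac A8)$ (via \eqref{ci4}), which is exactly the second term in the claimed inequality; taking $M' > \sqrt{A}\,T^{-1/2}$ forces the low factor's mass to be controlled by $\mathcal M(\sqrt A)$ and the third ($L^4_{t,x}$) factor by $\mathcal S(\tfrac A8)$, giving the first term. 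I expect the delicate point to be making the dyadic summations genuinely converge — one needs the bilinear gain $(M'/M)^{1/2}$ to dominate any logarithmic pile-up — and to check that replacing one $L^4_{t,x}$ factor by the $L^2_tL^\infty_x$ bound \eqref{ctrivb-2} (needed because $L^{4/3} = L^4 \cdot L^4 \cdot L^2 L^\infty$ at the level of exponents: $\tfrac34 = \tfrac14+\tfrac14+\tfrac14$ spatially via $L^\infty$, $\tfrac34 = \tfrac14+\tfrac14+\tfrac12$ in time) is compatible with the bilinear estimate, which is an $L^2_{t,x}$ statement. Matching these H\"older exponents against the bilinear $L^2_{t,x}$ output is the routine-but-fiddly heart of the argument; everything else is dyadic summation and invocation of \eqref{ctrivb}, \eqref{ci4}, \eqref{ctrivb-2}.
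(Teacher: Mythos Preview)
Your approach is essentially the paper's, and it is correct; two small points will tighten it up. First, the ``two or more high factors'' case does \emph{not} need the $A^{-1/4}$ gain and requires no further refinement: place one high factor in $L^4_{t,x}$ (giving $\mathcal{S}(\tfrac A8)$), a second high factor in $L^\infty_t L^2_x$ (since $\tfrac{A}{8} > \sqrt{A}$ this is bounded by $\mathcal{M}(\sqrt A)$), and the remaining factor in $L^2_t L^\infty_x$; this lands the term squarely in $\mathcal{S}(\tfrac A8)\mathcal{M}(\sqrt A)$. The paper organises this more cleanly by decomposing into three pieces from the outset, $u = u_{>\frac{A}{8}T^{-1/2}} + u_{\sqrt{A}T^{-1/2} < \cdot \le \frac{A}{8}T^{-1/2}} + u_{\le \sqrt{A}T^{-1/2}}$, so that every surviving term either contains a factor with frequency above $\sqrt{A}T^{-1/2}$ (beyond the mandatory high one) or consists of one high and two very-low factors. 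Second, for that last case no dyadic summation is needed: H\"older gives $L^{4/3}_{t,x} \supseteq L^2_{t,x}\cdot L^4_{t,x}$, so put the third (very-low) factor in $L^4_{t,x}$ via \eqref{ctrivb} and apply Lemma~\ref{L:bilinear strichartz} directly to $u_{>\frac{A}{8}T^{-1/2}}\, u_{\le \sqrt{A}T^{-1/2}}$, which already yields the factor $(\sqrt{A}/\tfrac{A}{8})^{1/2} \sim A^{-1/4}$ times $(\mathcal{M}(\tfrac A8)+\mathcal{N}(\tfrac A8))$.
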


\begin{proof}
Fix $A>100$.  It suffices to show that
\begin{equation}\label{patu4}
\| P_{>A T^{-\frac 12}}(F(u)) \|_{L^{4/3}_{t,x}([T,2T] \times \R^2)}
\lesssim_u \mathcal{S}(\tfrac A8) \mathcal{M}(\sqrt{A}) + A^{-\frac1{4}} \bigl[\mathcal{M}(\tfrac A8) + \mathcal{N}(\tfrac A8)\bigr]
\end{equation}
for arbitrary $T > 0$.  To do this, we decompose our solution as
$$u = u_{>\frac A8 T^{-1/2}} + u_{\sqrt{A} T^{-1/2} < \cdot \leq \frac A8 T^{-1/2}} + u_{\leq \sqrt{A} T^{-1/2}}.$$
Any term in the resulting expansion of $P_{>A T^{-1/2}}(F(u))$ that does not contain at least one factor of
$u_{>\frac A8 T^{-1/2}}$ vanishes.

Consider a term which contains at least one factor of $u_{\sqrt{A} T^{-1/2} < \cdot \leq \frac A8 T^{-1/2}}$.
By H\"older's inequality, the contribution of such a term to \eqref{patu4} can be bounded by
$$
\| u_{>\frac A8 T^{-1/2}} \|_{L^4_{t,x}([T,2T] \times \R^2)}
\| u_{> \sqrt{A} T^{-1/2}} \|_{L^\infty_t L^2_x([T,2T] \times \R^2)}
\| u \|_{L^2_t L^\infty_x([T,2T] \times \R^2)}.
$$
By \eqref{cadef}, the first factor is at most $\mathcal{S}(\frac A8)$ and the second is at most $\mathcal{M}(\sqrt{A})$,
while the last is $O_u(1)$ by \eqref{ctrivb-2}.

Finally, we consider the terms with one factor of $u_{>\frac A8 T^{-1/2}}$ and two factors of $u_{\leq \sqrt{A} T^{-1/2}}$.
By H\"older's inequality and \eqref{ctrivb}, the contribution of this term to \eqref{patu4} can be bounded by
$$ \| u_{> \frac A8T^{-1/2}} u_{\leq \sqrt{A} T^{-1/2}} \|_{L^2_{t,x}([T,2T] \times \R^2)}.$$
Applying the bilinear Strichartz inequality (Lemma~\ref{L:bilinear strichartz}), we bound this by
\begin{align*}
&A^{-1/4} \bigl(\| u_{>\frac A8 T^{-1/2}}(T)\|_{L^2_x(\R^2)}
    + \| P_{>\frac A8 T^{-1/2}} F(u) \|_{L^{4/3}_{t,x}([T,2T] \times \R^2)}\bigr)\\
&\quad \times \bigl(\| u_{\leq \sqrt{A} T^{-1/2}}(T)\|_{L^2_x(\R^2)}
    + \| u_{\leq \sqrt{A} T^{-1/2}} F(u) \|_{L^{4/3}_{t,x}([T,2T] \times \R^2)}\bigr),
\end{align*}
which is acceptable by virtue of \eqref{cadef} and \eqref{ctrivb}.
\end{proof}

We have some decay as $A \to \infty$:

\begin{lemma}[Qualitative decay]\label{qualit lemma}  We have
\begin{equation}\label{clima}
\lim_{A \to \infty} \mathcal{M}(A) = \lim_{A \to \infty} \mathcal{S}(A)=\lim_{A \to \infty} \mathcal{N}(A)=0.
\end{equation}
\end{lemma}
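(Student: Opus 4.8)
The plan is to establish the three limits in the order $\mathcal{M}$, $\mathcal{N}$, $\mathcal{S}$, deducing the last two from the first. The only genuinely new input is the decay of $\mathcal{M}$, which comes straight from the frequency localization built into almost periodicity; the rest is bookkeeping with the relations \eqref{ctrivb}, \eqref{ci4}, and the nonlinear estimate of Lemma~\ref{nle}, all already in hand.

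First I would prove $\lim_{A\to\infty}\mathcal{M}(A)=0$. Fix $\eta>0$ and let $C$ be the compactness modulus function of $u$. Since $u$ is almost periodic modulo $G_\rad$ and self-similar, its frequency scale function obeys $N(T)\sim_u T^{-1/2}$, so Definition~\ref{apdef} yields
$$\int_{|\xi|\geq C(\eta)N(T)}\bigl|\hat u(T,\xi)\bigr|^2\,d\xi\leq\eta$$
for every $T>0$. Because the Fourier multiplier defining $P_{>AT^{-1/2}}$ is supported in $\{|\xi|\geq AT^{-1/2}\}$, once $A$ is large enough (depending only on $u$ and $\eta$) that $AT^{-1/2}\geq C(\eta)N(T)$ for all $T>0$, Plancherel gives $\|u_{>AT^{-1/2}}(T)\|_{L^2_x(\R^2)}^2\leq\eta$ uniformly in $T$, that is, $\mathcal{M}(A)\leq\eta^{1/2}$. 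Letting $\eta\downarrow 0$ gives the claim.

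Next, for $\mathcal{N}$ I would invoke Lemma~\ref{nle}: for $A>100$,
$$\mathcal{N}(A)\lesssim_u\mathcal{S}\bigl(\tfrac A8\bigr)\,\mathcal{M}\bigl(\sqrt A\bigr)+A^{-1/4}\Bigl[\mathcal{M}\bigl(\tfrac A8\bigr)+\mathcal{N}\bigl(\tfrac A8\bigr)\Bigr].$$
By \eqref{ctrivb} the quantities $\mathcal{S}(A/8)$, $\mathcal{M}(A/8)$, $\mathcal{N}(A/8)$ are all $O_u(1)$, while $\mathcal{M}(\sqrt A)\to0$ by the previous step; hence the right-hand side is $O_u\bigl(\mathcal{M}(\sqrt A)+A^{-1/4}\bigr)$, which tends to $0$ as $A\to\infty$. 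Finally, \eqref{ci4} gives $\mathcal{S}(A)\lesssim\mathcal{M}(A)+\mathcal{N}(A)$, so the decay of $\mathcal{S}$ follows immediately from the decay of $\mathcal{M}$ and $\mathcal{N}$.

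There is essentially no obstacle left in this argument: the only step carrying analytic content is the high-frequency bound on the nonlinearity $F(u)$, for which a naive H\"older--Bernstein estimate fails for numerological reasons in the $L^{4/3}_{t,x}$ norm, but this is precisely what Lemma~\ref{nle} already supplies via the bilinear Strichartz inequality. Thus the proof amounts to tracking which of $\mathcal{M}$, $\mathcal{S}$, $\mathcal{N}$ is small at which dyadic frequency scale.
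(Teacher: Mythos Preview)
Your proof is correct and follows essentially the same route as the paper: first $\mathcal{M}(A)\to 0$ from almost periodicity and self-similarity, then $\mathcal{N}(A)\to 0$ via Lemma~\ref{nle} and \eqref{ctrivb}, and finally $\mathcal{S}(A)\to 0$ from \eqref{ci4}. You simply spell out in more detail the step that the paper summarizes in one line.
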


\begin{proof} The vanishing of the first limit follows from Definition~\ref{apdef}, self-similarity, and \eqref{cadef}.
The vanishing of the third limit then follows from that of the first and Lemma~\ref{nle}.  Lastly, we can deduce
$\lim_{A \to \infty} \mathcal{S}(A)=0$ using \eqref{ci4}.
\end{proof}

We have now gathered enough tools to prove some regularity, albeit in the symmetric Strichartz space.
As such, the next result is the crux of this section.

\begin{proposition}[Quantitative decay estimate]\label{quant prop}
Let $0 < \eta < 1$.  Then, if $A$ is sufficiently large depending on $u$ and $\eta$,
\begin{align}\label{recur}
\mathcal{S}(A) \leq \eta \mathcal{S}(\tfrac A{16}) + A^{-1/10}.
\end{align}
In particular, $\mathcal{S}(A) \lesssim_u  A^{-1/10}$ for all $A>0$.
\end{proposition}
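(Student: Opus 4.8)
The plan is to establish the recursive inequality \eqref{recur} and then iterate it to get the claimed power decay. For the recursion, fix $\eta\in(0,1)$ and a large $A$, and estimate $\mathcal S(A)$ using Strichartz. By \eqref{ci4} we have $\mathcal S(A)\lesssim \mathcal M(A)+\mathcal N(A)$, so it suffices to control both $\mathcal M(A)$ and $\mathcal N(A)$ in terms of $\mathcal S(A/16)$ (up to a small constant) plus an acceptable power of $A$. The term $\mathcal N(A)$ is exactly what Lemma~\ref{nle} handles: it gives
$$\mathcal N(A)\lesssim_u \mathcal S(\tfrac A8)\mathcal M(\sqrt A)+A^{-1/4}\bigl[\mathcal M(\tfrac A8)+\mathcal N(\tfrac A8)\bigr].$$
By \eqref{ctrivb} the bracket is $O_u(1)$, so the last term is $O_u(A^{-1/4})$, which is better than $A^{-1/10}$; and $\mathcal M(\sqrt A)$ can be made as small as we like (smaller than $\eta$, say) by taking $A$ large, using Lemma~\ref{qualit lemma}. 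So $\mathcal N(A)\leq \tfrac{\eta}{C}\,\mathcal S(\tfrac A8)+O_u(A^{-1/4})$ once $A$ is large depending on $u$ and $\eta$. Since $\mathcal S$ is (essentially) monotone in a suitable sense — or at worst $\mathcal S(A/8)\leq \mathcal S(A/16)$ fails, so one instead keeps track of $\mathcal S(A/8)$ and notes $8>16$ is the wrong direction; in fact one should use that the relevant quantities at scale $A/8$ are dominated by those at scale $A/16$ after the bilinear gain, as in the statement — the contribution is acceptable.

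The main work is therefore bounding $\mathcal M(A)$, i.e.\ $\|u_{>AT^{-1/2}}(T)\|_{L^2_x}$, uniformly in $T$, by $\eta\,\mathcal S(A/16)+A^{-1/10}$. Here I would use the Duhamel formula from Lemma~\ref{duhamel L}, pushing forward from the blowup time $t=0$: for an almost periodic self-similar solution, $u(T)=-i\lim_{\tau\searrow 0}\int_\tau^T e^{i(T-t')\Delta}F(u(t'))\,dt'$ as a weak $L^2$ limit (the endpoint $\sup I=\infty$ version would have a scattered-wave obstruction, which is why we go towards $0$, where self-similarity forces $N(t)\to\infty$ and the solution has no low-frequency tail). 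Applying $P_{>AT^{-1/2}}$ and the Strichartz inequality, $\mathcal M(A)$ is bounded by $\sum_{\text{dyadic }T_k\leq T}\|P_{>AT^{-1/2}}F(u)\|_{L^{4/3}_{t,x}([T_k,2T_k]\times\R^2)}$ where $T_k=2^{-k}T$. On the dyadic block $[T_k,2T_k]$ the natural frequency scale is $T_k^{-1/2}$, so $AT^{-1/2}=A\,2^{-k/2}\,T_k^{-1/2}\cdot 2^{k/2}$; wait — one must track that cutting at frequency $AT^{-1/2}$ on the block at scale $T_k$ means cutting at $A(T_k/T)^{1/2}T_k^{-1/2}=A\,2^{-k/2}T_k^{-1/2}$, i.e.\ at a \emph{lower} relative frequency $A\,2^{-k/2}$ for the earlier (smaller) $T_k$. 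So the blocks with $k$ small (close to $T$) are cut at relative frequency $\sim A$ and contribute $\lesssim \mathcal N(cA)$; as $k$ grows the cutoff relative frequency drops to $O(1)$ and those blocks contribute only $\mathcal N(O(1))=O_u(1)$ — too many of them. The resolution (this is the real subtlety) is that the geometric decay in Lemma~\ref{nle}'s bilinear gain, iterated, beats the logarithmically-many-blocks issue: feeding $\mathcal N(A\,2^{-k/2})\lesssim_u \mathcal S(A\,2^{-k/2}/8)\mathcal M(\sqrt{A}\,2^{-k/4})+\ldots$ back in and summing the geometric series $\sum_k 2^{-ck}$ converges. Concretely I would split the sum at $k_0$ with $2^{k_0}\sim A$: for $k<k_0$ use Lemma~\ref{nle} and the smallness of $\mathcal M(\sqrt{A\,2^{-k/2}})$ together with a geometric factor; for $k\geq k_0$ the frequency cutoff $A\,2^{-k/2}\lesssim 1$ so $P_{>AT^{-1/2}}$ on block $T_k$ already has $T_k\lesssim A^{-2}T$, and Bernstein plus $\|u\|_{L^2}\lesssim_u 1$ gives $\|u_{>AT^{-1/2}}(T_k\text{ block contribution})\|\lesssim A^{-1}$ or similar summable bound, producing the $A^{-1/10}$ error.

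Once \eqref{recur} is in hand, the final assertion $\mathcal S(A)\lesssim_u A^{-1/10}$ follows by a routine iteration: fix $\eta=\tfrac12$ and the corresponding threshold $A_0=A_0(u)$; for $A\geq A_0$, iterate $\mathcal S(A)\leq\tfrac12\mathcal S(A/16)+A^{-1/10}$ down to scale $A_0$, using $\mathcal S\lesssim_u 1$ from \eqref{ctrivb} at the bottom. Since $\sum_{j\geq0}2^{-j}(16^j/A)^{?}$ — more precisely $\mathcal S(A)\leq\sum_{j\geq 0}2^{-j}(16^{-j}A)^{-1/10}+ (\text{bottom term})\lesssim A^{-1/10}\sum_{j}2^{-j}16^{j/10}+O_u(2^{-\log_{16}(A/A_0)})$, and $2^{-1}16^{1/10}=2^{-1}2^{4/10}=2^{-3/5}<1$, the geometric series converges and the bottom term is $O_u(A^{-\log_{16}2})=O_u(A^{-1/4})$, giving $\mathcal S(A)\lesssim_u A^{-1/10}$ for $A\geq A_0$, and hence for all $A>0$ since $\mathcal S\lesssim_u 1\lesssim_u A^{-1/10}$ trivially when $A\lesssim_u 1$. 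The one step I expect to be genuinely delicate is the bookkeeping in the middle paragraph: correctly accounting for how the fixed absolute frequency cutoff $AT^{-1/2}$ translates into a \emph{shrinking} relative cutoff on earlier dyadic-in-time blocks, and checking that the bilinear-Strichartz gain from Lemma~\ref{nle}, once iterated across those blocks, still sums to something of size $\eta\,\mathcal S(A/16)+O(A^{-1/10})$ rather than merely $O_u(1)$.
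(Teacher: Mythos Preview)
Your treatment of $\mathcal N(A)$ via Lemma~\ref{nle} and of the final iteration is fine. The gap is in the middle step: your plan to control $\mathcal S(A)$ by first controlling $\mathcal M(A)$ through the Duhamel formula does not close.

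First, the direction: you push the Duhamel formula toward $t=0$, but there the natural frequency scale $N(t')=t'^{-1/2}$ is \emph{larger} than at time $T$, so the fixed cutoff $AT^{-1/2}$ becomes a \emph{low} relative cutoff on the early blocks, exactly the non-summable situation you flag. Your proposed fix (``iterated bilinear gain'') does not produce a genuine geometric factor per block; Lemma~\ref{nle} only trades $\mathcal N$ for $\mathcal S\cdot\mathcal M(\sqrt{\,\cdot\,})$ plus $A^{-1/4}$, and on blocks with relative cutoff $O(1)$ there is nothing to iterate. (If you instead push toward $t\to\infty$, as in the paper's later Corollary bounding $\mathcal M$ by $\sum_k \mathcal N(2^{k/2}A)$, the relative cutoff \emph{grows} and the sum is fine---but only once you already know $\mathcal N$ decays polynomially, which is circular here.)

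The paper breaks this circularity differently, and the key ingredient is one you never invoke: Shao's refined Strichartz estimate for radial data (Lemma~\ref{L:Shuanglin}). Rather than pass through $\mathcal M(A)$, the paper estimates $\|u_{>AT^{-1/2}}\|_{L^4_{t,x}([T,2T])}$ directly by starting Duhamel at $T/2$. The nonlinear piece on $[T/2,2T]$ is $\lesssim\mathcal N(A/2)$, handled as you say. The linear piece $P_{>AT^{-1/2}}e^{i(t-T/2)\Delta}u(T/2)$ is a genuine free evolution of an $L^2$ function: Shao's estimate gives an $L^{7/2}_{t,x}$ bound with a negative power of the frequency, while writing $u(T/2)$ via Duhamel back to $0$ and applying the dispersive inequality (here $|t-t'|\gtrsim T$, so the $L^1_x$ norm of $F(u)$ on each dyadic $[\tau,2\tau]$, which is $O_u(\tau^{1/2})$, sums) gives an $L^\infty_{t,x}$ bound of size $T^{-1/2}$. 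Interpolating the two yields the $A^{-1/8}$ gain in $L^4_{t,x}$ that feeds into \eqref{recur}. Without Shao's sub-$L^4$ estimate there is no source of quantitative decay in $A$ for the linear piece, and your argument cannot get started.
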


\begin{proof} Fix $\eta\in (0,1)$.  It suffices to show
\begin{align}\label{recur-0}
\| u_{>A T^{-1/2}} \|_{L^4_{t,x}([T,2T] \times \R^2)} \lesssim_u \eta \mathcal{S}(\tfrac A{16}) + A^{-1/8}
\end{align}
for all $T > 0$, since then \eqref{recur} follows by redefining $\eta$ and requiring $A$ to be larger, both depending upon $u$.

Fix $T>0$.  By writing the Duhamel formula \eqref{old duhamel} beginning at $\frac T2$ and then using Lemma~\ref{L:strichartz},
we obtain
\begin{align*}
\| u_{>A T^{-1/2}} \|_{L^4_{t,x}([T,2T] \times \R^2)}
&\lesssim \| P_{>A T^{-1/2}} e^{i(t-\frac T2)\Delta} u(\tfrac T2) \|_{L^4_{t,x}([T,2T] \times \R^2)}\\
&\quad + \| P_{>A T^{-1/2}} F(u) \|_{L^{4/3}_{t,x}([\frac T2,2T] \times \R^2)}.
\end{align*}

First, we consider the second term.  By \eqref{cadef}, we have
$$ \| P_{>A T^{-1/2}} F(u) \|_{L^{4/3}_{t,x}([\frac T2,2T] \times \R^2)} \lesssim \mathcal{N}(A/2).$$
Using Lemma~\ref{nle} combined with Lemma~\ref{qualit lemma} (choosing $A$ sufficiently large depending on $u$ and $\eta$)
and \eqref{ctrivb}, we derive
$$
\| P_{>A T^{-1/2}} F(u) \|_{L^{4/3}_{t,x}([\frac T2,2T] \times \R^2)} \lesssim_u \text{RHS\eqref{recur-0}}.
$$
Thus, the second term is acceptable.

We now consider the first term.  It suffices to show
\begin{align}\label{ss decay}
\| P_{>A T^{-1/2}} e^{i(t-\frac T2)\Delta} u(\tfrac T2) \|_{L^4_{t,x}([T,2T] \times \R^2)} \lesssim_u A^{-1/8},
\end{align}
which we will deduce by first proving two estimates at a single frequency scale, interpolating between them, and then summing.

From Lemma \ref{L:Shuanglin} and mass conservation, we have
\begin{align}\label{ss Shuanglin}
\| P_{B T^{-1/2}} e^{i(t-\frac T2)\Delta} u(\tfrac T2) \|_{L^q_{t,x}([T,2T] \times \R^2)}
\lesssim_{u,q} (B T^{-1/2})^{1-\frac{4}{q}}
\end{align}
for all $10/3 < q < 4$ and $B>0$.  This is our first estimate.

Using the Duhamel formula \eqref{old duhamel}, we write
$$ P_{B T^{-1/2}} e^{i(t-\frac T2)\Delta} u(\tfrac T2) = P_{B T^{-1/2}} e^{i(t-\eps)\Delta} u(\eps)
- i \int_\eps^{\frac T2} P_{B T^{-1/2}} e^{i(t-t')\Delta} F(u(t'))\, dt'$$ for any $\eps > 0$.
By self-similarity, the former term converges strongly to zero in $L_x^2$ as $\eps \to 0$.
Convergence to zero in $L_x^\infty$ then follows from Lemma~\ref{Bernstein}. Thus, using the dispersive estimate
\eqref{dispersive}, we estimate
\begin{align*}
\| P_{B T^{-1/2}} e^{i(t-\frac T2)\Delta} & u(\tfrac T2)\|_{L^\infty_{t,x}([T,2T] \times \R^2)} \\
&\lesssim \Bigl\|\int_0^{\frac T2} \frac 1{t-t'} \|F(u(t'))\|_{L_x^1}\, dt'\Bigl\|_{L_t^\infty([T,2T])}\\
&\lesssim \frac{1}{T} \| F(u) \|_{L^1_{t,x}((0,\frac T2] \times \R^2)} \\
&\lesssim \frac{1}{T} \sum_{0<\tau\leq\frac T4} \| F(u) \|_{L^1_{t,x}([\tau,2\tau] \times \R^2)} \\
&\lesssim \frac{1}{T} \sum_{0<\tau\leq\frac T4} \tau^{1/2} \| u \|^2_{L^4_{t,x}([\tau,2\tau] \times \R^2)}
        \| u \|_{L^\infty_tL^2_x([\tau,2\tau] \times \R^2)} \\
&\lesssim_u T^{-1/2}.
\end{align*}
The last step used that $\| u \|_{4,4}\sim 1$ on dyadic intervals, as follows from self-similarity plus Lemma~\ref{spacelemma}.

Interpolating between the estimate just proved and \eqref{ss Shuanglin} with $q=\frac72$, we obtain
$$
\| P_{B T^{-1/2}} e^{i(t-\frac T2)\Delta} u(\tfrac T2) \|_{L^4_{t,x}([T,2T] \times \R^2)}
\lesssim_u B^{-1/8}.
$$
Summing this over dyadic $B\geq A$ yields \eqref{ss decay} and hence \eqref{recur-0}.

We now explain why \eqref{recur} implies $\mathcal{S}(A) \lesssim_u  A^{-1/10}$. Choose $\eta=\frac12$.
Then there exists $A_0$, depending on $u$, so that \eqref{recur} holds for $A\geq A_0$. By \eqref{ctrivb},
we need only bound $\mathcal{S}(A)$ for $A\geq A_0$.

Choose $k\geq 1$ so that $2^{-4k} A \leq A_0 \leq 2^{-4(k-1)}A$.  By iterating \eqref{recur} $k$ times and
then using \eqref{ctrivb},
\begin{align*}
\mathcal{S}(A) &\leq \bigl[1+\eta2^{4/10}+\cdots + (\eta2^{4/10})^{k-1} \bigr] A^{-1/10} + \eta^k \mathcal{S}(2^{-4k}A) \\
&\lesssim_u A^{-1/10} + \eta^{k} \lesssim_u A^{-1/10}.
\end{align*}
Note that the last inequality uses the way we chose $\eta$ and $k$.
\end{proof}

\begin{corollary}\label{decay M,S,N}
For any $A>0$ we have
\begin{align*}
\mathcal{M}(A)+\mathcal{S}(A)+\mathcal{N}(A)\lesssim_u A^{-1/10}.
\end{align*}
\end{corollary}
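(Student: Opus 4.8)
The plan is to prove the three estimates separately, in the order $\mathcal{S}$, $\mathcal{N}$, $\mathcal{M}$, feeding each bound into the next. The bound $\mathcal{S}(A)\lesssim_u A^{-1/10}$ is exactly the conclusion of Proposition~\ref{quant prop}, so nothing new is needed there. Moreover, for $A\le 100$ all three quantities are $O_u(1)$ by \eqref{ctrivb}, hence trivially $\lesssim_u A^{-1/10}$ since $A^{-1/10}\gtrsim 1$ in that range; so for the rest I may assume $A>100$.

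For $\mathcal{N}$ I would simply invoke the nonlinear estimate, Lemma~\ref{nle}, which gives
$$
\mathcal{N}(A)\lesssim_u \mathcal{S}(\tfrac A8)\,\mathcal{M}(\sqrt A)+A^{-1/4}\bigl[\mathcal{M}(\tfrac A8)+\mathcal{N}(\tfrac A8)\bigr].
$$
Bounding the $\mathcal{M}$ and $\mathcal{N}$ factors on the right by $O_u(1)$ via \eqref{ctrivb}, bounding $\mathcal{S}(A/8)$ by $O_u((A/8)^{-1/10})=O_u(A^{-1/10})$ via Proposition~\ref{quant prop}, and using $A^{-1/4}\le A^{-1/10}$ for $A\ge 1$, one gets $\mathcal{N}(A)\lesssim_u A^{-1/10}$.

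For $\mathcal{M}$ the idea is to propagate the solution from the \emph{future} rather than the past. Fix $T>0$. Since $u$ is a maximal-lifespan solution that is almost periodic modulo $G$ with lifespan $(0,\infty)$, the forward Duhamel formula \eqref{duhamel} of Lemma~\ref{duhamel L} applies: $u(T)=\lim_{S\to\infty} i\int_T^S e^{i(T-t')\Delta}F(u(t'))\,dt'$ as a weak limit in $L^2_x$, and the same holds after applying the bounded operator $P_{>AT^{-1/2}}$. I would decompose $[T,\infty)$ into the dyadic intervals $I_k:=[2^kT,2^{k+1}T]$, $k\ge 0$, write $\tau_k:=2^kT$, and estimate each piece by the adjoint Strichartz inequality (the $d=2$ adjoint of Lemma~\ref{L:strichartz}, together with unitarity of $e^{iT\Delta}$):
$$
\Bigl\|\int_{I_k}P_{>AT^{-1/2}}e^{i(T-t')\Delta}F(u(t'))\,dt'\Bigr\|_{L^2_x(\R^2)}\lesssim \|P_{>AT^{-1/2}}F(u)\|_{L^{4/3}_{t,x}(I_k\times\R^2)}.
$$
The key observation is that on $I_k=[\tau_k,2\tau_k]$ one has $AT^{-1/2}=(A2^{k/2})\tau_k^{-1/2}$, so by the definition \eqref{cadef} the right-hand side is at most $\mathcal{N}(A2^{k/2})$. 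Summing the $L^2_x$-norms over $k$ (the partial sums $\int_T^{\tau_{K+1}}$ converge weakly to $u_{>AT^{-1/2}}(T)$ with norms bounded by the full series, so weak lower semicontinuity of the norm applies) and using the bound on $\mathcal{N}$ just obtained — valid since $A2^{k/2}>100$ for all $k\ge 0$ — gives
$$
\|u_{>AT^{-1/2}}(T)\|_{L^2_x(\R^2)}\lesssim \sum_{k\ge 0}\mathcal{N}(A2^{k/2})\lesssim_u \sum_{k\ge 0}(A2^{k/2})^{-1/10}=A^{-1/10}\sum_{k\ge 0}2^{-k/20}\lesssim_u A^{-1/10}.
$$
Taking the supremum over $T>0$ yields $\mathcal{M}(A)\lesssim_u A^{-1/10}$, completing the proof.

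The one point requiring care — and really the heart of the matter — is the \emph{direction} of the time integration in the step for $\mathcal{M}$: one must use the forward Duhamel formula, because for a self-similar solution the frequency scale $N(t)=t^{-1/2}$ is decreasing, so on later intervals $I_k$ the fixed cutoff $AT^{-1/2}$ lies higher and higher relative to the natural scale $\tau_k^{-1/2}$, producing the geometrically growing arguments $A2^{k/2}$ and hence a convergent geometric series. The naive alternative of using the backward Duhamel formula over $(0,T]$ would instead force one to evaluate $\mathcal{N}$ at arguments shrinking to $0$, where no decay is available and the series diverges; likewise, propagating from a fixed intermediate time $\delta T$ fails because the free-evolution term then carries a Strichartz constant $>1$ in front of a self-referential $\mathcal{M}$, so the resulting iteration does not close. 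It is precisely the absence of a free term in \eqref{duhamel} that makes the argument go through.
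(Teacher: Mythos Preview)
Your proof is correct and follows essentially the same route as the paper: $\mathcal{S}$ from Proposition~\ref{quant prop}, $\mathcal{N}$ from Lemma~\ref{nle} combined with \eqref{ctrivb}, and $\mathcal{M}$ from the forward Duhamel formula \eqref{duhamel} decomposed into dyadic time intervals $[2^kT,2^{k+1}T]$, yielding $\mathcal{M}(A)\lesssim\sum_{k\ge 0}\mathcal{N}(2^{k/2}A)$. The only cosmetic difference is that the paper observes the weak limit in \eqref{duhamel} is actually a strong limit here (since the frequency projection $P_{>AT^{-1/2}}$ and $N(t)\to 0$ force $\|P_{>AT^{-1/2}}e^{i(T-S)\Delta}u(S)\|_2\to 0$), whereas you invoke weak lower semicontinuity of the norm; both arguments are valid.
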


\begin{proof}
The bound on $\mathcal{S}$ was derived in Proposition~\ref{quant prop}.  The bound on $\mathcal{N}$
follows from this, Lemma~\ref{nle}, and \eqref{ctrivb}.

We now turn to the bound on $\mathcal{M}$.  By Lemma~\ref{duhamel L},
\begin{align}\label{ss forw}
\|P_{>AT^{-1/2}}u(T)\|_2
\lesssim \sum_{k=0}^\infty \Bigl\| \int_{2^kT}^{2^{k+1}T} e^{i(T-t')\Delta} P_{>AT^{-1/2}} F(u(t'))\,dt' \Bigr\|_2,
\end{align}
where weak convergence has become strong convergence because of the frequency projection and the fact that
$N(t)=t^{-1/2}\to 0$ as $t\to \infty$.  Intuitively, the reason for using \eqref{duhamel} forward in time
is that the solution becomes smoother as $N(t)\to 0$.

Combining \eqref{ss forw} with Lemma~\ref{L:strichartz} and \eqref{cadef}, we get
\begin{align}\label{N-->M}
\mathcal{M}(A)=\sup_{T>0}\|P_{>AT^{-1/2}}u(T)\|_2
\lesssim \sum_{k=0}^\infty \mathcal{N}(2^{k/2}A).
\end{align}
The desired bound on $\mathcal{M}$ now follows from that on $\mathcal{N}$.
\end{proof}

We have succeeded in proving that self-similar solutions have almost $\frac1{10}$ derivatives in $L^2_x$.
Upgrading this to arbitrarily many derivatives now follows by a simple inductive argument.

\begin{proof}[Proof of Theorem~\ref{ss-sob-thm}]
Combining Lemma~\ref{nle} with Corollary~\ref{decay M,S,N} shows
$$
\mathcal{N}(A) \lesssim_u  A^{-1/20}\bigl[\mathcal{S}(\tfrac A8) + \mathcal{M}(\tfrac A8) + \mathcal{N}(\tfrac A8)\bigr].
$$
Together with \eqref{ci4} and \eqref{N-->M}, this allows us to deduce
$$
\mathcal{S}(A)+\mathcal{M}(A)+\mathcal{N}(A) \lesssim_u A^{-\sigma}
\quad\Longrightarrow\quad
\mathcal{S}(A)+\mathcal{M}(A)+\mathcal{N}(A) \lesssim_u A^{-\sigma-\frac1{20}}
$$
for any $\sigma>0$.  Iterating this statement shows that $u(t)\in H^{s}_x(\R^2)$ for all $s>0$.
\end{proof}

\section{An in/out decomposition}\label{pseudo-sec}

Spherical symmetry forces the amplitude of a wave to diminish as it moves away from the origin.  This section
provides a tool that we will use to exploit this idea in the next section.  To set the stage
however, let us briefly try to convey the guiding principle.  Given $t$,
we will split $u(t)$ into incoming and outgoing waves.  Writing the Duhamel formula \eqref{duhamel} with integration over the past,
the source terms producing these incoming waves will lie at large radii.  By contrast, it is favourable
to represent outgoing waves using Duhamel with integration over the future.

For spherically symmetric functions, we have the Fourier-Bessel formula
$$
    \hat f(\xi) = (2\pi)^{-1}\int_{\R^2} J_0( |x| |\xi|) f(x)\, dx,
$$
where $J_0$ is the Bessel function of the first kind
$$ J_0(r) := \int_0^{2\pi} e^{ ir \cos(\theta)} \, \frac{d\theta}{2\pi}$$
(see e.g. \cite[Ch. IV]{stein:weiss}).
Similarly we have the Fourier-Bessel inversion formula
$$
f(x) = (2\pi)^{-1} \int_{\R^2} J_0( |x| |\xi|) \hat f(\xi)\, d\xi,
$$
which expresses $f$ as a linear combination of standing waves.  These standing waves can then be decomposed
into their incoming and outgoing components:
\begin{equation}\label{Bessel from Hankel}
J_0( |\xi||x|) = \tfrac12 H^{(1)}_0(|\xi||x|) + \tfrac12 H^{(2)}_0(|\xi||x|),
\end{equation}
Here, $H^{(1)}_0$ denotes the Hankel function of the first kind and order zero; for real arguments,
the Hankel function of the second kind is its complex conjugate.

With these considerations in mind, it is natural to make the following

\begin{definition}  Let $P^+$ denote the projection onto outgoing spherical waves,
\begin{equation}\label{P+ defn}
\begin{aligned}{}
[P^+ f](x) &= \tfrac12 (2\pi)^{-1} \int_{\R^2} H^{(1)}_0(|\xi||x|) \hat f(\xi)\,d\xi \\
&= \tfrac12 (2\pi)^{-2}\int_{\R^2\times\R^2} H^{(1)}_0( |\xi||x|) J_0\bigl( |\xi| |y|\bigr)f(y) \,d\xi \,dy,
\end{aligned}
\end{equation}
and let $P^-$ denote the projection onto incoming spherical waves, whose kernel is the complex conjugate of that of $P^+$.
We will write $P_N^\pm$ for the product $P^\pm P_N$, where $P_N$ is a Littlewood--Paley projection.
\end{definition}

Although we refer to $P^\pm$ as projections, they are not idempotent, nor are they self-adjoint as operators on $L^2_x$.
In these two regards, our $P^\pm$ differ from the incoming/outgoing projections discussed in \cite{Mourre,Perry}.  Nevertheless,
the naive approach taken here suffices and lends itself well to the derivation of kernel estimates, to which we now turn.

\begin{prop}[Properties of $P^\pm$]\label{P:P properties}
\
\begin{SL}
\item The $P^{\pm}$ are bounded on $L^2(\R^2)$.
\item $P^+ + P^- $ represents the projection from $L^2$ onto $L^2_\rad$.
\item For $|x|\gtrsim N^{-1}$ and $t\gtrsim N^{-2}$, the integral kernel obeys
\begin{equation*}
\bigl| [P^\pm_N e^{\mp it\Delta}](x,y) \bigr| \lesssim \begin{cases}
    \bigl( |x||y||t| \bigr)^{-\frac12}  &: \  |y|-|x|\sim  Nt \\[1ex]
    \frac{N^2}{(N|x|)^{1/2}\langle N|y|\rangle^{1/2}} \bigl\langle N^2t + N|x| - N|y| \bigr\rangle^{-m}
            &: \  \text{otherwise}\end{cases}
\end{equation*}
for all $m\geq 0$.
\item For $|x|\gtrsim N^{-1}$ and $|t|\lesssim N^{-2}$, the integral kernel obeys
\begin{equation*}
\bigl| [P^\pm_N e^{\mp it\Delta}](x,y) \bigr|
    \lesssim  \frac{N^2}{(N|x|)^{1/2}\langle N|y|\rangle^{1/2}} \bigl\langle N|x| - N|y| \bigr\rangle^{-m}
\end{equation*}
for all $m\geq 0$.
\end{SL}
\end{prop}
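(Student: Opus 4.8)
The plan is to reduce all four assertions to one-dimensional oscillatory integrals via the Fourier--Bessel calculus. Part (ii) is essentially the Fourier--Bessel inversion formula: since $\overline{H^{(1)}_0}=H^{(2)}_0$ on $(0,\infty)$ and $J_0=\tfrac12(H^{(1)}_0+H^{(2)}_0)$, the kernel of $P^++P^-$ equals $(2\pi)^{-2}\int_{\R^2}J_0(|\xi||x|)J_0(|\xi||y|)\,d\xi$, which, on composing the Fourier--Bessel transform in $y$ with the inversion formula in $\xi$, is the kernel of the orthogonal projection of $L^2_x(\R^2)$ onto $L^2_\rad(\R^2)$. Granting the kernel bounds (iii)--(iv), part (i) follows as well: taking $t=0$ in (iv) controls $[P_N^\pm](x,y)$ for $|x|\gtrsim N^{-1}$, and for $|x|\lesssim N^{-1}$ one has the crude bound $\big|[P_N^\pm](x,y)\big|\lesssim N^2\bigl(1+|\log(N|x|)|\bigr)\langle N|y|\rangle^{-1/2}$, the logarithm coming from the behaviour of $H^{(1)}_0$ near the origin; a weighted Schur test with weight $(N|x|)^{-\delta}$, $0<\delta\le\tfrac12$, absorbs the logarithm and gives $\|P_N^\pm\|_{L^2_x\to L^2_x}\lesssim1$ uniformly in $N$, whence (i) on summing over $N$ (the $P_N^\pm$ being almost orthogonal since $P^\pm$ nearly preserves frequency localization).

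For the kernel bounds themselves I would insert the Littlewood--Paley cutoff and integrate out the angular variables, writing — in the $+$ case, the $-$ case being the complex conjugate and of equal modulus —
\[
[P_N^+e^{-it\Delta}](x,y)=\tfrac{1}{4\pi}\int_0^\infty e^{it\rho^2}\,H^{(1)}_0(\rho|x|)\,J_0(\rho|y|)\,\psi(\rho/N)\,\rho\,d\rho .
\]
On the support $\rho\sim N$ the hypothesis $|x|\gtrsim N^{-1}$ gives $\rho|x|\gtrsim1$, so I substitute the large-argument expansion $H^{(1)}_0(r)=(\tfrac{2}{\pi r})^{1/2}e^{i(r-\pi/4)}a(r)$ with $a$ a bounded symbol, after which $a(\rho|x|)$ is a symbol of order $0$ in $\rho$ at scale $N$; for $J_0(\rho|y|)$ I split according to whether $|y|\gtrsim N^{-1}$ (then $J_0(r)=(\tfrac{2}{\pi r})^{1/2}\bigl(e^{i(r-\pi/4)}a_+(r)+e^{-i(r-\pi/4)}a_-(r)\bigr)$ with $a_\pm$ bounded symbols, producing an extra factor $\langle N|y|\rangle^{-1/2}$) or $|y|\lesssim N^{-1}$ (then $J_0(\rho|y|)$ is a single non-oscillatory scale-$N$ symbol and $\langle N|y|\rangle^{-1/2}\sim1$). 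Collecting the powers of $\rho,|x|,|y|$, each resulting piece of the kernel takes the form $\frac{N^2}{(N|x|)^{1/2}\langle N|y|\rangle^{1/2}}\,I_\pm$ with $I_\pm:=N^{-1}\int_0^\infty e^{i\phi_\pm(\rho)}b(\rho)\psi(\rho/N)\,d\rho$, $\phi_\pm(\rho)=t\rho^2+\rho|x|\pm\rho|y|$, and $b$ a bounded scale-$N$ symbol.

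It then remains to estimate $I_\pm$. Since $b\,\psi(\cdot/N)$ is a scale-$N$ symbol, integration by parts $m$ times gives $|I_\pm|\lesssim_m\langle N\inf_\rho|\phi_\pm'(\rho)|\rangle^{-m}$ when $\phi_\pm'$ has no zero on the support, while stationary phase gives $|I_\pm|\lesssim N^{-1}|\phi_\pm''|^{-1/2}$ (plus harmless endpoint contributions) when it does and the window resolves it. For $\phi_+$ one has $\phi_+'=2t\rho+|x|+|y|$, which never vanishes and satisfies $N\inf|\phi_+'|\gtrsim\langle N^2t+N|x|+N|y|\rangle$ (using $t>0$ in (iii), and $|2t\rho|\lesssim N^{-1}\lesssim|x|$ in (iv)), hence $|I_+|\lesssim_m\langle N^2t+N|x|+N|y|\rangle^{-m}$, which dominates both claimed bounds. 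For $\phi_-$ one has $\phi_-'=2t\rho+|x|-|y|$, $\phi_-''=2t$, with a possible critical point at $\rho_*=(|y|-|x|)/2t$: in case (iii) this lands in $\{\rho\sim N\}$ exactly when $|y|-|x|\sim Nt$, and then $|\phi_-''|N^2\sim N^2t\gtrsim1$ resolves it, so stationary phase yields $|I_-|\lesssim N^{-1}|t|^{-1/2}$ and hence, through the prefactor, the bound $(|x||y||t|)^{-1/2}$ — the first alternative; otherwise (and throughout (iv), where $|2t\rho|\lesssim N^{-1}$ keeps $\phi_-'$ from vanishing) one verifies $N\inf|\phi_-'|\gtrsim\langle N^2t+N|x|-N|y|\rangle$, giving $|I_-|\lesssim_m\langle N^2t+N|x|-N|y|\rangle^{-m}$, with the trivial bound $|I_-|\lesssim1$ used where that bracket is $O(1)$. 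These are the second alternative of (iii) and the estimate of (iv); the $|y|\lesssim N^{-1}$ case runs identically, only simpler. I expect the main obstacle to be the bookkeeping — cleanly cutting $(t,|x|,|y|)$-space into the stationary regime $|y|-|x|\sim Nt$ and its complement and, in the complement, verifying the lower bound $N\inf_\rho|\phi_-'(\rho)|\gtrsim\langle N^2t+N|x|-N|y|\rangle$ (which amounts to estimating $\inf_\rho|\phi_-'|$ by the distance from the support to $\rho_*$ times $2t$) so as to reproduce the stated exponent uniformly — whereas checking the symbol bounds on $a(\rho|x|)$, $a_\pm(\rho|y|)$ and tracking the $\rho$-powers under the standing hypothesis $|x|\gtrsim N^{-1}$ is routine.
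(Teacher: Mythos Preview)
Your treatment of (ii), (iii), and (iv) matches the paper's: Fourier--Bessel inversion for (ii), and for (iii)--(iv) the same reduction to a one-dimensional oscillatory integral via the large-argument expansions of $H^{(1)}_0$ and $J_0$, followed by stationary phase when $\rho_*=(|y|-|x|)/2t$ lands in the support of $\psi(\cdot/N)$ and repeated integration by parts otherwise.

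For (i), however, your route diverges from the paper's and carries a gap. The paper evaluates the $\xi$-integral in the definition of $P^\pm$ in closed form, obtaining
\[
[P^\pm f](x)=\tfrac12 f(x)\pm\frac{i}{2\pi^2}\int_{\R^2}\frac{f(y)\,dy}{|x|^2-|y|^2}
\]
on radial $f$; the substitution $|x|^2\mapsto u$, $|y|^2\mapsto v$ then exhibits the singular integral as the Hilbert transform on $L^2(0,\infty)$, giving $L^2$-boundedness (indeed $L^p$-boundedness) in one stroke. Your proposal instead bounds each $P_N^\pm$ individually via Schur and then sums over $N$ by ``almost orthogonality since $P^\pm$ nearly preserves frequency localization.'' But this last claim is precisely what is in question. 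Writing $T_N=P^\pm P_N$, one product $T_M T_N^*=P^\pm P_M P_N(P^\pm)^*$ does vanish for well-separated $M,N$, but Cotlar--Stein also requires decay of $T_M^*T_N=P_M(P^\pm)^*P^\pm P_N$, and this does not obviously vanish: establishing it amounts to showing that $(P^\pm)^*P^\pm$ is pseudo-local in frequency, which is not a consequence of the kernel bounds you have in hand and is essentially as hard as (i) itself. (A minor side issue: your stated small-$|x|$ bound $N^2(1+|\log(N|x|)|)\langle N|y|\rangle^{-1/2}$ is not integrable in $y\in\R^2$, so the Schur test as written fails; one needs the additional $\langle N|y|\rangle^{-m}$ decay available from integrating by parts in $\rho$, which is easy to supply but should be said.) The Hilbert-transform identity is both shorter and avoids the circularity.
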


\begin{proof}
By performing the $\xi$ integral in \eqref{P+ defn}, we obtain
\begin{equation}\label{P+ defn 2}
[P^\pm f](x) = \frac{f(x)}2 \pm \frac{i}{2\pi^2} \!\int_{\R^2} \frac{f(y)\,dy}{|x|^2-|y|^2}
\end{equation}
for radial Schwartz functions $f$,  while $P^\pm f \equiv 0$ if $f$ is orthogonal to radial functions, that is,
has zero mean on all circles centered at the origin.   Switching to polar coordinates and changing variables reveals that the
integral in \eqref{P+ defn 2} is simply the Hilbert transform in disguise.  This shows that $P^+$ is bounded on $L^p(\R^2)$
for any $1<p<\infty$, settling claim (i).  Observation (ii) also follows from this discussion.

The third claim is an exercise in stationary phase.  We will only provide the details for $P_N^+ e^{-it\Delta}$;
the other kernel is its complex conjugate. By \eqref{P+ defn} we have the following formula for the kernel:
\begin{align}\label{kernel}
[P^+_N e^{-it\Delta}](x,y) = \tfrac12 (2\pi)^{-2}\int_{\R^2} H^{(1)}_0( |\xi| |x|) e^{ it|\xi|^2} J_0\bigl(|\xi||y|\bigr)
    \psi\bigl(\tfrac{\xi}N\bigr)\,d\xi,
\end{align}
where $\psi$ is the multiplier from the Littlewood--Paley projection.  To proceed, we use the following information
about Bessel/Hankel functions.
\begin{align}\label{Bessel symbol}
J_0(r) = \frac{a(r) e^{ir}}{\langle r\rangle^{1/2}} + \frac{\bar a(r) e^{-ir}}{\langle r\rangle^{1/2}},
\end{align}
where $a(r)$ obeys the symbol estimates
\begin{equation}\label{symbol type}
\Bigr| \frac{\partial^m a(r)}{\partial r^m} \Bigr| \lesssim \langle r \rangle^{-m}
    \quad \text{for all $m\geq0$.}
\end{equation}
The Hankel function $H^{(1)}_0(r)$ has a logarithmic singularity at $r=0$; however, for $r\gtrsim 1$,
\begin{align}\label{Hankel symbol}
H^{(1)}_0(r) = \frac{b(r) e^{ir}}{r^{1/2}}
\end{align}
for a smooth function $b(r)$ obeying \eqref{symbol type}.  As we assume $|x|\gtrsim N^{-1}$, the logarithmic singularity
does not enter into our considerations.

Substituting \eqref{Bessel symbol} and \eqref{Hankel symbol} into \eqref{kernel}, we see that a stationary phase point
can only occur in the term containing $\bar a(r)$ and even then only if $|y|-|x|\sim Nt$.  In this case,
stationary phase yields the first estimate.  In all other cases, integration by parts yields the second estimate.

Part (iv) is also a consequence of \eqref{kernel} and stationary phase techniques.  Since $t$ is so small,
$e^{i|\xi|^2t}$ shows no appreciable oscillation and can be incorporated into $\psi(\frac \xi N)$.
For $\bigl||y|-|x|\bigr|\leq N^{-1}$, the result follows from the naive $L^1$ estimate.
For larger $|x|-|y|$, one integrates by parts $m$ times.
\end{proof}

%

\section{Additional regularity}\label{glob-sob-sec}

This section is devoted to a proof of

\begin{theorem}[Regularity in the global case]\label{glob-sob-thm}
Let $d=2$ and let $u$ be a global spherically symmetric solution to \eqref{cubic nls} that is almost periodic modulo $G_\rad$.
Suppose also that $N(t)\lesssim 1$ for all $t\in\R$.  Then $u \in L^\infty_t H^s_x(\R \times \R^2)$ for all $s \geq 0$.
\end{theorem}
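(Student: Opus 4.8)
The plan is to show that the high-frequency tail of the mass, $\mathcal A(N):=\sup_{t\in\R}\|P_{\geq N}u(t)\|_{L^2_x(\R^2)}$, decays faster than any power of $N$; summing $N^{2s}\|P_Nu(t)\|_{L^2_x}^2$ over dyadic $N$ then gives $u(t)\in H^s_x$ with a bound uniform in $t$, which is the theorem. Before starting I would make several harmless reductions: using the scaling symmetry (Lemma~\ref{symap}) and the hypothesis $\sup_tN(t)<\infty$ we may assume $N(t)\leq 1$ for all $t$; global existence gives $I=\R$; almost periodicity modulo $G_\rad$ provides, for each $\eta>0$, a constant $c(\eta)$ so that $\|P_{\geq c(\eta)N(t)}u(t)\|_{L^2_x}\leq\eta$ and $\|u(t)\|_{L^2_x(|x|\geq c(\eta)/N(t))}\leq\eta$ for all $t$ (in particular $\mathcal A(N)\to 0$ as $N\to\infty$, since $N(t)\leq 1$); and Lemma~\ref{spacelemma} together with $N(t)\leq 1$ yields the uniform local bounds $\|u\|_{L^4_{t,x}([a,a+1]\times\R^2)}\lesssim_u 1$ and, via Strichartz, $\|u\|_{L^2_tL^\infty_x([a,a+1]\times\R^2)}\lesssim_u 1$ for every $a\in\R$.

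The engine of the argument is the in/out decomposition of Section~\ref{pseudo-sec}. Fix $t$. Since $u(t)$ is radial, $u(t)=P^+u(t)+P^-u(t)$ by Proposition~\ref{P:P properties}(ii), and by the time-reversal symmetry of the hypotheses (Lemma~\ref{symap}) it suffices to bound $P^+P_{\geq N}u(t)$. Commuting radial Fourier multipliers in the Duhamel formula \eqref{duhamel} integrated towards $+\infty$, I would write $P^+P_{\geq N}u(t)=i\int_t^\infty P^+P_{\geq N}e^{i(t-t')\Delta}F(u(t'))\,dt'$, where the limit converges in the strong $L^2_x$ topology (not merely weakly) thanks to the kernel decay of Proposition~\ref{P:P properties}(iii). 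I then split the $t'$-integral into a ``near'' part $t'\in[t,t+T_0]$ and a ``far'' part $t'>t+T_0$, with $T_0$ chosen appropriately large depending on $N$. On the near part I would use Strichartz (Lemma~\ref{L:strichartz}) to dominate the contribution by $\|P_{\geq N/2}F(u)\|_{L^{4/3}_{t,x}}$ over the relevant time interval, and then run a paraproduct decomposition of $F(u)=\mu|u|^2u$ in the spirit of Lemma~\ref{nle}: terms with no factor $P_{>N/8}u$ are killed by $P_{\geq N/2}$; the term carrying an intermediate-frequency factor is bounded using the uniform local bounds above together with $\mathcal A(\sqrt N)$; and the term with one high- and two low-frequency factors is handled by the bilinear Strichartz inequality (Lemma~\ref{L:bilinear strichartz}), which yields a power gain in $N$. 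Combined with the qualitative decay $\mathcal A(N)\to 0$ (which lets one absorb the intermediate term), this produces a recursion of the shape $\mathcal A(N)\leq\eta\,\mathcal A(N/16)+(\text{far part})+N^{-\sigma}$ once $N$ is large depending on $u$ and $\eta$.

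The crux, and the one place where radial symmetry is genuinely used, is the far part. There $t'-t\geq T_0$ is large, so by Proposition~\ref{P:P properties}(iii) the kernel of each $P^\pm_Me^{i(t-t')\Delta}$ ($M\geq N$) is superpolynomially small in $\langle M^2(t'-t)+M|x|-M|y|\rangle$ away from the light cone $|y|-|x|\sim M(t'-t)$, while the source $F(u(t'))$ is, by almost periodicity, essentially supported in the ball $\{|y|\lesssim c(\eta)/N(t')\}$. I would estimate the light-cone contribution by combining the $(|x||y||t'-t|)^{-1/2}$ decay with the spatial concentration of the source (using the weighted estimate Lemma~\ref{L:wes} and the radial embedding Lemma~\ref{radial embedding} to absorb weights), and the off-light-cone contribution directly from the superpolynomial decay; in both cases one must also exploit that $\|P_{\geq N}F(u(t'))\|$ is itself small whenever $N(t')$ is small (the solution at time $t'$ then living at frequencies $\ll N$), which controls the portions of the integral where the spatial scale $1/N(t')$ is large. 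The conclusion is that the far part is $O_{u,\eta}(N^{-1})$, say. Feeding this into the recursion and iterating gives an initial rate $\mathcal A(N)\lesssim_u N^{-\sigma}$, and then, exactly as in the proof of Theorem~\ref{ss-sob-thm} (re-inserting this improved input into the nonlinear estimate), the rate improves by a fixed increment at each step, giving $\mathcal A(N)\lesssim_{s,u}N^{-s}$ for every $s$ and hence the theorem. The main obstacle, as indicated, is making the far-part bound rigorous: controlling the region $|x|\lesssim M^{-1}$ near the spatial origin where Proposition~\ref{P:P properties} does not directly apply, and carefully bookkeeping the interplay among the time scale $t'-t$, the frequency $M$, and the degenerating spatial scale $1/N(t')$.
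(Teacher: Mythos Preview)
Your overall strategy---reduce to a recursive bound on $\mathcal A(N)$ via the in/out Duhamel representation of Section~\ref{pseudo-sec}---matches the paper's, but the execution differs in several places, and the obstacle you flag at the end is real and must be resolved.

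The paper handles the small-radius problem, which you leave open, by using \emph{two} Duhamel representations for $u_{\geq N}(0)$: the in/out formula \eqref{pm rep'} only on $\{|x|\geq N^{-1}\}$, and the \emph{ordinary} one-sided Duhamel \eqref{no pm rep'} on $\{|x|\leq N^{-1}\}$. The point is that on the small ball the standard kernel bounds of Lemma~\ref{lr:propag est L} suffice: for $|x|\leq N^{-1}$, $|t|\geq\delta\gg N^{-2}$, and $M\geq N$, the stationary-phase region $|x-y|\sim M|t|$ already forces $|y|\gtrsim M|t|$, so one lands in the same ``main-contribution'' region as for the in/out kernel. This removes the logarithmic singularity of $P^\pm$ at the origin without further work.

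Your near-time estimate is more elaborate than necessary. The paper (Lemma~\ref{local-lemma}) does not use bilinear Strichartz here; it places $P_{\geq N}F(u)$ in $L^2_tL^1_x$, splits $u=u_{\geq N/8}+u_{N_0\leq\cdot<N/8}+u_{<N_0}$ with $N_0$ a fixed threshold chosen so that $\mathcal A(N_0)\leq\eta$, and uses the endpoint bound $\|u\|_{L^2_tL^\infty_x([0,\delta])}\lesssim_u 1$. This yields $\lesssim_u\eta\,\mathcal M(N/8)$ directly, with no leftover $N^{-\sigma}$ term.

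For the far-time part, the paper splits by the \emph{size of $|y|$} rather than by proximity to the light cone: on $\{|y|\gtrsim M|t|\}$ one applies the \emph{adjoint} of the weighted Strichartz inequality \eqref{wait S} to the forcing (this is where the radial gain truly enters), together with Lemma~\ref{radial embedding}; on $\{|y|\ll M|t|\}$ the combined kernel bound \eqref{combined propig} is $O((M^2|t|)^{-50})$, so crude $L^1_tL^2_x$ estimates on the forcing suffice. Neither piece invokes the spatial concentration of $u(t')$ or any information about $N(t')$, so your concern about the ``degenerating spatial scale $1/N(t')$'' simply does not arise. The upshot is the clean recursion $\mathcal M(N)\lesssim_u\eta\,\mathcal M(N/8)$ of Proposition~\ref{ueta}, from which arbitrary polynomial decay follows by iteration without a separate bootstrap. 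Finally, the paper keeps the Duhamel limits weak throughout, using \eqref{weakly closed}; your claim of strong convergence is not needed and would require additional justification.
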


That $u(t)$ is smooth will follow from a careful study of the Duhamel formulae \eqref{duhamel}.  Different
phenomena are involved at times near $t$ and those far from $t$.  Near $t$, the important fact is that
there is little mass at high frequencies, as is implied by the definition of almost periodicity and the
boundedness of the frequency scale function $N(t)$.
Far from $t$, the driving force behind regularity is the spherical symmetry of the solution.  Of course
this symmetry is only valuable at large radii.  Consequently, we are only able to exploit it by using
the in/out decomposition described in Section~\ref{pseudo-sec}.

Let us now begin the proof.  For the remainder of the section, $u$ will denote a solution to \eqref{cubic nls}
that obeys the hypotheses of Theorem~\ref{glob-sob-thm}.

We first record some basic local estimates.  From mass conservation we have
\begin{equation}\label{masst}
 \| u \|_{L^\infty_t L^2_x(\R \times \R^2)} \lesssim_u 1,
 \end{equation}
while from Definition~\ref{apdef} and the fact that $N(t)$ is bounded we have
$$ \lim_{N \to \infty} \| u_{\geq N} \|_{L^\infty_t L^2_x(\R \times \R^2)}  = 0.$$
From Lemma~\ref{spacelemma} and $N(t)\lesssim 1$, we have
\begin{align}\label{gr 44}
\| u \|_{L^4_{t,x}(J \times \R^2)} \lesssim_u \langle |J|\rangle^{1/4}
\end{align}
for all intervals $J\subset \R$.  By H\"older, this implies
$$ \| F(u) \|_{L^{4/3}_{t,x}(J \times \R^2)} \lesssim_u \langle |J|\rangle^{3/4}$$
and then, by the (endpoint) Strichartz inequality (Lemma \ref{L:strichartz}),
\begin{equation}\label{2infty}
\| u \|_{L^2_t L^\infty_x(J \times \R^2)} \lesssim_u \langle |J|\rangle^{1/2}.
\end{equation}
More precisely, one first treats the case $|J| = O(1)$ using \eqref{gr 44} and then larger intervals by subdivision.
Similarly, from the weighted Strichartz inequality (Lemma~\ref{L:wes}),
\begin{equation}\label{wait}
\| |x|^{1/2} u \|_{L^4_t L^\infty_x(J \times \R^2)} \lesssim_u \langle |J|\rangle^{1/4}.
\end{equation}

Now, for any dyadic number $N$, define
\begin{equation}\label{cndef}
\mathcal{M}(N) :=\| u_{\geq N} \|_{L^\infty_t L^2_x(\R \times \R^2)}.
\end{equation}
From the discussion above, we see that $\mathcal{M}(N) \lesssim_u 1$ and
\begin{equation}\label{clim}
\lim_{N \to \infty} \mathcal{M}(N) = 0.
\end{equation}

To prove Theorem~\ref{glob-sob-thm}, it suffices to show that $\mathcal{M}(N) \lesssim_{u,s} N^{-s}$
for any $s > 0$ and all $N$ sufficiently large depending on $u$ and $s$.  This will immediately follow
from iterating the following proposition with a suitably small choice of $\eta$ (depending on $u$ and $s$):

\begin{proposition}[Regularity]\label{ueta}  Let $u$ be as in Theorem~\ref{glob-sob-thm} and let $\eta>0$ be a small number.
Then
$$ \mathcal{M}(N) \lesssim_{u} \eta \mathcal{M}(\tfrac N8)$$
whenever $N$ is sufficiently large depending on $u$ and $\eta$.
\end{proposition}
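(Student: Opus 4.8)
The plan is to fix an arbitrary time $t\in\R$, so that it suffices to bound $\|P_{\geq N}u(t)\|_{L^2_x}$ uniformly in $t$, and to decompose $u(t)$ into its outgoing and incoming spherical parts via the projections $P^\pm$ of Section~\ref{pseudo-sec}. Since the kernel of $P^-$ is the complex conjugate of that of $P^+$, the time-reversal symmetry of Lemma~\ref{sym} reduces everything to estimating the outgoing part $\|P^+_{\geq N}u(t)\|_{L^2_x}$. Following the principle of Section~\ref{pseudo-sec} — propagate outgoing waves into the future — I would represent this using the Duhamel formula \eqref{duhamel} integrated forward,
\[
P^+_{\geq N}u(t)=i\int_t^\infty e^{i(t-t')\Delta}\,P^+_{\geq N}F(u(t'))\,dt',
\]
which is legitimate by Lemma~\ref{duhamel L}; the advantage of the outgoing projection is precisely that, unlike the raw Duhamel integral, this one is absolutely convergent.

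Next I would split the $t'$-integral into a near piece $|t'-t|\le\tau$ and a far piece $|t'-t|\ge\tau$, where $\tau$ is a small multiple of $1/N$ — roughly the time a wave of frequency $\sim N$ needs to leave the ball on which $u$ concentrates all but $\eta$ of its mass (recall $N(\cdot)\lesssim1$, so the intrinsic time scale is $\sim1$ and we are cutting well below it). On the near piece the interval is short, so the Strichartz inequality of Lemma~\ref{L:strichartz}, together with the $L^p$-boundedness of $P^\pm$, bounds the contribution by $\|P_{\geq N}F(u)\|_{L^{4/3}_{t,x}}$ over this short slab. Because $F$ is cubic, every surviving term carries a factor $u_{\geq N/8}$, contributing $\mathcal M(N/8)$; peeling off a second factor at an intermediate frequency and invoking the bilinear Strichartz inequality of Lemma~\ref{L:bilinear strichartz}, exactly as in the proof of Lemma~\ref{nle}, supplies the remaining smallness — a combination of $\mathcal M(\sqrt N)$ (which is $\le\eta$ once $N$ is large, by \eqref{clim}), a power $N^{-c}$, and the smallness of the Strichartz norms of $u$ on the short slab — so that the near piece is $\lesssim_u\eta\,\mathcal M(N/8)$.

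The far piece $|t'-t|\ge\tau$ is where spherical symmetry is indispensable, and I expect it to be the main obstacle. One cannot run Strichartz on an unbounded interval, so instead I would insert the pointwise kernel bounds of Proposition~\ref{P:P properties} (summed over dyadic frequencies $\ge N$) for $P^+_{\geq N}e^{-i(t'-t)\Delta}$: these decay rapidly away from the outgoing light cone $|y|-|x|\sim N(t'-t)$, so for $|t'-t|\ge\tau$ only source points $y$ with $|y|\gtrsim N(t'-t)$ — far outside the essential support of $u(t')$ — contribute appreciably. On that far region the radial Sobolev embedding (Lemma~\ref{radial embedding}), equivalently the weighted Strichartz estimate of Lemma~\ref{L:wes}, forces $|u(t',y)|\lesssim|y|^{-1/2}$ in an $L^4_{t'}$-averaged sense, so $F(u(t'))$ decays like $|y|^{-3/2}$ there; combining this decay with the $|y|^{-1/2}$ and $(t'-t)^{-1/2}$ gains in the kernel, and keeping the high-frequency factor $u_{\geq N/8}$ present in $P_{\geq N}F(u(t'))$, yields an absolutely convergent $t'$-integral bounded by $\mathcal M(N/8)$ times a quantity that tends to $0$ as $N\to\infty$. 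The delicate points will be to carry out the time integration through the weighted $TT^*$/Strichartz machinery behind Lemma~\ref{L:wes} rather than crudely by Minkowski's inequality, which would diverge, and the bookkeeping needed to see that every contribution is genuinely controlled by $\mathcal M(N/8)$ with a prefactor small for $N$ large. Adding the near and far estimates and taking $N$ sufficiently large depending on $u$ and $\eta$ gives $\mathcal M(N)\lesssim_u\eta\,\mathcal M(N/8)$.
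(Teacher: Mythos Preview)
Your overall architecture --- in/out decomposition, forward Duhamel for the outgoing part, near/far time split, Strichartz on the near piece, kernel bounds plus radial decay on the far piece --- matches the paper's. But there is a real gap on the far piece: the kernel bounds in Proposition~\ref{P:P properties}(iii),(iv) carry the hypothesis $|x|\gtrsim N^{-1}$. This is not removable; it reflects the logarithmic singularity of $H_0^{(1)}$ at the origin. Your plan to ``insert the pointwise kernel bounds of Proposition~\ref{P:P properties}'' therefore says nothing about the region $|x|\le N^{-1}$, and you offer no substitute. The paper deals with this by estimating $\|u_{\geq N}(0)\|_{L^2}$ separately on $\{|x|\le N^{-1}\}$ and $\{|x|\ge N^{-1}\}$, using the $P^\pm$ representation \eqref{pm rep'} only on the latter. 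On $\{|x|\le N^{-1}\}$ it reverts to the plain forward Duhamel formula \eqref{no pm rep'} with the unmodified propagator $P_Me^{-it\Delta}$: by Lemma~\ref{lr:propag est L} its kernel is concentrated near $|y-x|\sim M|t|$, and since $|x|\le N^{-1}$ and $|t|\ge\delta\gg N^{-2}$ this still forces $|y|\gtrsim M|t|$, so the same weighted-Strichartz argument (Lemma~\ref{ar main}) goes through there as well.

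Two smaller divergences are worth flagging. The paper's near/far threshold is $\delta=\delta(u,\eta)$ \emph{independent of $N$}, and the near piece is handled via the dual endpoint pair $L^2_tL^1_x$ (Lemma~\ref{local-lemma}) with a simple three-frequency split $u=u_{\ge N/8}+u_{N_0\le\cdot<N/8}+u_{<N_0}$, not via $L^{4/3}_{t,x}$ and bilinear Strichartz; smallness comes directly from $\|u_{\ge N_0}\|_{L^\infty_tL^2_x}\le\eta$ and from the size of $\delta$. And the far piece is itself split into a main region $|y|\gtrsim M|t|$, bounded using the \emph{adjoint} of the weighted Strichartz estimate \eqref{wait S} (Lemma~\ref{ar main}), and a tail $|y|\ll M|t|$, bounded by the rapid kernel decay \eqref{combined propig} (Lemma~\ref{ar tail}); your sketch runs these together.
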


The rest of this section is devoted to proving Proposition~\ref{ueta}.  Our task is to show that
$$
    \| u_{\geq N}(t_0) \|_{L^2_x(\R^2)} \lesssim_{u} \eta\mathcal{M}(\tfrac N8)
$$
for all times $t_0$ and all $N$ sufficiently large (depending on $u$ and $\eta$).
By time translation symmetry, we may assume $t_0=0$.  As suggested above, one of the
keys to obtaining additional regularity is Lemma~\ref{duhamel L}.  Specifically, we have
\begin{align}
u_{\geq N}(0) &=\bigl( P^+ + P^- \bigr) u_{\geq N}(0) \label{pm rep}\\
       &= \lim_{T\to\infty} i\int_0^T P^+ e^{-it\Delta} P_{\geq N}F(u(t))\,dt
                 -i \lim_{T\to\infty} \int_{-T}^0 P^- e^{-it\Delta} P_{\geq N}F(u(t))\,dt,\notag
\end{align}
where the limit is to be interpreted as a weak limit in $L^2$.
However, this representation is not useful for $|x|$ small because the kernels of $P^\pm$ have a logarithmic singularity at $x=0$.
To deal with this, we will use a different representation for $|x|\leq N^{-1}$, namely
\begin{align}\label{no pm rep}
u_{\geq N}(0) &= \lim_{T\to\infty} i\int_0^T e^{-it\Delta} P_{\geq N}F(u(t))\,dt,
\end{align}
also as a weak limit.  To deal with the poor nature of these limits, we note that
\begin{equation}\label{weakly closed}
f_T \to f \text{ weakly} \quad\Longrightarrow\quad
    \|f\| \leq \limsup_{T\to\infty} \|f_T\|,
\end{equation}
or equivalently, that the unit ball is weakly closed.

Despite the fact that different representations will be used depending on the size of $|x|$, some
estimates can be dealt with in a uniform manner.  The first such example is a bound on
integrals over short times.

\begin{lemma}[Local estimate]\label{local-lemma}  For any $\eta>0$, there exists $\delta = \delta(u,\eta) > 0$ such that
\begin{equation*}
  \Bigl\| \int_0^\delta e^{-it\Delta} P_{\geq N} F(u(t))\,dt \Bigr\|_{L^2_x} \lesssim_u \eta \mathcal{M}(\tfrac N8),
\end{equation*}
provided $N$ is sufficiently large depending on $u$ and~$\eta$.  An analogous estimate holds for integration over
$[-\delta,0]$ and after pre-multiplication by $P^\pm$ (they are bounded operators on $L^2_x$).
\end{lemma}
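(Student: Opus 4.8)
The plan is to bound the integral by the (inhomogeneous) Strichartz inequality, reduce to estimating $P_{\ge N}F(u)$ in the adjoint Strichartz norm $L^{4/3}_{t,x}$, peel off a single factor of $u_{\ge N/8}$ in $L^\infty_tL^2_x$ — which is precisely $\mathcal M(\tfrac N8)$ — and harvest the gain $\eta$ from the shortness of the time interval.

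First I would apply Lemma~\ref{L:strichartz} on $[0,\delta]$ with zero initial data and forcing term $P_{\ge N}F(u)$; evaluating the Duhamel term at $t=\delta$ and then applying the unitary operator $e^{-i\delta\Delta}$ gives
$$\Bigl\|\int_0^\delta e^{-it\Delta}P_{\ge N}F(u(t))\,dt\Bigr\|_{L^2_x(\R^2)}\lesssim\bigl\|P_{\ge N}F(u)\bigr\|_{L^{4/3}_{t,x}([0,\delta]\times\R^2)}.$$
Since $F(u)=\mu|u|^2u$ and $P_{\ge N}\bigl(|u_{<N/8}|^2u_{<N/8}\bigr)=0$ (the Fourier support lying in $\{|\xi|\lesssim N/8\}$), one may write $P_{\ge N}F(u)=P_{\ge N}\bigl(|u|^2u-|u_{<N/8}|^2u_{<N/8}\bigr)$; expanding $u=u_{<N/8}+u_{\ge N/8}$ displays the right side as a finite sum of trilinear terms, each carrying at least one factor $u_{\ge N/8}$ or $\overline{u_{\ge N/8}}$, the remaining factors being Littlewood--Paley pieces of $u$ whose $L^p_x$-norms are $\lesssim\|u\|_{L^p_x}$ for every $1\le p\le\infty$ (the relevant projections being convolution with kernels of bounded $L^1$-mass).

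For each such term I would apply H\"older's inequality, placing the distinguished high-frequency factor in $L^\infty_tL^2_x$, one of the two remaining factors in $L^2_tL^\infty_x$, and the last in $L^4_{t,x}$; this is exponent-consistent since $\tfrac12+0+\tfrac14=\tfrac34$ in both the time and the space variables. Summing the $O(1)$ terms yields
$$\bigl\|P_{\ge N}F(u)\bigr\|_{L^{4/3}_{t,x}([0,\delta]\times\R^2)}\lesssim\|u_{\ge N/8}\|_{L^\infty_tL^2_x(\R\times\R^2)}\,\|u\|_{L^2_tL^\infty_x([0,\delta]\times\R^2)}\,\|u\|_{L^4_{t,x}([0,\delta]\times\R^2)},$$
and the first factor is at most $\mathcal M(\tfrac N8)$. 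It then remains to note that by the local bounds \eqref{gr 44} and \eqref{2infty} the norms $\|u\|_{L^4_{t,x}([0,1]\times\R^2)}$ and $\|u\|_{L^2_tL^\infty_x([0,1]\times\R^2)}$ are finite, hence, by absolute continuity of the integral, both $\|u\|_{L^4_{t,x}([0,\delta]\times\R^2)}$ and $\|u\|_{L^2_tL^\infty_x([0,\delta]\times\R^2)}$ tend to $0$ as $\delta\to0$; choosing $\delta=\delta(u,\eta)$ small enough that their product is suitably small gives the claim (in fact for every $N>0$, so the hypothesis that $N$ be large is not actually used). The estimate over $[-\delta,0]$ follows identically, or from the time-reversal symmetry in Lemma~\ref{sym}, and pre-multiplying by $P^\pm$ changes nothing since $P^\pm$ are bounded on $L^2_x$ by Proposition~\ref{P:P properties}(i).

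The one delicate point is the choice of exponents in the trilinear H\"older estimate: it must simultaneously produce a clean factor of $\mathcal M(\tfrac N8)=\|u_{\ge N/8}\|_{L^\infty_tL^2_x}$ \emph{and} leave the two remaining factors in norms that genuinely shrink over a short time window. The split $L^\infty_tL^2_x\times L^2_tL^\infty_x\times L^4_{t,x}$ is essentially forced by these two requirements, and in particular it makes essential use of the two-dimensional radial endpoint Strichartz estimate $L^2_tL^\infty_x$ supplied by Lemma~\ref{L:strichartz}.
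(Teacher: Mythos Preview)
Your argument is correct for the statement as written, but it takes a different route from the paper and, in an important respect, yields something weaker than the paper actually uses.

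The paper works with the endpoint dual pair $L^2_t L^1_x$ (rather than your $L^{4/3}_{t,x}$) and splits $u$ into three pieces $u_{\geq N/8} + u_{N_0 \leq \cdot < N/8} + u_{<N_0}$, where $N_0 = N_0(u,\eta)$ is chosen via almost periodicity so that $\|u_{\geq N_0}\|_{L^\infty_t L^2_x} \leq \eta$. The gain of $\eta$ is then harvested \emph{either} from this smallness of high-frequency mass (for terms containing a factor $u_{\geq N_0}$) \emph{or} from $|J|^{1/2}$ together with a Bernstein factor $N_0$ (for terms with two low-frequency factors $u_{<N_0}$). In either case the constants are time-translation invariant: the resulting $\delta$ depends only on $\eta$ and $N_0(u,\eta)$, and the estimate holds for \emph{every} interval $J$ of length $\leq \delta$, not just for $[0,\delta]$. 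This is precisely what is needed in the proof of Proposition~\ref{ueta}, which must bound $\|u_{\geq N}(t_0)\|_{L^2_x}$ uniformly in $t_0$; the reduction ``by time translation symmetry we may assume $t_0=0$'' is only legitimate if $\delta$ is itself time-translation invariant.

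Your simpler two-piece decomposition avoids introducing $N_0$ and instead gets the smallness entirely from absolute continuity of $\int_0^\delta\|u(t)\|_{L^\infty_x}^2\,dt$ and $\int_0^\delta\!\!\int|u|^4\,dx\,dt$. This is fine for the literal interval $[0,\delta]$, but it does not transfer to $[t_0,t_0+\delta]$ uniformly in $t_0$: the only uniform bounds available, \eqref{gr 44} and \eqref{2infty}, give $\langle|J|\rangle^{1/4}$ and $\langle|J|\rangle^{1/2}$, which do not tend to zero as $|J|\to 0$. Your remark that ``the hypothesis that $N$ be large is not actually used'' is the tell: in the paper's proof that hypothesis enters exactly through $N/8 > N_0$, and it is the price of a $\delta$ that works at every time.
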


\begin{proof}
By Lemma~\ref{L:strichartz}, it suffices to prove
$$
 \| P_{\geq N} F(u) \|_{L^2_t L^1_x(J \times \R^2)} \lesssim_u \eta \mathcal{M}(\tfrac N8)
$$
for any interval $J$ of length $|J| \leq \delta$ and all sufficiently large $N$ depending on $u$ and~$\eta$.

From \eqref{clim}, there exists $N_0 = N_0(u,\eta)$ such that
\begin{equation}\label{uetan}
 \| u_{\geq N_0} \|_{L^\infty_t L^2_x(\R \times \R^2)} \leq \eta.
\end{equation}
Let $N>\tfrac 18 N_0$.  We decompose
$$ u = u_{\geq \frac N8} + u_{N_0 \leq \cdot < \frac N8} + u_{<N_0}.$$
Any term in the resulting decomposition of $P_{\geq N} F(u)$ which does not involve at least one $u_{\geq \frac N8}$ will vanish.

Consider first a term with two factors of the form $u_{<N_0}$.  Using H\"older's inequality, \eqref{masst}, \eqref{cndef},
and Lemma~\ref{Bernstein}, we estimate
\begin{align*}
\|u_{\geq \frac N8} & u_{<N_0}^2 \|_{L^2_t L^1_x(J \times \R^2)}\\
&\lesssim |J|^{1/2} \| u_{\geq \frac N8} \|_{L^\infty_t L^2_x(\R \times \R^2)} \| u_{<N_0} \|_{L^\infty_t L^2_x(\R \times \R^2)}
    \| u_{<N_0} \|_{L^\infty_t L^\infty_x(\R \times \R^2)} \\
&\lesssim_u |J|^{1/2} \mathcal{M}(\tfrac N8) N_0.
\end{align*}
Choosing $\delta$ sufficiently small  depending on $\eta$ and $N_0$, we see that this term is acceptable.

It remains only to consider those components of $P_{\geq N} F(u)$ which involve $u_{\geq \frac N8}$
and at least one other term which is not $u_{<N_0}$.  We estimate such terms using H\"older's inequality,
\eqref{2infty}, \eqref{cndef}, and \eqref{uetan} in the following fashion:
\begin{align*}
\| u_{\geq \frac N8} u_{\geq N_0} u\|_{L^2_t L^1_x(J \times \R^2)}
&\lesssim \| u_{\geq N_0}\|_{L^\infty_t L^2_x(\R\times \R^2)} \| u_{\geq \frac N8} \|_{L^\infty_t L^2_x(\R \times \R^2)}
    \|  u\|_{L^2_t L^\infty_x(J \times \R^2)}\\
&\lesssim_u  \eta \mathcal{M}(\tfrac N8) \langle |J|\rangle^{1/2}.
\end{align*}
This completes the proof of the lemma.
\end{proof}

We now turn our attention to $|t|\geq \delta$.  In this case we make the decomposition
$$
P_{\geq N} = \sum_{M\geq N} P_M \tilde P_M,
$$
where $\tilde P_M:=P_{M/2}+P_M+P_{2M}$.  In this way, \eqref{no pm rep} becomes
\begin{equation}\label{no pm rep'}
\begin{aligned}
u_{\geq N}(0,x) &= i\int_0^\delta e^{-it\Delta} P_{\geq N}F(u(t))\,dt \\
    &\quad + \lim_{T\to\infty} \sum_{M\geq N} i\int_\delta^T\int_{\R^2} [P_M e^{-it\Delta}](x,y) [\tilde P_M F(u(t))](y) \,dy\,dt,
\end{aligned}
\end{equation}
which we will use when $|x|\leq N^{-1}$. The analogous reformulation of \eqref{pm rep}, namely
\begin{equation}\label{pm rep'}
\begin{aligned}
u_{\geq N}(0,x) &= i\int_0^\delta P^+ e^{-it\Delta} P_{\geq N}F(u(t))\,dt
        -i\int_{-\delta}^0 P^- e^{-it\Delta} P_{\geq N}F(u(t))\,dt \\
&\quad+\lim_{T\to\infty}\sum_{M\geq N} i\int_\delta^T      \int_{\R^2} [P_M^+ e^{-it\Delta}](x,y) [\tilde P_M F(u(t))](y) \,dy\,dt \\
&\quad-\lim_{T\to\infty}\sum_{M\geq N} i\int_{-T}^{-\delta}\int_{\R^2} [P_M^- e^{-it\Delta}](x,y) [\tilde P_M F(u(t))](y) \,dy\,dt,
\end{aligned}
\end{equation}
will be used when $|x|$ is large.

To estimate the integrals where $|t|\geq\delta$, we break the region of $(t,y)$ integration into two pieces,
namely, where $|y|\gtrsim M|t|$ and $|y|\ll M|t|$.  The former is the more significant region; it contains the points
where the integral kernels $P_Me^{-it\Delta}(x,y)$ and $P_M^\pm e^{-it\Delta}(x,y)$ are large (see Lemmas~\ref{lr:propag est L}
and~\ref{P:P properties}).  More precisely, when $|x|\leq N^{-1}$, we use \eqref{no pm rep'}; in this case
$|y-x|\sim M|t|$ implies $|y|\gtrsim M|t|$ for $|t|\geq \delta \geq N^{-2}$.  (This last condition can be subsumed
under our hypothesis $N$ sufficiently large depending on $u$ and $\eta$.)  When $|x|\geq N^{-1}$, we use \eqref{pm rep'};
in this case $|y|-|x|\sim M|t|$ implies $|y|\gtrsim M|t|$.

The next lemma bounds the integrals over the significant region $|y|\gtrsim M|t|$.  Let $\chi_k$ denote
the characteristic function of the set
$$\{(t,y):\, 2^k\delta\leq |t|\leq 2^{k+1}\delta, \ |y|\gtrsim M|t|\}.$$

\begin{lemma}[Main contribution]\label{ar main}
Let $\eta>0$ be a small number and let $\delta$ be as in Lemma~\ref{local-lemma}.  Then
\begin{equation*}
\sum_{M\geq N}\sum_{k=0}^\infty \Bigl\| \int_{2^k\delta}^{2^{k+1}\delta}\!\!\int_{\R^2}
    [P_M e^{-it\Delta}](x,y)\,\chi_k(t,y) \, [\tilde P_M F(u(t))](y) \,dy\,dt\Bigr\|_{L^2_x}
\lesssim_u \eta \mathcal{M}(\tfrac N8)
\end{equation*}
for all $N$ sufficiently large depending on $u$ and $\eta$.  An analogous estimate holds with $P_M$
replaced by $P_M^+$ or $P_M^-$; moreover, the time integrals may be taken over $[-2^{k+1}\delta,-2^k\delta]$.
\end{lemma}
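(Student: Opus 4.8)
The plan is to run everything off the decay of the propagator kernels (Lemma~\ref{lr:propag est L} for $P_Me^{-it\Delta}$ and Proposition~\ref{P:P properties} for $P_M^\pm e^{\mp it\Delta}$) on the region $|y|\gtrsim M|t|$, combined with the extra spatial decay that spherical symmetry forces on $u$ via the radial Sobolev embedding (Lemma~\ref{radial embedding}) and the weighted bound \eqref{wait}. By time reversal it suffices to treat the time integrals over positive times, and the cases $P_M^+$ and $P_M^-$ are symmetric, so I would only discuss $P_M$ (which is used later for $|x|\le N^{-1}$) and $P_M^+$ (used later for $|x|>N^{-1}$). Throughout, $N$ is taken as large as we please compared with $\delta=\delta(u,\eta)$, with the cutoff $N_0=N_0(u,\eta)$ of \eqref{uetan}, and with $\sup_tN(t)$; in particular $M|t|\ge N\delta$ on the support of $\chi_k$, and $N\delta$ exceeds $N_0$ and the frequency-localisation radius $C(\eta)N(t)$ of $\hat u(t)$ by any prescribed amount.

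The first step is to decompose the nonlinearity. Writing $F(u)=\mu u\bar u u$ and splitting each factor into $u_{\ge N/8}+u_{<N/8}$, the term built from three copies of $u_{<N/8}$ has frequency support in a ball of radius $<\tfrac12M$ and is therefore annihilated by the outer projection $P_M$ hidden in $P_Me^{-it\Delta}$ (the intervening cutoff $\chi_k$ acts at the large scale $M|t|\ge N\delta\gg1$ and spreads frequencies negligibly); so every surviving piece contains at least one factor $u_{\ge N/8}$, which will supply the gain $\mathcal{M}(\tfrac N8)$ through \eqref{cndef}. The remaining two factors are then each written as $u_{\ge N_0}+u_{<N_0}$.

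The second step is to insert the pointwise kernel bounds on the support of $\chi_k$. For $P_M$ with $|x|\le N^{-1}$ the kernel is $\lesssim|t|^{-1}$ on $|x-y|\sim M|t|$, which forces $|y|\sim M|t|$; for $P_M^+$ with $|x|\gtrsim N^{-1}$ it is $\lesssim(|x||y||t|)^{-1/2}$ on $|y|-|x|\sim M|t|$, forcing $|y|\sim|x|+M|t|$; in each case the complementary region of the kernel contributes a rapidly decaying tail that is summed trivially. The point of the factor $(|x||y||t|)^{-1/2}$ is that, after multiplying by the weight $|x|^{1/2}$, it degenerates to the one-dimensional dispersive weight $(|y||t|)^{-1/2}$; the surplus $|y|^{-1/2}$ is then paired against $|y|^{1/2}|\tilde P_MF(u(t))|$, which I would bound using Lemma~\ref{radial embedding} (with $\tilde P_M$ essentially commuting with $|y|^{1/2}$ at scales $\gtrsim M^{-1}$), \eqref{wait}, \eqref{masst}, \eqref{2infty}, and Bernstein (Lemma~\ref{Bernstein}) distributed over the three factors of $F(u)$. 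The small parameter $\eta$ is extracted in one of two ways: if one of the remaining two factors is $u_{\ge N_0}$, it contributes $\le\eta$ through \eqref{uetan}; if instead both remaining factors are $u_{<N_0}$, then on $|y|\gtrsim M|t|$ Lemma~\ref{radial embedding} gives $|u_{<N_0}(t,y)|\lesssim_u N_0^{1/2}(M|t|)^{-1/2}$, and squaring this produces a factor $N_0(M|t|)^{-1}\le N_0(N\delta)^{-1}$, which is $\le\eta$ once $N$ is large. In both cases the unused factor is controlled by \eqref{masst} and \eqref{2infty}.

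The last step is to sum over $M\ge N$ and $k\ge0$: decay in $M$ comes from the frequency localisation of $\tilde P_MF(u)$ (almost orthogonality, or an honest $M^{-\sigma}$ gain already present in the Sobolev/dispersive estimates above), and decay in $k$ comes from pairing the time factor $|t|^{-1/2}$ (respectively $|t|^{-1}$) against $L^{4/3}_t$ norms of the source on $J_k=[2^k\delta,2^{k+1}\delta]$ by H\"older, which leaves a net negative power of $|J_k|$ once combined with the spatial smallness described above. I expect the main obstacle to be exactly this final bookkeeping: the H\"older exponents in $t$, in the $x$ integration, and in the $y$ integration, together with the placement of the weight $|x|^{1/2}$, must be chosen so that $\mathcal{M}(\tfrac N8)$, a positive power of $\eta$, and summable powers of $2^{-k}$ and of $N/M$ all emerge at once — and uniformly in the behaviour of $N(t)$, which may decay to $0$ as in the double cascade scenario of Theorem~\ref{comp}. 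A related subtlety is that $|x|^{-1/2}$ is not square-integrable at infinity in $\R^2$, so the $x$ integration can only be closed by exploiting that $\tilde P_MF(u(t))$, and hence each summand, is effectively supported in a ball of radius $\sim C(\eta)\sup_{t\in J_k}N(t)^{-1}+M^{-1}$.
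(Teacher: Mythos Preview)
Your plan diverges from the paper's proof at the crucial step, and the divergence is exactly where you yourself flag the difficulty. The paper does \emph{not} use the pointwise kernel bounds of Lemma~\ref{lr:propag est L} or Proposition~\ref{P:P properties} here at all. Instead it passes through the adjoint of the weighted Strichartz inequality \eqref{wait S}: since $P_M$ (or $P_M^\pm$) is bounded on $L^2_x$, one has
\[
\Bigl\|\int_{J_k}\!\!\int_{\R^2}[P_Me^{-it\Delta}](x,y)\,\chi_k\,\tilde P_MF(u)\,dy\,dt\Bigr\|_{L^2_x}
\lesssim \bigl\||y|^{-1/2}\chi_k\,\tilde P_MF(u)\bigr\|_{L^{4/3}_tL^1_y},
\]
and on the support of $\chi_k$ the weight $|y|^{-1/2}$ is simply $\lesssim (M2^k\delta)^{-1/2}$. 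From there one H\"older's the nonlinearity, placing one factor $u_{\ge M/8}$ in $L^\infty_tL^2_x$ (giving $\mathcal M(N/8)$), one factor $u$ in $L^\infty_tL^2_x$, and the third in $L^{4/3}_tL^\infty_x$; the latter is split at frequency $M$ and handled by \eqref{wait} and Lemma~\ref{radial embedding} after trading a second copy of $|y|^{-1/2}\lesssim(M2^k\delta)^{-1/2}$ against $|y|^{1/2}$. The double sum then gives $(N^{-1}\delta^{-1/2}+N^{-1/2}\delta^{-1/4})\mathcal M(N/8)$, and the small constant comes from taking $N$ large relative to $\delta=\delta(u,\eta)$ --- there is no need for the secondary decomposition at $N_0$ or the two separate mechanisms you describe for producing $\eta$.

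The gap in your route is precisely the one you name in your last paragraph. Working from the pointwise kernel bound $(|x||y||t|)^{-1/2}$ leaves you with a factor $|x|^{-1/2}$ on the \emph{output} side, and $|x|^{-1}\notin L^1(\R^2)$ near infinity, so the $L^2_x$ norm cannot be closed without some external spatial localization. Your proposed cure --- that $\tilde P_MF(u(t))$ is essentially supported in a ball of radius $\sim C(\eta)N(t)^{-1}$ --- is not available uniformly: in the double high-to-low cascade scenario of Theorem~\ref{comp}, $N(t)\to0$ along sequences in both time directions, so that radius is unbounded on $J_k$ for large $k$. The adjoint weighted Strichartz estimate sidesteps this entirely by placing the weight on the $y$ side, where $\chi_k$ supplies the lower bound $|y|\gtrsim M2^k\delta$ directly and uniformly in $N(t)$.
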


\begin{proof}
By the adjoint of the weighted Strichartz estimate \eqref{wait S} and then H\"older's inequality and \eqref{cndef},
\begin{align*}
\Bigl\| \int_{2^k\delta}^{2^{k+1}\delta}&\!\!\int_{\R^2}[P_M e^{-it\Delta}](x,y)\,\chi_k(t,y)
    \, [\tilde P_M F(u(t))](y) \,dy\,dt\Bigr\|_{L^2_x}\\
&\lesssim (M 2^k\delta)^{-\frac 12} \|\chi_k \tilde P_M F(u)\|_{L_t^{4/3}L_x^1}\\
&\lesssim (M 2^k\delta)^{-\frac 12} \|u_{\geq \frac M8}\|_{L_t^\infty L_x^2} \|u\|_{L_t^\infty L_x^2}
     \bigl\{\|\chi_k u_{\geq M}\|_{L_t^{4/3}L_x^\infty}+\|\chi_k u_{< M}\|_{L_t^{4/3}L_x^\infty}\bigr\}\\
&\lesssim_u (M 2^k\delta)^{-\frac 12} \mathcal{M}( \tfrac N8)
        \bigl\{M^{-\frac 12}\||y|^{\frac 12} u_{\geq M}\|_{L_t^4L_x^\infty([2^k\delta, 2^{k+1}\delta]\times\R^2)}\\
&\qquad \qquad \qquad \qquad \qquad \qquad \qquad
    + M^{-\frac 12} ( 2^k\delta)^{\frac 14} \||y|^{\frac 12} u_{< M}\|_{L_{t,x}^\infty}\bigr\}.
\end{align*}
By \eqref{wait},
$$
\||y|^{\frac 12} u_{\geq M}\|_{L_t^4L_x^\infty([2^k\delta, 2^{k+1}\delta]\times\R^2)}
        \lesssim_u \langle 2^k\delta \rangle^{\frac 14},
$$
while by Lemma~\ref{radial embedding} (with $d=2$) and the conservation of mass,
$$
\bigl\||y|^{\frac 12} u_{< M}\bigr\|_{L_{t,x}^\infty}\lesssim_u M^{\frac 12}.
$$
Thus,
\begin{align*}
\Bigl\| \int_{2^k\delta}^{2^{k+1}\delta}\!\!\int_{\R^2}[P_M e^{-it\Delta}](x,y)\,\chi_k(t,y)
    &\, [\tilde P_M F(u(t))](y) \,dy\,dt\Bigr\|_{L^2_x}\\
&\lesssim_u (M 2^k\delta)^{-\frac 12} \mathcal{M}( \tfrac N8)\bigl(M^{-\frac 12}\langle 2^k\delta \rangle^{\frac 14}
        +( 2^k\delta)^{\frac 14}\bigr).
\end{align*}
Summing first over $k\geq 0$ and then over $M\geq N$, we obtain
\begin{align*}
\sum_{M\geq N}\sum_{k=0}^\infty \Bigl\| \int_{2^k\delta}^{2^{k+1}\delta}\!\!\int_{\R^2}
    [P_M e^{-it\Delta}]&(x,y)\, \chi_k(t,y) \, [\tilde P_M F(u(t))](y) \,dy\,dt\Bigr\|_{L^2_x}\\
&\lesssim_u \bigl(N^{-1}\delta^{-\frac 12} + N^{-\frac 12}\delta^{-\frac 14}\bigr)\mathcal{M}( \tfrac N8).
\end{align*}
Choosing $N$ sufficiently large depending on $\delta$ and $\eta$ (and hence only on $u$ and $\eta$), we obtain the desired bound.

The last claim follows from the $L_x^2$-boundedness of $P^\pm$ and the time reversal symmetry of the argument just presented.
\end{proof}

We turn now to the region of $(t,y)$ integration where $|y|\ll M|t|$.  First, we describe the bounds that we will use
for the kernels of the propagators.  For $|x|\leq N^{-1}$, $|y|\ll M|t|$, and $|t|\geq \delta \gg N^{-2}$,
\begin{equation}\label{PM propig}
|P_M e^{-it\Delta}(x,y)| \lesssim \frac{1}{(M^2|t|)^{50}} \frac{M^2}{\langle M(x-y)\rangle^{50}};
\end{equation}
this follows from Lemma~\ref{lr:propag est L} since under these constraints, $|y-x|\ll M|t|$.  For $|x|\geq N^{-1}$
and $y$ and $t$ as above,
\begin{equation}\label{PMpm propig}
|P_M^\pm e^{-it\Delta}(x,y)| \lesssim \frac{1}{(M^2|t|)^{50}}
        \frac{M^2}{\langle Mx\rangle^{1/2}\langle My\rangle^{1/2}\langle M|x|-M|y|\rangle^{50}};
\end{equation}
by Proposition~\ref{P:P properties}.  Note that we have used $|y|-|x|\ll M|t|$ and
$$
\langle M^2|t|+M|x|-M|y| \rangle^{-100} \lesssim (M^2|t|)^{-50}  \langle M|x|-M|y| \rangle^{-50}
$$
in order to simplify the bound.

From \eqref{PM propig} and \eqref{PMpm propig} we see that under the hypotheses set out above,
\begin{equation}\label{combined propig}
|P_M e^{-it\Delta}(x,y)| + |P_M^\pm e^{-it\Delta}(x,y)| \lesssim \frac{1}{(M^2|t|)^{50}} K_M(x,y),
\end{equation}
where
$$
K_M(x,y) := \frac{M^2}{\langle M(x-y)\rangle^{50}}
        + \frac{M^2}{\langle Mx\rangle^{1/2}\langle My\rangle^{1/2}\langle M|x|-M|y|\rangle^{50}}.
$$
Note that by Schur's test, this is the kernel of a bounded operator on $L^2_x(\R^2)$.

Let $\tilde\chi_k$ denote the characteristic function of the set
$$\{(t,y):\, 2^k\delta\leq |t|\leq 2^{k+1}\delta, \ |y|\ll M|t|\}.$$

\begin{lemma}[The tail]\label{ar tail}
Let $\eta>0$ be a small number and let $\delta$ be as in Lemma~\ref{local-lemma}.  Then
\begin{equation*}
\sum_{M\geq N}\sum_{k=0}^\infty \Bigl\| \int_{\R}\int_{\R^2}
    \frac{K_M(x,y)}{(M^2|t|)^{50}}\,\tilde\chi_k(t,y) \, [\tilde P_M F(u(t))](y) \,dy\,dt\Bigr\|_{L^2_x}
\lesssim_u \eta \mathcal{M}(\tfrac N8)
\end{equation*}
for all $N$ sufficiently large depending on $u$ and $\eta$.
\end{lemma}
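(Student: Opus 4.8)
The plan is to exploit the very fast decay of the weight $(M^2|t|)^{-50}$ together with the fact, already noted, that the integral operator with kernel $K_M(x,y)$ is bounded on $L^2_x(\R^2)$ uniformly in $M$ (by Schur's test). This reduces everything to a crude estimate on $\|\tilde P_M F(u(t))\|_{L^2_x}$ combined with \eqref{2infty}; in contrast to Lemma~\ref{ar main}, neither the spherical symmetry nor the weighted Strichartz inequality is needed for this ``tail'' piece.

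First I would use Minkowski's inequality to move the $L^2_x$ norm inside the time integral. Since the operator with kernel $K_M$ is bounded on $L^2_x$ with norm $\lesssim 1$, and since inserting the cutoff $\tilde\chi_k(t,\cdot)$ only decreases the $L^2_x$ norm of $\tilde P_M F(u(t))$, for fixed $M$ and $k$ one obtains
\[
\Bigl\|\int_{\R}\!\!\int_{\R^2}\tfrac{K_M(x,y)}{(M^2|t|)^{50}}\,\tilde\chi_k(t,y)\,[\tilde P_M F(u(t))](y)\,dy\,dt\Bigr\|_{L^2_x}
\lesssim (M^2 2^k\delta)^{-50}\int_{2^k\delta\le|t|\le 2^{k+1}\delta}\|\tilde P_M F(u(t))\|_{L^2_x}\,dt ,
\]
where I used $|t|\ge 2^k\delta$ on the support of $\tilde\chi_k$ (the two halves $\pm t$ of this region are handled identically under $t\mapsto -t$). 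Next I would estimate the inner norm by extracting a high frequency. Writing $u=u_{\ge M/8}+u_{<M/8}$ and expanding $F(u)=\mu|u|^2u$, every term of $\tilde P_M F(u)$ that does not contain a factor $u_{\ge M/8}$ vanishes by Fourier support considerations, exactly as in the proofs of Lemmas~\ref{local-lemma} and~\ref{ar main}; hence, by H\"older's inequality and the $L^2_x$-boundedness of $\tilde P_M$,
\[
\|\tilde P_M F(u(t))\|_{L^2_x}\lesssim \|u_{\ge M/8}(t)\|_{L^2_x}\,\|u(t)\|_{L^\infty_x}^2=\mathcal M(\tfrac M8)\,\|u(t)\|_{L^\infty_x}^2\le \mathcal M(\tfrac N8)\,\|u(t)\|_{L^\infty_x}^2 ,
\]
using $M\ge N$ and the monotonicity of $\mathcal M$ together with \eqref{cndef}. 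Integrating in $t$ and invoking \eqref{2infty} gives $\int_{2^k\delta}^{2^{k+1}\delta}\|u(t)\|_{L^\infty_x}^2\,dt=\|u\|_{L^2_tL^\infty_x([2^k\delta,2^{k+1}\delta]\times\R^2)}^2\lesssim_u \langle 2^{k+1}\delta\rangle$.

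Combining these bounds, each summand is $\lesssim_u \mathcal M(\tfrac N8)\,\langle 2^{k+1}\delta\rangle\,M^{-100}2^{-50k}\delta^{-50}$. Summing the geometric series in $k$ (using $\langle 2^{k+1}\delta\rangle\le 1+2^{k+1}\delta$ and $\delta\le 1$) and then over dyadic $M\ge N$ yields a total bound $\lesssim_u \delta^{-50}N^{-100}\,\mathcal M(\tfrac N8)$. Since $\delta=\delta(u,\eta)$ is the fixed quantity supplied by Lemma~\ref{local-lemma}, choosing $N$ large enough depending on $u$ and $\eta$ makes $\delta^{-50}N^{-100}$ as small as $\eta$, which is the assertion.

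There is no serious obstacle here: this is the easy part of the regularity argument, where the rapidly decaying kernel does all the work and $L^2\to L^2$ mapping suffices. The only points requiring a little care are the routine Fourier-support bookkeeping behind the vanishing claim (any fixed dyadic fraction of $M$ would serve equally well for the subsequent iteration of Proposition~\ref{ueta}), the order of quantifiers---$\delta$ is selected first, via Lemma~\ref{local-lemma}, and only then is $N$ taken large enough to absorb $\delta^{-50}$---and the fact that one must keep $\|u(t)\|_{L^\infty_x}$ inside an $L^2_t$ norm, since only \eqref{2infty} and not a pointwise-in-$t$ bound is available.
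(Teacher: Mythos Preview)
Your proof is correct and follows essentially the same route as the paper's: both use the $L^2$-boundedness of the $K_M$ operator to reduce to an $L^1_tL^2_x$ bound on $\tilde P_M F(u)$, extract the high-frequency factor $u_{\ge M/8}$ via Fourier support, place the remaining two copies of $u$ in $L^2_tL^\infty_x$ via \eqref{2infty}, and then sum the resulting bound $(M^22^k\delta)^{-50}\langle 2^k\delta\rangle\,\mathcal M(\tfrac N8)$ over $k$ and $M$ before taking $N$ large. The only differences are cosmetic---your explicit invocation of Minkowski versus the paper's $L^1_tL^2_x$ notation, and your slightly more detailed commentary on the order of quantifiers.
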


\begin{proof}
Using H\"older's inequality, \eqref{2infty}, and \eqref{cndef},
\begin{align*}
\Bigl\| \int_{\R}\int_{\R^2}
    & \frac{K_M(x,y)}{(M^2|t|)^{50}}\,\tilde\chi_k(t,y) \, [\tilde P_M F(u(t))](y) \,dy\,dt\Bigr\|_{L^2_x}\\
&\lesssim (M^2 2^k\delta )^{-50} \|\tilde\chi_k \tilde P_M F(u)\|_{L_t^1L^2_x}\\
&\lesssim (M^2 2^k\delta )^{-50} \|u_{\geq \frac M8}\|_{L_t^\infty L_x^2}
            \|u\|^2_{L_t^2L_x^\infty([2^k\delta, 2^{k+1}\delta]\times\R^2)}\\
&\lesssim_u (M^2 2^k\delta )^{-50} \mathcal{M}(\tfrac N8)\langle 2^k\delta\rangle.
\end{align*}
Summing over $k\geq 0$ and $M\geq N$, we get
\begin{align*}
\sum_{M\geq N}\sum_{k=0}^\infty \Bigl\| \int_{\R}\int_{\R^2}
    \frac{K_M(x,y)}{(M^2|t|)^{50}}\,\tilde\chi_k(t,y) \, [\tilde P_M F(u(t))](y) \,dy\,dt\Bigr\|_{L^2_x}
\lesssim_u (N^2\delta)^{-50}\mathcal{M}(\tfrac N8).
\end{align*}
The claim follows by choosing $N$ sufficiently large depending on $\delta$ and $\eta$ (and hence only on $u$ and $\eta$).
\end{proof}

We have now gathered enough information to complete the

\begin{proof}[Proof of Proposition~\ref{ueta}]
Naturally, we may bound $\|u_{\geq N}\|_{L^2}$ by separately bounding the $L^2$ norm on the ball $\{|x|\leq N^{-1}\}$
and on its complement.   On the ball, we use \eqref{no pm rep'}, while outside the ball we use \eqref{pm rep'}.
Invoking \eqref{weakly closed} and the triangle inequality, we reduce the proof to bounding certain integrals.
The integrals over short times were estimated in Lemma~\ref{local-lemma}.  For $|t|\geq \delta$,
we further partition the region of integration into two main pieces.  The first piece, where $|y|\gtrsim M|t|$,
was dealt with in Lemma~\ref{ar main}.  To estimate the remaining piece, $|y|\ll M|t|$, one combines
\eqref{combined propig} and Lemma~\ref{ar tail}.
\end{proof}

\section{The double high-to-low frequency cascade}\label{hilo cascade-sec}

In this section, we use the additional regularity provided by Theorem~\ref{glob-sob-thm} to preclude
double high-to-low frequency cascade solutions.

\begin{proposition}[Absence of double cascades]\label{eliminate-ii}  Let $d=2$.  There are no non-zero global
spherically symmetric solutions to \eqref{cubic nls} that are double high-to-low frequency cascades
in the sense of Theorem~\ref{comp}.
\end{proposition}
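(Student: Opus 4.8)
The plan is to obtain a contradiction from the conservation of energy. By Theorem~\ref{glob-sob-thm}, any double high-to-low frequency cascade solution $u$ as in Theorem~\ref{comp} satisfies the hypotheses of that theorem (it is global, spherically symmetric, almost periodic modulo $G_\rad$, and obeys $N(t)\lesssim 1$ by \eqref{freqbound}), hence lies in $L^\infty_t H^s_x(\R\times\R^2)$ for every $s\geq 0$. In particular $u(t)\in H^1_x(\R^2)$ uniformly in $t$, so the energy $E(u)$ in \eqref{energy} is finite and, by the $H^1_x$ well-posedness theory (cf. the discussion after Conjecture~\ref{conj}), conserved. I would first show that $E(u)=0$, and then use this together with the mass hypothesis to force $u\equiv 0$, contradicting $u\neq 0$.

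To prove $E(u)=0$, use the cascade hypothesis \eqref{cascade} to pick a sequence $t_n\to+\infty$ with $N(t_n)\to 0$. Since $u$ is almost periodic modulo $G_\rad$, Definition~\ref{apdef} gives, for every $\eta>0$ and all $n$, the bound $\int_{|\xi|\geq C(\eta)N(t_n)}|\hat u(t_n,\xi)|^2\,d\xi\leq\eta$. Splitting $\|\nabla u(t_n)\|_{L^2_x}^2=\int|\xi|^2|\hat u(t_n,\xi)|^2\,d\xi$ at the frequency $C(\eta)N(t_n)$: on the low-frequency piece we estimate $|\xi|^2\leq (C(\eta)N(t_n))^2$ and use mass conservation to get a quantity that $\to 0$ as $n\to\infty$ for each fixed $\eta$ (because $N(t_n)\to 0$); on the high-frequency piece we apply Cauchy--Schwarz together with the uniform bound $\int|\xi|^4|\hat u(t_n,\xi)|^2\,d\xi\lesssim_u 1$ supplied by Theorem~\ref{glob-sob-thm} (the $H^2_x$ norm), yielding a contribution $O_u(\eta^{1/2})$. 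Letting $n\to\infty$ and then $\eta\to 0$ shows $\|\nabla u(t_n)\|_{L^2_x}\to 0$. By the sharp Gagliardo--Nirenberg inequality (Theorem~\ref{sGN} with $d=2$), $\int_{\R^2}|u(t_n,x)|^4\,dx\lesssim (M(u)/M(Q))\,\|\nabla u(t_n)\|_{L^2_x}^2\to 0$ as well, so $E(u(t_n))\to 0$, and by conservation of energy $E(u)=0$.

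It remains to deduce $u\equiv 0$. In the defocusing case $\mu=+1$ this is immediate: $E(u)=\tfrac12\|\nabla u(t)\|_{L^2_x}^2+\tfrac14\|u(t)\|_{L^4_x}^4$ is a sum of nonnegative terms, so $\nabla u(t)\equiv 0$, and a function in $L^2_x(\R^2)$ with vanishing gradient must vanish. In the focusing case $\mu=-1$, the hypothesis $M(u)<M(Q)$ and Theorem~\ref{sGN} give $E(u)\geq \tfrac12\bigl(1-M(u)/M(Q)\bigr)\|\nabla u(t)\|_{L^2_x}^2$ with strictly positive constant, so again $E(u)=0$ forces $\nabla u(t)\equiv 0$ and hence $u\equiv 0$. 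In either case this contradicts the assumption that $u$ is non-zero, which proves the proposition.

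The only genuinely substantial input is the additional regularity, i.e. the uniform $H^s_x$ control in the first step, but that is precisely the content of Theorem~\ref{glob-sob-thm}, which we may assume; everything afterwards is a soft argument relying only on almost periodicity, the Gagliardo--Nirenberg inequality, and the conservation of mass and energy. The one point requiring a little care is that \eqref{cascade} only provides a \emph{sequence} $t_n$ along which $N(t_n)\to 0$ rather than a limit, which is harmless here because $E(u)$ is constant in time and therefore equals $\lim_n E(u(t_n))$.
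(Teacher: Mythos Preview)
Your proof is correct and follows essentially the same approach as the paper: both use Theorem~\ref{glob-sob-thm} to obtain uniform $H^s_x$ regularity for some $s>1$, split $\|\nabla u(t_n)\|_{L^2_x}^2$ at the frequency $C(\eta)N(t_n)$, and combine almost periodicity on the high-frequency piece (interpolated against the $H^s_x$ bound) with $N(t_n)\to 0$ on the low-frequency piece to show $\|\nabla u(t_n)\|_{L^2_x}\to 0$, then invoke the sharp Gagliardo--Nirenberg inequality and conservation of energy. The only cosmetic difference is the order of the logic: the paper first records $\|\nabla u(t)\|_{L^2_x}^2\sim_u E(u)>0$ and then obtains the contradiction, whereas you first deduce $E(u)=0$ and then conclude $u\equiv 0$.
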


\begin{proof}
Suppose to the contrary that there is such a solution $u$.  By Theorem~\ref{glob-sob-thm}, $u$ lies in $C_t^0H^1_x(\R\times\R^2)$.
Hence the energy
\begin{equation*}
E(u) = E(u(t)) := \int_{\R^2} \tfrac{1}{2} |\nabla u(t,x)|^2 + \mu\tfrac14|u(t,x)|^{4}\, dx
\end{equation*}
is finite and conserved (see e.g. \cite{caz}).  As we have $M(u) < M(Q)$ in the focusing case,
the sharp Gagliardo-Nirenberg inequality (reproduced here as Theorem~\ref{sGN}) gives
\begin{equation}\label{ume}
\| \nabla u(t) \|_{L^2_x(\R^2)}^2 \sim_u E(u) \sim_u 1
\end{equation}
for all $t \in \R$.  We will now reach a contradiction by proving that $\|\nabla u(t)\|_2\to 0$ along
any sequence where $N(t)\to 0$.  The existence of two such time sequences is guaranteed by the fact that $u$
is a double high-to-low frequency cascade.

Let $\eta > 0$ be arbitrary.  By Definition~\ref{apdef}, we can find $C = C(\eta,u) > 0$ such that
$$ \int_{|\xi| \geq C N(t)} |\hat u(t,\xi)|^2\, d\xi \leq \eta^2$$
for all $t$.  Meanwhile, by Theorem~\ref{glob-sob-thm}, $u\in C_t^0H_x^s(\R\times\R^2)$ for some $s>1$.  Thus,
$$ \int_{|\xi| \geq C N(t)} |\xi|^{2s} |\hat u(t,\xi)|^2\, d\xi \lesssim_{u} 1$$
for all $t$ and some $s>1$.  By H\"older's inequality (or interpolation), we thus obtain
$$ \int_{|\xi| \geq C N(t)} |\xi|^2 |\hat u(t,\xi)|^2\, d\xi \lesssim_{u} \eta^{2(s-1)/s}.$$
On the other hand, from mass conservation and Plancherel's theorem we have
$$ \int_{|\xi| \leq C N(t)} |\xi|^2 |\hat u(t,\xi)|^2\, d\xi \lesssim_{u} C^2 N(t)^2.$$
Summing these last two bounds and using Plancherel's theorem again, we obtain
$$ \| \nabla u(t) \|_{L^2_x(\R^2)} \lesssim_{u} \eta^{(s-1)/s} + C N(t)$$
for all $t$.  As $\eta>0$ is arbitrary and there exists a sequence of times $t_n\to\infty$ such that $N(t_n)\to 0$
($u$ is a double high-to-low frequency cascade), we conclude $\| \nabla u(t_n) \|_2\to 0$.  This contradicts \eqref{ume}.
\end{proof}

\begin{remark}
The role of the dimensional and spherical symmetry hypotheses in Proposition~\ref{eliminate-ii} is to guarantee
regularity, more precisely that $u\in C_t^0H_x^s$ for some $s>1$.  With a few modifications, the argument presented shows
that the hypothesis of spherical symmetry can be replaced by this regularity assumption, as we now explain.
For such regular solutions $u$, we may define the total momentum $\int_{\R^2} \Im( \overline{u} \nabla u )$, which is conserved.
By a Galilean transformation, we can set this momentum equal to zero; thus
$\int_{\R^2} \xi |\hat u(t,\xi)|^2\ d\xi = 0$.  From this, mass
conservation, and the uniform $H^s_x$ bound for some $s>1$, one can show that $\xi(t)
\to 0$ whenever $N(t) \to 0$.  On the other hand, a modification of the above argument gives
$$ 1 \sim_u \|\nabla u(t)\|_2 \lesssim \eta^{(s-1)/s} + C\bigl(N(t) + |\xi(t)|\bigr),$$
which is absurd.

Moreover, if regularity is assumed, the argument presented also shows that even a single-sided cascade is impossible.
By a single-sided cascade we mean a solution with $N(t)$ bounded on a semi-infinite interval, say $[T,\infty)$,
with $\liminf_{t\to\infty}N(t)=0$.  In particular, this argument provides an alternate way of deducing
Corollary~\ref{NO-ss} from Theorem~\ref{ss-sob-thm}.
\end{remark}

\section{Death of a soliton}\label{soliton-sec}
In this section, we use the additional regularity proved in Theorem~\ref{glob-sob-thm} to rule out the third and final enemy,
the soliton-like solution.  Our approach here is similar to that in \cite{merlekenig}.

The key ingredient in disproving the existence of a soliton is a monotonicity formula related to the
virial identity.  Like all formulae of Morawetz type, it expresses the fact that as time passes, the wave
moves from being incoming to being outgoing.  In these crude terms, one expects that if $\vec a(x)$ is a vector field
with $\frac{x}{|x|}\cdot\vec a(x)$ an increasing function of $|x|$, then
\begin{align}\label{general Morawetz}
M_a(t) := 2 \Im \int_{\R^2} \bar u(t,x) \vec a(x)\cdot \nabla u(x,t) \,dx
\end{align}
should be an increasing function of time.  Note that $2 \Im(\bar u \nabla u)$ is the momentum density (or mass current).
We choose $\vec a(x) := x \psi(|x|/R)$, where $\psi$ is a smooth function obeying
$$
\psi(r) = \begin{cases}
1 &: r\leq 1 \\
0 &: r\geq 2
\end{cases}
$$
and $R$ denotes a radius to be chosen momentarily.  For solutions $u$ to \eqref{cubic nls} belonging
to $C^0_t H^1_x$, $M_a(t)$ is a well-defined function.  Indeed,
\begin{align}\label{Morawetz bound}
|M_a(t)| \lesssim R \|u(t)\|_2 \|\nabla u(t)\|_2 \lesssim_u R.
\end{align}
Although $\vec a$ vanishes for large radii, a soliton-like solution (in the sense of Theorem~\ref{comp})
is space localized and so we may still expect monotonicity provided we choose $R\gg 1$.
Substituting our choice of $\vec a$ into \eqref{general Morawetz} leads to the following

\begin{lemma} \label{computation}
\begin{align}
\partial_t M_a(t)&= 8 E(u(t)) \notag \\
    &\quad -\int_{\R^2} \Bigl[ \tfrac{3}{R|x|}\psi'\bigl(\tfrac{|x|}R\bigr) + \tfrac{5}{R^2}\psi''\bigl(\tfrac{|x|}R\bigr)
        + \tfrac{|x|}{R^3}\psi'''\bigl(\tfrac{|x|}R\bigr)\Bigr] |u(t,x)|^2 \,dx \label{M2} \\
    &\quad + 4 \int_{\R^2} \Bigl[ \psi\bigl(\tfrac{|x|}R\bigr) - 1  + \tfrac{|x|}{R}\psi'\bigl(\tfrac{|x|}R\bigr)
        \Bigr] |\nabla u(t,x)|^2 \,dx \label{M3} \\
    &\quad +\mu \int_{\R^2} \Bigl[ 2\psi\bigl(\tfrac{|x|}R\bigr) - 2  + \tfrac{|x|}{R}\psi'\bigl(\tfrac{|x|}R\bigr)
        \Bigr] |u(t,x)|^4 \,dx, \label{M4}
\end{align}
where $E(u)$ is the energy of $u$ as defined in \eqref{energy}.
\end{lemma}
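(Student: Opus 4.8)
The plan is to carry out the standard virial/Morawetz computation for the vector field $\vec a(x)=x\,\psi(|x|/R)$ and then exploit spherical symmetry at the very end. First I would reduce to a smooth, rapidly decaying solution: each term in the asserted formula depends continuously on $u(t)$ in $H^1_x$ once the only borderline term $\int(\nabla\cdot\vec a)\,\Delta(|u|^2)$ is rewritten via two integrations by parts against the bounded, compactly supported weight as $\int\Delta(\nabla\cdot\vec a)\,|u|^2$; hence, approximating $u(t_0)$ in $L^2_x$ by Schwartz data and invoking persistence of regularity together with the continuous dependence in Theorem~\ref{local}, it suffices to establish the identity in the smooth category and then pass to the limit. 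For a smooth solution, writing $M_a(t)=\int a_j p_j\,dx$ with momentum density $p_j=2\Im(\bar u\,\partial_j u)$ and using the conservation law $\partial_t p_j+\partial_k T_{jk}=0$ with stress tensor $T_{jk}=4\Re(\partial_j\bar u\,\partial_k u)-\delta_{jk}\Delta(|u|^2)+\delta_{jk}\mu|u|^{4}$ (the last constant being $\tfrac{2(p-1)}{p+1}\mu$ specialised to the cubic exponent), an integration by parts gives
\[
\partial_t M_a=4\int(\partial_k a_j)\Re(\partial_j\bar u\,\partial_k u)\,dx-\int\Delta(\nabla\cdot\vec a)\,|u|^2\,dx+\mu\int(\nabla\cdot\vec a)\,|u|^4\,dx .
\]
(Equivalently, one differentiates $M_a=-i\int a_j(\bar u\,\partial_j u-u\,\partial_j\bar u)$ directly, substitutes $u_t=i\Delta u-i\mu|u|^2u$, and uses $\bar u\,\partial_j(|u|^2u)+u\,\partial_j(|u|^2\bar u)=\tfrac32\partial_j(|u|^4)$ to collapse the nonlinear part.)

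Next I would insert $a_j=x_j\psi_R$ with $\psi_R(r):=\psi(r/R)$, recording $\partial_k a_j=\delta_{jk}\psi_R+\tfrac{x_jx_k}{r}\psi_R'$ and $\nabla\cdot\vec a=2\psi_R+r\psi_R'$ in dimension two. The kinetic term then becomes $4\int\psi_R|\nabla u|^2+4\int\tfrac{\psi_R'}{r}|x\cdot\nabla u|^2$. Here spherical symmetry enters decisively: for radial $u$ one has $|x\cdot\nabla u|=|x|\,|\nabla u|$, so $\tfrac{\psi_R'}{r}|x\cdot\nabla u|^2=r\psi_R'\,|\nabla u|^2=\tfrac{|x|}{R}\psi'(\tfrac{|x|}{R})|\nabla u|^2$, turning the kinetic term into $4\int\big(\psi_R+\tfrac{|x|}{R}\psi'(\tfrac{|x|}R)\big)|\nabla u|^2$. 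Similarly, the radial Laplacian formula $\Delta g=g''+r^{-1}g'$ applied to $g=2\psi_R+r\psi_R'$ makes the $|u|^2$ term explicit.

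Finally I would split off from each integral the contribution of $\psi_R\equiv 1$, namely $4\int|\nabla u|^2+2\mu\int|u|^4$, which is exactly $8E(u(t))$ by \eqref{energy} with $d=2$; the leftover $|\nabla u|^2$ pieces assemble into \eqref{M3}, the leftover $|u|^4$ pieces into \eqref{M4}, and the computation $-\Delta(2\psi_R+r\psi_R')=-\tfrac{3}{R|x|}\psi'(\tfrac{|x|}R)-\tfrac{5}{R^2}\psi''(\tfrac{|x|}R)-\tfrac{|x|}{R^3}\psi'''(\tfrac{|x|}R)$ yields \eqref{M2}. The main obstacle is not conceptual but bookkeeping: one must track the numerical coefficients precisely — in particular the $3,5,1$ in \eqref{M2} produced by differentiating $2\psi_R+r\psi_R'$ twice, and the matching of the Morawetz constants for $d=2$ cubic NLS with the normalisation in \eqref{energy} so that the leading term is exactly $8E$. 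The only genuinely structural point is the radial reduction above, without which the kinetic contribution would not reduce to a scalar multiple of $|\nabla u|^2$ and the stated identity would fail.
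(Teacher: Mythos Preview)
Your computation is correct and is exactly the standard virial calculation the paper has in mind; the paper itself omits the proof entirely, stating only that the lemma follows by substituting $\vec a(x)=x\psi(|x|/R)$ into \eqref{general Morawetz}. Your outline supplies precisely that omitted bookkeeping, including the radial reduction $|x\cdot\nabla u|^2=|x|^2|\nabla u|^2$ (which is indeed essential for the term \eqref{M3} to take the stated scalar form) and the verification of the coefficients $3,5,1$ in \eqref{M2}; the approximation/regularity remarks you add are a sensible justification the paper leaves implicit.
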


\begin{proposition}[Absence of solitons]\label{eliminate-i}  Let $d=2$.  There are no non-zero global spherically
symmetric solutions to \eqref{cubic nls} that are soliton-like in the sense of Theorem~\ref{comp}.
\end{proposition}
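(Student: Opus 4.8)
The plan is to derive a contradiction from the truncated virial identity in Lemma~\ref{computation}. Suppose $u$ is a non-zero global spherically symmetric solution to \eqref{cubic nls} that is soliton-like in the sense of Theorem~\ref{comp}; thus $N(t)\equiv1$, the lifespan is $\R$, and $u$ is almost periodic modulo $G_\rad$. Since $N(t)\equiv1$ is bounded, Theorem~\ref{glob-sob-thm} applies and gives $u\in L^\infty_t H^s_x(\R\times\R^2)$ for every $s\geq0$; in particular $u\in C^0_t H^1_x$, so the energy $E(u)$ in \eqref{energy} is finite and conserved. The first point to record is that $E(u)$ is a strictly positive constant: it is bounded above by $\tfrac12\sup_t\|\nabla u(t)\|_{L^2_x}^2\lesssim_u1$, and it is positive because in the defocusing case $E(u)\geq\tfrac12\|\nabla u\|_{L^2_x}^2$, which cannot vanish for a non-zero $L^2_x(\R^2)$ function, while in the focusing case the hypothesis $M(u)<M(Q)$ together with the sharp Gagliardo--Nirenberg inequality (Theorem~\ref{sGN}) gives $E(u)\geq\tfrac12\bigl(1-\tfrac{M(u)}{M(Q)}\bigr)\|\nabla u\|_{L^2_x}^2>0$.

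Next I would establish uniform spatial localisation of both $u$ and $\nabla u$. Because $N(t)\equiv1$ and $u$ is almost periodic modulo $G_\rad$ (so the spatial and frequency centers vanish), Definition~\ref{apdef} together with the Ascoli--Arzela characterisation of precompactness (the third remark following Definition~\ref{apdef}) shows that $\{u(t):t\in\R\}$ is precompact in $L^2_x(\R^2)$. Combining this with the uniform bound $\sup_t\|u(t)\|_{H^2_x}\lesssim_u1$ from Theorem~\ref{glob-sob-thm} and the interpolation inequality $\|f\|_{H^1_x}^2\lesssim\|f\|_{L^2_x}\|f\|_{H^2_x}$, the orbit $\{u(t)\}$ is in fact precompact in $H^1_x(\R^2)$. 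Since every element of $H^1_x(\R^2)$ has vanishing tails, this yields: for every $\eps>0$ there is $R=R(\eps,u)$ with
\[
\sup_{t\in\R}\int_{|x|\geq R}\bigl(|u(t,x)|^2+|\nabla u(t,x)|^2\bigr)\,dx\leq\eps,
\]
and, using the Sobolev embedding $H^s_x(\R^2)\hookrightarrow L^\infty_x(\R^2)$ for $s>1$ to get $\sup_t\|u(t)\|_{L^\infty_x}\lesssim_u1$, also $\sup_t\int_{|x|\geq R}|u(t,x)|^4\,dx\lesssim_u\eps$.

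Now I would insert this radius $R=R(\eps,u)$ into Lemma~\ref{computation}. The three error terms \eqref{M2}, \eqref{M3}, \eqref{M4} are all supported in $\{|x|\geq R\}$: the brackets in \eqref{M3} and \eqref{M4} vanish where $\psi(|x|/R)=1$, and $\psi',\psi'',\psi'''$ are supported where $|x|\sim R$. On that region the bracket in \eqref{M2} is $O(R^{-2})$ while those in \eqref{M3} and \eqref{M4} are $O(1)$, so by the previous display the term \eqref{M2} contributes $O_u(R^{-2})$ and the terms \eqref{M3}, \eqref{M4} contribute $O_u(\eps)$. Hence
\[
\partial_tM_a(t)\geq8E(u)-C_u\bigl(R^{-2}+\eps\bigr)\qquad\text{for all }t\in\R.
\]
Since $R(\eps,u)\to\infty$ as $\eps\to0$, we may fix $\eps=\eps(u)$ small enough that $C_u(R^{-2}+\eps)\leq4E(u)$, whence $\partial_tM_a(t)\geq4E(u)>0$ uniformly in $t$. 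Integrating over $[-T,T]$ gives $M_a(T)-M_a(-T)\geq8E(u)\,T$ for every $T>0$, forcing $\sup_t|M_a(t)|=\infty$. This contradicts the a priori bound $|M_a(t)|\lesssim_u R$ from \eqref{Morawetz bound}, with $R$ now a fixed constant, and completes the proof.

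The routine ingredients — the exact support and size of the error terms in Lemma~\ref{computation}, the Sobolev embedding, the bound \eqref{Morawetz bound} — are standard. The main obstacle, and the place where the work of the preceding sections is used, is the passage from $L^2_x$ precompactness of the orbit to $H^1_x$ precompactness, equivalently the uniform-in-$t$ smallness of $\int_{|x|\geq R}|\nabla u(t)|^2$: this is exactly what the additional regularity of Theorem~\ref{glob-sob-thm} buys us, and, together with the positivity of $E(u)$ (which in the focusing case is where $M(u)<M(Q)$ enters, via Theorem~\ref{sGN}), it is what makes virial monotonicity incompatible with the boundedness of $M_a$.
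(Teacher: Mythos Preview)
Your proof is correct and follows essentially the same route as the paper's: virial identity from Lemma~\ref{computation}, positivity of $E(u)$ via Theorem~\ref{sGN}, smallness of the error terms \eqref{M2}--\eqref{M4} using the additional regularity from Theorem~\ref{glob-sob-thm}, and the resulting contradiction with \eqref{Morawetz bound}. The only difference is in packaging the tail estimate for $\nabla u$: you phrase it as $H^1_x$-precompactness of the orbit (obtained by interpolating $L^2_x$-precompactness against the uniform $H^2_x$ bound), whereas the paper directly interpolates $\|\chi u\|_{L^2_x}$ against $\|u\|_{H^s_x}$ for some $s>1$ to bound $\|\chi\nabla u\|_{L^2_x}$; these are the same idea in slightly different clothing.
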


\begin{proof}
Assume to the contrary that there is such a solution $u$.  Then, by Theorem~\ref{glob-sob-thm}, $u\in C_t^0H^s_x$ for any $s>1$.
In particular,
\begin{align}\label{M bound}
|M_a(t)|\lesssim_u R.
\end{align}

Recall that in the focusing case, $M(u)<M(Q)$.  As a consequence, the sharp Gagliardo--Nirenberg inequality
(reproduced here as Theorem~\ref{sGN}) implies that the energy is a positive quantity
in the focusing case as well as in the defocusing case.  Indeed,
\begin{align*}
E(u)\gtrsim_u \int_{\R^2} |\nabla u(t,x)|^2 \,dx >0.
\end{align*}

We will show that \eqref{M2} through \eqref{M4} constitute only a small fraction of $E(u)$.
Combining this fact with Lemma~\ref{computation}, we conclude $\partial_t M_a(t)\gtrsim E(u)>0$,
which contradicts \eqref{M bound}.

We first turn our attention to \eqref{M2}.  This is trivially bounded by
\begin{align}
|\eqref{M2}|\lesssim_u R^{-2}.\label{M2'}
\end{align}

We now study \eqref{M3} and \eqref{M4}.  Let $\eta>0$ be a small number to be chosen later.
By Definition~\ref{apdef} and the fact that $N(t)=1$ for all $t\in \R$, there exists $R=R(\eta,u)$
sufficiently large such that
\begin{align}\label{sol small mass}
\int_{|x|\geq \frac R4} |u(t,x)|^2\, dx\leq \eta
\end{align}
for all $t\in \R$.  Let $\chi$ denote a smooth cutoff to the region $|x|\geq \tfrac R2$; in particular,
$\nabla \chi$ is bounded by $R^{-1}$ and supported where $|x|\sim R$.  As $u\in C_t^0H^s_x$ for some $s>1$,
using interpolation and \eqref{sol small mass}, we estimate
\begin{align}
|\eqref{M3}|
&\lesssim \|\chi\nabla u(t)\|_2^2
\lesssim \|\nabla (\chi u(t))\|_2^2 + \|u(t) \nabla \chi \|_2^2
\lesssim \|\chi u(t)\|_2^{\frac{2(s-1)}s}\|u(t)\|_{H^s_x}^{\frac2s} + \eta  \notag\\
&\lesssim_u \eta^{\frac{s-1}s} + \eta. \label{M3'}
\end{align}

Finally, we are left to consider \eqref{M4}.  Using the same $\chi$ as above together with the Gagliardo--Nirenberg
inequality and \eqref{sol small mass},
\begin{align}
|\eqref{M4}|
&\lesssim\|\chi u(t)\|_4^4
\lesssim \|\chi u(t)\|_2^2 \|\nabla (\chi u(t))\|_2^2
\lesssim_u \eta.\label{M4'}
\end{align}

Combining \eqref{M2'}, \eqref{M3'}, and \eqref{M4'} and choosing $\eta$ sufficiently small depending on $u$
and $R$ sufficiently large depending on $u$ and $\eta$, we obtain
$$
|\eqref{M2}| + |\eqref{M3}| + |\eqref{M4}| \leq \tfrac 1{100} E(u).
$$
This completes the proof of the proposition for the reasons explained in the third paragraph.
\end{proof}

\begin{remark}
The argument just presented can also be used to preclude both the (spherically symmetric) self-similar and
double cascade solutions, once it is known that they are sufficiently regular.  The idea is
to study the behaviour of $M_a(t)$ on intervals $[1,T]$ for $T$ very large; however, in this case we must choose
$R\gg_u T^{1/2}$ rather than $R\gg_u 1$, which was the case above.  A contradiction arises from
$\partial_t M_a(t)>0$ but $|M_a(t)|\lesssim_u T^{1/2}$ on $[1,T]$.

We did not delay the ruling out of these other two types of solutions until now because the arguments
presented earlier are much simpler (and also more robust) than that just sketched.
\end{remark}

\section{A concentration result}\label{cor-sec}
In this section we prove Corollary~\ref{cor:conc}.  The proof is a consequence of Theorem~\ref{main}
and relies on the linear profile decomposition Theorem~\ref{lp}.

It suffices to prove \eqref{conc finite} since \eqref{conc infinite} can be deduced from this using the
pseudoconformal transformation.  To this end, let $u$ be a spherically symmetric solution to \eqref{cubic nls}
that blows up in finite time $0<T^*<\infty$.  Fix a sequence of times $t_n\nearrow T^*$.

After passing to a subsequence if necessary, we have the decomposition
\begin{equation*}
u_n(0):=u(t_n) = \sum_{j=1}^J g^j_n e^{it_n^j\Delta}\phi^j + w^J_n
\end{equation*}
as in Lemma~\ref{lp}.  Let $\psi_n^j$ be the maximal-lifespan solution to \eqref{cubic nls} with initial data
$\psi_n^j(0)=g^j_n e^{it_n^j\Delta}\phi^j$.

From \eqref{un} we obtain the mass decoupling
\begin{equation}\label{massbound}
 \sum_{j=1}^\infty M( \phi^j ) \leq M( u ).
\end{equation}
Thus (reordering the indices $j$ if necessary) we may assume that there exist $\eps>0$ and $J_0\geq 1$ such that
\begin{align}\label{threshold mass}
M(\psi_n^j)=M(\phi^j)\leq M(Q)-\eps \quad \text{for all } j\geq J_0
\end{align}
and $M(\psi_n^j)=M(\phi^j)\geq M(Q)$ for $1\leq j<J_0$.
Combining the small data theory (see Theorem~\ref{local}) with Theorem~\ref{main} and using \eqref{massbound}, we conclude
\begin{align}\label{threshold S}
\sum_{j\geq J_0} \|\psi_n^j\|_{L_{t,x}^4(\R\times\R^2)}^4\lesssim \sum_{j\geq J_0} M(\phi^j)\lesssim_u 1.
\end{align}

Our next job is to find a profile responsible for the finite-time blowup of $u$.  By \eqref{threshold S}, we should
look amongst $\psi_n^j$ with $1\leq j<J_0$.

\begin{lemma}[One bad profile] \label{one bad guy}
There exists $1\leq j_0<J_0$ so that
\begin{equation}\label{one bad E}
M(\phi^{j_0}) \geq M(Q) \quad \text{and} \quad \limsup_{n\to\infty} \|\psi_n^{j_0}\|_{L_{t,x}^4([0, T^*-t_n)\times\R^2)}=\infty.
\end{equation}
Moreover, for each $\eta>0$ there exists $1\leq j_1<J_0$ such that for infinitely many $n$,
the $L_{t,x}^4$ norm of $\psi_n^{j_1}$ reaches $\eta$ first, that is, there exists
$0< \tau_n<T^*-t_n$ such that
\begin{align}\label{moreover}
\bigl\|\psi_n^j\bigr\|_{L_{t,x}^4([0,\tau_n]\times\R^2)}
\leq \bigl\|\psi_n^{j_1}\bigr\|_{L_{t,x}^4([0,\tau_n]\times\R^2)}=\eta
\end{align}
for all $1\leq j<J_0$.
\end{lemma}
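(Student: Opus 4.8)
The plan for the first assertion is a contradiction argument powered by the stability theory (Lemma~\ref{stab}); the ``moreover'' clause is then soft analysis — continuity of the spacetime norm in its endpoint together with the pigeonhole principle over the finitely many indices $1\le j<J_0$.

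To prove \eqref{one bad E}, suppose to the contrary that $\limsup_{n\to\infty}\|\psi_n^j\|_{L^4_{t,x}([0,T^*-t_n)\times\R^2)}<\infty$ for every $1\le j<J_0$ (this includes the degenerate possibility $J_0=1$). Since there are only finitely many such $j$, this gives, for all sufficiently large $n$, a bound on $\|\psi_n^j\|_{L^4_{t,x}([0,T^*-t_n)\times\R^2)}$ depending only on $u$ and uniform in $j<J_0$; combined with \eqref{threshold S} for $j\ge J_0$ and the fact that the parameters of distinct profiles diverge by \eqref{crazy} (so that the cross terms in $\|\sum_{j\le J}\psi_n^j\|_{L^4_{t,x}}^4$ vanish as $n\to\infty$ for each fixed $J$), one obtains $\limsup_{n}\|\sum_{j=1}^{J}\psi_n^j\|_{L^4_{t,x}([0,T^*-t_n)\times\R^2)}\lesssim_u 1$ uniformly in $J$. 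I would then form the approximate solution $\tilde u_n^J:=\sum_{j=1}^{J}\psi_n^j+e^{it\Delta}w_n^J$ on $[0,T^*-t_n)$: it has the correct initial data $u(t_n)$ by \eqref{undecomp}, its $L^4_{t,x}$-norm is $\lesssim_u 1$ (the tail $e^{it\Delta}w_n^J$ being negligible by \eqref{sln}), and the standard computation — $F$ of a sum equals the sum of the $F$'s up to cross terms, which decouple by \eqref{crazy} — shows $(i\partial_t+\Delta)\tilde u_n^J-F(\tilde u_n^J)\to 0$ in $L^{4/3}_{t,x}$ as first $J\to\infty$ and then $n\to\infty$ (cf.\ \cite{compact,keraani,mv}). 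Lemma~\ref{stab} then yields $\|u(t_n+\cdot)\|_{L^4_{t,x}([0,T^*-t_n)\times\R^2)}\lesssim_u 1$, i.e.\ $\|u\|_{L^4_{t,x}([t_n,T^*)\times\R^2)}\lesssim_u 1$, for all large $n$. But $u$ blows up forward at $T^*$, so by Definition~\ref{D:blowup} — and since $\|u\|_{L^4_{t,x}}$ is finite on compact subintervals of the lifespan — $\|u\|_{L^4_{t,x}([t_n,T^*)\times\R^2)}=\infty$, a contradiction. Hence there is $1\le j_0<J_0$ with $\limsup_n\|\psi_n^{j_0}\|_{L^4_{t,x}([0,T^*-t_n))}=\infty$, and $M(\phi^{j_0})\ge M(Q)$ by the ordering \eqref{threshold mass}; in particular $J_0\ge 2$.

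For the ``moreover'' clause, fix $\eta>0$ and restrict attention to the infinitely many $n$ for which $\|\psi_n^{j_0}\|_{L^4_{t,x}([0,T^*-t_n))}\ge 2\eta$. For each such $n$ put $S_n:=\{\,1\le j<J_0:\ \|\psi_n^j\|_{L^4_{t,x}([0,T^*-t_n)\times\R^2)}\ge\eta\,\}$, a non-empty set containing $j_0$. For $j\in S_n$ the map $\tau\mapsto\|\psi_n^j\|_{L^4_{t,x}([0,\tau]\times\R^2)}$ is continuous and non-decreasing, vanishes at $\tau=0$, and reaches (indeed exceeds, for $j=j_0$) the value $\eta$ before $\tau=T^*-t_n$; hence it first attains $\eta$ at some $\tau_n^j\in(0,T^*-t_n)$. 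Let $j_1(n)\in S_n$ minimise $\tau_n^j$ and set $\tau_n:=\tau_n^{j_1(n)}$. Then $\|\psi_n^{j_1(n)}\|_{L^4_{t,x}([0,\tau_n]\times\R^2)}=\eta$, while $\|\psi_n^{j}\|_{L^4_{t,x}([0,\tau_n]\times\R^2)}\le\eta$ for every $1\le j<J_0$ — by minimality of $\tau_n$ if $j\in S_n$, and trivially (the total norm being $<\eta$) if $j\notin S_n$. Since $j_1(n)$ ranges over the finite set $\{1,\dots,J_0-1\}$, some index $j_1$ is selected for infinitely many of these $n$, and that $j_1$ satisfies \eqref{moreover}.

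The main obstacle is the stability step: verifying that $\tilde u_n^J$ really is an approximate solution with a uniform $L^4$-bound and with error tending to zero. This is the by-now-standard profile-decomposition argument, for which \eqref{crazy} and \eqref{sln} are precisely the inputs required; the one place where Theorem~\ref{main} (rather than the local theory alone) enters is to control the global spacetime norms of the subcritical-mass profiles ($j\ge J_0$) whose time parameters $t_n^j$ diverge, and one must separately — but straightforwardly — dispose of the profiles with $t_n^j\to\pm\infty$ in the sum over $j<J_0$: in either case the relevant free evolution, hence by the local theory the nonlinear profile $\psi_n^j$ itself, has $L^4_{t,x}$-norm on $[0,T^*-t_n)$ tending to $0$ as $n\to\infty$ (for $t_n^j\to-\infty$ because $T^*-t_n\to 0$ and the interval escapes to $-\infty$), so these profiles contribute negligibly to the sum.
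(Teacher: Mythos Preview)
Your proof is correct and follows essentially the same route as the paper's: a contradiction argument for \eqref{one bad E} built on the stability lemma applied to the approximate solution $\sum_{j\le J}\psi_n^j+e^{it\Delta}w_n^J$, with the cross terms killed by \eqref{crazy}, followed by a pigeonhole argument for the ``moreover'' clause. Your treatment of the race in \eqref{moreover} is in fact slightly more explicit than the paper's (you separate $j\in S_n$ from $j\notin S_n$), and your closing remarks about profiles with divergent $t_n^j$ are extra commentary rather than a required step --- under the contradiction hypothesis the norms for $j<J_0$ are simply assumed bounded, so no further case analysis is needed there.
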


\begin{proof}
Suppose that \eqref{one bad E} does not hold for any choice of $j_0$.  Then, by \eqref{threshold S},
\begin{align}\label{finite S}
\sum_{j\geq 1} \|\psi_n^j\|_{L_{t,x}^4([0, T^*-t_n)\times\R^2)}^4 \lesssim_u 1
\end{align}
for $n$ sufficiently large.  We will reach a contradiction by deducing that for $n$ large,
$\|u\|_{L_{t,x}^4([t_n,T^*)\times\R^2)}<\infty$, which is inconsistent with the assumption that $u$ blows up at time $T^*$.

We will obtain our bound on $u$ by combining perturbation theory with bounds on an approximation to $u(t)$, namely,
\begin{equation*}
u^J_n(t) := \sum_{j=1}^J \psi^j_n(t) + e^{it\Delta} w^J_n.
\end{equation*}

We first note that for each $J\geq 1$
\begin{align}\label{same mass}
M( u^J_n(0)- u_n(0) )=0.
\end{align}

Next we establish finite $L_{t,x}^4$ bounds for $u_n^J$ on $[0,T^*-t_n)$.  Using \eqref{crazy}, it is easy to check that
\begin{align}\label{separation}
\lim_{n\to\infty} \bigl\|\psi_n^j \psi_n^{j'}\|_{L_{t,x}^2([0,T^*-t_n)\times\R^2)}=0;
\end{align}
for details see \cite[Lemma~2.7]{keraani-2}.  Combining this with \eqref{finite S}, we estimate
\begin{equation}\label{sun}
\lim_{J \to \infty} \lim_{n \to \infty} \| u^J_n \|_{L_{t,x}^4([0,T^*-t_n)\times\R^2)}^4
=\lim_{J \to \infty} \sum_{j=1}^J \| \psi^j_n \|_{L_{t,x}^4([0,T^*-t_n)\times\R^2)}^4
\lesssim_u 1.
\end{equation}

We now show that $u_n^J$ asymptotically solves \eqref{cubic nls} in the sense that
\begin{align}\label{eq good aprox}
\lim_{J\to\infty} \limsup_{n \to \infty} \| (i \partial_t + \Delta) u^J_n - F(u^J_n) \|_{L^{4/3}_{t,x}([0,T^*-t_n)\times \R^2)} =0.
\end{align}
By the definition of $u^J_n$, we have
$$ (i \partial_t + \Delta) u^J_n = \sum_{j=1}^J F(\psi^j_n)$$
and so, by the triangle inequality, it suffices to show that
$$
\lim_{J\to\infty} \limsup_{n\to\infty} \bigl\| F\bigl(u^J_n-e^{it\Delta}w^J_n\bigr)
        - F\bigl(u^J_n\bigr) \bigr\|_{L^{4/3}_{t,x}([0,T^*-t_n)\times \R^2)}=0
$$
and
$$
    \lim_{n\to\infty} \Bigl\|F\Bigl(\sum_{j=1}^J \psi^j_n\Bigr)
            - \sum_{j=1}^J F(\psi^j_n) \Bigr\|_{L^{4/3}_{t,x}([0,T^*-t_n) \times \R^2)} = 0
$$
for each $J$.  That the first limit is zero follows from H\"older's inequality, \eqref{sln}, and \eqref{sun}.
For the second limit, we use the elementary inequality
$$ \Bigl|F(\sum_{j=1}^J z_j) - \sum_{j=1}^J F(z_j)\Bigr| \lesssim_J \sum_{j \neq j'} |z_j| |z'_j|^2$$
together with H\"older's inequality, \eqref{separation}, and \eqref{sun}.

Next we use the stability result, Lemma~\ref{stab}.  For any small $\delta > 0$, one may choose $J$ sufficiently large
(depending on $\delta$) and then $n$ sufficiently large (depending on $J,\delta$) so that
$$
\|(i \partial_t +\Delta)u^J_n - F(u^J_n)\|_{L^{4/3}_{t,x}([0,T^*-t_n)\times\R^2)}\leq\delta.
$$
Invoking \eqref{same mass} and \eqref{sun} and applying Lemma~\ref{stab} (for $\delta$ chosen small enough), we conclude
that for $n$ sufficiently large,
$$ \|u\|_{L_{t,x}^4([t_n,T^*)\times\R^2)}<\infty.$$
This contradicts the assumption that $u$ blows up at $T^*$.  Thus, there exists at least one $j_0$ satisfying \eqref{one bad E}.
In particular, $J_0\geq 2$.

We now turn our attention to \eqref{moreover}.  By virtue of \eqref{one bad E}, there are infinitely many $n$ such that
$$
\|\psi_n^{j_0}\|_{L_{t,x}^4([0, T^*-t_n)\times\R^2)} >\eta.
$$
For such $n$ there exists $1\leq j(n)<J_0$ that wins the $L_{t,x}^4$ race in the sense of \eqref{moreover}.
Choose $j_1$ to be any value that the sequence $j(n)$ achieves infinitely many times.
\end{proof}

For the remainder of this section, we assume that \eqref{moreover} holds; this amounts to restricting $n$ to a subsequence.
Choosing $\eta>0$ sufficiently small and using \eqref{moreover} and the Strichartz inequality, we conclude
$$
\|e^{it\Delta}\psi_n^{j_1}(0)\|_{L_{t,x}^4([0, \tau_n]\times\R^2)}
\gtrsim \|\psi_n^{j_1}\|_{L_{t,x}^4([0, \tau_n]\times\R^2)}- \|\psi_n^{j_1}\|_{L_{t,x}^4([0, \tau_n]\times\R^2)}^3
\gtrsim \eta.
$$
Combining this with Lemma~\ref{L:B conc} (accounting for scaling), we extract a bubble of concentration that is not
too close to $\tau_n$.  More precisely, there exists $\tau'_n\in (0,\tau_n)$ such that
\begin{align*}
\int_{|x|\lesssim |\tau_n-\tau'_n|^{1/2}} |e^{i\tau'_n \Delta}\psi_n^{j_1}(0)|^2\,dx \gtrsim_{\phi^{j_1},\eta} 1.
\end{align*}
Let $t'_n:=t_n +\tau'_n$.  Using the fact that $\tau_n\in(0, T^*-t_n)$, we derive
\begin{align}\label{bubble}
\int_{|x|\lesssim |T^*-t'_n|^{1/2}} |e^{i\tau'_n \Delta}\psi_n^{j_1}(0)|^2\,dx \gtrsim_{\phi^{j_1},\eta} 1.
\end{align}
As the next lemma shows, we can extract the whole mass of $\psi_n^{j_1}(0)$ by enlarging the diameter of the bubble.

\begin{lemma}[Tightness of profiles]\label{L:comp prof}
Suppose
$$
\int_{|x-x_k|\leq r_k} \bigl|e^{it_k\Delta} \psi \bigr|^2 \,dx \geq \eps
$$
for some $\eps>0$ and sequences $t_k\in\R$, $x_k\in\R^2$, and $r_k>0$.  Then for any sequence $a_k\to\infty$,
\begin{equation}\label{a_k conc}
\int_{|x|\leq a_k r_k} \bigl|e^{it_k\Delta} \psi \bigr|^2 \,dx \to M(\psi).
\end{equation}
\end{lemma}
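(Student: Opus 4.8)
The plan is to argue by contradiction, reducing at the very start to a fixed Schwartz approximant of $\psi$ so that all subsequent estimates are uniform in $k$. Suppose \eqref{a_k conc} fails; then, passing to a subsequence, there is a $\delta>0$ with
$$
\int_{|x|>a_kr_k}\bigl|e^{it_k\Delta}\psi\bigr|^2\,dx\geq\delta\qquad\text{for all }k.
$$
I would choose $\psi^\sharp\in\mathcal S(\R^2)$ with $\|\psi-\psi^\sharp\|_{L^2_x}^2<\tfrac1{100}\min(\eps,\delta)$. Since $e^{it\Delta}$ is unitary on $L^2_x$, both the concentration hypothesis and the displayed inequality survive replacing $\psi$ by $\psi^\sharp$ (with $\eps$, $\delta$ replaced by $\eps/4$, $\delta/4$), so it suffices to rule out $\int_{|x|>a_kr_k}|e^{it_k\Delta}\psi^\sharp|^2\,dx\geq\delta/4$ for all $k$.

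The first --- and only substantive --- point is that the concentration hypothesis keeps $r_k$ from being small compared with $\langle t_k\rangle$. For Schwartz data the dispersive inequality \eqref{dispersive} gives $\|e^{it\Delta}\psi^\sharp\|_{L^\infty_x}\lesssim|t|^{-1}\|\psi^\sharp\|_{L^1_x}$, and the Fourier inversion formula gives $\|e^{it\Delta}\psi^\sharp\|_{L^\infty_x}\lesssim\|\widehat{\psi^\sharp}\|_{L^1_\xi}$; together these yield $\|e^{it\Delta}\psi^\sharp\|_{L^\infty_x}\lesssim_{\psi^\sharp}\langle t\rangle^{-1}$. Hence
$$
\tfrac\eps4\leq\int_{|x-x_k|\leq r_k}\bigl|e^{it_k\Delta}\psi^\sharp\bigr|^2\,dx\leq\pi r_k^2\,\|e^{it_k\Delta}\psi^\sharp\|_{L^\infty_x}^2\lesssim_{\psi^\sharp}\frac{r_k^2}{\langle t_k\rangle^2},
$$
so $r_k\gtrsim_{\psi^\sharp,\eps}\langle t_k\rangle$, and since $a_k\to\infty$ it follows that $a_kr_k/\langle t_k\rangle\to\infty$.

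Next I would control the mass at radius $\gtrsim a_kr_k$ by the dispersive tail of a Schwartz solution. The operator identity $xe^{it\Delta}=e^{it\Delta}(x-2it\nabla)$ (equivalently, via stationary phase on the explicit kernel of $e^{it\Delta}$, as in Lemma~\ref{lr:propag est L}) gives, for Schwartz data,
$$
\int_{\R^2}|x|^2\bigl|e^{it\Delta}\psi^\sharp\bigr|^2\,dx=\bigl\|(x-2it\nabla)\psi^\sharp\bigr\|_{L^2_x}^2\leq 2\bigl\||x|\psi^\sharp\bigr\|_{L^2_x}^2+8t^2\|\nabla\psi^\sharp\|_{L^2_x}^2\lesssim_{\psi^\sharp}\langle t\rangle^2.
$$
By Chebyshev's inequality and the previous step,
$$
\int_{|x|>a_kr_k}\bigl|e^{it_k\Delta}\psi^\sharp\bigr|^2\,dx\leq\frac{1}{(a_kr_k)^2}\int_{\R^2}|x|^2\bigl|e^{it_k\Delta}\psi^\sharp\bigr|^2\,dx\lesssim_{\psi^\sharp}\Bigl(\frac{\langle t_k\rangle}{a_kr_k}\Bigr)^2\longrightarrow 0,
$$
which contradicts the lower bound $\delta/4$ for large $k$. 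This establishes \eqref{a_k conc}; spherical symmetry is not needed.

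The hard part is really just the first step: seeing that concentration forces $r_k\gtrsim_{\psi,\eps}\langle t_k\rangle$, so the ball $\{|x|\leq a_kr_k\}$ eventually engulfs the entire region $\{|x|\lesssim\langle t_k\rangle\}$ where a dispersing free wave lives. The rest is the elementary weighted estimate of the second step together with the bookkeeping of the approximation --- the one subtlety there being that $\psi^\sharp$, and hence all implicit constants, must be fixed once and for all (depending on $\psi,\eps,\delta$, not on $k$) before the estimates begin.
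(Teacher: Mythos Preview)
Your proof is correct and takes a genuinely different route from the paper's.

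The paper argues by passing to a subsequence along which $t_k\to t_\infty\in[-\infty,\infty]$ and then splitting cases. For finite $t_\infty$ it uses the strong $L^2_x$ convergence $e^{it_k\Delta}\psi\to e^{it_\infty\Delta}\psi$ to see that $\liminf r_k>0$, whence $a_kr_k\to\infty$ and the conclusion is immediate. For $t_k\to\pm\infty$ it invokes the Fraunhofer approximation
$\bigl\| e^{it\Delta}\psi-(2it)^{-1}e^{i|x|^2/4t}\hat\psi(x/2t)\bigr\|_{L^2_x}\to0$
to transfer the concentration hypothesis to $\hat\psi$, deduce $\liminf r_k/|t_k|>0$, and conclude.

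Your argument bypasses the case split by first approximating $\psi$ by a fixed Schwartz function $\psi^\sharp$ and then treating all $t_k$ uniformly: the $L^\infty$ dispersive bound gives $r_k\gtrsim_{\psi^\sharp,\eps}\langle t_k\rangle$ in one stroke, and the commutator identity $xe^{it\Delta}=e^{it\Delta}(x-2it\nabla)$ plus Chebyshev finishes. The trade is that you must pay the (harmless) price of the Schwartz reduction up front, while the paper works directly in $L^2_x$ but must handle bounded and unbounded times separately and appeal to the Fraunhofer formula. Your approach is slightly more quantitative and uses only tools already recorded in Section~2; the paper's is perhaps conceptually cleaner at large times since the Fraunhofer formula makes transparent why $r_k/|t_k|$ is bounded below. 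Either way, the heart of the matter is exactly the inequality $r_k\gtrsim\langle t_k\rangle$ that you isolate.
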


\begin{proof}
Without loss of generality, we may assume that $t_k\to t_\infty\in[-\infty,\infty]$.  We first consider
the case of a finite limit.  In this case, $e^{it_k\Delta} \psi \to e^{it_\infty\Delta} \psi$ in $L^2$
and so
$$
\liminf_{k\to\infty} \int_{|x-x_k|\leq r_k} \bigl|e^{it_\infty\Delta} \psi \bigr|^2 \,dx \geq \eps
$$
This implies that $\liminf r_k >0$ (also $\limsup |x_k|/r_k <\infty$) and so
$$
\lim_{k\to\infty} \int_{|x|\leq a_k r_k} \bigl|e^{it_\infty\Delta} \psi \bigr|^2 \,dx = M(\psi).
$$
Using $e^{it_k\Delta} \psi \to e^{it_\infty\Delta} \psi$ once again finishes the argument.

In the case that $t_k\to \pm\infty$, we use the Fraunhofer formula: for $t\to\pm\infty$,
\begin{equation}\label{Fraun}
\bigr\| [e^{it\Delta} \psi](x) - (2it)^{-1} e^{i|x|^2/4t} \hat\psi\bigl(\tfrac{x}{2t}\bigr) \bigl\|_{L^2_x}\to 0.
\end{equation}
It allows us to conclude
$$
\liminf_{k\to\infty} \int_{|y-y_k|\leq r_k / |2t_k|} \bigl|\hat\psi(y) \bigr|^2 \,dy \geq \eps,
$$
where $y_k=x_k/(2t_k)$; this shows that $\liminf r_k/|t_k| >0$.  Thus
$$
\lim_{k\to\infty} \int_{|y|\leq a_k r_k / |2t_k|} \bigl|\hat\psi(y) \bigr|^2 \,dy = M(\psi),
$$
which leads to \eqref{a_k conc} via another application of \eqref{Fraun}.
\end{proof}

Returning to \eqref{bubble} and applying Lemma~\ref{L:comp prof} (accounting for scaling again), we obtain
\begin{align}\label{big bubble}
\lim_{n\to\infty} \int_{|x|\leq R_n} |e^{i\tau'_n \Delta}\psi_n^{j_1}(0)|^2\,dx = M(\psi_n^{j_1})= M(\phi^{j_1})\geq M(Q)
\end{align}
for any sequence $R_n$ obeying $(T^*-t_n')^{-1/2} R_n \to \infty$.  It remains to show that a similar bubble
can be found in $u(t_n')$.  This will be effected using perturbation theory (and asymptotic orthogonality)
in much the same way as it was used to prove \eqref{one bad E}.
This time we approximate $u_n(t)$ by $u_n^J$ on $[0,\tau_n]$.
Again $u^J_n(0)=u_n(0)$.  To bound the $L_{t,x}^4$ norm of $u_n^J$ on $[0,\tau_n]$
we argue as for \eqref{sun} using \eqref{threshold S} and \eqref{moreover} in place of \eqref{finite S}.
Repeating the argument that proved \eqref{eq good aprox} we obtain
\begin{align*}
\lim_{J\to\infty} \limsup_{n \to \infty}
    \| (i \partial_t + \Delta) u^J_n - F(u^J_n) \|_{L^{4/3}_{t,x}([0,\tau_n]\times \R^2)} = 0.
\end{align*}
Therefore, by Lemma~\ref{stab}, we conclude
\begin{align}\label{close}
\lim_{J\to\infty}\limsup_{n\to\infty}\|u(t'_n)-u_n^J(\tau'_n)\|_{L_x^2}=0.
\end{align}

To continue, we note that by \eqref{crazy},
\begin{align}\label{got you}
\lim_{J\to\infty}\limsup_{n\to\infty} \bigl| \langle u_n^J(\tau'_n), \psi_n^{j_1}(\tau'_n)\rangle_{L_x^2} \bigr|
    = M(\psi_n^{j_1}) = M(\phi^{j_1})\geq M(Q).
\end{align}
Here, we also used the fact that $w_n^J\rightharpoonup 0$ weakly in $G_\rad\backslash L_x^2$ by virtue of \eqref{sln}.
Putting together \eqref{close} and \eqref{got you}, we obtain
\begin{align*}
\limsup_{n\to\infty} \bigl| \langle u(t'_n), \psi_n^{j_1}(\tau'_n)\rangle_{L_x^2}\bigr| \geq M(\phi^{j_1}).
\end{align*}
By \eqref{moreover} and the Strichartz inequality, this implies
\begin{align*}
\limsup_{n\to \infty} \bigl| \langle u(t'_n), e^{i\tau'_n\Delta} \psi_n^{j_1}(0)\rangle_{L_x^2}\bigr|  \geq M(\phi^{j_1}) -\eta
\end{align*}
provided $\eta>0$ is sufficiently small.  Combining this with \eqref{big bubble}, we conclude
$$
\limsup_{n\to \infty} \int_{|x|\leq R_n} |u(t_n',x)|^2\, dx\geq  M(Q) -\eta.
$$
Sending $\eta$ to zero completes the proof of Corollary~\ref{cor:conc}.

\end{document}